\documentclass[12pt]{amsart}
\usepackage[dvipsnames]{xcolor}

\usepackage{etoolbox}

\makeatletter
\let\old@tocline\@tocline
\let\section@tocline\@tocline
\newcommand{\subsection@dotsep}{4.5}
\newcommand{\subsubsection@dotsep}{4.5}
\patchcmd{\@tocline}
  {\hfil}
  {\nobreak
     \leaders\hbox{$\m@th
        \mkern \subsection@dotsep mu\hbox{.}\mkern \subsection@dotsep mu$}\hfill
     \nobreak}{}{}
\let\subsection@tocline\@tocline
\let\@tocline\old@tocline

\patchcmd{\@tocline}
  {\hfil}
  {\nobreak
     \leaders\hbox{$\m@th
        \mkern \subsubsection@dotsep mu\hbox{.}\mkern \subsubsection@dotsep mu$}\hfill
     \nobreak}{}{}
\let\subsubsection@tocline\@tocline
\let\@tocline\old@tocline

\let\old@l@subsection\l@subsection
\let\old@l@subsubsection\l@subsubsection

\def\@tocwriteb#1#2#3{%
  \begingroup
    \@xp\def\csname #2@tocline\endcsname##1##2##3##4##5##6{%
      \ifnum##1>\c@tocdepth
      \else \sbox\z@{##5\let\indentlabel\@tochangmeasure##6}\fi}%
    \csname l@#2\endcsname{#1{\csname#2name\endcsname}{\@secnumber}{}}%
  \endgroup
  \addcontentsline{toc}{#2}%
    {\protect#1{\csname#2name\endcsname}{\@secnumber}{#3}}}%

\newlength{\@tocsectionindent}
\newlength{\@tocsubsectionindent}
\newlength{\@tocsubsubsectionindent}
\newlength{\@tocsectionnumwidth}
\newlength{\@tocsubsectionnumwidth}
\newlength{\@tocsubsubsectionnumwidth}
\newcommand{\settocsectionnumwidth}[1]{\setlength{\@tocsectionnumwidth}{#1}}
\newcommand{\settocsubsectionnumwidth}[1]{\setlength{\@tocsubsectionnumwidth}{#1}}
\newcommand{\settocsubsubsectionnumwidth}[1]{\setlength{\@tocsubsubsectionnumwidth}{#1}}
\newcommand{\settocsectionindent}[1]{\setlength{\@tocsectionindent}{#1}}
\newcommand{\settocsubsectionindent}[1]{\setlength{\@tocsubsectionindent}{#1}}
\newcommand{\settocsubsubsectionindent}[1]{\setlength{\@tocsubsubsectionindent}{#1}}

\renewcommand{\l@section}{\section@tocline{1}{\@tocsectionvskip}{\@tocsectionindent}{}{\@tocsectionformat}}%
\renewcommand{\l@subsection}{\subsection@tocline{2}{\@tocsubsectionvskip}{\@tocsubsectionindent}{}{\@tocsubsectionformat}}%
\renewcommand{\l@subsubsection}{\subsubsection@tocline{3}{\@tocsubsubsectionvskip}{\@tocsubsubsectionindent}{}{\@tocsubsubsectionformat}}%
\newcommand{\@tocsectionformat}{}
\newcommand{\@tocsubsectionformat}{}
\newcommand{\@tocsubsubsectionformat}{}
\expandafter\def\csname toc@1format\endcsname{\@tocsectionformat}
\expandafter\def\csname toc@2format\endcsname{\@tocsubsectionformat}
\expandafter\def\csname toc@3format\endcsname{\@tocsubsubsectionformat}
\newcommand{\settocsectionformat}[1]{\renewcommand{\@tocsectionformat}{#1}}
\newcommand{\settocsubsectionformat}[1]{\renewcommand{\@tocsubsectionformat}{#1}}
\newcommand{\settocsubsubsectionformat}[1]{\renewcommand{\@tocsubsubsectionformat}{#1}}
\newlength{\@tocsectionvskip}
\newcommand{\settocsectionvskip}[1]{\setlength{\@tocsectionvskip}{#1}}
\newlength{\@tocsubsectionvskip}
\newcommand{\settocsubsectionvskip}[1]{\setlength{\@tocsubsectionvskip}{#1}}
\newlength{\@tocsubsubsectionvskip}
\newcommand{\settocsubsubsectionvskip}[1]{\setlength{\@tocsubsubsectionvskip}{#1}}

\patchcmd{\tocsection}{\indentlabel}{\makebox[\@tocsectionnumwidth][l]}{}{}
\patchcmd{\tocsubsection}{\indentlabel}{\makebox[\@tocsubsectionnumwidth][l]}{}{}
\patchcmd{\tocsubsubsection}{\indentlabel}{\makebox[\@tocsubsubsectionnumwidth][l]}{}{}

\newcommand{\@sectypepnumformat}{}
\renewcommand{\contentsline}[1]{%
  \expandafter\let\expandafter\@sectypepnumformat\csname @toc#1pnumformat\endcsname%
  \csname l@#1\endcsname}
\newcommand{\@tocsectionpnumformat}{}
\newcommand{\@tocsubsectionpnumformat}{}
\newcommand{\@tocsubsubsectionpnumformat}{}
\newcommand{\setsectionpnumformat}[1]{\renewcommand{\@tocsectionpnumformat}{#1}}
\newcommand{\setsubsectionpnumformat}[1]{\renewcommand{\@tocsubsectionpnumformat}{#1}}
\newcommand{\setsubsubsectionpnumformat}[1]{\renewcommand{\@tocsubsubsectionpnumformat}{#1}}
\renewcommand{\@tocpagenum}[1]{%
  \hfill {\mdseries\@sectypepnumformat #1}}

\let\oldappendix\appendix
\renewcommand{\appendix}{%
  \leavevmode\oldappendix%
  \addtocontents{toc}{%
    \protect\settowidth{\protect\@tocsectionnumwidth}{\protect\@tocsectionformat\sectionname\space}%
    \protect\addtolength{\protect\@tocsectionnumwidth}{2em}}%
}
\makeatother



\makeatletter
\settocsectionnumwidth{2em}
\settocsubsectionnumwidth{2.5em}
\settocsubsubsectionnumwidth{3em}
\settocsectionindent{1pc}%
\settocsubsectionindent{\dimexpr\@tocsectionindent+\@tocsectionnumwidth}%
\settocsubsubsectionindent{\dimexpr\@tocsubsectionindent+\@tocsubsectionnumwidth}%
\makeatother

\settocsectionvskip{10pt}
\settocsubsectionvskip{0pt}
\settocsubsubsectionvskip{0pt}
    


\settocsectionformat{\bfseries}
\settocsubsectionformat{\mdseries}
\settocsubsubsectionformat{\mdseries}
\setsectionpnumformat{\bfseries}
\setsubsectionpnumformat{\mdseries}
\setsubsubsectionpnumformat{\mdseries}


\let\oldtableofcontents\tableofcontents
\renewcommand{\tableofcontents}{%
  \vspace*{-\linespacing}
  \oldtableofcontents}

\usepackage[bookmarks=true, bookmarksopen=true,%
bookmarksdepth=3,bookmarksopenlevel=2,%
colorlinks=true,%
linkcolor=blue,%
citecolor=blue,%
filecolor=blue,%
menucolor=blue,%
urlcolor=blue]{hyperref}

\usepackage{graphicx}
\usepackage{amssymb}
\usepackage{amsmath, amscd}
\usepackage{xcolor}

\newtheorem{thm}{Theorem}[section]

\newtheorem{theorem}[thm]{Theorem}
\newtheorem*{theorem*}{Theorem}
\newtheorem{proposition}[thm]{Proposition}
\newtheorem{corollary}[thm]{Corollary}
\newtheorem{lemma}[thm]{Lemma}

\theoremstyle{definition}
\newtheorem{definition}[thm]{Definition}

\theoremstyle{remark}
\newtheorem{remark}[thm]{Remark}

\newtheorem{example}[thm]{Example}

\numberwithin{equation}{section}

\newcommand{\D}{{\mathbb{D}}}
\newcommand{\R}{{\mathbb{R}}}
\newcommand{\C}{{\mathbb{C}}}

\newcommand{\Q}{{\mathbb{Q}}}
\newcommand{\Z}{{\mathbb{Z}}}
\renewcommand{\P}{{\mathbb{P}}}

\newcommand{\Mm}{{\mathcal{M}}}



\newcommand\bB{\mathbf{B}}

\newcommand\bD{\mathbf{D}}

\newcommand\bG{\mathbf{G}}

\newcommand\bN{\mathbf{N}}

\newcommand\bP{\mathbf{P}}

\newcommand\bU{\mathbf{U}}
\newcommand\bV{\mathbf{V}}
\newcommand\bW{\mathbf{W}}

\newcommand\bZ{\mathbf{Z}}


\newcommand{\M}{{\mathcal{M}}}

\newcommand{\ev}{\operatorname{ev}}

\newcommand{\pa}{\partial}

\newcommand{\ind}{\operatorname{index}}

\newcommand{\area}{\operatorname{area}}

\newcommand{\Sk}{\operatorname{Sk}}

\usepackage{xspace}

\usepackage[margin=1in,marginparwidth=0.8in, marginparsep=0.1in]{geometry}

\begin{document}

\title{Counting bare curves}
\author{Tobias Ekholm}
\address{Department of mathematics and Centre for Geometry and Physics, Uppsala University, Box 480, 751 06 Uppsala, Sweden \and
Institut Mittag-Leffler, Aurav 17, 182 60 Djursholm, Sweden}
\email{tobias.ekholm@math.uu.se}
\author{Vivek Shende}
\address{Center for Quantum Mathematics, Syddansk Univ., Campusvej 55
5230 Odense Denmark \and 
Department of mathematics, UC Berkeley, 970 Evans Hall,
Berkeley CA 94720 USA}
\email{vivek.vijay.shende@gmail.com}

\thanks{TE is supported by the Knut and Alice Wallenberg Foundation, KAW2020.0307 Wallenberg Scholar and by the Swedish Research Council, VR 2022-06593, Centre of Excellence in Geometry and Physics at Uppsala University and VR 2020-04535, project grant. \\ \indent  
VS is supported by  Villum Fonden Villum Investigator grant 37814, Novo Nordisk Foundation grant NNF20OC0066298, and Danish National Research Foundation grant DNRF157. 
}

\begin{abstract}
We construct a class of perturbations of the Cauchy-Riemann equations for maps from curves to a Calabi-Yau threefold.  Our perturbations vanish on components of zero symplectic area. For generic 1-parameter families of perturbations, the locus of solution curves without zero-area components is compact,  transversely cut out, and satisfies certain natural coherence properties.  

For curves without boundary, this yields a reduced Gromov-Witten theory in the sense of Zinger.   That is, we produce a well defined invariant given by counting only maps without components of zero symplectic area, and we show that this invariant is related to the usual Gromov-Witten invariant by the expected change of variables.

For curves with boundary on Maslov zero Lagrangians, our construction provides an `adequate perturbation scheme' with the needed properties to set up the skein-valued curve counting, as axiomatized in our previous work \cite{SOB}.

The main technical content is the construction, over the Hofer-Wysocki-Zehnder Gromov-Witten configuration spaces, of perturbations to which the `ghost bubble censorship' argument can be applied.  Certain local aspects of this problem were resolved in our previous work \cite{ghost}.  The essential work of the present article
is to ensure inductive compatibilities, despite the non-existence of marked-point-forgetting maps for the configuration spaces. 
\end{abstract}
\subjclass{53D45}
\maketitle

\thispagestyle{empty}

\newpage
\small
\renewcommand{\contentsname}{}
\setcounter{tocdepth}{2}
\settocsectionvskip{2pt}
$\phantom{N}$ \vspace{-10mm}
\tableofcontents
\normalsize
\newpage

\section{Introduction}\label{sec:intro}

The moduli of pseudo-holomorphic stable maps from possibly nodal Riemann surfaces is not the closure of the locus of such maps from smooth surfaces.  There are other components, and, for enumerative problems, it is natural to ask whether contributions from the different components can be meaningfully separated, or, more generally, whether contributions from maps with various other geometric properties can be separated.  In particular, a holomorphic map $u\colon S \to X$ leads to infinitely many others obtained by composition with some ramified cover $S' \to S$, and/or attaching new irreducible components collapsed by the map.  We refer to such collapsed components as \emph{ghost bubbles}, and say that a map without ghost bubbles is {\em bare}.  Of particular interest is the case of Calabi-Yau 3-folds, where it is natural to consider separating these two contributions out of the Gromov-Witten invariants, leaving some more primitive invariant \cite{GV-MtopII, MNOP, Ionel-Parker-GV, Doan-Ionel-Walpulski, Pardon-MNOP}.

A starting point for approaching this problem is to ask whether, as the almost complex structure $J$ varies in a 1-parameter family, a family of $J$-holomorphic curves without any multiple covers or ghost bubbles can limit to a curve with these behaviors. In fact, multiple covers do generically appear in such families; see for instance \cite{Ionel-Parker-GV, Bai-Swaminathan}, and more sophisticated ideas are required to account for their contributions.  However, in situations where multiple covers are excluded a priori, e.g.\ for topological reasons, it is possible to exclude ghost bubble formation \cite{ionel-genus1, zinger-sharp, niu2016refined, SOB, doan-walpulski-embedded}.  

In \cite{SOB}, we used this idea in the context of a study of the wall-crossing behavior of counts of holomorphic curves in a Calabi-Yau 3-fold $X$ with Maslov zero Lagrangian boundary conditions $L$. Let us recall the ``ghost bubble censorship'' argument in this context. 

By a general position argument, one can show:  for generic $J$, if a $J$-holomorphic map $u\colon (S, \partial S) \to (X, L)$ is generically (on $S$) an embedding, then $S$ is smooth (i.e.\ no nodes), $u$ is an embedding, and $u(\mathrm{int}(S))$ is disjoint from $L$.  We call maps with these properties \emph{0-generic}.  Now consider maps which appear as $J$ varies in a general 1-parameter family.  One certainly expects isolated instances of the following: boundary nodes, general position double points at the boundary, and interior crossings with $L$.
We call 1-parameter families with only such degeneracies \emph{1-generic}.
Again, a general position argument establishes 1-genericity for families of maps which are all generically embeddings.  

The ghost bubble censorship argument excludes one additional situation: a degeneration in which a separating boundary node develops and the curve on one side of the node maps to a point.  Our version of this argument \cite{ghost} works as follows.  
Consider any 1-parameter family $J_t$ of complex structures, and a corresponding 1-parameter family of $J_t$ holomorphic maps $u_t\colon (S_t, \partial S_t) \to (X, L)$, with no ghost components away from $t=t_0$, such that $u_{t_0}\colon S_{t_0} \to X$ has a ghost component, and such that the non-ghost part $(u_{t_0})_+\colon (S_{t_0})_+  \to X$ is generically an embedding. One shows that in this case, $(u_{t_0})_+$ must exhibit some  singular behavior at the attaching point(s) of the ghost components, of a sort which does not appear in a 1-generic family.  But then the holomorphic map $(u_{t_0})_+$ could not have occured in a generic 1-parameter family $J_t$. 

That is, when multiple covers can be excluded, one can show that for generic 1-parameter families $J_t$, the locus of bare maps is closed.  We showed in \cite{SOB} that the remaining phenomena in 1-generic families
can be identified with the framed HOMFLYPT skein relations from quantum topology, and, consequently, that there is a deformation invariant count of open curves \emph{valued in the skein module} of the Lagrangian boundary condition.  (Counting curves in the skein module is a rigorous mathematical shadow of the idea that Lagrangian branes in the topological A-model should carry the Chern-Simons quantum field theory where holomorphic curves ending on them should introduce line defects \cite{Witten}.)  In fact, while most counting problems involve multiple covers, in \cite{SOB} we found one striking application where they could be excluded a priori, and established that the HOMFLYPT invariant of a knot is equal to a count of curves on a certain Lagrangian (depending on the knot) in the resolved conifold, as conjectured by Ooguri and Vafa \cite{OV}. 

More generally, in \cite{SOB} we established the existence of a skein-valued curve counting theory conditional on the existence of a class of perturbations satisfying certain axiomatics, including that solutions to the perturbed holomorphic curve equation could not be multiply covered.  Our axiomatics were such that when multiple covers can be excluded a priori, choosing generic $J$ already provides such a class. 

In the present article, we construct a class of perturbations satisfying the axiomatics of \cite{SOB}, for any fixed homology class of maps to a target Calabi-Yau 3-fold $X$ with (possibly empty) Maslov zero Lagrangian boundary condition $L$.
This completes the necessary foundations for our work  \cite{ekholm-shende-unknot, ekholm-shende-colored} establishing the full Ooguri-Vafa conjecture relating (what are before perturbation multiply covered) curve counts and colored HOMFLYPT invariants, several works \cite{scharitzer-shende, scharitzer-shende-2, hu-schrader-zaslow}
around skein-valued cluster algebra associated to Legendrian mutations, 
and our study of the open topological vertex \cite{ekholm-shende-unknot, ekholm-longhi-nakamura-hopf, Ekholm-Shende-Longhi}. 

Our previous work \cite{ghost} contains 
the necessary analysis locally on the moduli of maps.  Our task here is to patch it together.  
We  work in the Hofer-Wysocki-Zehnder polyfold setting, which we review in Section \ref{sec:polyfoldforopencurves}.  The main result of the paper is the following:

\begin{theorem} \label{main theorem intro}
    Let $X$ be a Calabi-Yau 3-fold, and $L \subset X$ a (possibly empty) Lagrangian submanifold of Maslov index zero. 
    Fix a class $d \in H_2(X, L)$, and choose some integer
    $\chi_{\min}$.  We write $\mathbf{Z}$ for the configuration space of maps realizing class $d$ with possibly disconnected domain of formal Euler characteristic $\ge \chi_{\min}$, see Section \ref{sec:stablemapmoduli} and $\mathbf{W}$ for the bundle of formal complex anti-linear differentials over $\mathbf{Z}$, see Section \ref{sec:formaldiff}.  

    Then there is a (weighed multi-)section $\lambda$ of $\mathbf{W} \to \mathbf{Z}$  with the following properties. For any stable map $(u,S)$, $\lambda(u)$ vanishes on any irreducible component of $S$ with zero symplectic area and there is a neighborhood $N(L)$ of $L\subset X$ such that $\lambda(u)$ vanishes at any point mapping to $N(L)$. All bare solutions to $\bar \partial_J u = \lambda(u)$ are 0-generic and transversely cut out, and the locus of bare solutions is compact. 

    Moreover, any two sections constructed by our methods may be connected by a 1-parameter family of sections, again vanishing on components of zero symplectic area and in $N(L)$,
    such that all bare solutions are 1-generic and transversely cut out, the locus of bare solutions is compact, and a bare curve is a solution if and only if its normalization is. 
\end{theorem}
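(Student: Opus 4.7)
The plan is to construct $\lambda$ by induction over a natural stratification of $\mathbf{Z}$: stratify by combinatorial type of the nodal domain together with the ghost/bare decomposition of its components (a component being ghost iff its symplectic area vanishes). Bare solutions lie in strata where all components are bare; these strata form an open subset of $\mathbf{Z}$, but their closure meets boundary strata where ghost bubbles appear. Induction proceeds outward from the top stratum of smooth bare domains toward strata of greater combinatorial complexity, constructing $\lambda$ on each by extending from its boundary while preserving the vanishing conditions on ghost components and on the preimage of $N(L)$.

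On the interior of each stratum, the vanishing conditions impose no constraint on the bare part of the $\bar\partial$-equation away from $L$, so the HWZ weighted multi-section transversality machinery yields a perturbation cutting out the bare solutions transversely. Compactness of the bare solution locus then reduces, via Gromov compactness, to the ghost bubble censorship argument of \cite{SOB}: if a family of $0$-generic bare solutions had Gromov limit with a ghost component attached to a bare one, then our local analysis in \cite{ghost} forces a codimension $>1$ singularity on the bare side at the attaching node, contradicting $0$-genericity. The vanishing of $\lambda$ on ghosts and in $N(L)$ is precisely what guarantees that the hypotheses of \cite{ghost} are met in the limit. The claim that a bare curve is a solution iff its normalization is then follows tautologically from the vanishing of $\lambda$ on ghosts, since normalizing exactly removes such attached components.

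The crux, and the main obstacle, is achieving \emph{inductive compatibility} across strata. In the HWZ framework, polyfold charts require auxiliary choices of stabilizing marked points chosen to kill domain automorphisms, and --- unlike in the Deligne-Mumford setting --- these choices \emph{do not} assemble into a forgetful map from charts of higher combinatorial complexity to lower. Hence a perturbation constructed on a stratum cannot be pulled back to a tubular neighborhood in the adjacent stratum along any canonical map. My approach is to exploit the weighted multi-section formalism: construct each local perturbation symmetrized under the action of the local symmetry group permuting stabilizing marked points, so that different chart choices produce multi-sections that average coherently; then glue via a polyfold partition of unity into a global weighted multi-section $\lambda$. Checking that the inductive choices across strata fit together --- and simultaneously respect both vanishing constraints --- is where most of the technical work is expected to lie.

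For the $1$-parameter statement, the same construction is run on $[0,1] \times \mathbf{Z}$ with boundary conditions $\lambda_0, \lambda_1$, relaxing the transversality target for bare solutions from $0$-generic to $1$-generic. The same ghost censorship mechanism, now applied in one parameter more, excludes ghost bubble formation in the bare locus of the homotopy, and the normalization equivalence follows as in the stationary case.
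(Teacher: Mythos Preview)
Your outline correctly identifies the main obstacle --- the absence of forgetful maps --- but your proposed resolution via symmetrization over stabilizing marked points does not address the actual source of incompatibility, and this is a genuine gap.

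The problem is not that different choices of stabilizing points lead to different charts. The problem, illustrated in the paper's Example~\ref{perturbation overlap example} and Section~\ref{ssec: forgetful mismatch}, is intrinsic to any extension procedure that glues a previously chosen perturbation on the bare part with the zero perturbation on ghosts. Concretely: take a bare map $(u,S)$ at Euler characteristic $\chi$, attach at $p\in S$ a rational ghost bubble carrying further higher-genus ghosts, then smooth only the node at $p$. The resulting map has bare part again of Euler characteristic $\chi$, and one can compare two perturbations on it: the original $\lambda$ from step $\chi$, versus the extension $\lambda^*$ obtained by hat-gluing $\lambda$ with zero across the rational bubble. These differ --- $\lambda^*$ has been cut off on a disk around $p$ --- regardless of any symmetrization. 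So the bare part of a limit of bare solutions need not solve an equation you have already analyzed, and your inductive appeal to ghost bubble censorship plus previously established $0$-genericity breaks.

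The paper's resolution is not to force exact compatibility but to control the discrepancy. One shows $\|\lambda-\lambda^*\|$ is $\mathcal{O}(e^{-(2\pi+\eta)R})$ in the gluing length $R$ (Lemma~\ref{controlled incompatibility}), and then uses that transversality and $0$-genericity are open conditions (Lemmas~\ref{estimate openness of genericity}, \ref{estimate openness of genericity 1}): for the cutoff parameters $\epsilon_k$ in the partition of unity taken small enough, the bare part of any limiting solution solves an equation within the stability radius of one already handled. Making this work requires an induction on Euler characteristic (not on combinatorial type per se), a specific ``sharp'' variant of hat-gluing engineered so that the extended sections satisfy the quantitative decay hypotheses of \cite{ghost} (Definition~\ref{control}, Proposition~\ref{sharp hat gluing is controlled}), and a careful ordering of the choice of $\epsilon_k$ using ghost bubble censorship itself at each stage (end of the proof of Theorem~\ref{main theorem 0-parameter}).

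A smaller point: the claim that ``a bare curve is a solution iff its normalization is'' does not follow from vanishing on ghosts --- a bare nodal curve has no ghosts. Normalization here separates \emph{boundary} nodes, and the equivalence holds because the basic perturbations are by construction supported away from a neighborhood $N(L)$ of $L$, hence away from boundary nodes.
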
 

Theorem \ref{main theorem intro} is proved in Section \ref{sec: main res}.

\begin{remark}
    Theorem \ref{main theorem intro} concerns maps 
    \emph {of fixed degree} and makes no claims 
    about how the solutions which arise from perturbing 
    multiply covered curves relate with the original unperturbed multiple cover, or whether or not they stay together in families, etc.  
    
    Questions around multiple covers have been studied in many other works, most notably in the work of Ionel and Parker \cite{Ionel-Parker-GV} where they established the Gopakumar-Vafa integrality conjecture. Note however that their proof does  not (as one might have naively hoped) show how to identify the GV invariants with some count of simpler embedded curves and does not separate in families the contributions from embedded curves and multiple covers. We refer to \cite{Bai-Swaminathan} for further discussion and a detailed study in the case of double covers.
\end{remark}

While we have advertised Theorem \ref{main theorem intro} in terms of its foundational relevance to counting holomorphic curves with boundary, it has new consequences already for closed curves: 
    
    \begin{corollary}
        For $(X, \omega)$ a symplectic Calabi-Yau 3-fold, and $\beta \in H_2(X)$, there is a well defined invariant given by counting bare solutions to $\bar \partial_J u = \lambda(u)$: 
        \begin{equation}  \label{bare curve count defined}
            Z_{\mathrm{bare}}(\beta, z) := \sum_{\mbox{\tiny {$\begin{matrix}(u, S) \, \mathrm{bare}\\ u_\ast[S]=\beta\end{matrix}$}}} [\lambda \cap \bar \partial_J]_{(u, S)} \cdot z^{-\chi(S)}, 
        \end{equation}
    where $[\lambda \cap \bar \partial_J]_{(u, S)}$ is the (signed, rational) intersection number between the multi-section $\lambda$ and the $\bar \partial_J$ section.
    \end{corollary}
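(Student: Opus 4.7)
The plan is to derive the corollary directly from the two halves of Theorem \ref{main theorem intro}. Fix $\beta\in H_2(X)$ and a threshold $\chi_{\min}$, and apply the theorem with $L=\emptyset$ and $d=\beta$ to produce a perturbation $\lambda$. The resulting bare locus inside $\mathbf{Z}$ is compact and transversely cut out. Since $X$ is a Calabi--Yau 3-fold and we are taking no marked points, the virtual dimension of the component of $\mathbf{Z}$ with Euler characteristic $\chi$ is $0$; hence transversality together with compactness forces the bare locus to be a finite set of points, each carrying a well-defined sign and a rational multiplicity coming from the weighted multisection $\lambda$. Thus, for every individual $\chi\ge\chi_{\min}$, the coefficient of $z^{-\chi}$ in \eqref{bare curve count defined} is a finite rational number, and the truncated sum $\sum_{\chi\ge\chi_{\min}}$ is a well-defined Laurent polynomial in $z$ with rational coefficients.

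Next I would establish independence from the perturbation (and from the almost complex structure $J$, which enters as part of the auxiliary data). Given any two perturbations $\lambda_0,\lambda_1$ constructed as in Theorem \ref{main theorem intro} for the same $\chi_{\min}$, the moreover clause produces a 1-parameter family $\{\lambda_t\}_{t\in[0,1]}$ whose total parametric bare locus $\{(t,u,S):\bar\partial_J u=\lambda_t(u),\ \text{bare}\}$ is compact, 1-generic, and transversely cut out. Because the ambient virtual dimension is zero, this total locus is a compact, oriented, weighted 1-manifold with boundary equal to the signed difference of the $t=0$ and $t=1$ slices. The standard fact that signed boundary points of a compact weighted 1-manifold sum to zero yields the equality of the two coefficient sums for every $\chi$; the normalization clause (``a bare curve is a solution if and only if its normalization is'') ensures that apparent nodal boundary components in the parametric moduli cancel in matched pairs rather than contributing spuriously. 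Independence from $J$ follows by the same cobordism argument applied to a path of almost complex structures, combined with a simultaneous path of perturbations constructed fiberwise by the theorem.

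Finally I would check consistency across thresholds: if $\chi_{\min}^{(1)}>\chi_{\min}^{(2)}$, the bare count in each $\chi\ge\chi_{\min}^{(1)}$ can be computed either with a perturbation built for $\chi_{\min}^{(1)}$ or by restricting a perturbation built for $\chi_{\min}^{(2)}$ to the appropriate substrata of $\mathbf{Z}$, and the previous paragraph shows the two answers agree. Letting $\chi_{\min}\to -\infty$ assembles the truncated Laurent polynomials into a single well-defined formal series $Z_{\mathrm{bare}}(\beta,z)$, concluding the proof. The main delicate point here is nothing new, but rather the compactness of the parametric bare locus: a priori, a bare $J_t$-curve could escape the bare locus by acquiring a ghost bubble, destroying the cobordism, and it is precisely the ghost bubble censorship built into Theorem \ref{main theorem intro} that prevents this. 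Once that input is granted, the remainder of the argument is a routine transversality--cobordism bookkeeping.
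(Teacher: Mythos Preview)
Your argument is correct and is precisely the standard transversality--cobordism deduction the paper has in mind; the paper states the Corollary in the introduction as an immediate consequence of Theorem \ref{main theorem intro} and gives no separate proof, so you have filled in exactly what was left implicit.

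One small remark: since the Corollary concerns closed curves ($L=\emptyset$, $\beta\in H_2(X)$), there are no boundary nodes at all, and an interior node has codimension $2$, so in a generic $1$-parameter family the bare locus never meets nodal curves. Thus your invocation of the normalization clause to handle ``apparent nodal boundary components'' is unnecessary here---the parametric bare moduli space is already a compact $1$-manifold with boundary only over $t\in\{0,1\}$. The normalization clause matters only in the bordered setting, where hyperbolic and elliptic boundary nodes genuinely appear in codimension $1$ (this is the content of Lemma \ref{l: generic solutions 1} and the skein relations of \cite{SOB}). This does not affect the validity of your argument, only its economy.
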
     
    We view this as a differential-geometric definition of the (hitherto conjectural) all-genus reduced Gromov-Witten invariants in the sense of Zinger \cite{Zinger-Li-reduced}.  Such invariants are supposed to be defined by counting only the contributions of the component of the stable map moduli space arising as the closure of maps from smooth curves, and are supposed be related to usual Gromov-Witten partition function, in the language above
    \[
Z_{\mathrm{GW}}(\beta, g_s) := \sum_{\mbox{\tiny{$\begin{matrix}(u, S)\\ u_\ast[S]=\beta\end{matrix}$}}} [\sigma \cap \bar \partial_J]_{(u, S)} \cdot g_s^{-\chi(S)},
    \]
    where $\sigma$ is a perturbation for Gromov-Witten counts (non-trivial on area zero curves), 
    by the formula 
    \begin{equation} \label{zinger-li} Z_{\mathrm{red}}(\beta, e^{g_2/2}-e^{-g_2/2}) = Z_{\mathrm{GW}}(\beta, g_s)\end{equation} 
    In genus one, such invariants were constructed in \cite{Zinger-reduced, Zinger-reduced-CY} (in fact, by proving a ghost bubble censorship result), and  were
    conjectured to exist more generally in \cite{Zinger-Li-reduced}.  There is an algebraic approach \cite{Hu-Li-reduced, RMMP} to define a contribution from the closure of the smooth map locus, but the resulting invariant is only known to satisfy \eqref{zinger-li} in genus 1. 

Our bare curve counting captures only contributions near smooth curves, and we prove:  
\begin{theorem}[\ref{c : bare to GW}]\label{t : bare to GW intro}
        Let $(X,\omega)$ be a symplectic Calabi-Yau 3-fold. Then
        \begin{equation}\label{eq : bare to GW}
        Z_{\mathrm{bare}}(\beta,e^{g_s/2}-e^{-g_s/2}) = Z_{\mathrm{GW}}(\beta,g_s).
        \end{equation} 
    \end{theorem}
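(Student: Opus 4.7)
The plan is to compare the two sides of \eqref{eq : bare to GW} term by term using the decomposition of GW stable maps into a bare part plus attached ghost bubbles, combined with a classical Hodge integral computation to produce the $\sinh$ substitution.

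\emph{Step 1: Stratification of the perturbed moduli.} By Theorem \ref{main theorem intro}, the perturbation $\lambda$ vanishes on zero-area components, so any solution $(u,S)$ of $\bar\partial_J u = \lambda(u)$ decomposes canonically into its bare part $S' \subset S$ (the union of positive-area components, on which the equation $\bar\partial_J u = \lambda(u)$ still holds) together with finitely many trees $G_1,\dots,G_k$ of ghost (constant-map) components, each attached to $S'$ at a single node $p_i \in S'$. By Theorem \ref{main theorem intro} the bare part is transversely cut out and $0$-generic. Conversely, every bare solution can be extended by arbitrary ghost decorations. Thus the moduli of solutions orbifold-fibers over the bare moduli with fiber parametrizing the tuple $(k;p_1,\dots,p_k;G_1,\dots,G_k)$.

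\emph{Step 2: Decoupled obstruction theory.} At a stable map as above, the linearization of the perturbed Cauchy-Riemann operator splits as a direct sum of the (transverse) bare linearization on $S'$ and the honest $\bar\partial_J$ linearization on each ghost tree, because $\lambda$ vanishes identically on the ghost components. Since $u$ is constant on each $G_i$, the pullback $u^* TX|_{G_i}$ is trivial of rank $3$, so the obstruction space equals $H^1(G_i, \mathcal{O}) \otimes T_{u(p_i)}X$; taking the Euler class over the moduli of ghost trees gives $(-1)^{g_i}\lambda_{g_i}^3$ on $\overline{\cM}_{g_i,1}$, where $g_i$ is the arithmetic genus of $G_i$. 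Consequently, the contribution of the stratum reduces to a product of Hodge integrals coupled through Riemann-Roch to integration of the attaching points $p_i$ over $S'$.

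\emph{Step 3: Generating function and substitution.} Summing over the number $k$ of attaching points, over their positions, and over the genera $g_i$ exponentiates a single-point local generating function $F(g_s)$. The function $F$ is evaluated by the universal Calabi-Yau Hodge integrals of Faber-Pandharipande and Mari\~no-Pandharipande that appear in the proof of the Gopakumar-Vafa formula; combined with the $g_s^{-\chi(S)}$ string-coupling weight and the Riemann-Roch factor from integrating attaching points over $S'$, the net weight of a fixed bare curve $(u,S')$ inside $Z_{\mathrm{GW}}(\beta, g_s)$ comes out to
\[
[\lambda \cap \bar\partial_J]_{(u,S')} \cdot \bigl(e^{g_s/2} - e^{-g_s/2}\bigr)^{-\chi(S')}.
\]
Summing over bare solutions and recognizing $z = e^{g_s/2}-e^{-g_s/2}$ in the definition \eqref{bare curve count defined} of $Z_{\mathrm{bare}}$ yields the claimed identity.

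\emph{Main obstacle.} The most delicate step is establishing that the differential-geometric perturbative count of solutions agrees stratum by stratum with the algebraic virtual count producing the Hodge integrals, which requires a careful comparison of the multisection intersection $[\lambda \cap \bar\partial_J]$ with the Euler-class integral of the ghost obstruction bundle within the polyfold framework. The Hodge integral identity itself is a known ingredient of the Gopakumar-Vafa proof, but its applicability here depends on the strict decoupling of bare and ghost linearizations built into our construction of $\lambda$ (which vanishes to infinite order along ghost loci and in a neighborhood of $L$), and on the fact (from Theorem \ref{main theorem intro}) that no additional bare degenerations arise in families carrying ghost decorations.
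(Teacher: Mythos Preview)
Your outline is in the right spirit, but Step~2 contains a genuine gap that the paper explicitly routes around. The obstruction bundle calculation you invoke (Hodge integrals on $\overline{\cM}_{g,1}$, as in Pandharipande's degenerate contributions) applies to ghost bubbles attached to a \emph{$J$-holomorphic} embedded curve. In our setting, the bare curve $(u,S')$ solves $\bar\partial_J u = \lambda(u)$ with $\lambda(u)\ne 0$; it is not $J$-holomorphic, and there is no known description of the solutions $(v,T)$ with $(v_\ast,T_\ast)=(u,S')$ as a product of $S'$-type moduli and ghost moduli to which one could apply the Euler class argument. The paper says this outright: for nonvanishing $\theta_{\mathrm{bare}}$, ``we do not know any such description \ldots\ and we will derive the result by deforming to a setting where we can apply \cite{Pandharipande-degenerate}.'' Your ``decoupled obstruction theory'' claim also overstates the splitting: even though $\lambda$ vanishes on ghosts, node-matching conditions and node-smoothing directions couple the two pieces, which is why the $\psi$-class enters Pandharipande's integral; you gesture at this with ``coupled through Riemann-Roch'' but give no argument that the perturbed count matches the Euler class. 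You yourself flag this as the ``main obstacle'' and then do not resolve it.

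The paper's fix is different and worth noting. Rather than proving the obstruction-bundle comparison at the perturbed solution, one deforms the almost complex structure $J$ to some $J(0)$ for which $u(S')$ is genuinely holomorphic (this is possible since $\|\theta_{\mathrm{bare}}\|$ is small), and simultaneously deforms $\theta_{\mathrm{bare}}$ to zero via the 1-parameter version of the main theorem. Ghost bubble censorship is used repeatedly to confine all relevant solutions to a fixed neighborhood of $u(S')$ throughout the deformation, so the GW contribution is unchanged along the path. At the endpoint $(J(0),0)$, one is exactly in Pandharipande's setup and the formula follows. A nontrivial bookkeeping point is that the size of the neighborhood must be fixed at step $\chi(S')$ of the inductive construction, before later $\epsilon^\beta$ are chosen; this is why the theorem is only asserted for \emph{some} $\theta_{\mathrm{bare}}$, not for every perturbation produced by the main theorem.
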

\begin{remark}
    The orientation of moduli spaces of holomorphic curves in $X$ depends on an initial choice of orientation on the complex plane $\C$. If the orientation of the complex plane is reversed the change of variables in \eqref{eq : bare to GW} should instead be 
    $
    iz = e^{ig_s/2}-e^{-ig_s/2}
    $.
\end{remark}

\vspace{2mm} \noindent
{\bf Acknowledgements.} We thank Shaoyun Bai, Eleny Ionel, Kenji Fukaya, Penka Georgieva, Melissa Liu, Cristina Manolache,  John Pardon, Brett Parker, Jake Solomon, and Aleksey Zinger for helpful discussions.

\section{Perturbations for bare curve counting}\label{sec : methods and difficulties}
This section contains an informal recollection of the properties of the Hofer-Wysocki-Zehnder perturbation scheme for holomorphic curve counting \cite{HWZ, HWZ-GW}, followed by an informal discussion of the modifications necessary for the bare curve counts that we construct in this paper. We briefly describe the problems we will face and the ideas used to solve them.  The reader who does not like informal discussions may freely skip this section entirely.

\subsection{The Hofer-Wysocki-Zehnder approach to transversality} 
Consider a symplectic manifold $X$ with a compatible almost complex structure $J$. We will have to perturb the Cauchy-Riemann equations $\bar\partial_J=0$ in the presence of multiple covered solutions. Various setups were developed for this purpose \cite{RT,HWZ-GW,FOOO,Pardon};
we will work in the Hofer-Wysocki-Zehnder `polyfold' framework \cite{HWZ}.   We do this for two reasons, both related to the desired applications to skein-valued curve-counting \cite{SOB}.  First, we will need to know that the perturbed solutions to the holomorphic curve equation are actually maps, not just abstract elements of some formal space the counting of which defines a virtual number or class, and that, moreover, said maps have sufficiently good regularity properties that the boundaries trace out curves along the Lagrangian.  
Second, we do not want to patch together data on different charts (as is done in approaches like \cite{FOOO,Pardon}).  Indeed,  while patching together fundamental classes is natural (taking chains commutes with homotopy colimits), a non-trivial discussion would be needed to patch together skein-valued counts.\footnote{Another setting in which the perturbations are explicit and global is provided by the `global Kuranishi charts' of \cite{Siebert, Abouzaid-McLean-Smith, Hirschi-Swaminathan}; these would plausibly also  provide a suitable home for our desired constructions.}  

In the Hofer-Wysocki-Zehnder setup, the moduli spaces of interest are cut out of infinite dimensional configuration spaces $\mathbf{Z}$, parameterizing not-necessarily-holomorphic maps from complex curves,
by sections of infinite dimensional bundles $\mathbf{W}\to\mathbf{Z}$, with fiber at $u\colon S \to X$ being an appropriate functional-analytic space of sections of $u^* TX \otimes \Omega^{0,1}_S$.  

The configuration spaces are built from 
Banach spaces, but the notion of differentiability is not the classical
notion. Instead, the Banach spaces are equipped with a filtration (typically the Banach space is a space of functions and the filtration is by regularity) and, at each filtration level, differentiability is required only with respect
to directions of strictly greater filtration level.  This notion is called sc-differentiability (`sc' is for scale), and satisfies usual properties of calculus.  Unlike in the usual calculus, sc-Banach spaces may admit 
retracts which are not themselves locally modeled on sc-Banach spaces; by idempotent completion, the operations
of calculus may be extended to such retracts \cite[Section 2]{HWZ}.
E.g., the tangent space to a retract is by definition the image of 
the differential of the retraction.  

The use of sc-calculus to study holomorphic curve theory has two key advantages. The first
is that reparameterization of holomorphic curves is an sc-smooth action, hence one can form good quotients.
The second is that the configuration space of maps from the family of curves giving a versal deformation of a nodal domain
is a retract of a sc-smooth space \cite[Section 2]{HWZ-models}.  
These properties allow the construction of a configuration space $\mathbf{Z}$ of maps from possibly nodal domains, to which the sc-calculus applies  \cite[Theorem 1.7]{HWZ-GW}.  

In this setting, the Cauchy-Riemann operator is a section $\bar \partial_J$ of the aforementioned infinite dimensional bundle $\mathbf{W}\to\mathbf{Z}$.  Because the configuration spaces $\mathbf{Z}$ already contains maps from nodal domains (and because the topology on $\mathbf{Z}$ restricts to the correct topology on the solution space), Gromov's compactness theorem asserts that $\bar\partial_J^{-1}(0)$ is compact.

To construct transversely-cut-out moduli spaces,
one perturbs by multi-sections $\lambda$ of the bundle $\mathbf{W}\to\mathbf{Z}$.  (Notationally, one considers $\lambda$ as a subset of the total space of $\mathbf{W}$, and correspondingly denotes the perturbed solutions as  
$\bar\partial_J^{-1}(\lambda)$.)  
The $\bar \partial_J$-section is shown in \cite[Theorem 4.6]{HWZ-GW} to be 
`sc-Fredholm' in the sense of \cite[Definition 3.8]{HWZ}.  
This property is an amalgalm of conditions; in particular, both those necessary to transfer desired consequences of usual notion of Fredholm through sc-retracts, and additionally a formal abstraction of elliptic bootstrapping.  
Additionally, as explained in \cite[Section 5]{HWZ}, there are key properties one can require of perturbations $\lambda$ to ensure desirable consequences: a formal regularizing condition `$\mathrm{sc}^+$', which ensures that the formal elliptic bootstrapping applies, and the property of being small in an `auxiliary norm', which  ensures that the compactness of the initial solution space $\bar\partial_J^{-1}(0)$ persists to $\bar\partial_J^{-1}(\lambda)$.  
Perturbations with both these properties are shown to exist in great generality in \cite{HWZ-perturb}.  However, as we clarify in the next subsection, this abstract existence result does not immediately produce perturbations suitable for our purposes, and we will construct our own by hand.  

We will need to generalize this setup to allow 
maps with Lagrangian boundary conditions.  
This can be done using 
standard techniques for adapting holomorphic curve arguments to allow boundary conditions (see e.g.\ 
\cite[Section 5.2]{ESS}).  With these modifications the original arguments of \cite{HWZ-GW} may be repeated verbatim; we give the statements and explain the necessary modifications to the proofs in Section \ref{sec:polyfoldforopencurves} below. 

\subsection{Our desired perturbations} \label{our perturbations}
We are not in fact interested in transverse sections $\lambda$ of $\mathbf{W} \to \mathbf{Z}$.  Instead, we want to produce a (multi-)section $\lambda$ with the property that for a map $u\colon S \to X$, the value of $\lambda(u)$ (recall, $\lambda(u)$ is a section of a bundle over $S$) vanishes on all components of $S$ with zero symplectic area, and, for other reasons, also on the preimage of a neighborhood  of the Lagrangian $L$.  For such $\lambda(u)$, any solution to $\bar \partial_J u = \lambda(u)$ will contract all zero area components.  

In order for the ghost bubble censorship argument (recalled  in Section \ref{sec:intro}) to apply, we also need two additional properties of the section $\lambda$.  

The first is that if a sequence of bare solutions to $\bar \partial_J u = \lambda(u)$ converges to a solution with ghost bubbles, the bare part of the limit has some singular behavior (that makes it non 1-generic) and  bare parts of solutions in 0- and 1-parameter families in general position are 0-generic and 1-generic, respectively.
(Here, the appropriate generalization of the 1-genericity conditions to perturbed holomorphic maps is to forbid certain singularity conditions on the holomorphic derivative $\partial_J u$.  Indeed, since we are solving a perturbed equation 
$\bar \partial_J u = \lambda(u)$, in general with $\lambda(u)$ supported where the bubble is forming, we cannot expect $du = 0$ at this point.)
This first condition is local in moduli, and 
in \cite[Section 10]{ghost} we constructed, locally in moduli, a class of perturbations that satisfy it. 

The second condition is not local in moduli. Consider a solution with ghost bubbles $u \in \bar \partial_J^{-1}(\lambda)$.  If no non-local-in-moduli conditions are imposed on $\lambda$, then the corresponding positive area map $u_+$ -- whose domain has different topology and so lies in a different component of moduli space --  need not be among the solutions $\bar \partial_J^{-1}(\lambda)$.  We will need to impose a condition ensuring that there is a corresponding solution $u_+' \in \bar \partial_J^{-1}(\lambda)$, near enough to $u_+$ that non-singularity of $u_+'$ ensures non-singularity of $u_+$.  

The main work of the present article consists in showing that there are $\lambda$ for which these two conditions hold simultaneously. 

\subsection{Absence of forgetful maps for perturbed solutions}

The aforementioned non-local constraint on $\lambda$ would most naturally be expressed in terms of maps between configurations spaces 
which `forget a marked point'.  Unfortunately, 
for the Hofer-Wysocki-Zehnder configuration spaces, these maps do not exist. 

The most basic obstruction to forgetful maps is common to any setting where one both perturbs constant spheres, and restricts attention to maps with the property that any component with vanishing symplectic area has finite automorphism group.  Indeed, let $S$ be a sphere with three marked points $0,1,\infty$, and consider a map $u\colon S \to X$ with vanishing symplectic area $\int_{S} u^* \omega = 0$, but where $\bar \partial_J u \ne 0$
(i.e., it solves a perturbed equation $\bar \partial_{J} u = \lambda$, where $\lambda$ is not identically zero).  Assume that the images $u(0)$, $u(1)$, and $u(\infty)$ are distinct points.  
Consider other maps $v_0\colon C_0 \to X$ and $v_1\colon C_1 \to X$, where the image of $v_j(C_{j})$ passes through $u(j)$, $j=0,1$.  Assume now that forgetful maps exist.  On the one hand, we should be able to attach $u$ to $v_0$ and $v_1$ to 
get a map $w\colon (C_0 \cup_0 S \cup_1 C_1) \to X$, where the domain is regarded as a curve with 
a single marked point $\infty$.  On the other hand, we should be able to forget the marked point $\infty$ for $w$. However, the 
domain resulting from forgetting $\infty$ is unstable, because of the irreducible area zero sphere component $S$ with only two marked points, 
and yet we cannot collapse $S$, since 
the points $0,1 \in S$ where $C_0$ and $C_1$ are attached have different 
images.  

The absence of forgetful maps for $\mathbf{Z}$, along with an approach to route around them, is discussed in detail in \cite{Schmaltz}.  (We do not use the approach of \cite{Schmaltz}, although it would be interesting to compare to our approach.)

\subsection{Approximate compatibility}
Our method to construct perturbations for which ghost bubble censorship holds is as follows.  
Consider a locally closed locus in $\mathbf{Z}$ with some fixed topological type of map; in particular, where the ghost components have some fixed topology, and are attached to the positive area components in some topologically fixed manner.  {\em On this stratum} there is a well defined forgetful map which forgets all nodes and ghost components.\footnote{Strictly speaking, forgetting the nodes is only defined on a locus in $\mathbf{Z}$ of maps satisfying certain stronger regularity conditions at nodes; we axiomatize the class of maps to which this distinction is irrelevant through our notion of `extendability' described in Section \ref{sec: extendability} below, and verify later this property for the perturbations we construct in Lemma \ref{lem: extendable}.} So, given a perturbation defined already in higher Euler characteristics, we can `extend' it in some neighborhood of such a stratum as follows.  On the stratum itself, we take the perturbation defined previously on the positive area part, and zero on the ghost components.  To extend to a neighborhood, we use (a version of) the gluing of perturbations defined in \cite{HWZ-GW}, and then cut off.  We do this along each stratum of nodal curves, and cut off using some function restricting to a partition of unity along the boundary, and sum the results. (See Remark \ref{r : multisections and partitions of unity} for a discussion of multi-sections and partitions of unity).  

The sum-of-stratawise-extensions described above does not  involve the use of the (non-existent) forgetting-a-marked-point maps.  As a consequence, the perturbation on a given map -- say with some ghosts -- is not generally the same as the corresponding perturbation on the positive part of the same map.   Let us explain in a simple example.  

\begin{example} \label{perturbation overlap example}
Begin with some bare map $u\colon S \to X$.  Form a map $v\colon T \to X$ by attaching a rational ghost bubble at some $p \in S$, and then some further (higher genus) ghost components to the bubble.  Consider a map $\widetilde{v}\colon \widetilde{T} \to X$ obtained by smoothing the node at $p$.  Note the positive area part of the domain of $\widetilde{v}$ is again just $S$, to which some higher genus ghost bubbles have been attached near $p$.  

Now suppose we had begun with some perturbation defined at Euler characteristic $\chi(S)$, so that $\lambda(u)$ is non-vanishing in a neighborhood of $p \in S$.  We write $\widetilde{u} := \widetilde{v}|_S$.  Now the extension method from the previous paragraph gives {\em two different contributions} to $\lambda(\widetilde{v})|_{S}$.  One, corresponding to $\widetilde{v}$ being obtained from $\widetilde{u}$ by just attaching ghosts, is just $\lambda(\widetilde{u})$.   
The second, corresponding to $\widetilde{v}$ being a smoothing of $v$, will be obtained by gluing $\lambda(u)$ to the zero perturbation on the rational bubble, which has the effect of cutting $\lambda(u)$ off to zero in a neighborhood of $p$, with radius (some monotonic function of) the gluing parameter $\epsilon$ at $p$, see Figure \ref{fig:Forget1}. 

\begin{figure}
    \centering
    \includegraphics[width=0.5\linewidth]{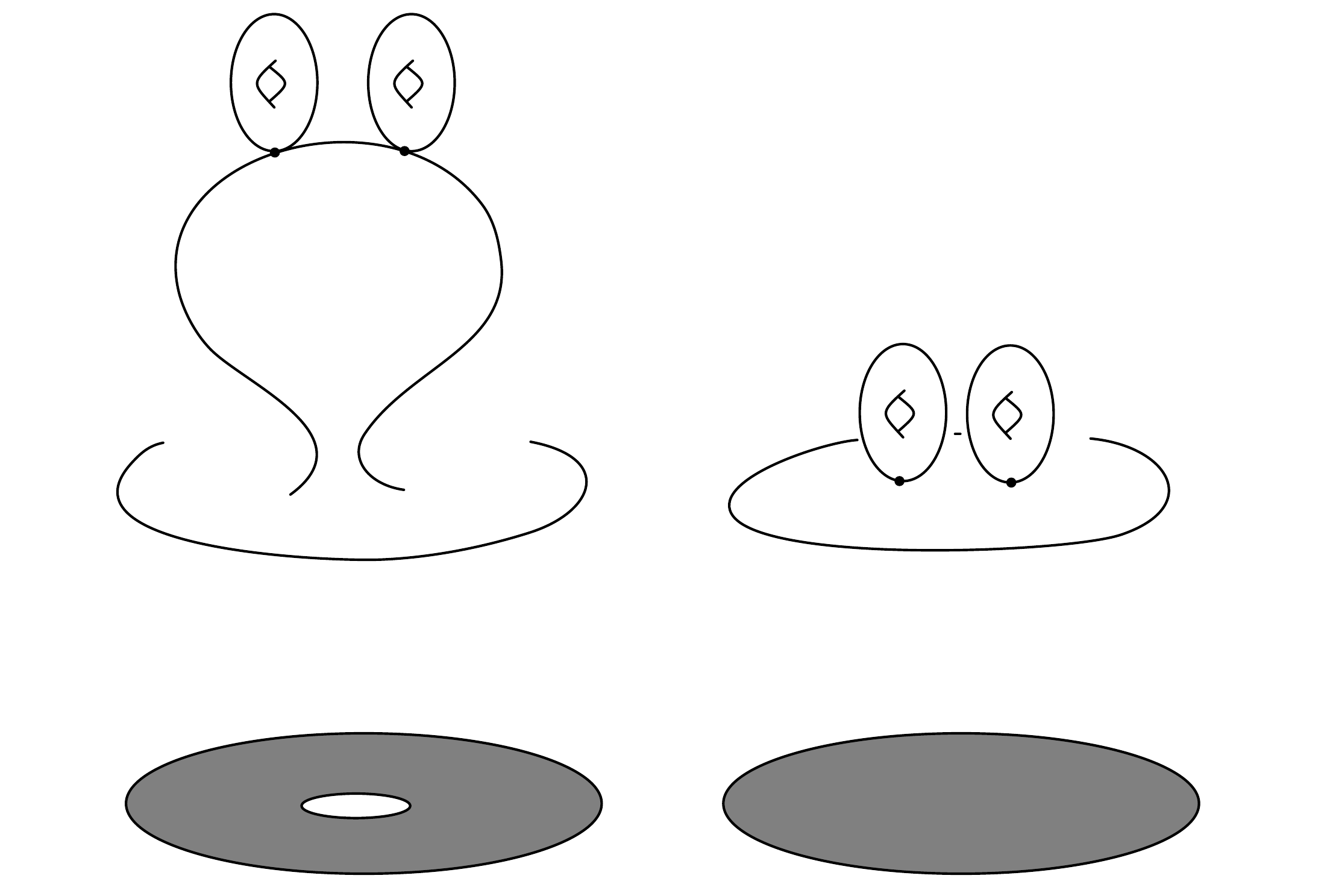}
    \caption{An original perturbation is extended to lower Euler characteristic in two different ways corresponding to different gluing parameter strata. The shaded regions below indicate the support of the corresponding perturbation, restricted to the domain of the positive area part of the new map.}
    \label{fig:Forget1}
\end{figure}

Let us write this result (just here) as 
$\lambda^\bullet(\widetilde u)$. Then we have 
\begin{equation} \label{toy perturbation interpolation}
\lambda(\widetilde{v})|_{S} = (1- \kappa(\epsilon)) \lambda(\widetilde{u}) + \kappa(\epsilon) \lambda^\bullet(\widetilde u),
\end{equation}
where $\kappa$ is a cutoff function in the gluing parameter at $p$.  In the region where $\kappa(\epsilon) \ne 0$, this differs from  $\lambda(\widetilde{u})$,
the perturbation we had seen before when we considered this map, so we cannot (yet) argue for ghost bubble censorship by induction. 
\end{example}

Our resolution of this difficulty is the following.  Since the bare solutions in higher Euler characteristic were transversely cut out, their genericity properties 
are preserved by sufficiently small perturbation.  That is, it is enough
to ensure that $\lambda(\widetilde{v})|_{S}$ can be made to be sufficiently close to $\lambda(\widetilde{u})$.  Now the point is that the (auxiliary) norm of the difference of $\lambda(\widetilde{v})|_{S}$ and $\lambda(\widetilde{u})$ is controlled by the radius of the disk where the perturbation is cut off, and consequently by $\epsilon$.  Thus, it is enough that $\kappa(\epsilon)$ goes to zero soon enough.  
From this sort of argument, we establish our main result (Theorem \ref{main theorem intro}).

\section{Curves, maps, and regularity conditions} \label{sec:basics} 
In this section we fix notation for domains, 
for stable maps, and for boundary degenerations.  We largely follow \cite{HWZ-GW}.

\subsection{Notation for functional analytic norms}
We write $\|f\|$ for the $L^2$-norm of functions, and more generally $\|f\|_k$ for the sum of $L^2$-norms of weak derivatives of order up to $k$.  We use the same notation for the corresponding norms of sections of orthogonal vector bundles over Riemannian manifolds $f\colon M\to E$. More generally, we use standard constructions of Sobolev norms for maps to non-linear targets, see e.g.\ \cite[Section 2]{Moser}. The precise value of such norms depends on choices of metrics on the spaces, bundles, and associated jet- bundles. For our purposes such choices are either immaterial or made explicitly by working in local coordinates and will therefore not be discussed.

We will also sometimes weight the $L^2$ norm by some function on $M$, which we indicate by notations like $\|f\|_{k, \delta}$, to be 
explained later.  When $f \in C^k(M, E)$, we also write $\| f \|_{C^k}$ for the supremum of 
the point wise operator norms of the derivatives of order up to $k$. 

We recall that the Sobolev embedding theorem implies that 
functions on a two-dimensional domain, with $k$ weak derivatives locally in $L^2$, are in fact $k-2$ times continuously differentiable. We also use the corresponding Sobolev inequality 
$ \| f \|_{C^{k-2}} < K\cdot \| f \|_{k}$, where $K$ is a constant depending only on the geometry used to define the norms.

\subsection{Bordered, noded, surfaces}
By a (smooth) Riemann surface, we mean a pair $(S, j)$ where $S$ is a two-dimensional compact real 
manifold, and $j$ is a smooth almost complex structure.  We do not require that $S$ is connected.
When we want to allow or emphasize that $S$ may have boundary, we say that it is a \emph{bordered} Riemann surface.  

For a connected Riemann surface, the topological Euler characteristic $\chi$ is related to the genus $g$ and number of boundary components $h$ by $\chi = 2-2g-h$.  In the disconnected case, we have 
$\chi=2c-2g-h$, where $c$ denotes the number of connected components. We will typically 
use the Euler characteristic as our measure of the complexity of a surface.  Note that surfaces of greater complexity have smaller (more negative) Euler characteristics.  

Our {\em definition} of a nodal surface will be just the data of a smooth surface, plus the data of which points are identified to form nodes.  More
precisely, we have the following.  

\begin{definition} \label{def:surface}
A noded, bordered Riemann surface is a tuple $(S, j, M_{\mathrm{in}}, M_{\mathrm{bd}}, D_{\mathrm{in}}, D_{\mathrm{hy}}, D_{\mathrm{el}})$ 
where $(S, j)$ is a bordered Riemann surface, $M_{\mathrm{in}}$ and $M_{\mathrm{bd}}$ are finite sets of  
points in the interior and boundary respectively (which may be ordered or unordered), 
$D_{\mathrm{in}}$ is a finite set of unordered pairs of interior points, $D_{\mathrm{hy}}$ a finite set of unordered pairs of boundary points, and $D_{\mathrm{el}}$ is a finite set of interior points.  There are no coincidences
among the points in $M_{\mathrm{in}}$, $M_{\mathrm{bd}}$, $D_{\mathrm{in}}$, $D_{\mathrm{hy}}$, and $D_{\mathrm{el}}$. 
\end{definition}

The points in $M_{\mathrm{in}}$ and $M_{\mathrm{bd}}$ we regard as interior and boundary marked points, respectively. 
The pairs in $D_{\mathrm{in}}$ are interior points which we think of as being joined to form a single interior node.  Likewise the pairs in $D_{\mathrm{hy}}$ are joined to form `hyperbolic' boundary nodes.  
Finally, the points in $D_{\mathrm{el}}$ are regarded as `elliptic' boundary nodes, i.e., boundary circles
which have shrunk to radius zero. For simpler notation, we sometimes abbreviate $(S, j, M_{\mathrm{in}}, M_{\mathrm{bd}}, D_{\mathrm{in}}, D_{\mathrm{hy}}, D_{\mathrm{el}})$ to $(S, j, M, D)$ or just $S$.

We write $\chi(S)$ for the topological Euler characteristic of a smoothing of $S$.  The topology on any moduli space of curves or maps is such that 
$\chi(S)$ is locally constant.  (As usual, we will often refer to Riemann surfaces as (holomorphic) curves.)

\begin{remark}
Let us record how to compute $\chi(S)$ in terms of the data defining $S$.   
If $\widetilde{S}$ is the disjoint union of smooth curves underlying $S$, then 
$\chi = \chi(\widetilde{S}) - 2|D_{\mathrm{in}}| - |D_{\mathrm{hy}}| - |D_{\mathrm{el}}|$
(Recall that $D_{\mathrm{in}}$ and $D_{\mathrm{hy}}$ are sets of pairs, and in the above formula $|D_{\mathrm{in}}|$ and $|D_{\mathrm{hy}}|$ count the number of pairs.)  
Indeed, smoothing an interior node either decreases the number of components by one or increases the genus by one, smoothing a hyperbolic boundary node either decreases the number of connected components by one or increases the number of holes by one, and that
smoothing a boundary elliptic node increases the number of holes by one.  
Finally, we may express $\chi(\widetilde{S})$ in terms of the genus $\tilde{g}$, number of holes $\tilde{h}$, and number of components $\tilde{c}$ as
$\chi(\tilde{S}) = 2\tilde c -2\tilde g -\tilde h$.
\end{remark}

\subsection{Stable curves and maps}
A smooth or nodal curve is \emph{stable} if it has no continuous family of automorphisms
preserving its marked points.  
For a smooth connected curve, 
with genus $g$ and $h$ boundary components, and $m$ interior and $b$ boundary marked
points, this amounts to requiring $4g + 2h + 2m + b \ge 5$.  The unstable irreducible closed curves
are tori without marked points, and  
spheres with at most two marked points.  There are the following unstable irreducible bordered curves: 
cylinders without marked points, and disks with either no marked points, or a single interior marked point, or one or two boundary marked points.

A disconnected curve is stable if its connected components all are. A nodal curve is stable if the underlying smooth curve, with  node data regarded as marked points, 
is stable. 

We consider continuous maps of bordered Riemann surfaces $(S,\partial S)\to (X,L)$ into 
a symplectic manifold $X$ with symplectic form $\omega$ and a Lagrangian submanifold $L\subset X$. The precise definition is as follows.

\begin{definition}
Let $S=(S, j, M_{\mathrm{in}}, M_{\mathrm{bd}}, D_{\mathrm{in}}, D_{\mathrm{hy}}, D_{\mathrm{el}})$ be a nodal, bordered Riemann surface. A map of $S$ is a continuous map $u\colon (S, \partial S) \to (X, L)$ such that: 
\begin{itemize}
\item if $\{x, y\}\in D_{\mathrm{in}}$ or $\{x, y\}\in D_{\mathrm{hy}}$ then $u(x) = u(y)$, and  
\item if $x \in  D_{\mathrm{el}}$ then $u(x) \in L$. 
\end{itemize}
\end{definition}

We say two maps $u$ and $u'$ of nodal Riemann surfaces $(S,j)$ and $(S',j')$ are equivalent if there is a $(j,j')$-holomorphic isomorphism $\phi\colon (S,j)\to (S',j')$, which respects the nodal decorations and marked points, such that $u=u'\circ \phi$.

\subsection{Ghosts and bare maps}
We introduce notation for the components of stable maps that are our main objects of study. Let $u\colon(S, \partial S) \to (X, L)$ be a map. 

\begin{definition} \label{def:stable}
We say that $(u,S)$ is \emph{numerically symplectic} if
for each component $C$ of $S$, $\int_C\omega\ge 0$.  A numerically symplectic map is \emph{stable} if $\int_C u^*\omega > 0$ for each unstable irreducible component
$C$ of the curve $S$. (Here nodes of $S$ on $C$ are regarded as marked points on $C$.)
When $\int_C u^*\omega = 0$, we say that $C$ is a \emph{ghost component} or \emph{ghost bubble}.
\end{definition}

\begin{definition} \label{def:bare}
We say that $(u,S)$ is \emph{bare} if it has no ghost components. 
We say that $(u,S)$ has \emph{constant ghosts} if the restriction of $u$ to any of its ghost components is constant. 
\end{definition} 

We  often  separate our maps into a `positive area part' and the rest.  
In our conventions for disconnected maps, it will be useful to allow the unique map from the empty domain to $(X, L)$. 

\begin{definition} \label{def: bare part} 
For a numerically symplectic map $u\colon (S, \partial S) \to (X, L)$: 
\begin{itemize}
\item We write
$u_+\colon (S_+, \partial S_+) \to (X, L)$ for the restriction of $u$ to the union $S_{+}$ of the irreducible components $C$ of $S$ for which $\int_{C} u^*\omega > 0$.  We leave punctures where said components were previously attached.   
We call $(u_+,S_{+})$ the \emph{positive area part} of $u$. 
\item We write $u_0\colon (S_0, \partial S_0) \to (X, L)$
for the restriction of $u$ to the union $S_{0}$ of irreducible components $C$ of $S$ with  $\int_{C} u^*\omega = 0$, with punctures where $S_{0}$ was attached to $S_+$ in $S$. We call $(u_0,S_{0})$ the \emph{ghost part} of $u$.
\end{itemize}
\end{definition}

\begin{lemma} \label{bare part larger Euler characteristic} 
Let $(u, S)$ be a stable numerically symplectic map, and $(u_+, S_+)$ its positive area part.  
Then $\chi(S) \le \chi(S_+)$ with equality if and only if $u$ is bare (i.e.\ $S = S_+$). 
\end{lemma}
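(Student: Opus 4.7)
The proof is a direct Euler-characteristic computation. I begin by applying the formula $\chi(S) = \chi(\widetilde{S}) - 2|D_{\mathrm{in}}| - |D_{\mathrm{hy}}| - |D_{\mathrm{el}}|$ to both $S$ and $S_+$, using the decomposition of the normalization $\widetilde{S} = \widetilde{S}_+ \sqcup \widetilde{S}_0$ and classifying each node of $S$ by whether its two endpoints lie in $\widetilde{S}_+$ or in $\widetilde{S}_0$. Writing $d^{+0}_{\mathrm{in}}$ and $d^{+0}_{\mathrm{hy}}$ for the numbers of interior and hyperbolic nodes with one endpoint on each side, a short calculation yields the identity
\begin{equation*}
\chi(S_+) - \chi(S) \;=\; 2 d^{+0}_{\mathrm{in}} + d^{+0}_{\mathrm{hy}} - \chi(S_0),
\end{equation*}
where $S_0$ is the ghost part viewed as its own nodal bordered Riemann surface, with the $+0$ attachment points serving as its marked points. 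In the setting of Theorem~\ref{main theorem intro} there are no additional user-specified marked points, so these attachment points account for all of the marked points on $S_0$, giving $m_{S_0} = d^{+0}_{\mathrm{in}}$ and $b_{S_0} = d^{+0}_{\mathrm{hy}}$. The lemma thereby reduces to showing $\chi(S_0) \le 2 m_{S_0} + b_{S_0}$, with strict inequality whenever $S_0 \ne \emptyset$.

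The plan is to work connected-component by connected-component. The key claim is that any stable nonempty connected nodal pointed bordered Riemann surface $T$, with $m_T$ interior and $b_T$ boundary marked points, satisfies the strict bound $\chi(T) < 2 m_T + b_T$. To prove this I pass to the smoothing $\widehat{T}$, a smooth connected compact bordered pointed surface with the same Euler characteristic and the same marked points as $T$. Stability of $T$ as a nodal pointed curve, which is inherited from the stability of $(u, S)$ since each ghost component of $S$ must have finite automorphism group, forces stability of $\widehat{T}$ as a smooth pointed curve. I then run through the topological types of stable smooth pointed bordered surfaces, using the lists of unstable irreducible curves given in Section~\ref{sec:basics}, and in each case verify $\chi(\widehat T) < 2 m_T + b_T$ directly from the stability threshold; this is a short case analysis (e.g., a smooth sphere needs $\ge 3$ marked points, giving $\chi = 2 < 6 \le 2m_T$; a disk needs $2m_T + b_T \ge 3$, giving $\chi = 1 < 2m_T + b_T$; and so on).

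The main point requiring care is the treatment of connected components $T$ of $S_0$ attached to $S_+$ via no node at all, that is, purely-ghost connected components of $S$. For these, $m_T = b_T = 0$, and I need $\chi(T) < 0$. This is handled by the same case analysis: absolute stability with no marked points forces $\widehat{T}$ to satisfy $2g + h \ge 3$, so $\widehat{T}$ avoids being a sphere, disk, cylinder, or torus, and every such surface has $\chi \le -1$. Summing the resulting component-wise strict inequalities over the connected components of $S_0$ and substituting into the identity above yields $\chi(S) < \chi(S_+)$ whenever $S_0 \ne \emptyset$; the equality case $\chi(S) = \chi(S_+)$ is exactly $S_0 = \emptyset$, which by definition is the bare case.
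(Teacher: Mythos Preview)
Your proof is correct and, at its core, uses the same idea as the paper: stability of each ghost component forces the Euler characteristic to strictly increase when ghosts are removed. The paper's proof is a single sentence (``By stability, deleting a component of symplectic area zero will increase the Euler characteristic of the domain''), which implicitly works one irreducible ghost component at a time; you instead compute $\chi(S_+) - \chi(S)$ in one shot via the node-counting formula and then bound $\chi(S_0)$ by grouping into connected components and passing to smoothings. Both routes amount to the same numerics.

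Two remarks. First, your explicit restriction to the no-marked-points setting is appropriate and not merely cosmetic: the lemma as literally stated can fail if ghost components carry user-specified marked points (a constant sphere with three marked points, disjoint from $S_+$, has $\chi = 2 > 0$), and the paper's one-line proof implicitly makes the same assumption. Second, your detour through the smoothing $\widehat{T}$ and the case analysis is valid but slightly roundabout. It is a little cleaner to stay at the level of irreducible components, as the paper does: for each irreducible ghost $C$, stability reads $4g_C + 2h_C + 2m_C + b_C \ge 5$, equivalently $2m_C + b_C - 2\chi(C) \ge 1$ where $m_C,b_C$ count special points (including node endpoints) on $C$; summing over the irreducible components of $S_0$ and cancelling the internal node contributions yields $2d^{+0}_{\mathrm{in}} + d^{+0}_{\mathrm{hy}} - \chi(S_0) \ge (\text{number of irreducible ghost components}) \ge 1$, which is exactly your inequality, obtained without needing the auxiliary fact that the smoothing of a stable nodal curve is stable.
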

\begin{proof}
By stability, deleting a component of symplectic area zero will increase the Euler characteristic of the domain.  
\end{proof}

Fix a strictly convex closed cone $P$ in $H_2(X,L)$ such that $\int_\beta\omega>0$ for all $\beta\in P$.

\begin{lemma}\label{l:Euler char from above}
	Consider an integer homology class $\beta\in P\subset H_{2}(X,L)$. 
	The Euler characteristic of $S$ is bounded above as $(u,S)$ ranges over stable numerically symplectic maps  
	in homology class $\beta$, and if $(u,S)$ achieves the maximum then $u$ is bare and $S$ has no nodes. 
\end{lemma}
\begin{proof}
By Lemma \ref{bare part larger Euler characteristic}, it suffices to consider bare maps.  Passing to the normalization only increases the Euler characteristic of the domain, 
so we may as well assume that the domain is a disjoint union of smooth curves $S=S_1\cup\dots\cup S_m$.  Let $a_0$ be the minimum positive value of $\omega$ on $P\cap\{\int_\beta \omega\le 2\int_S u^\ast\omega\}$. 
Then each component $u(S_k)$ has symplectic area $\ge a_0$ and Euler characteristic $\le 2$.  
Thus, 
\[
\int_{\beta}\omega = \int_S u^\ast\omega =\sum_{k=1}^m \int_{S_k} u^\ast\omega \ge m a_0  
\quad\text{ and }\quad 
\chi(S)=\sum_{k=1}^m \chi(S_k)\le 2m.  
\]
Therefore,
\[
\chi(S)\le \frac{2}{a_0} \int_\beta \omega. 
\]
The last statement follows from Lemma \ref{bare part larger Euler characteristic}.
\end{proof}

We will sometimes write $\chi_{\max}(\beta)$ for the maximum of the Euler characteristic of stable numerically symplectic maps in homology class $\beta$

For an almost complex structure $J$ on $X$, we say as usual that a map $u$ is \emph{$J$-holomorphic} if it satisfies the Cauchy-Riemann equation,
$\bar\partial_{J}u=\frac12(du +  J \circ du \circ j) =0$.  So long as $J$ is tamed by $\omega$, any $J$-holomorphic map $(u,S)$ is numerically symplectic and the ghost part $(u_{0},S_{0})$ of any $J$-holomorphic map is constant.

\subsection{Holomorphic polar coordinates}
In this section we discuss local coordinate conventions and associated elementary regularity properties that will be used when building polyfold neighborhoods of stable maps in later sections. 

The domains of our maps are (possibly disconnected, nodal) bordered Riemann surfaces $(S, \partial S)$ in the sense of Definition \ref{def:surface}.  
We will often want to work in local coordinates.  
We fix two parameterizations of the punctured unit disk: 
$$\D^\circ =  \{z\in\C\colon 0 < |z| \le 1\},$$ 
\begin{alignat}{4}\label{eq:exppolar}
&[0, \infty) &&\times \R / \Z  \to  \D^\circ,  \qquad\qquad &&(-\infty, 0] &&\times \R / \Z\to  \D^\circ,  \\\notag
&(s, t)  &&\mapsto \quad e^{- 2 \pi (s + i t)}   \qquad\qquad    &&(s, t) &&\mapsto \quad e^{2 \pi (s + i t)}. 
\end{alignat} 
Restricting the maps to $[0,\infty)\times [0,\frac12]$ and $(-\infty,0]\times[-\frac12,0]$, respectively, we get parameterizations of the punctured half-disk $\D_{+}^{\circ}=\left\{z\in\C\colon 0<|z|<1,\mathrm{Im}(z)\ge 0\right\}$. For convenient notation below we will often write 
\begin{equation}\label{eq:notationI}
I=\R/\Z,\qquad I=[0,\tfrac12],\qquad \text{or }\quad I=[-\tfrac12,0],
\end{equation}
to cover all these cases simultaneously.

Consider an interior (respectively boundary) point $p \in S$ where $S$ is a Riemann surface.  By \emph{holomorphic polar coordinates around $p$}
we mean the choice of a holomorphic map $(\D, 0) \to (S, p)$ (respectively $(\D_{+},\partial\D_{+} 0) \to (S,\partial S, p)$), which is composed with one of the parameterizations in \eqref{eq:exppolar} 
to give a map $[0, \infty) \times I \to S \setminus p$, or 
$(-\infty, 0] \times I \to S \setminus p$.

\subsection{Regularity conditions} \label{sec:regularity} 

We recall the regularity conditions of \cite[Definition 1.1]{HWZ-GW}:

\begin{definition}  \label{def:regularity}
We say a map $u\colon(S, \partial S) \to (X, L)$ is: 
\begin{itemize}
 \item of class $m$ at a point $z \in S$ if it has $m$ weak derivatives in $L^2_{\mathrm{loc}}$ near $z$.   
  \item of class $(m, \delta)$, $\delta\in (0,2\pi)$ at a point $z \in S$ if, in holomorphic polar coordinates
  in some punctured disk neighborhood of $z$ and smooth local coordinates $(\R^{2n},0)\to (X, u(z))$, the map $u\colon [0, \infty) \times I \to \R^{2n}$ 
  has $m$ weak derivatives in $L^2_{\delta}$, where $L_{\delta}^{2}$ is $L^{2}$ weighted by $e^{\delta s}$, i.e. 
  $$\int_{[0,\infty)\times I}|D^{\mathbf k}u|^{2}e^{2\delta s} \,dsdt<\infty,$$ 
  for all multi-indices $\mathbf k$ with $| \mathbf k|\le m$.
\end{itemize}
For the marked points on $S$, we will further distinguish them into two types: \emph{marked points} where we require the map to be of class $m$, and
and \emph{punctures} where we require the map to be of regularity $(m, \delta)$.  We say that a map $u$ is of class $(m, \delta)$ if it is of class $(m, \delta)$ at nodes and punctures, and of class $m$ elsewhere. 
\end{definition}

\begin{remark}\label{r : delta weight well def}
The notion of class $(m,\delta)$ is independent of holomorphic coordinates.  Indeed, such coordinates correspond to maps 
$\phi\colon [0,\infty)\times I\to\C$ of form
$\phi(z) = \sum_{k<0} c_k e^{2\pi k(s+it)}$, where $c_1\ne 0$. Thus after rescaling that affects integral norms also by rescaling we have 
\[
\phi(z) = e^{-2\pi(s+it)} + \sum_{k\le 2} c_k e^{2\pi k (s+it)} = e^{-2\pi (s+it)} + \mathcal{O}(e^{-4\pi s}).
\]
Taking logarithms we find that two local coordinates $(s,t)$ and $(\sigma,\tau)$ are related by the following change of coordinates in a neighborhood of $\infty$:
\[
(\sigma,\tau) = (s+\gamma_1(s,t),t+\gamma_2(s,t)), \qquad |\gamma_j(s,t)|_{C^3}=\mathcal{O}(e^{-2\pi s}).
\]
Consider now a map $v\colon [\sigma_0,\infty)\times I\to\R^{2n}$ in $(\sigma,\tau)$-coordinates. The corresponding map in $(s,t)$-coordinates is $u(s,t) = v(s+\gamma_1(s,t),t+\gamma_2(s,t))$. Then
\begin{align*}
    \int_{[s_0,\infty)\times I} u^2(s,t)e^{2\delta s}\,dsdt &= 
    \int_{[s_0,\infty)\times I} v^2(s+\gamma_1(s,t),t+\gamma_2(s,t))e^{2\delta s}\,dsdt \\
    &\le 
    \int_{[\sigma_0,\infty)\times I} v^2(\sigma,\tau)e^{2\delta(\sigma - \mathcal{O}(e^{-2\pi\sigma}))}\,(1+\mathcal{O}(e^{-2\pi\sigma}))d\sigma d\tau \\
    &= \mathcal{O}(1)\cdot 
    \int_{[\sigma_0,\infty)\times I} v^2(\sigma,\tau)e^{2\delta\sigma}\,d\sigma d\tau.
\end{align*}
Derivatives of $u(s,t)$ are estimated in a similar way. Thus
composition with the holomorphic coordinate changes of the domain near the puncture intertwines the corresponding $H^{3,\delta}$ spaces.
\end{remark}

\begin{remark}
The Sobolev embedding theorem implies that wherever a map $u$ is of class $m \ge 3$, it 
 is continuously differentiable.  This is one reason why $m \ge 3$ is used in applications, in particular in \cite{HWZ-GW} 
which we will later follow.  For class $(3, \delta)$ maps, we note below in Lemma \ref{l:H3deltatoC0} that at the puncture (or node) itself, one retains some H\"older continuity $C^{0, \alpha}$, $\alpha<\frac{\delta}{2\pi}$.  
\end{remark}

\begin{remark}
	The Fourier expansion of a map $u\colon [0,\infty)\times S^1\to\R^{2n}$ in $L^{2}_\delta$ with an interior node at $0\in\R^{2n}$ is  
    \[
    u(s,t)=\sum_{k<0} c_k(s)e^{2\pi i kt}.
    \]
    Here $2\pi k$, $k\in\Z$, are the eigenvalues of the operator $i \partial_t$ which are then also the critical weights of $\bar\partial = \partial_s + i\partial_t$, or more generally for the linearized $\bar\partial_J$-operator $L\bar\partial_J$ for general $J$ with $J(0)$ equal to the standard complex structure on $\R^{2n}\approx\C^n$, i.e., the weights where  the Fredholm index of the linearized operator $H^1_\delta\to L^2_\delta$ jumps.
    At a boundary node $u\colon [0,\infty)\times [0,1]\to\R^{2n}$ with boundary values in $\R^n\subset\R^{2n}\approx\C^n$ the operator $i\partial_t$ has eigenvalues $\pi k$. To uniformize notation below we take the circle to have length $1$ and the interval to have length $\frac12$ which gives eigenvalues $2\pi k$ in both cases, and we will use weights $\delta\in(0,2\pi)$ below, unless otherwise stated.
\end{remark}

We next compare the regularity requirements $m$ and $(m,\delta)$. Let $s+it\in[0,\infty)\times I$.  The case $(-\infty,0]\times I$ is directly analogous and obtained by the change of variables $s+it\mapsto -(s+it)$ in what follows below.  
Let $z=e^{-2\pi(s+it)}$ as in \eqref{eq:exppolar} be coordinates in a neighborhood $D$ of $p\in S$. Consider a map $u\colon D\setminus\{p\}\to\R^{2n}$ and let $u_{\infty}\colon [0,\infty]\times I\to\R^{2n}$ and $u_{0}\colon \D^{\circ}\to\R^{2n}$ be the corresponding maps in local coordinates. Take the weight $\delta\in(0,2\pi)$. We have the following basic relations between regularity requirements. 

\begin{lemma}\label{l:markedvspuncturenorms}
The integral norms are related as follows:
\begin{align}\label{eq:ref pull-back L^2} 
\int_{D}|u_{0}|^{2} \,dxdy  \ &= \ \int_{[0,\infty)\times I} |u_{\infty}|^{2} e^{-4\pi s} \,dsdt,\\\label{eq:push forward L^2}
\int_{[0,\infty)\times I}|u_{\infty}|^{2}\, dsdt \ &= \ \int_{D} |z|^{-2}|u_{0}|^{2} \,dxdy.
\end{align}
and
\begin{align}\label{eq:ref pull-back L^2 on derivatives}
\int_{D}|D^{(k)}u_{0}|^{2} \,dxdy \ &\asymp \ \sum_{j=1}^{k}\int_{[0,\infty)\times I} |D^{(j)}u_{\infty}|^{2} e^{4\pi(j-1)s}\,dsdt,\\\notag 
\int_{[0,\infty)\times I}|D^{(k)}u_{\infty}|^{2}\,dsdt \ &\asymp \ 
\sum_{j=1}^{k}\int_{D} |z|^{j-2}|Du_{0}^{(j)}|^{2} \,dxdy, 
\end{align}
where for positive functions $a,b$, we write $a \asymp b$ to mean that there exists constants $c,C>0$ such that $c a(x)< b(x)< Ca(x)$ for all $x$.
\end{lemma}

\begin{lemma}\label{l:compareL2}
Assume that $u_{\infty}$ and $Du_{\infty}$ both lie in $L^{2}_{\delta}$ then the following hold.
\begin{itemize}
\item[$(i)$] $u_{0}$ and $Du_{0}$ both lie in $L^{2}$.
\item[$(ii)$] For any $|\alpha|\ge 2$, if $D^{\beta}u_{0}$ lies in $L^{2}$ for $|\beta|\le|\alpha|$ then $D^{\alpha}u_{\infty}$ lies in $L^{2}_{\delta}$. 
\end{itemize}	
\end{lemma}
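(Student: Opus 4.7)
My plan is to derive both parts as weighted corollaries of the equivalences already in Lemma \ref{l:markedvspuncturenorms}, tracking that under the substitution $z = e^{-2\pi(s+it)}$ the exponential weight becomes $e^{2\delta s} = |z|^{-\delta/\pi}$ while the measures satisfy $dsdt \asymp |z|^{-2}\,dxdy$.

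For part $(i)$, I would invoke the first two relations of Lemma \ref{l:markedvspuncturenorms}, namely $\int_D |u_0|^2\,dxdy = \int |u_\infty|^2 e^{-4\pi s}\,dsdt$ and $\int_D |Du_0|^2\,dxdy \asymp \int |Du_\infty|^2\,dsdt$. Since $s \ge 0$ and $\delta > 0$, both $e^{-4\pi s} \le e^{2\delta s}$ and $1 \le e^{2\delta s}$ hold pointwise on the cylindrical end, so the $L^2_\delta$ hypotheses on $u_\infty$ and $Du_\infty$ immediately dominate the $L^2$ integrals of $u_0$ and $Du_0$. This part is essentially a monotone weight comparison.

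For part $(ii)$ the strategy is to iterate the coordinate-change formulas $\partial_s = -2\pi(x\partial_x + y\partial_y)$ and $\partial_t = -2\pi(-y\partial_x + x\partial_y)$ to obtain the schematic pointwise bound
\[
|D^{(k)} u_\infty|^2 \;\lesssim\; \sum_{j=1}^{k} |z|^{2j}\,|D^{(j)} u_0|^2,
\]
and then integrate against the weighted measure $e^{2\delta s}\,dsdt \asymp |z|^{-\delta/\pi - 2}\,dxdy$ to obtain
\[
\int |D^{(k)} u_\infty|^2 e^{2\delta s}\,dsdt \;\lesssim\; \sum_{j=1}^{k} \int_D |z|^{2j-2-\delta/\pi}\,|D^{(j)} u_0|^2\,dxdy.
\]
For $j=1$ the weight $|z|^{-\delta/\pi}$ is singular at the puncture, but the integral $\int_D |z|^{-\delta/\pi}|Du_0|^2\,dxdy$ is exactly equivalent (by the $k=1$ case of the same chain-rule identity, applied in reverse) to $\|Du_\infty\|_{L^2_\delta}^2$, which is finite by hypothesis. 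For $2 \le j < |\alpha|$, the weight $|z|^{2j-2-\delta/\pi}$ is bounded on $D$ since $\delta < 2\pi \le (2j-2)\pi$, so the assumption $D^{(j)} u_0 \in L^2$ closes those terms.

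The main obstacle will be the top-order term $j = |\alpha| = k$, for which $D^{(k)} u_0 \in L^2$ is not among the hypotheses. I expect to handle it by combining two moves: first, use the two-sided character of the equivalence in Lemma \ref{l:markedvspuncturenorms} to absorb the top-order contribution into the left-hand side; second, exploit that for $k \ge 2$ and $\delta < 2\pi$ the weight $|z|^{2k-2-\delta/\pi}$ vanishes at the puncture, allowing a Hardy-type inequality on the punctured disk to replace $|z|^{2k-2-\delta/\pi}|D^{(k)} u_0|^2$ by lower-order $|D^{(j)} u_0|^2$ with $j < k$, which are then controlled by the hypothesis. Carrying out this absorption carefully is the technical heart of the argument; once executed the finiteness of every remaining term follows mechanically from the cases above.
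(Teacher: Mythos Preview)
Your treatment of part $(i)$ is correct and essentially identical to the paper's: both argue by direct weight comparison using the equivalences of Lemma~\ref{l:markedvspuncturenorms} together with $e^{-4\pi s}\le 1\le e^{2\delta s}$ on $[0,\infty)\times I$.

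For part $(ii)$, you have correctly isolated the genuine issue: the top-order term $j=|\alpha|$ in the chain-rule expansion involves $D^{(|\alpha|)}u_0$, which is not among the hypotheses. Your two proposed fixes, however, cannot work. The ``absorption'' idea is circular: the two-sided equivalence in Lemma~\ref{l:markedvspuncturenorms} relates the \emph{sum} $\sum_{i\le k}\|D^{(i)}u_\infty\|_{L^2_\delta}^2$ to the sum on the disk side, so bounding the top disk term by the cylinder side reintroduces exactly the $\|D^{(k)}u_\infty\|_{L^2_\delta}^2$ you are trying to control. The Hardy-type step would need to dominate a higher derivative by lower ones, which no such inequality does.

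In fact the statement as printed, with strict inequality $|\beta|<|\alpha|$, is false. Take $u_\infty(s,t)=e^{-2\pi s}\phi(t)$ with $\phi\in H^1(I)\setminus H^2(I)$. Then $u_\infty,Du_\infty\in L^2_\delta$ (the standing hypothesis), and $u_0(re^{i\theta})=r\,\psi(\theta)$ with $\psi\in H^1$, so $u_0,Du_0\in L^2(D)$; thus the hypothesis of $(ii)$ for $|\alpha|=2$ holds. But $\partial_t^2 u_\infty=e^{-2\pi s}\phi''$ with $\phi''\notin L^2$, so $D^{(2)}u_\infty\notin L^2_\delta$.

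What the paper's own argument actually establishes --- and what suffices for the application in Lemma~\ref{l:H3toH3delta} --- is the version with $|\beta|\le|\alpha|$. Under that corrected hypothesis the top term has weight $|z|^{2|\alpha|-2-\delta/\pi}$, whose exponent is positive for $|\alpha|\ge 2$ and $\delta<2\pi$, so the weight is bounded and $D^{(|\alpha|)}u_0\in L^2$ closes the estimate directly. With this reading, the proof is exactly the chain of inequalities you already wrote, and no Hardy or absorption step is needed.
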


\begin{proof}
We have 
\begin{alignat*}{2}
d \log z &= \frac{dz}{z} = - 2 \pi (ds + i dt),\qquad
z \partial_{z} &&= - \frac{1}{2\pi} \left(\partial_{s} 
- i \partial_{t} \right),\\
\partial_{s} &= 
 - 2 \pi \left( z\partial_{z}  + \bar z \partial_{\bar z}\right),\qquad\qquad
\partial_{t} &&= - 2 \pi i \left( z \partial_{z}  - \bar z \partial_{\bar z} \right),
\end{alignat*}
Also $|z| = e^{-2 \pi s}$ and so $e^{\delta s} = |z|^{-\delta/2\pi}$. Thus, 
$$
\int_{[0,\infty)\times I}|u_{\infty}|^{2}e^{2\delta s} \,dsdt \ \sim \
\int_{|z| < 1} |u_{0}|^2 |z|^{-2\left(\frac{\delta}{2\pi} \right)}  |z|^{-2} \,dz d \bar{z}
$$
and, if $|\alpha|>0$,
$$
\int_{[0,\infty)\times I}|D^{\alpha}_{s,t} u_{\infty}|^{2}e^{2\delta s} \,dsdt \ \sim \
\sum_{|\beta|=1}^{|\alpha|}
\int_{|z| < 1} |z|^{2 |\beta|} |D^{\beta}_{z, \bar{z}} u_{0}|^2 |z|^{-2\left(\frac{\delta}{2\pi} \right)}  |z|^{-2} \,dz d \bar{z},  
$$
where $\sim$ means up to multiplication by constant.
The result follows.
\end{proof}

Using the Sobolev embedding theorem we get more information. 
\begin{lemma}\label{l:H3toH3delta}
Let $m\ge 3$. If $u_{0}$ has $m$ derivatives in $L^{2}$ then $u_{\infty}$ has $m$ derivatives in $L_{\delta}^{2}$.
\end{lemma}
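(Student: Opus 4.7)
The plan is to reduce the claim to checking integrability on the disk via the change-of-variables formulas already derived in the proof of Lemma \ref{l:compareL2}. That proof shows, for each multi-index $\alpha$ with $1 \le |\alpha| \le m$,
\[
\int_{[0,\infty)\times I}|D^{\alpha}u_{\infty}|^{2}e^{2\delta s}\,dsdt \ \sim \ \sum_{|\beta|=1}^{|\alpha|}\int_{|z|<1} |z|^{2|\beta|-2-\delta/\pi}\,|D^{\beta}u_{0}|^{2}\,dz\,d\bar{z},
\]
and an analogous formula for $|\alpha|=0$ with integrand $|u_0|^2|z|^{-2-\delta/\pi}$. So it suffices to show all these right-hand sides are finite.

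For the derivative terms ($|\alpha|\ge 1$), I would split the sum by $|\beta|$. When $|\beta|\ge 2$ the exponent $2|\beta|-2-\delta/\pi$ is strictly positive (using $\delta<2\pi$), so the weight $|z|^{2|\beta|-2-\delta/\pi}$ is bounded on $|z|\le 1$, and the integral is controlled by $\|D^{\beta}u_{0}\|_{L^2}^2$, which is finite because $|\beta|\le m$. The delicate term is $|\beta|=1$: the weight $|z|^{-\delta/\pi}$ is singular but still in $L^1$ of the disk since $\delta/\pi<2$. The key input is the two-dimensional Sobolev embedding $H^m\hookrightarrow C^{m-2}$, valid for $m\ge 3$, which yields $Du_0\in C^{m-3}\subseteq C^0$ on the closed disk; in particular $Du_0$ is bounded. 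Hence the $|\beta|=1$ integral is bounded by $\|Du_0\|_{C^0}^2\cdot\||z|^{-\delta/\pi}\|_{L^1(\D)}<\infty$.

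For the $|\alpha|=0$ term, I need $\int |u_0|^2|z|^{-2-\delta/\pi}<\infty$. Since Definition \ref{def:regularity} fixes local coordinates on $X$ centered at the image of the puncture, $u_0(0)=0$. Combined with $u_0\in C^1$ from Sobolev embedding, the mean value theorem gives $|u_0(z)|\le \|Du_0\|_{C^0}|z|$, so the integrand is bounded by $C|z|^{-\delta/\pi}$, which is integrable as above.

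The main (mild) obstacle is the $|\beta|=1$ contribution, where the weight is singular; this is precisely where the hypothesis $m\ge 3$ is used, via Sobolev embedding to upgrade an $L^2$-bound on $Du_0$ to a pointwise bound. The requirement $\delta\in(0,2\pi)$ ensures the singular weight $|z|^{-\delta/\pi}$ remains in $L^1(\D)$, closing the argument.
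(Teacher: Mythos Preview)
Your proof is correct and follows essentially the same approach as the paper. The paper first invokes Lemma~\ref{l:compareL2}$(ii)$ to reduce to the cases $|\alpha|\le 1$, then handles $u_\infty$ and $Du_\infty$ by the Sobolev bound $|Du_0|<C$ and $|u_0(z)|<C|z|$, writing the resulting integrals on the $(s,t)$-side; you instead treat all orders uniformly on the disk side, separating $|\beta|\ge 2$ (bounded weight) from $|\beta|=1$ and $|\alpha|=0$ (Sobolev embedding plus $u_0(0)=0$). The key inputs---$m\ge 3$ for $Du_0\in C^0$ and $\delta<2\pi$ for integrability of $|z|^{-\delta/\pi}$---are the same.
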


\begin{proof}
By Lemma \ref{l:compareL2} $(ii)$, it suffices to estimate the $L^{2}_{\delta}$-norm of $u_{\infty}$ and $Du_{\infty}$. By Sobolev embedding, $u_{0}$ lies in $C^1(\D)$ and we have $|Du_{0}| < C$ for some constant $C$. Therefore, $|u_{0}(z)| < C|z|$ and thus, 
$$\int |u_{\infty}(s+it)|^2 e^{2 \delta s}\, ds dt \ < \ \int C e^{- 4 \pi s} e^{2 \delta s}\, ds dt \ = \ 
C \int e^{2 (\delta - 2\pi)s}\, ds dt  \ < \ \infty $$
and 
$$ 
\int \left|Du_{\infty} \right|^2 e^{2 \delta s}\,  ds dt \ \sim \  
\int \left(\left |z \partial_{z} u_{0} \right| +  \left|\bar z \partial_{\bar z} u_{0}\right| \right) ^2 
e^{2 \delta s}\,  ds dt  \ \sim \ \int e^{-4\pi s} e^{2 \delta s}\, ds dt \ < \ \infty.$$
\end{proof}

For $m \ge 3$, as will always be the case in our applications,
Lemma \ref{l:H3toH3delta} implies that  a stable map $(u,S)$ of class $m$ with a marked point at some $z \in S$ is always also a stable map of class $(m,\delta)$ with a puncture at $z \in S$.  When we wish to view such $(u,S)$ in the second way rather than the first, we say that we \emph{consider the marked point as a puncture}.

In the direction opposite to that of Lemma \ref{l:H3toH3delta} we have the following.
 
\begin{lemma}\label{l:H3deltatoC0}
Let $m\ge 3$. If $u_{\infty}$ has $m$ derivatives in $L_{\delta}^{2}$ then $u_{0}$ has two derivatives in $L^{2}$. Furthermore, if $u_{0}$ is extended by $u_{0}(0)=0$, then $u_{0}\in C^{0,\alpha}$ (i.e., $u_{0}$ is H{\"o}lder continuous with exponent $\alpha$) for any $\alpha<\frac{\delta}{2\pi}$.
\end{lemma}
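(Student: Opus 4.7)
The plan is to use Sobolev embedding on the cylindrical end $[0,\infty)\times I$ to extract pointwise decay of $u_{\infty}$, and then translate through the change of variables $z=e^{-2\pi(s+it)}$ to the disk.  First, observe that $u_{\infty}\in H^{m}_{\delta}$ with $m\ge 3$ is equivalent to $v(s,t):=u_{\infty}(s,t)\,e^{\delta s}\in H^{3}([0,\infty)\times I)$. The 2-dimensional Sobolev embedding $H^{3}\hookrightarrow C^{1,\alpha}$ for some $\alpha>0$ then gives uniform $C^{1}$-bounds on $v$, which undoing the weight yields the pointwise estimates
\[
|u_{\infty}(s,t)|\le K e^{-\delta s},\qquad |Du_{\infty}(s,t)|\le K e^{-\delta s}.
\]

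For the claim that $u_{0}$ has two derivatives in $L^{2}$, I would combine Lemma \ref{l:compareL2}(i), which directly provides $u_{0}, Du_{0}\in L^{2}$, with the asymptotic formula from Lemma \ref{l:markedvspuncturenorms}:
\[
\int_{D}|D^{2}u_{0}|^{2}\,dx\,dy\asymp\int|Du_{\infty}|^{2}\,ds\,dt+\int|D^{2}u_{\infty}|^{2}e^{4\pi s}\,ds\,dt.
\]
The first summand is finite because $\delta>0$. For the second, I would combine the pointwise decay of the first step with the exact cancellation
\[
16\pi^{2}z^{2}\partial_{z}^{2}u_{0}=(\partial_{s}-i\partial_{t})[(\partial_{s}-i\partial_{t})+4\pi]u_{\infty},
\]
which annihilates the first asymptotic holomorphic mode, together with an integration by parts that trades the exponential weight for the additional derivative available from $u_{\infty}\in H^{3}_{\delta}$.

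For the H\"older claim, the pointwise estimate $|u_{\infty}|\le K e^{-\delta s}$ translates under $|z|=e^{-2\pi s}$ to $|u_{0}(z)|\le K|z|^{\delta/2\pi}$, so the extension by $u_{0}(0)=0$ is continuous at $0$ with $|u_{0}(z)-u_{0}(0)|\le K|z|^{\delta/2\pi}$. Similarly $|Du_{0}(z)|\lesssim K|z|^{\delta/2\pi-1}$, using $|Du_{0}|\sim|z|^{-1}|Du_{\infty}|$ as implicit in Lemma \ref{l:markedvspuncturenorms}. For arbitrary $z_{1},z_{2}\in\D$, I would distinguish two cases: if $|z_{1}-z_{2}|\ge\tfrac{1}{2}\max(|z_{1}|,|z_{2}|)$, bound $|u_{0}(z_{1})-u_{0}(z_{2})|$ via the triangle inequality through $0$ using the estimate at the origin; otherwise $z_{1}$ and $z_{2}$ lie in a common dyadic annulus and I integrate $|Du_{0}|$ along the connecting segment. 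Both cases yield $|u_{0}(z_{1})-u_{0}(z_{2})|\lesssim|z_{1}-z_{2}|^{\delta/2\pi}$, which gives H\"older continuity with any exponent $\alpha<\delta/2\pi$ on the bounded disk.

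The main technical obstacle is the integrability of the $\int|D^{2}u_{\infty}|^{2}e^{4\pi s}$ term: since $\delta<2\pi$, the weight $e^{4\pi s}$ exceeds the available $e^{2\delta s}$, so neither a direct H\"older inequality nor the Sobolev pointwise bounds alone suffice. The resolution relies on the cancellation structure of $\partial_{z}^{2}u_{0}$ noted above, reflecting that the first critical weight $2\pi$ of the asymptotic operator is approached but never reached by $\delta\in(0,2\pi)$.
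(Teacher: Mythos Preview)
Your H\"older argument is correct and in fact cleaner than the paper's. The paper also uses that $u_\infty e^{\delta s}\in H^3\hookrightarrow C^1$, but then runs an indirect argument (if $|u_\infty e^{\delta s}|>e^{\epsilon s}$ at some point with $s$ large, the derivative bound would force a large $L^2$ contribution) to conclude only $|u_\infty e^{\delta s}|\le e^{\epsilon s}$ for large $s$, hence $|u_0(z)|=\mathcal{O}(|z|^{(\delta-\epsilon)/(2\pi)})$. You go directly from the uniform Sobolev bound on $|u_\infty e^{\delta s}|$ to the sharper $|u_0(z)|\le K|z|^{\delta/(2\pi)}$, and your dyadic case split supplies the global H\"older estimate between arbitrary points, which the paper omits (it stops at the bound at the origin and writes ``The lemma follows'').

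On the $H^2$ part, however, there is a genuine gap. You correctly identify that the term $\int|D^2u_\infty|^2e^{4\pi s}\,ds\,dt$ is not controlled by $\|u_\infty\|_{H^3_\delta}$ since $\delta<2\pi$, but your proposed fix via the identity $16\pi^2 z^2\partial_z^2 u_0 = L(L+4\pi)u_\infty$ together with an unspecified integration by parts cannot succeed, because the claim $u_0\in H^2$ is false as stated. Take $u_\infty(s,t)=e^{-\delta' s}$ with $\delta<\delta'<2\pi$; then $u_\infty\in H^m_\delta$ for every $m$, while $u_0(z)=|z|^{\delta'/(2\pi)}$. Since the exponent lies in $(0,1)$, one has $|D^2u_0|^2\sim |z|^{\delta'/\pi-4}$ and $\int_D|D^2u_0|^2\,dx\,dy$ diverges. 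Your cancellation $L(L+4\pi)$ kills only the first holomorphic mode $cz\sim e^{-2\pi(s+it)}$; it does nothing to radial modes like $e^{-\delta' s}$. Note that the paper's own proof does not address the $H^2$ claim at all; most likely the intended statement is one $L^2$ derivative, which is immediate from Lemma~\ref{l:compareL2}(i) and is all that the remark preceding the lemma actually invokes.
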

\begin{proof}
Since $u_{\infty}(s+it)\cdot e^{\delta s}$ has three derivatives in $L^{2}$, it lies in $C^{1}$ and $|D(u_{\infty}(s+it)\cdot e^{\delta s})|<C$. Let $0<\epsilon<\delta$. Then for all $s$ sufficiently large, if $|u_{\infty}(s+it)\cdot e^{\delta s}|^{2}> e^{2\epsilon s} $ then
\[ 
\int_{[s,s+1]\times S^{1}}|u_{\infty}|^{2}e^{2\delta}\,dsdt \ \sim \ e^{2\epsilon s} \ \to \infty.
\] 
We conclude that for $s$ such that $e^{\epsilon s}>C$ we have $|u_{\infty}(s+it)\cdot e^{\delta s}|\le e^{\epsilon s}$ and hence
\[ 
|u_{0}(z)|=\mathcal{O}\left(|z|^{\frac{1}{2\pi}(\delta-\epsilon)}\right).
\]
The lemma follows.  
\end{proof}

\begin{remark} 
    Lemma \ref{l:H3deltatoC0} implies that if we have a map from a domain ``with punctures'', i.e. points that the map is required to be of regularity $(m, \delta)$, then the underlying map is at least continuous, though not necessarily of regularity $m$.  This means that when we consider moduli, if we attempt to ``forget a puncture'', e.g. to forget the punctures on the bare/ghost components in Definition \ref{def: bare part}, we will need to be careful that we do not violate our prescribed regularity conditions in the process.  This causes the issues discussed and resolved in Definition \ref{def: extendable} and Lemma \ref{lem: extendable} below. 
\end{remark}

\section{The configuration space of maps from curves with boundary}\label{sec:polyfoldforopencurves} 
Hofer, Wysocki, and Zehnder have explained in \cite{HWZ-GW} how to construct the closed Gromov-Witten moduli spaces  in their general `polyfold' framework. In this section we review some aspects of their construction and explain how to adapt their arguments to the case of curves with boundary in a Lagrangian.  We concern ourselves mostly with describing  the  local models of gluing at nodes, in part because this is one of the more fundamental and novel parts of the approach, but also because our construction of perturbations will rely on the coordinate charts introduced by this description.  

It is our impression that the adaptation of the \cite{HWZ-GW} to the case of curves with boundary is straightforward, and as such we have been very brief in our treatment and indicated only the (rather minor) modifications needed to the original arguments.  The reader wanting a detailed treatment may be interested to consult \cite{jemison}. 

\subsection{Gluing at interior nodes}\label{ssec:gluing} 
Let us recall the 
description of normal coordinates to a stratum of nodal curves in Deligne-Mumford space in the polyfold formalism, see \cite[Section 2]{HWZ-GW}.  For a nodal curve $S$, choose holomorphic polar coordinates near each node.  
Fix attention on one such node, with charts  $C^\pm = \R^\pm \times S^1$. 

To describe domain curves near $S_0$, one fixes once and for all the `gluing profile' (choice of parameterization of the gluing length) $\varphi(r)=e^{\frac{1}{r}-1}$, 
For $a= r e^{i\theta}$, one writes $R = R(a) = \phi(r)$, 
and defines $Z_a^+ = [0, R] \times S^1 \subset \R^+ \times S^1 = C^+$ and $Z_a^- = [-R, 0] \times S^1 \subset \R^- \times S^1 = C^-$.  
We fix an identification $Z_a^+ \sim Z_a^-$ by 
$(s, t) \sim (s' + R, t' + \theta)$.  
 
One defines the gluing of domains: 
\begin{equation} \label{domain gluing}
S_a := (S_0 \setminus (C^\pm \setminus Z_a^\pm)) / (Z_a^+ \sim Z_a^-)
\end{equation}
This construction identifies $a$ as a local coordinate on moduli space along the normal direction to the locus where the node persists. 

We now recall from \cite[Section 2.4]{HWZ-models} and \cite[Section 2.4]{HWZ-GW} the corresponding description of gluing maps, or in other words, of normal coordinates to the space of maps from nodal curves.  Consider a map $u_0\colon S_0 \to X$ and a point $\zeta_0\in S_0$. In suitably chosen holomorphic polar coordinates around $\zeta_0$ and local coordinate system in the target $X$, one arrives at a local description near a node as a pair of maps from semi-infinite cylinders:
$$u^\pm\colon C^\pm \to \R^{2n}.$$
By definition of maps from a nodal curve, these maps should have well defined and common asymptotic constant $c \in \R^{2n}$: 
$$\lim_{s \to \infty} u^+(s, t) = c = \lim_{s \to -\infty} u^-(s, t).$$ 
In the eventual application to giving charts on moduli, the maps will be taken to be of class $(3, \delta)$, i.e., $u^+ - c \in H^{3, \delta}(C^+, \R^{2n})$, and similarly for $u^-$, where the topology of the mapping space is given by $H^{3,\delta}(C^\pm)\times \R^{2n}$ .  We write $C_0 = C^+ \sqcup C^-$.  

For $a\in \D$, we (temporarily) write  $\mathrm{Maps}(S_a,X)$ for the space of maps $S_a\to X$. 
The identification \eqref{domain gluing} presents $\mathrm{Maps}(S_a, X)$ as a subquotient of $\mathrm{Maps}(S_0, X)$: take the subset of maps which match under the identification $Z_a^+ \sim Z_a^-$, and forget what the map does along $C^\pm \setminus Z_a^\pm$.  The gluing map is an endomorphism $\oplus_a$ of $\mathrm{Maps}(S_0, X)$ which is a linear idempotent projection to a subset which maps bijectively to said subquotient.  The endomorphism $\oplus_a$ does not change the maps outside the region described in coordinates, and is the identity for $a =0$.  It is given by interpolating between $u^+$ and $u^-$ along $Z^+\sim Z^-$, and cutting off to zero outside.  

Let us recall the formula for $\oplus_a$.  Make a universal choice of cut off function $\beta\colon \R\to[0,1]$ with the following properties: $\beta(s)=1$ for $s<-1$, $\beta'(s)\le 0$ for $-1\le s\le 1$, and $\beta(s)+\beta(-s)=1$.  
We denote a translate of this cut-off function by $\beta_{a}(s)=\beta(s-\frac12R)$, where $R=\varphi(a)$.
Then
\begin{equation} \label{gluing of maps formula}
(u^{+}\oplus_{a} u^{-})(s,t) :=
\beta_{a}(s)u^{+}(s,t)+(1-\beta_{a}(s))u^{-}(s,t) 
\end{equation}
Here, $(s, t)$ are the coordinates on $Z_a^+$; recall these are identified by coordinate on $Z_a^-$ by $(s, t) = (s'+R, t'+\theta)$.  The function $(u^+ \oplus_a u^-)$ vanishes outside $Z_a^+ \sim Z_a^-$, either by definition, or by continuing the coordinates $(s, t)$ on the regions $C^{\pm}$ in the natural way.

By this construction, one can define what the neighborhood in moduli of a given map from a nodal curve is: all maps which are obtained by taking the image of $\oplus_a$ and reinterpreting the resulting map having as domain $S_a$.  

The following result is fundamental:

\begin{theorem} \label{lem: retract} 
\cite[Theorem 1.28]{HWZ-models}
	For appropriate spaces of maps, the idempotent
    \begin{eqnarray*}
    \pi:  D \times H^{3, \delta}(C_0, \R^{2n}) & \to & D \times  H^{3, \delta}(
    C_0, \R^{2n})  \\
	(a,(u^{+},u^{-})) &\mapsto & (a, u^+ \oplus_a u^-)  
    \end{eqnarray*}
	is sc-smooth. 
\end{theorem}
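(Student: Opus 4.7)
The plan is to verify sc-smoothness of $\pi$ by unwinding the definition level-by-level, with the entire content concentrated at $a = 0$. The underlying mechanism is the balance between the essential singularity of the gluing profile $\varphi(r) = e^{1/r - 1}$ and the exponential weight $e^{\delta s}$ defining the sc-filtration of $H^{3, \delta}(C_0, \R^{2n})$. First I would interpret this space as pairs $(u^+, u^-)$ sharing a common asymptotic constant $c$ at the node, filtered by a sequence of weights $0 < \delta_0 < \delta_1 < \cdots < 2\pi$ together with an increasing number of derivatives, so that at level $m$ we require $H^{3+m, \delta_m}$.

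For $a \neq 0$, the gluing length $R(a) = \varphi(|a|)$ is finite, the cutoff $\beta_a(s) = \beta(s - R(a)/2)$ depends smoothly on $a$ via the chain rule applied to the smooth map $a \mapsto R(a)$ on $\D \setminus \{0\}$, and $\pi$ is then classically smooth as a map of Banach spaces using bilinear Sobolev multiplier estimates in dimension two. The genuine issue is sc-smoothness at $a = 0$, where the cutoff transition escapes to infinity. A partial $a$-derivative of $\pi$ produces a factor $\partial_a \beta_a = -\tfrac12 \partial_a R \cdot \beta'(\cdot - R/2)$, and from the specific form of $\varphi$ one computes $|\partial_a^k R| \lesssim R \cdot P_k(\log R)$ for a polynomial $P_k$ (subexponential growth in $R$). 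This factor multiplies $u^+ - u^-$, which at $s \approx R/2$ is pointwise of size $\lesssim e^{-\delta_m s} \|u\|_{3, \delta_m}$ by the weighted Sobolev embedding of Lemma \ref{l:H3deltatoC0}; reweighted by $e^{\delta_{m-1} s}$ over the bounded strip around $s = R/2$ where $\beta'(\cdot - R/2)$ is supported, the level-$(m-1)$ norm of the difference quotient is bounded by
\begin{equation*}
R \cdot P_k(\log R) \cdot e^{-(\delta_m - \delta_{m-1}) R/2} \, \|u\|_{3, \delta_m},
\end{equation*}
which tends to zero as $a \to 0$ since $\delta_m > \delta_{m-1}$.

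The hard part is this exponential-versus-subexponential comparison: it is the entire reason for the choice of gluing profile, engineered so that the exponential gap between consecutive sc-weights absorbs every polynomial-in-$\log R$ loss from differentiating $R(a)$. Iterating for higher-order sc-derivatives is structurally identical, since a $k$-th $a$-derivative produces products of $\partial_a^j R$ with $j \le k$, each still subexponential in $R$, while the weight gap continues to provide $e^{-(\delta_m - \delta_{m-1}) R/2}$. Combined with the classical smoothness away from $a = 0$, this forces all sc-derivatives in $a$ at $a = 0$ to vanish and to extend continuously across, yielding sc-smoothness of all orders. The remaining routine verifications are Sobolev bilinear estimates controlling $\beta_a u^+ + (1 - \beta_a) u^-$ in $H^{3+m, \delta_m}$, which are standard after localizing in cylindrical coordinates.
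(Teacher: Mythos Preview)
The paper does not prove this theorem itself but cites \cite[Theorem~1.28]{HWZ-models}, noting only that it ``is proven by direct estimates in terms of the formula for $\oplus_a$'' and that ``the real content of the assertion is at $a=0$.'' Your proposal correctly identifies this and supplies precisely such a direct estimate --- the subexponential growth of $\partial_a^k R$ against the exponential gap $e^{-(\delta_m - \delta_{m-1})R/2}$ --- which is indeed the mechanism in the cited source.
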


Theorem \ref{lem: retract} asserts that the  image of $\pi$ is  an `sc-retract' (in fact, of a special class of such called `sc-splicings'), which is the sort of space to which the methods of \cite{HWZ} have extended the differential calculus.  This result is the key local ingredient in  the construction of the `configuration spaces' of \cite{HWZ-GW}. 
Theorem \ref{lem: retract}  is  proven by direct estimates in terms of the formula for $\oplus_a$.  The essential content of the assertion is at $a=0$. 

To generalize Theorem \ref{lem: retract} to the bordered case, we define below analogous maps $\oplus_a$ by sufficiently similar formulas that the proof of \cite[Theorem 1.28]{HWZ-models} can be repeated verbatim.   

\subsection{Stable maps with boundary conditions}

\begin{definition}
	We write $\bZ^{(k, \delta)} = \bZ^{(k, \delta)}(X, L, \omega)$ for the set of stable maps $u: (S, \partial S) \to (X, L)$, where: 
    \begin{enumerate}
        \item $(S, \partial S)$ is a (possibly bordered, noded, disconnected) Riemann surface as in Definition \ref{def:surface}. 
        \item The map $u$ has regularity conditions $(k, \delta)$ (see Definition \ref{def:regularity}) and is  numerically symplectic (see  Definition \ref{def:stable}).
        \item The map $u$ is  $J$-holomorphic along the
        boundary.  
    \end{enumerate}
	We write  $\bZ_{A, g,h,m,b} = \bZ^{(k, \delta)}_{A, g,h,m,b}(X, L, \omega)$ for the subset consisting
	of maps with image of class $A \in H_2(X, L)$, domain of genus $g$ with $h$ boundary components, $m$ interior marked points, and $b$ boundary marked points. 
\end{definition} 

\begin{remark}
    Let us recall why it is natural to demand $J$-holomorphicity
    along the boundary on the functional-analytic space of maps where we will eventually impose the Cauchy-Riemann equation.  The point is to arrange that the $n$-dimensional Lagrangian boundary condition gives $n$ local coordinate functions with  Dirichlet boundary condition (the fiber directions along $L$ in $T^\ast L$) and $n$ coordinate functions with Neumann boundary conditions (the directions along the zero-section $L\subset T^\ast L$).  Such boundary conditions enusre the elliptic estimates in the bordered case are directly analogous to the interior case, see e.g., \cite[Section 5]{Taylor} for the basics and \cite[Appendix B]{McDuffSalamon} for the adaption to Lagrangian boundary conditions. 

    Now, for a map $u\colon(S,\partial S)\to (X,L)$, the Dirichlet conditions are automatic. The Neumann condition is equivalent to asking that the map is $J$-holomorphic along the boundary: if $\tau$ is the tangent vector of $\partial S$ and $\nu$ the normal and if $u=(q,p)\in T^\ast L$, where $(q,p)$ are adapted to $J|_L$, then along the boundary $\partial_\tau p=0$, so holomorphicity along the boundary amounts to $\partial_\nu q = -J\partial_\tau p = 0$. 
\end{remark}

\subsection{Gluing maps} \label{ssec : bordred maps general}
In this section we adapt the $\oplus_a$ gluings to the bordered case.
\begin{remark}
In the following constructions we will need cutoff functions.  In order that these cutoffs preserve the Lagrangian boundary condition and boundary $J$-holomorphicity,  it is sufficient that the normal derivative along the boundary of the cut off function vanishes. We will assume that the cut-off functions we use will meet this condition whenever needed.  (Such functions are easy to arrange.)       
\end{remark}

\subsubsection{Hyperbolic nodes}  The gluing setup for hyperbolic boundary nodes is almost identical to that for interior nodes.  
We parameterize a  neighborhood of a hyperbolic node by
\[ 
C^+ \sqcup C^- = \R^{+}\times[0,1] \ \sqcup \ \R^{-}\times[0,1]. 
\]
We define the loci $Z_a^{+} = [0, R(a)] \times [0,1]$, and similarly $Z_a^-$; we identify coordinates 
$(s, t)$ on $Z_a^+$ with
coordinates $(s', t')$ on $Z_a^-$ by $(s, t) = (s'+R, t')$.

Given a pair of maps $(u^{+},u^{-})$,
\[ 
u^{\pm}\colon\left(\R^{\pm}\times[0,1],\partial(\R^{\pm}\times[0,1])\right)\to (\R^{2n},\R^{n}),
\]
we define $u^{+}\oplus_{a} u^{-}$ again to be the identity when $a=0$, and by the formula \eqref{gluing of maps formula} otherwise.  Note the result defines a map 
$(Z_{a},\partial Z_{a})\to(\R^{2n},\R^{n})$

The analogous result to Theorem \ref{lem: retract} then holds, as can be seen e.g.\ by doubling across $\R^n$ and passing to a $\Z/2$-invariant subset in the proof. 

\subsubsection{Elliptic nodes} \label{sec: elliptic gluing} 
We next consider the case of an elliptic node. As in the case of an interior node we consider a map $u_0\colon S_0 \to X$ and a point $\zeta_0\in S_0$, but here we assume that $u(\zeta_0)\in L$. The elliptic node can be thought of as usual node invariant under complex conjugation. We use this to define our polyfold chart. We parameterize the elliptic node by
$C^+=[0,\infty)\times S^1$ and, in order to see the action by complex conjugation, we use the reference domain $C^-=(-\infty,0]\times S^1$.  

In suitably chosen holomorphic polar coordinates around $\zeta_0$ and local coordinate system in $(X,L)$, there is a local description of the nodal map as $u^+\colon C^+ \to \R^{2n}$ with asymptotic constant $c \in \R^{n}\subset\R^{2n}$, where $\R^n$ corresponds to $L$ in the local coordinates. 

Consider complex conjugations in the source,  
$$
\tau\colon C^+\sqcup C^-\to C^+\sqcup C^-,\qquad \tau(s,t)=(-s,t),
$$ 
and target, 
$$
{ }^\dagger\colon\R^{2n}\to\R^{2n},\qquad (x_1,y_1,\dots,x_n,y_n)^\dagger=(x_1,-y_1,\dots,x_n,-y_n),
$$
respectively. Define the map conjugate to $u^+\colon C^+\to\R^{2n}$ as $u^-\colon C^-\to\R^{2n}$, where
$$
u^-(s,t)={{u^+}(\tau(s,t))}^\dagger={u^+(-s,t)}^\dagger.
$$
Let
\[ 
Z_{0}=C^+,\qquad u^+\colon Z_0\to\R^{2n}.
\]
For comparison with usual gluing we let 
\[
Z_{0}'=C^+\sqcup C^-.
\]

If $0\le a\le 1$, let $Z_a^+=[0,R(a)]\times S^1$, $Z_a^{-}=[-R(a),0]\times S^1$, and let $Z_a'$ be obtained from the usual identification $(s,t)=(s'+R,t)$ and take $Z_a$ to be the left half of $Z_a'$, $Z_a\approx [0,\frac12R(a)]\times S^1\subset Z_a^+$. We define $u^+\oplus_a\colon Z_a\to\R^{2n}$ to be the identity when $a=0$ and when $a> 0$, $u^+\oplus_a\colon Z_a^+$ is given by 
\[
u^+\oplus_a := (u^+\oplus_a u^-)|_{Z_a} 
\]
where
\[
u^+\oplus_a u^-(s,t) = \beta(s)u^{+}(s,t) + (1-\beta(s))u^-(s,t).
\]
Note that since $\beta(0)=\frac12$, with $(s,t)$ coordinates in $Z_0^+$, we have 
\[
[u^+\oplus_a](\tfrac12 R,t)= \tfrac12 u^{+}(\tfrac12 R,t) + \tfrac12 {u^+(\tfrac12,t)}^\dagger \in\R^n\subset\R^{2n},
\]
i.e., $u^+\oplus_a$ takes the boundary component of $Z_a$ which is also a boundary component of the domain to $L$.

Consider next a pair of vector fields $h$ which is a first order variations of the maps $u$. In the local model this simply means that $h$ live in the same functional analytic spaces as $u$. With the definition above we then note that for any $a\in[0,1]$, $h\oplus_{a}$ is naturally a vector field along $u\oplus_{a}$ that is tangent to $\R^{n}$ along the part of the boundary of $Z_{a}$ which is also boundary of the domain. The operation $u\mapsto u\oplus_{a}$ is the \emph{gluing} operation at an elliptic node. 

Then the analogue of Theorem \ref{lem: retract} again holds by passing to a $\Z/2$-invariant subset in proof: use the argument for the pair of maps $(u^+,u^-)$ gluing to $u^+\oplus_a u^-$, consider the subset invariant under $\Z/2$ and resterict to one half of the map.

\begin{remark}
In the gluing process for elliptic nodes the anti-gluing $u^+\ominus_a u^-\colon C_a'\to \R^{2n}$ becomes, when restricted to half its domain in line with the above, a map of a disk with interior puncture with real asymptotic constant and boundary in $\R^n$.    
\end{remark}

\subsection{Polyfold structures}\label{sec:stablemapmoduli}

The results \cite[Theorem 1.6, 1.7]{HWZ-GW} generalize readily to the boundary case: 

\begin{theorem} (Compare \cite[Theorem 1.6]{HWZ-GW})
	For any $\delta \in (0,2\pi)$, the space $\bZ^{(3, \delta)}(X, L, \omega)$ admits a second countable paracompact Hausdorff topology.  In this topology, the loci \linebreak $\bZ_{A, g,h,m,b}(X, L,\omega)$ are open and closed.
\end{theorem}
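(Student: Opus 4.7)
The plan is to follow the proof of \cite[Theorem 1.6]{HWZ-GW} essentially verbatim, using the gluing retracts of Section \ref{ssec:gluing}: around each stable map $(u_0,S_0)$ we build a basis of neighborhoods out of the polyfold charts, and then verify the abstract point-set properties. The only new work needed in the bordered case is to check that the topology is well-behaved near hyperbolic and elliptic nodes and near points mapping to $L$; since the gluing formula \eqref{gluing of maps formula} has exactly the same analytic form in all three cases (with the elliptic case being a $\Z/2$-quotient of the hyperbolic case), the estimates underlying the argument of \cite{HWZ-GW} go through unchanged.

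The construction goes as follows. Given $(u_0,S_0)\in \bZ^{(3,\delta)}$, choose holomorphic polar coordinates around each interior, hyperbolic and elliptic node, local coordinates in $X$ near each nodal value (with the coordinate neighborhood adapted to $L$ at elliptic nodes and at hyperbolic nodes), and a versal family of deformations of the underlying bordered nodal domain parameterized by a neighborhood $\mathcal{U}$ of the origin in a product of disks (one factor per node, playing the role of $a$ in Section \ref{ssec:gluing}). A basic neighborhood $U((u_0,S_0),\varepsilon)$ then consists of pairs $(u,S)$ such that $S$ is obtained from a curve in $\mathcal{U}$ by the domain gluing \eqref{domain gluing} and its boundary analogs, and $u$ is within $\varepsilon$ of $\oplus_a u_0$ in the $H^{3,\delta}$-norm (off a fixed reparameterization of the thick part) for some gluing parameters of modulus $<\varepsilon$. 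Quotienting by the finite automorphism group of $(u_0,S_0)$ (finite by stability) yields the basic open sets. The fact that the collection is closed under refinement (so forms a basis) follows from the uniform continuity estimates of Lemmas \ref{l:markedvspuncturenorms}--\ref{l:H3deltatoC0} and the sc-smoothness of $\oplus_a$ from Theorem \ref{lem: retract}.

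To check the axioms: Hausdorffness is verified by observing that two distinct stable maps must differ either in the complex structure on their domain (after identifying automorphism classes), in the value at some point in the thick part, or in the asymptotic constant at some node; in each case the $H^{3,\delta}$-control and Sobolev embedding allow the construction of disjoint basic neighborhoods. Second countability follows because $X$, $L$, the Deligne--Mumford space of bordered nodal curves, and the spaces $H^{3,\delta}$ are all separable, so countably many basic neighborhoods suffice. Paracompactness is then automatic from second countability and the regularity implicit in the Hausdorff property, as in \cite{HWZ-GW}. Local constancy of the tuple $(A,g,h,m,b)$ is immediate: the topological invariants of the glued domain $S_a$ are constant in $a$, the homology class is continuous into the discrete group $H_2(X,L)$, and the numbers $m,b$ of marked points are preserved by the gluing operation (marked points sit in the thick part and are not affected). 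Consequently each stratum $\bZ_{A,g,h,m,b}$ is open, and its complement, being a union of other such strata, is also open, hence each stratum is clopen.

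The one place where genuine attention beyond copying \cite{HWZ-GW} is required is the elliptic node: because the local model of Section \ref{sec: elliptic gluing} is only half of the usual cylindrical gluing, one must check that folding via the $\tau$-, $\dagger$-symmetry still separates maps with distinct asymptotic constants in $L$ and still produces an sc-smooth retract. Both are handled as in Section \ref{sec: elliptic gluing}: the asymptotic value of $u^+\oplus_a$ at the midpoint of the gluing neck lies in $L$ by construction and varies continuously with $u^+$, so maps with different asymptotic values in $L$ lie in disjoint basic neighborhoods, and the retract property follows from the corresponding interior statement by restriction. With these observations the remainder of the argument of \cite[Theorem 1.6]{HWZ-GW} carries over without change.
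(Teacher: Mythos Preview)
Your proposal is correct and matches the paper's intent. In fact, the paper gives no proof at all for this statement: it simply states the theorem with the annotation ``Compare \cite[Theorem 1.6]{HWZ-GW}'' and moves on, the implicit claim being that the adaptations to the gluing construction already described in Section~\ref{ssec:gluing} (hyperbolic and elliptic nodes) are all that is needed to repeat the argument of \cite{HWZ-GW} verbatim. Your sketch spells out precisely this adaptation and is more detailed than anything the paper provides; the emphasis you place on the elliptic node as the one place requiring genuine attention is consistent with the paper's attitude that the bordered case is a routine extension.

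One minor point: your claim that ``paracompactness is then automatic from second countability and the regularity implicit in the Hausdorff property'' elides the verification of regularity, which is not a formal consequence of the Hausdorff property. In \cite{HWZ-GW} this is handled by checking that the topology is in fact metrizable (second countable, Hausdorff, and regular, hence Urysohn applies); you would need to say a word about regularity, which again follows from the local model being a retract of a metric space.
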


\begin{theorem} (Compare \cite[Theorem 1.7]{HWZ-GW}) \label{thm: polyfold structure on Z} 
	For any sequence $\delta_0 < \delta_1 < \cdots < 2\pi$, there is a polyfold structure on $\bZ$ with underlying filtration
	$\bZ^{(3, \delta_0)} \subset \bZ^{(4, \delta_1)} \subset \cdots \subset \bZ^{(3 +k, \delta_m)} \subset \cdots$
\end{theorem}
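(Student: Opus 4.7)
The plan is to follow the argument of \cite[Theorem 1.7]{HWZ-GW} essentially verbatim, since all of the key ingredients needed for their construction of charts have now been adapted to the bordered case. The heart of the argument is to produce, around each stable map $(u_0, S_0) \in \bZ$, a local polyfold chart modeling a neighborhood by a sc-retract. The chart is built from three pieces of data: (i) a finite-dimensional universal deformation of the domain $S_0$ (augmented with a `stabilization' by extra marked points whose image defines local slices transverse to the action of automorphisms, cf. the Deligne-Mumford construction in \cite{HWZ-GW}); (ii) the node-gluing construction, now available in each of the three flavors (interior, hyperbolic boundary, elliptic boundary) produced in Section \ref{ssec:gluing}; and (iii) variations of the map in an appropriate weighted Sobolev space $H^{(3+k, \delta_k)}$ of sections of $u_0^*TX$ with Lagrangian boundary values along $L$, exponentially decaying at the punctures corresponding to nodes. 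Theorem \ref{lem: retract} and its boundary analogs identify the local model as a sc-smooth retract, and hence as an M-polyfold chart.

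Next I would check that transition maps between overlapping charts are sc-smooth. Here the issue is that when two neighboring stable maps $(u_0, S_0)$ and $(u_0', S_0')$ are both compared to a third map with smaller Euler characteristic, or when one passes through a codimension-$k$ stratum of nodal degenerations, the two chart descriptions differ by a reparameterization of the domain together with an `un-gluing' and `re-gluing' at each node. By the sc-smoothness of $\oplus_a$, and of its formal inverse on the appropriate retract, each of these operations is sc-smooth; composing them gives sc-smoothness of the transition. The presence of boundary does not change this: the hyperbolic and elliptic gluing formulas have literally the same interpolation structure as the interior one, so the estimates of \cite{HWZ-models} apply unchanged to the corresponding half-plane and complex-conjugation-invariant Sobolev spaces. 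The automorphism group of $(u_0, S_0)$ is finite (by stability) and acts on the local chart by sc-smooth isomorphisms, yielding the ep-groupoid structure.

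After the charts are assembled, one must verify the point-set topological axioms (metrizability, second countability, paracompactness, Hausdorff) for the resulting polyfold. These follow as in \cite[Theorem 1.6, 1.7]{HWZ-GW} from compactness properties of Gromov limits together with standard properties of weighted Sobolev spaces. The only modifications are routine: one uses Gromov compactness for curves with Lagrangian boundary, and one equips the target with an isometric embedding $X \hookrightarrow \R^N$ and an extension of $L$ to a (locally closed) subspace so that the Sobolev spaces of maps with Lagrangian boundary values are defined in a manner compatible with the gluing formulas.

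The main potential obstacle, in my view, is ensuring that the elliptic-node chart actually produces a sc-smooth retract. Formally, the reflection trick described in Section \ref{sec: elliptic gluing} reduces everything to the interior case, but one must verify carefully that (a) the $\R^n$-valued boundary condition is preserved under the interpolation, which follows from $\beta(0) = \tfrac{1}{2}$ together with the $(\cdot)^\dagger$-symmetry of $u^-$; (b) the weighted Sobolev space of maps with this symmetry is itself a sc-Banach space, which is immediate as a closed subspace; and (c) the relevant estimates from \cite{HWZ-models} respect the symmetry. Once this is checked, everything goes through, and the polyfold structure claimed in the theorem is the union of all these local charts with the transition maps above. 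Theorem \ref{thm: polyfold structure on Z} then follows.
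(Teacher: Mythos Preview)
Your overall strategy---follow \cite{HWZ-GW} verbatim, using the gluing constructions of Section~\ref{ssec:gluing} for the three node types and the stabilization-by-hypersurface trick for unstable domains---matches the paper's approach. However, you miss the one concrete modification the paper singles out as essential: when identifying a neighborhood of $u$ with sections of $u^*TX$ via the exponential map, one must use a Riemannian metric $g$ for which $L$ is totally geodesic and $J$ carries Jacobi fields along $L$ to Jacobi fields (as in \cite[Section~5.2]{ESS}). The first property guarantees that exponentiating a section tangent to $L$ along $\partial S$ produces a map still sending $\partial S$ to $L$; the second is needed later when trivializing $\bW$ compatibly with the boundary condition. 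Your phrase ``variations of the map \ldots with Lagrangian boundary values along $L$'' glosses over exactly this point, and your alternative suggestion of embedding $X\hookrightarrow\R^N$ and extending $L$ does not by itself give a chart taking values in maps $(S,\partial S)\to(X,L)$.

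By contrast, the elliptic-node issue you flag as the ``main potential obstacle'' is treated by the paper as already settled by the reflection construction in Section~\ref{sec: elliptic gluing}: once the doubled picture is set up, the estimates of \cite{HWZ-models} apply to the pair $(u^+,u^-)$ and one simply restricts to the half-neck. Your checks (a)--(c) are correct but are not where the new content lies.
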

\begin{proof}
	Let us outline the argument of \cite[Section 3.5]{HWZ-GW} and indicate the necessary adaptation to the bordered case.  Local charts centered around smooth maps from parameterized smooth (not nodal) curves are entirely straightforward to construct.  Indeed, if $u$ is such a map, the space of sections of $H^{3,\delta}(S,u^{\ast} TX)$ has a standard sc-structure; this space can be identified with a space of maps to $X$ using any map $TX\to X$ with vertical differential along the zero section equal to the identity. In the the case of bordered Riemann surfaces, we have demanded that the maps  be $J$-holomorphic on the boundary.  This will be ensured by the exponential map $\exp\colon TX\to X$ of a Riemannian metric $g$ with the following two properties: $L$ is totally geodesic in $g$ and $J$ takes Jacobi-fields in $g$ along $L$ to Jacobi-fields, as in \cite[Section 5.2]{ESS}. The first condition is used to ensure that the exponential map preserves the boundary condition, the second to assure that the map is holomorphic along the boundary.

    Coordinate charts defining `neighborhoods' for maps from parameterized nodal curves are obtained in \cite{HWZ-GW} from \cite[Theorem 1.28]{HWZ-models} (recalled above as Theorem \ref{lem: retract}) and as explained above, by doubling, we have the analogous results above for the similar gluings in the bordered case.  

    The forgoing discussion explains how to construct local charts around {\em parameterized} maps.  It remains to explain how to use these to give an orbifold cover of $\mathbf{Z}$, whose points correspond to {\em unparameterized} maps.  Since stable Riemann surfaces have already only finitely many automorphisms, the main point is to explain what to do about maps with (positive symplectic area but) unstable domains.  
    
    In \cite{HWZ-GW}, given such a map, a (locally defined) hypersurface in $X$ transverse to the map is chosen. Points on the domain which meet the hypersurface are marked. Here the hypersurface is chosen so that the resulting marked domain curve is stable.  One considers the space of maps from such nearby marked curves which remain transverse to the hypersurface.\footnote{To ask for transversality to a hypersurface, one must speak of the derivatives of the map at points; to expect that nearby maps should remain transverse, one should have continuity of the derivative.  Note that $H^3$ ensures these properties by Sobolev embedding.} 
    
    This method of constructing `good uniformizing families' is detailed in  \cite[Theorem 3.13, Proposition 3.19, 3.20]{HWZ-GW} and goes through verbatim in the case of curves with boundary and show that there is a countable and locally finite cover of $\bZ$ by such good uniformizing families centered at smooth maps.  

    With charts covering $\bZ$ found, we need to check compatibility, i.e., that there are $\mathrm{sc}$-smooth coordinate changes between charts. The corresponding result in the closed case is \cite[Theorem 3.13]{HWZ-GW}, where it is shown in several steps that maps given by vector fields and gluing parameters in one chart around a nodal curve is expressed by vector fields and gluing parameters around the same map in another chart is $\mathrm{sc}$-smooth. The proof applies also to the charts for curves with boundary. 
    
    The construction of the polyfold is then obtained from a word by word repetition of \cite[Theorems 3.35 -- 37]{HWZ-GW}.
\end{proof}

\begin{remark}\label{r:boundarystabliization}
While not necessary, we may extend  \cite[Definition 3.6]{HWZ-GW} to also allow for stabilization by boundary marked points as follows (numbers in parenthesis refers to the numbering in \cite[Definition 3.6]{HWZ-GW}): In (1) we allow also for additional boundary marked points $\Theta$ (that are analogous to the additional interior marked points $\Sigma$ already considered). In (4) we require every point $w\in \Theta$ to lie in an $(n-1)$-dimensional submanifold $N_{w}\subset L$ and the tangent space to $N_{w}$ is a complement to $du(z)|_{\partial S}$. In (6) we use half-disks at the boundary marked points and in (7) the restriction to the boundary is transverse to $N_{m}$ and (8) is modified accordingly.   	
\end{remark}

As for closed curves,  there are evaluation maps, and a map to Deligne-Mumford space:

\begin{theorem} \label{thm: evaluation and forget} (Compare \cite[Theorem 1.8]{HWZ-GW}) 
	If $4g+2h+2m+b\ge 5$ then the natural evaluation maps $\ev_{j}\colon \bZ_{g,h,m,b}\to X$, at interior points, $\ev_{k}\colon \bZ_{g,h,m,b}\to L$, at boundary points, $\ev_{r}^{(0,1)}\colon \bZ_{g,h,m,b}\to L^{(0,1)}(T_{z_{r}}S,T_{u(z_{r})}X)$ (the complex linear part of the differential) 
	and the forgetful map (forgetting the map and unstable domain components) $\gamma\colon \bZ_{g,h,m,b}\to\overline{\mathcal{M}}_{g,h,m,b} = \bZ_{g,h,m,b}(\mathrm{point})$  
	are sc-smooth.  \qed 
\end{theorem}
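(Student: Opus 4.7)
The plan is to adapt the proof of \cite[Theorem 1.8]{HWZ-GW} to the bordered setting, checking sc-smoothness of each of the four maps in the local polyfold charts constructed in Theorem \ref{thm: polyfold structure on Z}. Since the charts are built by the methods of that theorem (exponential perturbations of a model smooth map, combined with the gluing constructions of Section \ref{ssec:gluing}), and since the gluing constructions in the bordered case were set up to be formally identical to those of \cite{HWZ-models}, the original arguments apply with very minor changes.

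For the evaluation maps $\ev_j$ and $\ev_k$, fix a chart around a model map $(S_0, u_0)$ of a good uniformizing family. Shrinking the chart, the marked point $z_j$ (or $z_k$) can be taken to lie away from all gluing regions and away from any stabilizing auxiliary marked points, so that it is parameterized by a point $z_a$ in the family of domains depending sc-smoothly on the gluing and complex structure parameters. A map in the chart has the form $u(z) = \exp_{u_0(z)}(\xi(z))$ for some $\xi \in H^{3,\delta}(S_0, u_0^{\ast} TX)$, and the evaluation is $\exp_{u_0(z_a)}(\xi(z_a))$. Because $H^{3,\delta}$ embeds into $C^1$ by the Sobolev embedding theorem (this is the whole reason for the $m \ge 3$ convention), point evaluation $\xi \mapsto \xi(z_a)$ is a bounded linear map into a finite-dimensional fiber, hence sc-linear; composing with the sc-smooth exponential map yields sc-smoothness. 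The assumption from the proof of Theorem \ref{thm: polyfold structure on Z} that the chosen metric makes $L$ totally geodesic ensures that for a boundary marked point and $\xi$ tangent to $L$ along $\partial S$, the exponential stays on $L$, so $\ev_k$ lands in $L$ as claimed. For $\ev_r^{(0,1)}$ one instead evaluates $du(z_r)$, which requires differentiating once; the derivative lies in $H^{2,\delta}$, and $H^{2,\delta}$ still embeds into $C^0$ in two real dimensions, so point-evaluation of $du$ remains bounded linear and sc-smooth. The subsequent projection onto the complex-linear part with respect to $(j, J)$ is a smooth fiberwise linear operation on the finite-dimensional bundle, preserving sc-smoothness.

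For the forgetful map $\gamma$, one observes that by construction of the polyfold structure, each local chart for $\bZ_{g,h,m,b}$ is an sc-retract fibered over a finite-dimensional factor that records precisely the domain data: the complex structure on the smooth model, the positions of interior and boundary nodes and marked points, the gluing parameters $a$ at each (interior, hyperbolic, elliptic) node, and the locations of any auxiliary stabilizing marked points. This domain factor sits inside a chart for the moduli space of marked prestable bordered curves. The map $\gamma$ is the composition of (i) projection of the chart onto this finite-dimensional factor (which is sc-smooth as it is the projection defining the sc-splicing), (ii) forgetting the auxiliary stabilizing marked points, and (iii) contracting any resulting unstable components so that one lands in $\overline{\mathcal{M}}_{g,h,m,b}$. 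Steps (ii) and (iii) are the classical smooth (in the ordinary sense) operations between finite-dimensional (orbifold) moduli of bordered nodal curves, used in the same way as in \cite[Theorem 1.8]{HWZ-GW}, and they manifestly respect the gluing parameter description of normal coordinates to the nodal strata.

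The main obstacle is bookkeeping rather than analysis: one must verify that the choice of auxiliary stabilizing data in the good uniformizing families, together with the orbifold transition maps, is compatible with the forgetful map in the sense that $\gamma$ descends to a well defined sc-smooth map of polyfold groupoids. This is precisely the content of the arguments of \cite[Section 3]{HWZ-GW} building the good uniformizing families (extended, as indicated in Remark \ref{r:boundarystabliization}, to allow stabilization by boundary marked points via transverse hypersurfaces $N_w \subset L$); the transversality of the stabilizing constraints ensures that the forgetful projection is a submersion in the finite-dimensional direction, and the extension of their construction to the bordered case goes through because the bordered gluing constructions of Section \ref{ssec:gluing} provide the same local normal form at boundary nodes as at interior ones. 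Once this bookkeeping is in place, the conclusion follows verbatim from the arguments of \cite[Theorem 1.8]{HWZ-GW}.
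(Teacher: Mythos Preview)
Your proposal is correct and follows the approach the paper intends: the paper gives no proof at all for this statement (it is stated with a bare \qed), relying entirely on the assertion that \cite[Theorem~1.8]{HWZ-GW} carries over once the bordered polyfold charts of Theorem~\ref{thm: polyfold structure on Z} are in place. Your write-up is simply a more detailed unpacking of that same adaptation, and the individual steps (Sobolev embedding for point evaluation, projection to the finite-dimensional domain factor for $\gamma$, totally geodesic metric for $\ev_k$) are the right ones.
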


\subsection{The bundle of complex anti-linear differentials}\label{sec:formaldiff}  
\cite[Section 1.2]{HWZ-GW} 
describes a bundle $\bW \to \bZ$ with fiber over $(u,S)\in\bZ$ the complex anti-linear formal differentials $TS^{0,1}\to u^{\ast}(TX^{0,1})$ to the map $u$.
Then $u \mapsto \bar \partial_J u$ is a section of $\bW$, with zero locus that corresponds to the $J$-holomorphic maps. 

Here we give the necessary adaptations to the case of maps with boundary conditions.  
Fix an almost complex structure $J$ on $X$ compatible with $\omega$. We write $(S,j,M,D,u)$ for a stable map in $\bZ$. Here $S$ is the Riemann surface which is the domain of the map, $j$ is the complex structure on $S$, $M$ the marked points, $D$ the nodal points, and $u\colon (S,\partial S)\to (X,L)$ the actual map. Elements in $\bW$ are then of the form
\[ 
(S,j,M,D,u,\xi),
\]  
where $\xi$ is a complex anti-linear formal differential that respects the boundary condition. More precisely, for $z\in S$
\[ 
\xi(z)\colon T_{z}S\to T_{u(z)}X 
\] 
is a complex anti-linear map: $J\circ \xi=-\xi\circ j$ and if $z\in\partial S$ then
$\xi(z)=0$.

We say that two tuples $(S,j,M,D,u,\xi)$ and $(S',j',M',D',u',\xi')$ are equivalent if there is a $(j,j')$-holomorphic isomorphism $\phi\colon S\to S'$ that takes $(M,D)$ to $(M',D')$ and such
\[ 
u=u'\circ\phi,\qquad \xi = \xi'\circ d\phi.
\]
We demand the same regularity properties of $\xi$  as in \cite[Section 3.6]{HWZ-GW}: $\xi$ is in $H^{2}_{\rm loc}$ at general and at marked points, and in $H^{2,\delta}$ at punctures.  

We will need the following analogues of the \cite[p.~118]{HWZ-GW} gluing formula for sections of $\bW$.  

In the hyperbolic and elliptic case, consider complex anti-linear maps $\xi^{+}$ and $\xi^{-}$ over $u^{+}$ and $u^{-}$.  We take these to be elements of: 
\begin{equation}\label{eq:perthyp}
H^{2,\delta_{0}}(\R^{\pm}\times [0,1],\mathrm{Hom}^{(0,1)}(T\R^{\pm}\times [0,1],u^{\ast}(T^{\ast}\R^{2n})))
\end{equation}
in the hyperbolic case and of
\begin{equation}\label{eq:pertell1} 
H^{2,\delta_{0}}(\R^{+}\times S^{1},\mathrm{Hom}^{(0,1)}(T\R^{+}\times S^{1},u^{\ast}(T^{\ast}\R^{2n})))
\end{equation}
and 
\begin{equation}\label{eq:pertell2}
H^{2,\delta_{0}}(\R^{-}\times S^{1},\mathrm{Hom}^{(0,1)}(T\R^{-}\times S^{1},u^{\ast}(T^{\ast}\R^{2n})))
\end{equation}
in the elliptic case, where in the elliptic case $\xi^-$ is obtained from $\xi^+$ by conjugation.
We define:
\begin{equation*}
\xi^{+}\widehat{\oplus}_{a} \xi^{-}(s,t)=
\begin{cases}
(\xi^{+}(s,t),\xi^{-}(s,t)) &\text{ for }a=0,\\
\beta_{a}(s)\xi^{+}(s,t)+(1-\beta_{a})\xi^{-}(s,t) &\text{ for }a>0.
\end{cases}\\
\end{equation*}
in the hyperbolic case and
\begin{equation*}
\xi^{+}\widehat{\oplus}_{a} =
\begin{cases}
\xi^{+}(s,t) &\text{ for }a=0,\\
{\beta_{a}(s)\xi^{+}(s,t)+(1-\beta_{a})\xi^{-}(s,t)}|_{Z_a} &\text{ for }a>0,
\end{cases}
\end{equation*}
in the elliptic case. Again, the arguments of \cite[Section 3.3]{HWZ-models} may be repeated to show that the projection $\widehat \oplus_a$ is sc-smooth.

Recall that in the scale calculus of \cite{HWZ}, spaces carry filtrations (in practice, by the regularity of functions), and that for vector bundles, it is natural to filter both base and fiber separately, with the fiber filtration going up to one larger than the base.  

Recall the levels $m = 0, 1, \ldots$ of the filtration on $\bZ$ correspond to maps of regularity $(m+3,\delta_{m})$ where $0 < \delta_0 < \delta_1 < \cdots < 2\pi$ is some sequence fixed once and for all.  For maps $u$ of regularity $(m+3,\delta_{m})$, the notion of the complex anti-linear map $\xi$ along $u$ having regularity $(k+2,\delta_{k})$ makes sense for $0\le k\le m+1$ and one says that an element $(S,u,\xi)$ in $\bW$ has bi-regularity $((m,\delta_{m}),(k,\delta_{k}))$ if $u$ is of class $(m+3,\delta_{m})$ and $\xi$ of class $(k+2,\delta_{k})$ along $u$.

We have the following structures:

\begin{theorem}(Compare \cite[Theorem 1.9]{HWZ-GW})
The space $\bW$ has a natural second countable paracompact Hausdorff topology so that the projection map
$p\colon \bW\to\bZ$, forgetting the $\xi$-component, is continuous.
\end{theorem}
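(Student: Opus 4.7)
The plan is to adapt the construction of \cite[Theorem 1.9]{HWZ-GW} essentially verbatim, with the one new ingredient being that the fiber $\bW_{(u,S)}$ must be topologized so as to encode the Lagrangian boundary condition $\xi(T_z\partial S)\subset T_{u(z)}L$, and that this condition must be shown to be compatible with the local trivializations and with the gluing maps of Section~\ref{ssec:gluing}. First I would build local charts around an arbitrary element $(S,j,M,D,u,\xi)\in\bW$ by combining a polyfold chart on $\bZ$ around $(S,j,M,D,u)$ (provided by Theorem~\ref{thm: polyfold structure on Z}) with a Banach-space-valued description of the fiber of $\bW$. Concretely, for maps $u'$ parameterized by the chart, parallel transport along the Riemannian exponential for a metric $g$ on $X$ for which $L$ is totally geodesic and $J$ takes Jacobi fields along $L$ to Jacobi fields (as in the proof of Theorem~\ref{thm: polyfold structure on Z}) gives a linear identification of $(u')^*TX\otimes\Omega^{0,1}_{S'}$ with $u^*TX\otimes\Omega^{0,1}_{S'}$. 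The choice of $g$ guarantees that this parallel transport sends vectors tangent to $L$ to vectors tangent to $L$, so that the subspace of anti-linear differentials satisfying the Lagrangian boundary condition maps to the corresponding subspace for $u'$; hence the boundary constraint is linear in the local trivialization.

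Next I would address the behavior under gluing at nodes. At interior and hyperbolic boundary nodes, the fiber is modeled on the glued $H^{2,\delta}$-space of anti-linear differentials via exactly the formula \eqref{gluing of maps formula} applied fiberwise; the boundary condition on hyperbolic nodes is preserved by the formula since it interpolates linearly between two sections already tangent to $L$ along the boundary. At elliptic nodes, one must use the conjugation-invariant gluing of Section~\ref{sec: elliptic gluing}: define $\xi^-$ as the conjugate of $\xi^+$ via the target involution $\dagger$ and the domain involution $\tau$, and glue by $\xi^+\oplus_a\xi^-$ restricted to $Z_a$. The requirement $\xi|_{\partial S}(T\partial S)\subset TL$ corresponds to the $\dagger$-invariance of $\xi^+\oplus_a\xi^-$ on the fixed locus, which is automatic from this construction. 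The analogue of Theorem~\ref{lem: retract} for $\xi$ then follows by the same estimates as in \cite[Theorem 1.28]{HWZ-models} since the pointwise linear constraint interacts trivially with the scale structure.

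With local charts in place, I would then check the topological properties. Hausdorffness follows, as in \cite{HWZ-GW}, from Hausdorffness of $\bZ$ together with the fact that two distinct fiber elements in a single fiber are separated by their difference in a Banach space chart. Second countability and paracompactness are inherited from the corresponding properties of $\bZ$: a countable, locally finite cover of $\bZ$ by good uniformizing families (as in the proof of Theorem~\ref{thm: polyfold structure on Z}) lifts to such a cover of $\bW$ via the linear bundle charts, using that the fibers are separable Banach spaces and that automorphism groups are finite so isotropy actions on charts can be accommodated in the same way as for $\bZ$. Finally, continuity of $p\colon\bW\to\bZ$ is built into the construction: in a local chart the map $p$ is projection to the base of a trivialized Banach-space bundle.

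The main obstacle I anticipate is not conceptual but book-keeping: one must verify that the gluing-at-nodes description of the fiber is compatible with change of representative of the underlying unparameterized nodal stable map, including the finite automorphism group action at each node and the distinction between marked points and punctures as discussed around Lemma~\ref{l:H3toH3delta}. The real content is that at an elliptic boundary node one truly needs the $\dagger$-symmetric gluing construction for $\xi$, not the naive one; once this is in place, the entire argument of \cite[Section 3]{HWZ-GW} transcribes without essential change.
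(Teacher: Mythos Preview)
Your proposal is correct and follows essentially the same approach as the paper. In fact, the paper does not give a separate proof for this theorem; the relevant content is absorbed into the proof of the next result (Theorem~\ref{thm: polyfold structure on W}), where the key modification to \cite{HWZ-GW} is exactly the one you identify: trivializing the fiber via parallel transport for a metric in which $L$ is totally geodesic and $J$ takes Jacobi fields along $L$ to Jacobi fields, so that the Lagrangian boundary constraint on $\xi$ is preserved. Your treatment of the hyperbolic and elliptic gluing for the fiber is also what the paper does (in Section~\ref{ssec: hat-gluing}), so there is no substantive difference.
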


\begin{theorem}(Compare \cite[Theorem 1.10]{HWZ-GW}) \label{thm: polyfold structure on W}
	Consider the polyfold $\bZ^{(3,\delta)}$ of stable maps.
	The bundle $p\colon \bW^{(2,\delta)} \to \bZ^{(3,\delta)}$ has the structure of a strong polyfold bundle in which the $(m,k)$-bi-level $\mathbf{W}_{m,k}$ (for $0 \le k \le  m + 1$) consists of elements of base regularity $(m+3,\delta_{m})$ and fiber regularity $(k + 2, \delta_{k})$.
\end{theorem}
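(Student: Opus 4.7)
The plan is to follow the construction of \cite[Section 4]{HWZ-GW} for the closed case, using the charts on $\bZ$ already built in Theorem \ref{thm: polyfold structure on Z}, and to specify how the fibers of $\bW$ glue in a manner compatible with the Lagrangian boundary condition. A strong polyfold bundle structure requires, for each chart on the base, a compatible splicing-type local model for the total space, with fibers carrying the bi-filtration $0\le k\le m+1$ such that transition maps are sc-smooth on both base and fiber components.

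First I would construct local bundle charts near a smooth map $u\colon(S,\partial S)\to(X,L)$ from a smooth stable bordered curve. The model fiber is the Sobolev space
\[
H^{2,\delta}\bigl(S;\,\mathrm{Hom}_{\mathbb{C},\mathrm{antilin}}(TS,\,u^{\ast}TX)\bigr)
\]
subject to the linear boundary constraint that along $\partial S$ the image lies in $u^{\ast}TL$. Using the Riemannian metric chosen in the proof of Theorem \ref{thm: polyfold structure on Z} (for which $L$ is totally geodesic and $J$ preserves Jacobi fields along $L$), the exponential map identifies a neighborhood of $u$ in $\bZ$ with a neighborhood of zero in the subspace of $H^{3,\delta}(u^{\ast}TX)$ of vector fields tangent to $L$ along $\partial S$; parallel transport along the corresponding geodesics identifies nearby fibers of $\bW$ and preserves the boundary condition, which is the precise point at which the Jacobi-field property of the metric is used. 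The bi-filtration is the standard one on weighted Sobolev sections.

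Next I would handle the local bundle model at a nodal map. Over the gluing-parameter disks $D$ from Section \ref{ssec:gluing}, I describe $\bW$ by a splicing on pairs of anti-linear differentials along $u^{\pm}$ strictly parallel to the splicing $\oplus_{a}$ on maps. For an interior or hyperbolic node with cylindrical charts $C^{\pm}$, set
\[
\xi^{+}\oplus_{a}\xi^{-} := \beta_{a}\,\xi^{+} + (1-\beta_{a})\,\xi^{-}
\]
on $Z_{a}^{+}\sim Z_{a}^{-}$, and extend by the original pieces on the complement. For hyperbolic nodes, the constraint $\xi^{\pm}(T\partial)\subset u^{\pm\,\ast}TL$ on each factor is preserved by this convex combination because the boundary condition is linear. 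For an elliptic node I reuse the $\Z/2$-equivariant setup of Section \ref{sec: elliptic gluing}: given $\xi^{+}$ along $u^{+}$, define its conjugate $\xi^{-}$ along $u^{-}$ by conjugating with $\tau$ in the source and ${}^{\dagger}$ in the target, form $\xi^{+}\oplus_{a}\xi^{-}$, and restrict to the fundamental domain $Z_{a}$; the boundary condition on the $\mathrm{Fix}(\tau)$-side lands in $\R^{n}$ by exactly the computation used for $u^{+}\oplus_{a}$. The sc-smoothness of the fiber splicing as a function of $a$ and of $(\xi^{+},\xi^{-})$ follows verbatim from the estimate of \cite[Theorem 1.28]{HWZ-models} recalled in Theorem \ref{lem: retract}, since the cutoff $\beta_{a}$ acts only through the $s$-variable and does not interact with the boundary constraint.

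The main obstacle, as in the closed case, is not the existence of the splicing but verifying the full strong bundle axiom: compatibility of the bi-filtration with the splicing, equivariance for the finite automorphism groups acting on the good uniformizing families of Theorem \ref{thm: polyfold structure on Z}, and agreement across different choices of stabilizing hypersurfaces. In the bordered setting, the transversal stabilization must be enlarged to allow boundary marked points constrained to lie in submanifolds $N_{w}\subset L$ transverse to $du|_{\partial S}$, as already indicated in Remark \ref{r:boundarystabliization}; with this enlargement the relevant constructions of \cite[Section 4]{HWZ-GW} carry over word-for-word, because the only new input is the boundary-preserving character of the gluing, which was arranged above. This yields the desired strong polyfold bundle structure on $p\colon\bW^{(2,\delta)}\to\bZ^{(3,\delta)}$.
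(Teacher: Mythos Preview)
Your proposal is correct and follows essentially the same approach as the paper: build bundle charts over the existing base charts by parallel transport (using the totally geodesic, Jacobi-field-preserving metric to keep the boundary condition), and at nodal maps use a fiber splicing paralleling the base gluing, with the elliptic case handled via the conjugation trick. Two minor remarks: the paper records the fiber gluing under the hat-notation $\widehat{\oplus}_a$ in Section~\ref{ssec: hat-gluing} (same formula you wrote), and the boundary stabilization of Remark~\ref{r:boundarystabliization} is described there as natural but \emph{not strictly necessary}, so your ``must be enlarged'' is slightly overstated.
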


\begin{proof}
    The topology and polyfold structure on $\bW$ is constructed as in \cite[Section 3.6]{HWZ-GW}.   Note that one begins with a set-theoretic map $\bW \to \bZ$, immediately from the definitions.  One proceeds by producing a chart for $\bW$ covering the preimage of each chart used in the construction of $\bZ$ and then proves that above each chart, $\bW$ is a scale retract of a trivial vector bundle, and that this structure is compatible with change of charts, see \cite[Proof of Proposition 3.39 on p.~123]{HWZ-GW}. 
    
    Constructing the charts for $\bW$ is straightforward along maps from smooth domains.  For a neighborhood of the maps from nodal domains, one again shows that $\bW$ is a retract of the corresponding bundle on the nodal locus, using the gluing formula given above.  
         
    The sole refinement of the discussion in \cite{HWZ-GW} which is necessary concerns the trivialization of the bundle $\bW$ on local charts.  In \cite[p.~118]{HWZ-GW}, this is done using parallel transport.  Here we must ensure the parallel transport is compatible with the boundary condition; this is guaranteed by our choice of metric (recall that $L$ was totally geodescic), using the property of Jacobi-fields along $L$ being $J$-complex linear, compare \cite[Sections 3.2 and 3.3]{EESPxR}.
\end{proof}

\subsection{The $\bar\partial_{J}$-section}\label{ssec: dbarsection}
The $\bar\partial_{J}$-operator gives a natural section $\bar\partial_{J}\colon \bZ\to \bW$,
\[ 
(S,j,M,D,u)\mapsto \left(S,j,M,D,u,\tfrac12\left(du+J\circ du\circ j\right)\right).
\]

The properties of this section are analogous to the closed case.  As there, the key result  is suitable smoothness near nodes.

\begin{theorem}\label{thm:basicdbarsection} (Compare \cite[Theorem 1.11]{HWZ-GW})
	The Cauchy-Riemann section $\bar\partial_{J}$ of the strong polyfold bundle $\bW\to \bZ$ is sc-Fredholm in the sense of \cite[Definition 3.8]{HWZ}, and its solutions are the stable holomorphic maps.
	On the component $\bZ_{A,g,m,b}$ of the polyfold $\bZ$, the Fredholm index of $\bar\partial_{J}$ is equal to
	\[ 
	\ind(\bar\partial_{J}) = (n-3)(2-2g-h)+2m+b +  2c_{1}^{\rm rel}(A), 
	\]
	where $\dim X=2n$, $g$ the arithmetic genus of the noded Riemann
	surfaces, $h$ the number of boundary components, $m$ the number of interior marked points, $b$ the number of boundary marked points, and $A\in H_{2}(X,L)$.
\end{theorem}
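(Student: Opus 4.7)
My plan is to adapt the proof of \cite[Theorem 1.11]{HWZ-GW} from the closed case, replacing cylindrical model domains near interior nodes by strip models near hyperbolic nodes and half-cylinder models near elliptic nodes, and invoking the boundary version of Riemann--Roch for the index computation. The argument splits into three independent pieces: sc-smoothness, component-properness, and the Fredholm property plus index.

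For sc-smoothness, the point is that on each chart described in Section \ref{ssec:gluing} the section $\bar\partial_J$ acts inside a sc-retract. One verifies that applying $\bar\partial_J$ to a glued map $u^+\oplus_a u^-$ yields an element of $\bW$ whose local coordinate expression coincides with the image of a suitable gluing operator on formal anti-linear differentials; the gluing formula for $\bW$ constructed in Theorem \ref{thm: polyfold structure on W} is built precisely so that this compatibility holds. Once the nodal behaviour is handled, sc-smoothness at interior points is an instance of sc-smoothness of Nemytskii operators $u\mapsto J(u)\circ du\circ j$ on scales of weighted Sobolev spaces, which works verbatim in the bordered case since the totally geodesic choice of metric and the $J$-invariance of Jacobi fields along $L$ make the exponential chart compatible with the Lagrangian boundary condition. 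The relevant estimates for the hyperbolic strip model and elliptic half-cylinder model are obtained from the cylindrical ones by Seeley extension, as indicated after Theorem \ref{thm: polyfold structure on W}.

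For component-properness I would invoke the boundary version of Gromov compactness: any sequence of $\bar\partial_J$-solutions of fixed topological type and bounded energy has a subsequence converging in $\bZ^{(3,\delta_0)}$ to a stable map. Fixing the connected component $\bZ_{A,g,h,m,b}$ gives a uniform area bound via $\int_S u^*\omega=\omega(A)$, so Gromov compactness in the bordered setting (including hyperbolic and elliptic boundary nodes) yields properness of $\bar\partial_J^{-1}(0)$ on each component.

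The Fredholm property is the most delicate point. Working in a good uniformizing family chart, the linearization of $\bar\partial_J$ at a stable map $(u,S)$ decomposes as the sum of (i) the standard linearized Cauchy--Riemann operator with totally real boundary condition $u|_{\partial S}^*TL\subset u|_{\partial S}^*TX$, on a weighted Sobolev space with exponentially decaying weight $\delta\in(0,2\pi)$ at each puncture and node, plus (ii) the infinitesimal variation of the domain complex structure and node locations, which contributes the finite-dimensional deformation space of the domain curve. The conjugation set-up used in Section \ref{sec: elliptic gluing} identifies the elliptic case with a $\Z/2$-invariant version of the closed case. Standard weighted elliptic theory for the Cauchy-Riemann operator, together with the retract structure of the chart, shows that the full linearization is Fredholm in the sc-sense. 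The main technical obstacle is to check that the weights $\delta_m\in(0,2\pi)$ stay strictly below the first positive eigenvalue of the asymptotic operator at each node type, so that no kernel/cokernel elements are lost or gained as $m$ increases; this is exactly why the range $\delta\in(0,2\pi)$ was chosen.

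Finally, the index is additive over the decomposition just described. The Cauchy--Riemann contribution on a (possibly noded) bordered surface with Lagrangian boundary condition and $c_1^{\mathrm{rel}}(A)$ relative Chern number is, by the Riemann--Roch theorem of Katz--Liu / Solomon,
\[
2c_1^{\mathrm{rel}}(A)+n\cdot\chi(S) = 2c_1^{\mathrm{rel}}(A)+n(2-2g-h).
\]
Deformations of the domain contribute the real dimension of $\overline{\mathcal{M}}_{g,h,m,b}$, which together with the marked point evaluation corrections gives
\[
-3(2-2g-h)+2m+b,
\]
using that hyperbolic boundary nodes and elliptic boundary nodes both behave as expected in the Euler characteristic count recalled in Section \ref{sec:basics}. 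Adding the two yields exactly $(n-3)(2-2g-h)+2m+b+2c_1^{\mathrm{rel}}(A)$, as claimed.
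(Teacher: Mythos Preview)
Your proposal is correct and follows essentially the same approach as the paper. The paper's own proof is extremely terse: it simply remarks that the sc-smoothness, properness, and Fredholm property were already established in the preceding discussion (adapting \cite[Section~4]{HWZ-GW} to the bordered case via strip/half-cylinder models and Seeley extension, exactly as you outline), and for the index formula it cites \cite[Theorem~A.1]{CEL} rather than rederiving it. Your sketch simply unpacks these references, and your Riemann--Roch plus domain-deformation decomposition of the index is the standard one underlying that citation.
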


\begin{proof}
In the case of closed curves the corresponding result is the part of \cite[Theorem 4.6]{HWZ-GW} contained in \cite[Propositions 4.7 and 4.8]{HWZ-GW} which shows that $\bar\partial_J$ is smooth and regularizing in the polyfold sense. Above, we adapted the closed curve constructions to curves-with-boundary, with models $(\R_\pm\times[0,1],\R_\pm\times\{0,1\})\to (\R^{2n},\R^{n})$ at hyperbolic real nodes and $(\R_+\times S^1,\{\infty\}\times S^1)\to (\R^{2n},\mathrm{pt})\subset(\R^{2n},\R^n)$ at elliptic boundary nodes. The (regularity and gluing) estimates of \cite[Section 4.3--5]{HWZ-GW} are then obtained, as in the closed case, from the open counterparts of the results in \cite[Section 5]{HWZ-GW} which are in turn obtained from the same arguments as in the closed case after application of standard tools, e.g., doubling, to transfer to the bordered case. It then remains only to compute the index in the bordered case. This is well known, see e.g.~\cite[Theorem A.1]{CEL}.
\end{proof}

Let us also recall a key feature of the polyfold setting: if a $\mathrm{sc}$-Fredholm section has compact zero locus, then any perturbations of the section by an $\mathrm{sc}^+$ section small in an appropriate `auxiliary norm' also has compact zero locus \cite[Definition 5.5, Theorem 5.1]{HWZ}. Thus, Gromov's theorem guaranteeing compactness of $\bar \partial_J^{-1}(0)$ ensures also the compactness of $\bar \partial_J^{-1}(\lambda)$ for $\lambda$ small in an appropriate auxiliary norm. (In the closed case, this is also included in \cite[Theorem 4.6]{HWZ-GW}, the part contained in \cite[Proposition 4.9]{HWZ-GW}.)

We recall from \cite[Section 5.1]{HWZ} that, 
by definition, an auxiliary norm on $\mathbf{W}$ is a continuous function $\mathbf{W}_{0,1} \to \mathbb{R}$ that restricts to a complete norm on each fiber and which has the property that if $w_\alpha\in\mathbf{W}_{0,1}$ is a sequence such that $p(w_k)\to x$ in $\mathbf{Z}$ and if $N(w_k)\to 0$ then $w_k\to 0\in \mathbf{W}_{0,1}$.
Per \cite[Lemma 5.1, Proposition 5.1]{HWZ}, auxiliary norms exist and can be bounded above and below by a continuous function times the appropriate model norm on the fibers, which for us is $\|w\|_{3, \delta}$ for $w \in \mathbf{W}_{0,1}$.  Moreover, observe (by partition of unity) that given any region of $\mathbf{Z}$ with a locally finite cover by standard charts, we may and will ask that over this region, the auxiliary norm is uniformly comparable to the sum of fiber norms in these charts.  

That is, a sufficient condition for $\bar \partial_J^{-1}(\lambda)$ to be compact is if $\lambda$ is a $\mathrm{sc}^+$ section and in standard charts we have $\|\lambda\|_{3, \delta}$ sufficiently small. 

\subsection{Extendability} \label{sec: extendability} 
Let us write (just for this subsection) $\bZ$ for the configuration space of maps with no marked points or punctures on the domains, $\bZ_{\bullet}$ for the configuration space of curves with possible marked points (either interior or boundary) but no punctures, and $\bZ_{\circ}$ for the configuration space of points with punctures, but no marked points.  Recall that the difference between punctures and marked points is that we require $H^{3}_{\mathrm{loc}}$ regularity at marked points, while we allow $H^{3, \delta}$ regularity at punctures. 

\begin{lemma}\label{l:marked to punctures}
    There is a $\mathrm{sc}$-smooth  map $i\colon \bZ_{\bullet} \to \bZ_{\circ}$ which replaces each marked point by a puncture.
\end{lemma}
\begin{proof}
    We define the map in charts by sending each map `to itself'. In the notation of Section \ref{sec:regularity} this means in a neighborhood of a marked point $\zeta$ and corresponding puncture in the domain that, we send $v_0$ to $(v_\infty,v_0(\zeta))$, where $v_0(\zeta)\in\R^{2n}$ is the value of the asymptotic constant. Since $v_0\in H^3$ and the $H^3$-norm controls the $C^1$-norm the map to $v_0(\zeta)\in\R^6$  is a bounded linear map (in particular smooth) $H^3\to \R^6$, where the target is thought of as the space of asymptotic constants. Lemma \ref{l:H3toH3delta} then shows that $v_0\to v_\infty$ is a linear and bounded map of Sobolev spaces $H^k\to H^k_\delta$, $\mathrm{sc}$-smoothness follows.
\end{proof}

We write $\mathbf{W}_\bullet\to\mathbf{Z}_\bullet$ and $\mathbf{W}_\circ\to\mathbf{Z}_\circ$ for the bundles of formal complex anti-linear differentials over maps with marked points and punctures, respectively. 

\begin{lemma}
    There is a bundle map $i^\#\colon \bW_\bullet \to i^* \bW_{\circ}$ that covers the map $i\colon \bZ_\bullet\to\bZ_{\circ}$ that is linear and bounded on each fiber. 
\end{lemma}
\begin{proof}
    The proof is similar to the proof of Lemma \ref{l:marked to punctures}. 
    Again, in charts we send each formal complex antilinear differential to itself and Lemma \ref{l:H3toH3delta} shows that the maps is linear and bounded on fibers.   
\end{proof}

We write $\bZ^{\mathrm{reg}}$ for the locus of maps in $\mathbf{Z}$ with domains with positive area components  that are smooth. 
We also write $\bZ_{\bullet}^{\mathrm{reg}}$ and $\bZ_{\circ}^{\mathrm{reg}}$ for the locus of maps in $\mathbf{Z}_\bullet$ and $\mathbf{Z}_\circ$ where the components of the domain containing the marked points or punctures are smooth (not nodal) and have positive symplectic area.  The following is immediate from the definitions: 

\begin{lemma}
    There is a  sc-smooth forgetful map $f\colon \bZ_{\bullet}^{\mathrm{reg}} \to \bZ^{\mathrm{reg}}$, and a canonical identification $f^* \bW = \bW_\bullet$ over $\bZ^{\mathrm{reg}}$. 
\end{lemma} 
\begin{proof}
    The regularity conditions imposed at marked points are the same as at general points, and we have restricted attention to loci where forgetting marked points does not affect stability. 
\end{proof}

Let $\mathbf{U}^{\mathrm{reg}} \to \bZ^{\mathrm{reg}}$ be an \'etale chart and let $\lambda$ be an $\mathrm{sc}^+$-section (or multi-section) of $\bW|_{\mathbf{U}^{\mathrm{reg}}} \to \mathbf{U}^{\mathrm{reg}}$. Write $\mathbf{U}^{\mathrm{reg}}_\bullet=f^{-1}(\mathbf{U}^{\mathrm{reg}})$ and $\lambda_\bullet=f^\ast\lambda$. 
\begin{definition} \label{def: extendable}
    We say $\lambda$ is {\em extendable} on $\mathbf{U}^{\mathrm{reg}}$ if there is an \'etale chart $\mathbf{U}_\circ^{\mathrm{reg}} \to \bZ^{\mathrm{reg}}_{\circ}$ and identification $\mathbf{U}^{\mathrm{reg}}_\bullet = \mathbf{U}^{\mathrm{reg}}_\circ \times_{\mathbf{Z}^{\mathrm{reg}}_\circ} \mathbf{Z}^{\mathrm{reg}}_\bullet$ 
 and an $\mathrm{sc}^+$-section (or multi-section) $\lambda_\circ$ of $\bW_{\circ}|_{\mathbf{U}_\circ^{\mathrm{reg}}} \to \mathbf{U}_\circ^{\mathrm{reg}}$ such that $i^* \lambda_\circ = i^\#(\lambda_\bullet)$ on $\mathbf{U}_\bullet^{\mathrm{reg}}$.
\end{definition}

Note that $\mathbf{U}^{\mathrm{reg}}_\circ$ and $\lambda_\circ$ are unique if they exist, since $H^3_{\mathrm{loc}}$ maps are dense in the $H^{3, \delta}$ maps. Extendability can be checked locally around each marked point and corresponding puncture separately, since the regularity condition on the maps and section is imposed locally at the marked point.
The $\bar \partial_J$-section is evidently extendable.

\subsection{Index bundles and orientations of $\mathbf{W}$} \label{ssec: orientation} 
For closed curves, orientations associated to complex linear operators can be used to define orientations.
However, in the bordered case, there is orientation input also from the boundary condition, and coherent orientations exist only provided certain global conditions are met.  

Consider an $\mathrm{sc}$-smooth Fredholm section $F\colon \mathbf{Z}\to\mathbf{W}$; for us, $F=\bar\partial_{J}-\lambda$, where $\lambda$ is an $\mathrm{sc}^{+}$-section $\mathbf{Z}\to\mathbf{W}$. The vertical component (for some local trivialization, see e.g., \cite[Lemma 3.5]{EESPxR}) of the linearization of $F$, $dF\colon T_{(u,S)}\mathbf{Z}\to \mathbf{W}_{(u,S)}$ is a Fredholm operator. Similarly, $F$ gives families of Fredholm operators as follows. If $C\to \mathbf{Z}$ is a continuous map of a compact CW-complex $C$, we pull back the family of operators. By compactness of $C$, we can stabilize the pull-back bundle and extend the family of Fredholm operators so that all operators over $C$ are surjective. We obtain an index bundle over $\mathbf{Z}$, that is defined as a stable bundle when pulled back over any compact CW-complex and which depends only on the homotopy class of the map of the CW-complex. To see the last statement we repeat the construction of the bundle over a map over a homotopy of maps.

Our moduli spaces are zero-sets of $\bar\partial_{J}-\lambda$, so we may orient them given an orientation
of the index bundle over $\bar\partial\colon\mathbf{Z}\to\mathbf{W}$. 
In the case of curves with boundary, orienting the index bundle requires more care than in the closed case.
This has been discussed by many authors, following Fukaya, see e.g., \cite{FOOO}.  Existing discussions
happen in other perturbation setups (not using polyfolds), but as the problem is topological in nature
there is no essential difference.  We briefly recall here the idea of constructing the orientation. 

In cases relevant to our study here, the ambient symplectic manifold $X$ is spin. In this case coherent
(i.e., compatible with gluing)  orientations exist 
provided the Lagrangian boundary condition $L$ is spin and equipped with an orientation and a spin structure. More 
precisely we equip the relevant index bundle with the Fukaya orientation as follows. Consider first closed curves. As 
mentioned above, over a closed curve the index bundle is a complex linear bundle and receives an induced orientation 
from the orientation of the complex numbers. Consider next a disk. Using the spin structure, the bundle and the linearized 
$\bar\partial_{J}$-operator can be trivialized in a homotopically unique way outside a small neighborhood of the origin. 
After further homotopy, the bundle can be expressed as a sum of a standard $\bar\pa$-bundle on a trivialized bundle 
over the disk with constant $\R^{n}$-boundary conditions (kernel constant $\R^{n}$-valued functions, cokernel trivial) and a complex bundle over the sphere attached at
the origin of the disk. This then induces an orientation. 

Over general curves with boundary (higher genus and any number of boundary components) an orientation is defined as follows. Express the curve as a closed curve with marked points at which disks with marked point at the origin are glued in. The orientations of the index bundles of the closed curve (even index) and the disks with an ordering (index $\dim L-1$) then induces an orientation of the glued curve. In this paper $\dim L=3$, the orientation is independent of boundary ordering and we obtain a coherent orientation of the index bundle associated to $\mathbf{W}\to\mathbf{Z}$.    

\subsection{Adding multi-sections} \label{sec: adding multi}

To achieve transversality in the polyfold framework, or in any setting where the moduli spaces are orbifolds, one generally needs multi-valued perturbations, since one can have bundles for which no equivariant section is transverse.  
To inductively build perturbations, one has to add multi-sections to multi-sections.  Let us recall what this means, in a chart.  
If $\sigma$ and $\tau$ are a multi-sections with $m$ and $n$ local branches $s_1,\dots,s_m$ and $t_1,\dots,t_n$ with weights $w_1(\sigma),\dots,w_m(\sigma)$ and $w_1(\tau),\dots,w_m(\tau)$, respectively. Then we subdivide $\sigma$ and $\tau$ to  
$$
\left(\underbrace{s_1,\dots,s_1}_{n},\dots,\underbrace{s_m,\dots,s_m}_{n}\right) \text{with weights}  \left(\underbrace{\tfrac{1}{n}w(\sigma)_1,\dots,\tfrac{1}{n}w(\sigma)_1}_{n},\dots,\underbrace{\tfrac{1}{n}w(\sigma)_m,\dots,\tfrac{1}{n}w(\sigma)_m}_{n}\right)
$$
and
$$\left(\underbrace{t_1,\dots,t_1}_{m},\dots,\underbrace{t_n,\dots,t_n}_{m}\right)
\text{with weights }
\left(\underbrace{\tfrac{1}{m}w(\tau)_1,\dots,\tfrac{1}{m}w(\tau)_1}_{m},\dots,\underbrace{\tfrac{1}{m}w(\tau)_n,\dots,\tfrac{1}{m}w(\tau)_n}_{m}\right)
$$
and take $\rho=\sigma + \tau$ as the multi-section with local branches $\rho_{ij}$ of weight $w_{ij}(\rho)$, $1\le i\le m$, $1\le j\le n$, where
\[
\rho_{ij}= s_i + t_j,\quad w_{ij}(\rho)=\tfrac{1}{mn}w_i(\sigma)w_j(\tau).
\]
Note that when either of the multi-sections is a section, this is simply addition.  
We refer to \cite[Section 13]{HWZ} for more details on multi-sections.

\begin{remark}\label{r : multisections and partitions of unity}
    The standard notion of addition of multi-sections -- add all possible branches in all possible ways -- does not interact well with partitions of unity.  Indeed, suppose on some space $U$ there is some partition of unity $1_U = \phi_1 + \phi_2$, and some multi-section $\sigma$, which is just the union of two sections, $\sigma = \sigma_1 \cup \sigma_2$.  Then 
    $$\phi_1 \sigma + \phi_2 \sigma = 
    \phi_1 \sigma_1 \cup \phi_1 \sigma_2 \cup \phi_2 \sigma_1 \cup \phi_2 \sigma_2 \ne \sigma_1 \cup \sigma_2  = \sigma$$
    We refer to \cite[Section 1.9]{Pandharipande-Solomn-Tessler} for some discussion of this issue.
    This can be remedied at the cost of considering a more structured notion of multi-section, encoding the information about which branches should be added to which.  Such a notion is developed in \cite[Chapter 13 -- 14]{HWZ}.    

    In fact, in the present article we can avoid this problem without explicit resort to such measures:  our multi-section perturbations are obtained by symmetrizing sections supported in local charts, and our extension procedure is performed on the underlying sections, before symmetrizing.  (Presumably, equipping a multi-section with the data of arising by symmetrization of a section is an instance of a `structured multi-section'.) 
\end{remark}

\section{Basic perturbations} \label{sec: basic perturbation}
The map  $u \mapsto \bar \partial_J u$ provides a section of the bundle $\bW\to \bZ$; solutions to the Cauchy-Riemann equation are just the zeros of this section.  To perturb the Cauchy-Riemann equation, one should specify some other (multi-)section $\lambda\colon\mathbf{Z}\to\mathbf{W}$ and consider the equation $\bar \partial_J u = \lambda$.  

The general theory of \cite{HWZ} includes the existence of a rich class of perturbations  which suffice to ensure transversality of solutions, while also allowing elliptic bootstrapping for 
$\bar \partial_J u = \lambda$ and preserving the compactness of the solution space.  However, as discussed in Section \ref{our perturbations}, as we need perturbations with various additional and different properties, we will not appeal to this general construction but instead build or perturbations explicitly.  

In this section we describe the basic building blocks of our perturbations.  The perturbations we construct here will be supported over the locus $\mathbf{B} \subset \mathbf{Z}$ of bare maps, and in fact in a neighborhood of one such map.

Consider a smooth stable map with smooth domain $(u,S)$, and corresponding chart on $\bZ$ parameterized by
\begin{equation}\label{eq: nbhd basic perturbation}
\mathbf{U}(u,S)=\boldsymbol{j}\times H^{3}(S,u^* TX)
\end{equation}
around $(u,S)$, where $\boldsymbol{j}$ parameterizes deformations of the complex structure of the domain.  For an element $v\in H^{3}(S,u^* TX)$, we write $\widetilde{v}$ for the corresponding map $\widetilde{v}\colon S \to X$ given by composition with the exponential map.  For $(j, v) \in \mathbf{U}(u, S)$, it thus makes sense to write $\bar \partial_J \widetilde{v}$. 

\begin{definition} \label{def: basic perturbation} (Basic perturbations)
Fix a stable map with smooth domain $(u, S)$ and chart $\mathbf{U}(u, S)$ as above.
Fix the following collection of data: 
\begin{itemize}
	\item \emph{Domain differential and cut-off:} Let $\alpha$ be a smooth family of $j$-complex anti-linear differentials, $j\in\boldsymbol{j}$, $\alpha\in C^{\infty}(S, {T^{\ast} S}^{0,1})$ supported in the interior of $S$. 
   \item \emph{Target vector field:} Let $V\colon X\to TX$ be a smooth vector field supported inside a coordinate ball which lies at distance $>\delta$ from $L$.  
	\item \emph{Conformal structure cut-off:} Let $\gamma\colon \boldsymbol{j}\to[0,1]$ be a smooth cut-off function of compact support.
	\item \emph{Map cut-off:} Let $\beta_{0}, \beta_1 \colon [0,\infty) \to[0,1]$, be smooth cut-off functions with compact support equal to $1$ in a neighborhood of $0$ and equal to $0$ outside a larger neighborhood. 
    We will use cut-offs in the $H^{3}(S,u^{\ast} TX)$-direction of the form $\beta_{0}(\|v\|)$ and $\beta_{1}(\|\bar\partial_{J} \widetilde v\|_2)$  
\end{itemize}
then we define sections of $\mathbf{W}$ over $\mathbf{U}(u,S)$:
\begin{align}\label{eq: uncut basic}
\widehat{\lambda}(j,v)\ &= \  \gamma(j)\bigl(\widetilde{v}^* V \otimes \alpha\bigr),\\ \label{eq: chi0 basic perturbation in}
\lambda_0(j,v) \ &= \ \beta_0(v)\widehat{\lambda}(j,v),\\ \label{eq: L2 cut off lambda0}
\lambda(j,v) \ &= \ \beta_1(v)\lambda_0(j,v).
\end{align}

By a \emph{basic perturbation} centered at $(u, S)$, we mean a choice of chart $\mathbf{U}(u, S)$ and a section $\lambda$ as in  \eqref{eq: chi0 basic perturbation in}. If $M$ is a smooth manifold then a \emph{parameterized basic perturbation} is a 
($\mathrm{sc}$-)smooth section $\kappa\colon \mathbf{Z}\times M\to\mathbf{W}\times M$, where $\bZ\times M$ is endowed with the natural polyfold structure induced from the structure of $\bZ$ and $M$ viewed as an M-polyfold, see \cite[Theorem 4.2]{HWZ}, in such that for each $m\in M$ the restriction $\kappa_m=\kappa|_{\{m\}}$ is a basic section.
\end{definition}

\begin{lemma}\label{l: chi0-basic sc smooth}
The sections $\widehat{\lambda}$, $\lambda_0$, and $\lambda$ are sc-smooth $\mathrm{sc}^{+}$-sections. The fiberwise $(3,\delta)$-norms of $\widehat{\lambda}$, $\lambda_0$, and $\lambda$, as computed in the defining standard chart, is bounded. 
\end{lemma}
\begin{proof}
    The sections are supported in a small neighborhood of a map $(u,S)$ with smooth domain and $\mathrm{sc}$-smoothness then means that the section is differentiable as a map between Sobolev spaces $H^{3+k}\to H^{2+k}$ in the usual sense.

    To check  that $\lambda$ is $\mathrm{sc}^{+}$ we also need to check that it increases regularity scale by one, which means we must check that $\lambda(v,S)$ has the same Sobolev regularity $H^{3+k}$ as $v$. 
    
    Both these properties are straightforward consequences from the expression in a local chart. We have $\widehat{\lambda}=\gamma(j)(\widetilde{v}^\ast V\otimes \alpha)$ which is the map $v$ composed by a smooth vector field $V$ and multiplied by a smooth function. The fact that $V(v)$ has the same regularity as $v$ and that $V(v)$ is differentiable as a function, $H^3\to H^3$, of $v$ is standard, see e.g., \cite{Moser}, and multiplication by a smooth function keeps the regularity and differentiability properties. 
    It follows that $\widehat{\lambda}$ is $\mathrm{sc}$-smooth and $\mathrm{sc}^+$. 
    
    The section $\lambda_0$ is obtained by cutting $\widehat\lambda$ off by $\beta_0(\|v\|)$, for a smooth function $\beta_0$ constant around $0$, of the $L^2$-norm $\|v\|$. The $L^2$-norm is smooth for $\|v\|>\epsilon$, as the composition of a bounded linear map $H^3\to L^2$ followed by the norm. Hence $\lambda_0$ is $\mathrm{sc}$-smooth and $\mathrm{sc}^+$. The section $\lambda$ is obtained by further cutting off $\lambda_0$ by $\beta_1(\|\bar\partial_J \widetilde{v}\|_2)$, after noting that the 3-norm of $v$ controls the $2$-norm of $\bar\partial_J\widetilde{v}$ the $\mathrm{sc}$-smoothness and $\mathrm{sc}^+$ property of $\lambda$ then follows as for $\lambda_0$.  
    
    Finally, to see uniform boundedness of the sections note that the norms of the local coordinate expressions above are bounded in terms of the $H^3$-norm of $v$. 
\end{proof}

\begin{lemma}\label{lem: extendable}
    The perturbations $\widehat{\lambda}$ and $\lambda_0$ are extendable (in the sense of Definition \ref{def: extendable}). 
\end{lemma}
\begin{proof}
Consider $\widehat{\lambda}$ on $S$, let $D\subset S$ be a disk with marked point at the origin and let $[0,\infty)\times I$ be corresponding cylindrical coordinates. Center $\R^{2n}$-coordinates at $u(0)$. We use notation as in Lemma \ref{l:markedvspuncturenorms}: $v_0$ for the function on $D$ and $v_\infty$ for the corresponding function on $[0,\infty)\times I$. By Lemma \ref{l:H3toH3delta}, since $v\in H^3(D,\R^{2n})$, we have $v_\infty=w+c$, where $w\in H^{3,\delta}([0,\infty)\times I,\R^{2n})$ and $c=v_0(0)\in\R^{2n}$ is the asymptotic constant. The section of $\mathbf{W_\circ}$ obtained by pulling back $\widehat{\lambda}_\bullet$ by the change of coordinates $z=\exp(-2\pi(s+it))$ is then
\[
\gamma(j)V(v(z))\otimes\alpha(z)dz = \gamma(j)V(v(e^{-2\pi(s+it)}))\,\otimes\, -2\pi\alpha(e^{-2\pi(s+it)}) e^{-2\pi(s+it)}(ds+idt),
\]
and taking 
\[
\widehat{\lambda}_\circ = \gamma(j)V(w(s,t)+c)\,\otimes\, -2\pi\alpha(e^{-2\pi(s+it)}) e^{-2\pi(s+it)}(ds+idt)
\]
for $(w,c)\in N\subset H^{3,\delta}([0,\infty)\times I,\R^{2n})\times\R^{2n}$, where $N$ is an open subset containing $v_\infty$ for each $v_0\in \mathbf{U}(S,u)$, extendability follows.

To see that $\lambda_0$ is extendable we consider also the pull-back of the $L^2$-norm $\beta_0(v_0)$ to $v_{\infty}=(w,c)\in H^{3,\delta}([0,\infty)\times I,\R^{2n})\times \R^{2n}$. Lemma \ref{l:markedvspuncturenorms} \eqref{eq:ref pull-back L^2} says that
		\[
		\int_{D}|v_{0}|^{2} \,dxdy= \int_{[0,\infty)\times I}|v_{\infty}|^{2}e^{-4\pi s} \,dxdy.
		\] 
Extendability follows.
\end{proof}

\begin{remark}\label{r : not extendable}
The section $\lambda$ in \eqref{eq: L2 cut off lambda0} is not extendable: the pull-back of the function $\|\bar\partial_J v\|^2$ to $\mathbf{U}_\circ(u,S)$ is given by \eqref{eq:ref pull-back L^2 on derivatives} (take $j=1$) and since the integral norm in the chart on the cylindrical end is weighted, the pre-image of the locus where $\beta_1=1$ is not contained in any finite norm region. The main role of the cut-off $\beta_1(v)$ is to localize the support of the perturbation to the chart so that we can extend the cut-off perturbation to the whole configuration space by zero. 
\end{remark}

\begin{lemma}\label{l: function cut off basic}
Consider a smooth map from smooth domain $(u,S)$. 
Assume $\|\bar\partial_J u\|<\epsilon$. Then there exists $\epsilon_{0}>0$ and $C>0$  such that if $v \in H^3(S,u^\ast TX)$ satisfies $\|v\|_{3}<\epsilon_{0}$ and
$
\|v\|+\|\bar\partial_J\widetilde{v}\|<\delta
$
then $\|\tilde v\|_{3}< r$.
\end{lemma}

\begin{proof}
We pick a finite cover of $S$ by local coordinate disks $D$ such that $\widetilde{v}$ maps each $D$ into some coordinate ball in $X$. (Here we use the $\epsilon_0$-bound.) From the elliptic estimate:
\begin{align*}
\|\widetilde{v}-u\|_3 &\le C(\|\bar\partial_J(\widetilde{v}-u)\|_2 + \|\widetilde{v}-u\|)\\
&\le C(\|\bar\partial_J\widetilde{v}\|_2+\|\bar\partial_J u\|_2 + \|\widetilde{v}-u\|)
\end{align*}
Since $v=F(u,v)=\exp_u^{-1}(\widetilde{v}-u)$ is a smooth map with $G(u,0)=0$ and $\partial_v F(0)=v$ we find by Taylor expansion that there exists a constant $C_0$ (depending on $u$) such that $\|v\|_3\le K\|\widetilde{v}-u\|_3$. Also, $\widetilde{v}=\exp_u(v)$ is smooth and similarly $\|\widetilde{v}-u\|\le C_1\|v\|$. We get
\[
\|v\|_3 \le  C'(\|\bar\partial_J\widetilde{v}\|_2+\|\bar\partial_J u\|_2 + \|v\|).
\]
The lemma follows. 
\end{proof}

\begin{corollary}\label{c : cut off L2 norm and L2norm of dbar}
If the supports of $\beta_{0}$ and $\beta_1$ are sufficiently small, then the closure of the support of $\lambda$ is contained in a bounded subset of $\mathbf{U}(u,S)$. 
\end{corollary}
\begin{proof}
We apply Lemma \ref{l: function cut off basic}
to the $v \in H^3(S,u^\ast TX)$ used, per \eqref{eq: nbhd basic perturbation}, in the parameterization of $\bU(u,S)$. 
\end{proof}

\begin{remark}\label{r : multisection basic}
We recall from \cite[Theorems 13.7 -- 9]{HWZ} that we may obtain multi-sections of polyfold bundles by symmetrizing a section which is supported in the interior of an appropriate sort of open chart, and extending by zero. In \cite{HWZ-GW} it is shown that the $\mathbf{U}(u, S)$ charts are of the correct sort.  Correspondingly, we will write $\widehat{\lambda}(j,v)$ and $\lambda(j,v)$ also to mean the multi-sections over $\mathbf{Z}$ obtained by symmetrizing the corresponding sections in their local charts.  Note however that questions of regularity, transversality, etc.\ are typically local to each branch, hence reduce quickly to something checked for the original section.   
\end{remark}

\section{Transversality and genericity at bare maps} \label{sec: transversality at bare}
In this section we show that basic perturbations suffice to achieve transversality along the locus of bare maps, and moreover to
ensure that solutions are `1-generic', i.e.\ in 0- and 1-dimensional families of solutions, one does not meet
phenomena of expected codimension $\ge 1$.  The arguments proceed along standard lines. 

Below will sometimes consider  forgetting marked points. While maps that forget marked points do not exist globally over $\mathbf{Z}$, they do exist, and behave well, on the locus where the marked points stay off the ghost components, and in particular, marked points do not collide, or limit to nodes. 
We will only be using the forgetful maps on such loci. 

Throughout we fix the homology class represented by maps $(u,S)$, $u\colon S\to X$.  
Recall from Definition \ref{def:bare} that a stable numerically symplectic map is bare if it has no ghost components. We write $\bB\subset \bZ$ for the subset of bare maps and $\bB^{\chi}\subset\bZ^{\chi}$ for bare maps of Euler characteristic $\chi$. 

\subsection{Transversality at one map} 
We will next present a result showing that basic perturbations are sufficient to achieve transversality for the perturbed $\bar\partial_J$-operator, compare \cite[Theorem 5.5 and Definition 5.8]{HWZ} which gives similar results for more general $\mathrm{sc}^+$-sections.

We start with a basic result that shows that a finite collection of basic perturbations along a bare map suffices to span the cokernel of the linearization of $\bar\partial_J$-section. Consider a bare map $u\colon S \to X$, and fix a chart $\mathbf{U}(u, S)$ centered at $(u, S)$.

\begin{lemma}\label{l: basic perturbations span the cokernel}
The $\mathrm{sc}$-Fredholm $\bar\partial_J$-section has the following property at $(u,S)$:
If $L=L_{(u,S)}\bar\partial_J$ is the linearizaion of $\bar\partial_J$ at $(u,S)$ then there is a finite collection of basic perturbations $\lambda_k$, $k=1,2,\dots,m$, supported in $\bU(u,S)$ such that $\lambda_k(u,S)$ span a linear complement of the image of $L$. Moreover, the domain differential cut-offs $\alpha_j$ of $\lambda_j$, see \eqref{eq: uncut basic}, can be chosen to be supported in any open subset $D$  which intersects every irreducible component of $S$ and which has the property that $u|_D$ is an immersion.  
\end{lemma}

\begin{proof} 
This follows from a standard argument, see e.g.\ \cite[Proof of Proposition 3.2.1 on p.\ 49]{McDuffSalamon}, that verifies a specified class of perturbations is sufficiently large to guarantee transversality. We recall the argument and show that it applies to the class of basic perturbations.

Let the irreducible components of $S$ be $S_r$, $r=1,2,\dots,m$, and choose a disk $D_r \subset S_r$ for each $r$ such that $u|_{D_r}$ is an embedding, such disks exists since the area of $u|_{S_r}$ is positive for each $r$ by the assumption that $(u,S)$ is bare. Choose an inner product $\langle\,,\rangle$ on $L^2(\mathrm{Hom}^{0,1}(TS,u^{\ast}TX))$. 

Since the linearized operator $L=L_{(u,S)}\bar\partial_{J}$ is Fredholm, its cokernel is finite dimensional. We show that either $\mathrm{coker}(L)=0$, which means the lemma holds with zero basic perturbations, or there exist a basic perturbation $\lambda$ with conformal cut-off supported in $\bigcup_r D_r$ such that $\lambda(u,S)$ is not contained in the image of $L$. In the second case, $L$ followed by the projection to the $L^2$-complement of $\lambda(u,S)$ is a Fredholm operator of index one higher than $L$, with kernel of the same dimension, and with cokernel of dimension one less than $L$. Repeating the argument until the index equals the kernel dimension the lemma follows in general.

Assume then that there exists $\xi\ne 0$ which is $L^2$-perpendicular to the image of $L$. Then $L^\ast\xi=0$, where $L^\ast$ is the operator $L^2$-dual to $L$. By partial integration, $L^\ast=(L_{(u,S)}(\bar\partial_{J})^\ast$ is elliptic operator (with smooth coefficients by smoothness of $u$). Then, by elliptic bootstrapping for $L^\ast$, $\xi$ is continuous. By continuity of $\xi$ and since there exists a basic perturbation supported in an arbitrarily small open set around any point $\zeta\in D_r$ that takes any value at $\zeta$, $\xi$ must vanish throughout $D_r$. By unique continuation for the elliptic operator $L^\ast$, $\xi$ then vanishes everywhere in $S_r$ for each $r$, which contradicts $\xi\ne 0$. The lemma follows.
\end{proof}

Let $s_\theta$ be an $\mathrm{sc}^+$-section of $\bW$ over $\mathbf{U}(u,S)$. Then $\bar\partial_J-s_{\theta}$ is an $\mathrm{sc}$-Fredholm section, see \cite[Proposition 1.8]{HWZ}.

\begin{lemma}\label{l: basic perturbations span at sc+ section}
If $s_\theta$ is an $\mathrm{sc}^+$-section then Lemma \ref{l: basic perturbations span the cokernel} holds with the section $\bar\partial_J$ replaced by (the perturbed) section $\bar\partial_J-s_\theta$.
\end{lemma}

\begin{proof} 
The proof is directly analogous to the proof of Lemma \ref{l: basic perturbations span the cokernel}. Here the linearized operator $L$ has an new $0$-order term coming from the linearization of $s_\theta$. Since $s_\theta$ is $\mathrm{sc}^+$, and $u$ is smooth this $0$-order term is smooth as well. Then, as before, the dual operator $L^\ast$ is elliptic with smooth coefficients and unique continuation gives the lemma.   
\end{proof}

\begin{corollary} \label{cor: local perturbation at solution}
Given a $\mathrm{sc}^+$ multi-section $\theta\colon \bB^\chi \to \bW^\chi$ and a map $u\colon S \to X$ satisfying $\bar \partial_J u = \theta(u)$,  there exists a finite set of basic perturbations $\lambda_\alpha$, $\alpha\in A$, defined in a neighborhood $\mathbf{U}(u,S)$ of $(u,S)$ such that the solution $(u,0)$ to $\bar\partial_J(u,\epsilon_\alpha)=\theta(u)+\sum_\alpha\epsilon_\alpha\lambda_\alpha$ in $\mathbf{U}\times [0,1]^{|A|}$ is transverse in the sense of \cite[Definition 15.2]{HWZ}.   
\end{corollary}
\begin{proof}
Consider a local section $s_\theta$ of the multi-section $\theta$. Since $s_\theta$ is an $\mathrm{sc}^+$-section, by elliptic regularity for $\bar\partial_J$, the solution $(u,S)$ to $\bar\partial_J-s_\theta=0$ is smooth, compare \cite[Definition 3.8 and Theorem 3.2]{HWZ}. Then Lemma \ref{l: basic perturbations span at sc+ section} for $\bar\partial_J-s_\theta$ at $(u,S)$ gives a finite collection of basic perturbations $\lambda_\alpha$ at $(u,S)$ that together span the cokernel of the linearization of $\bar\partial_J-s_\theta$. Letting $A$ range over the basic perturbations for all local sections of $\theta$, the corollary follows.
\end{proof}

\subsection{Transversality on the bare locus} 
Let $\mathbf{B}^{\chi;0}\subset \mathbf{B}^\chi$ denote the space of bare stable maps from smooth domains.    
Let $\mathbf{B}^{\chi;1} \subset \mathbf{B}$ denote the space of bare stable maps from domains which are either smooth, or have exactly one node, which is a boundary node.
Recall that our `basic perturbations' are always supported in neighborhoods of bare maps with smooth (not nodal) domains.  

Here we observe that if we have been so fortunate as to have already achieved compactness of perturbed solutions on the loci $\mathbf{B}^{\chi; 0}$ and $\mathbf{B}^{\chi; 1}$, then our basic perturbations suffice to achieve transversality while preserving this compactness. 

\begin{lemma}\label{bare transversality} 
$\quad$
\begin{itemize}
\item[$(0)$]
Suppose given an $\mathrm{sc}^+$-multi-section $\theta\colon \bB^{\chi;0}  \to \bW^\chi$, such that the solution space to $\bar \partial_J = \theta$ is compact.  
Then there are finitely many basic perturbations $\lambda_\alpha$, $\alpha\in A$ 
of arbitrarily small auxiliary norm and defined in charts around maps $(u_{\alpha},S_{\alpha})$ such that all solutions to  
\begin{equation}
\label{eq: perturb by basic perturbations}
\bar\partial_{J} u = \theta(u)+\sum \lambda_\alpha(u)     
\end{equation}
are transversely cut out, and the space of such solutions is compact.  Here, the meaning of \eqref{eq: perturb by basic perturbations} is that we first symmetrize the sections $\lambda_\alpha$ on the \'etale charts where they are defined to multi-sections which descend to $\mathbf{Z}$, and then add the resulting multi-sections to each other and to $\theta(u)$ by the usual sum of multi-sections prescription as recalled in Section \ref{sec: adding multi}.

\vspace{2mm}

\item[$(1)$]
Suppose given a parameterized $\mathrm{sc}^+$-multi-section $\theta_t\colon \bB^{\chi;1}\times [0,1]  \to \bW^\chi$ such that the solution space to $\bar\partial_{J}=\theta_{t}$ is compact.  Assume additionally that solutions over the boundary $t=0,1\in\partial[0,1]$ are transversely cut out, and that the solution space meets the locus of maps with one boundary node transversely.  

Then there are finitely many 
parameterized basic sections $\lambda_{\alpha,t}$, $\alpha\in A$ 
of arbitrarily small auxiliary norm in charts centered at $((u_{\alpha},S_{\alpha}),t_\alpha)$ where $t_\alpha$ lies in the interior of $[0,1]$ such that the space of solutions to 
\begin{equation}\label{eq : (1) bare transversality}
\bar\partial_{J} u = \theta_{t}(u)+\sum \lambda_{\alpha;t}(u) 
\end{equation}
is compact and transversely cut out; and such that the solutions
agree with the solutions of the unperturbed equation for $t$ in some neighborhood of $\partial[0,1]$ and in some neighborhood of the locus of maps in $\bB^{\chi;1}\times [0,1]$ with one boundary node. 
\end{itemize}
\end{lemma} 
\begin{proof}
	We first consider $(0)$.
	By hypothesis the moduli space of solutions $\mathcal{M}^{\chi}$ to $\bar\partial_{J}u=\theta(u)$ in $\bB^{\chi}$ is compact. 
	Cover $\mathcal{M}^{\chi}$ by coordinate neighborhoods $\bU(u_\beta,S_\beta)$ inside $\bB^{\chi}$
    for which Corollory \ref{cor: local perturbation at solution} holds.  We then find finitely many basic sections 
	$\kappa_\alpha$, $\alpha\in A$, such that all solutions to
    $$\bar\partial_J(u,\epsilon)=\theta(u)+\sum_\alpha\epsilon_\alpha\kappa_\alpha(u),
    $$
    $(u,\epsilon_\alpha)\in\mathbf{B}^{\chi}\times[0,1]^{|A|}$, with $\epsilon=0$, i.e., $(u,\epsilon)=(u,0)$, are transverse. By the Sard-Smale theorem (compare \cite[Definition 15.2 and Theorem 15.3]{HWZ}), we then have transversality of solutions to the equation $\bar\partial_{J}(u)-\theta^{\chi}(u)-\sum_{\alpha\in A}\epsilon_{\alpha}\kappa_\alpha$ for an open dense set of $\epsilon=(\epsilon_{\alpha})$ in a neighborhood of $0\in[0,1]^{|A|}$. 
    Take $\lambda_{\alpha}=\epsilon_{\alpha}\kappa_\alpha$ for $\epsilon$ in this subset. 
    
	We turn to compactness of the perturbed solution set.  The operator $\bar\partial_J-\theta$ is an $\mathrm{sc}$-Fredholm section $\mathbf{Z}\to\mathbf{W}$ with compact solution set. 
    The section $\sum\lambda_{\alpha}$ is $\mathrm{sc}^{+}$ by Lemma \ref{l: chi0-basic sc smooth} and have auxiliary norm of size $\mathcal{O}(|\epsilon|)$. Hence the solution set $\bar\partial_J^{-1}(\theta+\sum \lambda_\alpha)$ is compact (and transverse) for all sufficiently small $|\epsilon|$ by \cite[Theorem 5.1]{HWZ}.
    
	The argument for $(1)$ is the same after observing that the desired transversality holds already near $\partial[0,1]$ and in a neighborhood of the locus of maps with a boundary node, hence we need introduce no new perturbations here. 
\end{proof}

\begin{remark}
    It is only in Lemma \ref{bare transversality} that the basic perturbations, which by definition are (not multi) sections defined in \'etale charts, are symmetrized to multi-sections and then descended to $\mathbf{Z}$.  We will not invoke this lemma until we turn to the proofs of our main results in Theorem \ref{main theorem 0-parameter}.  
\end{remark}

\subsection{Genericity of solutions}\label{sec: add Fredholm}
Let us also note that
Lemma \ref{bare transversality}  asserts nothing yet about the maps $u$ themselves which are solutions, only that the solution space is transversely cut out.  We will later want to know that we may choose perturbations so that the maps themselves have some particular genericity properties.  This is checked by a standard general position argument: we introduce auxillary marked points and formulate the failure of the desired generic properties as some corresponding Fredholm problems, then argue by Sard-Smale.  We give the details in the remainder of this section. 

We 
write $\bB_n^{\chi;0}$ to indicate the locus of bare maps from smooth domains whose domain has $n$ marked points
(and no punctures). We indicate in context whether the marked points should be interior or boundary. Recall the bundle of complex anti-linear differentials $\bW^{\chi;0}\to\bB^{\chi;0}$. In the presence of marked points we have the analogous bundle $\bW^{\chi;0}_{n}\to\bB^{\chi;0}_{n}$. 

We first list the various Fredholm problems we will consider and their indices, then draw conclusions. 

\subsubsection{The Cauchy-Riemann operator}\label{ssec:basicoperator}
Consider the polyfold bundle $\bW^{\chi;0} \to \bB^{\chi;0}$. 
Our basic Fredholm section is just the Cauchy-Riemann operator, 
$\bar\pa_{J}$. Because we are studying the case of a Calabi-Yau 3-fold with Maslov zero Lagrangian boundary conditions, the index is, per Theorem \ref{thm:basicdbarsection},
\[ 
\ind(\bar\pa_{J})=0.
\]

\subsubsection{The complex linear differential}\label{ssec:clindiff}
Consider the polyfold bundle $\bW^{\chi;0}_{1} \to \bB_{1}^{\chi;0}$. 
We extend the bundle to
\[ 
\bW_{1}^{\chi;0} \oplus \mathrm{Hom}^{0,1}(T_{p}S,T_{u(p)}X)\to \bB_{1}^{\chi;0},
\] 
by adding the space of $(J,j)$-complex linear maps from the tangent space of the surface at the marked point $p$ to the tangent space of the target at its image $u(p)$. In this setting we will study the extended $\bar\partial_{J}$ section $\bar\partial_{J}\oplus \partial_{J}(p)$ given by
\[ 
(\bar\partial_{J}\oplus \partial_{J}(p))(u) = \bar\partial_{J} u\oplus \partial_{J}u(p),
\] 
where $\partial_{J}u(p)$ denotes the complex linear part of the differential at $p$: $\partial_{J}u=\frac12\left(du - J\circ du\circ j\right)$. Since $u\in H^{3}_{\rm loc}$, the differential of $u$ is continuous and the evaluation described is well defined.

In the case of an interior marked point $p$, the Fredholm index of $\bar\partial_{J}\oplus \partial_{J}(p)$ is
\begin{equation}\label{eq:cdiffinterior}
	\ind(\bar\partial_{J}\oplus \partial_{J}(p))= \ind(\bar\partial_{J})+ 2 - 6 = -4,
\end{equation}
and in the case of a boundary marked point $q$, 
\begin{equation}\label{eq:cdiffboundary}
	\ind(\bar\partial_{J}\oplus \partial_{J}(q))= \ind(\bar\partial_{J})+ 1 - 3 = -2.
\end{equation}

\subsubsection{Interior crossings}\label{ssec:interiorinjective}
Consider the polyfold bundle $\bW^{\chi;0}_{2} \to \bB_{2}^{\chi;0}$ where $\bB_{2}^{\chi;0}$ is the space of stable maps with two interior marked points $p_{1}$ and $p_{2}$. 
We extend the bundle to
\[ 
\bB_{2}^{\chi;0} \times X\times X\to \bB_{2}^{\chi;0}.
\] 
In this setting we will study the extended $\bar\partial_{J}$-section $\bar\pa_{J}\times\ev_{12}$ given by
\[ 
\bar\pa_{J}\times\ev_{12}\, (u)= \bar\partial_{J} u\times (u(p_{1}),u(p_{2})).
\] 

We then consider $(\bar\pa_{J}\times\ev_{12})^{-1}(0\times\Delta_{X})$, where $\Delta_{X}$ is the diagonal in $X\times X$. The Fredholm index $\ind(\bar\pa_{J}\times\ev_{12},0\times\Delta_{X})$ of this problem is
\[ 
\ind(\bar\pa_{J}\times\ev_{12},0\times\Delta_{X})= \ind(\bar\partial_{J})+ 4 - 6 = -2.
\]

\subsubsection{Boundary crossings}\label{ssec:injective}
Consider the polyfold bundle $\bW_{2}^{\chi;0} \to \bB_{2}^{\chi;0}$ where $\bB_{2}^{\chi;0}$ is the space of stable maps with two boundary marked points $p_{1}$ and $p_{2}$. 
We extend the bundle to
\[ 
\bB_{2}^{\chi;0} \times L\times L\to \bB_{2}^{\chi;0}.
\] 
In this setting we will study the extended $\bar\partial_{J}$-section $\bar\pa_{J}\times\ev_{12}$ given by
\[ 
\bar\pa_{J}\times\ev_{12}\, (u)= \bar\partial_{J} u\times (u(p_{1}),u(p_{2})).
\] 

We then consider $(\bar\pa_{J}\times\ev_{12})^{-1}(0\times\Delta_{L})$, where $\Delta_{L}$ is the diagonal in $L\times L$. The Fredholm index $\ind(\bar\pa_{J}\times\ev_{12},0\times\Delta_{L})$ of this problem is
\[ 
\ind(\bar\pa_{J}\times\ev_{12},0\times\Delta_{L})= \ind(\bar\partial_{J})+ 2 - 3 = -1.
\]
That is, for generic $1$-parameter families of perturbations, one expects isolated instances when the boundary curve has a self-intersection, and at such instances, the two tangent vectors together with the tangent vector of the deformation at the intersection point span the tangent space of $L$.  

\subsubsection{Crossing $L$}\label{ssec:4chainandL}
Consider the polyfold bundle $\bW^{\chi;0}_{1} \to \bB_{1}^{\chi;0}$ where $\bB_{1}^{\chi;0}$ is the space of stable maps with one interior marked point $p$. 
We extend the bundle to
\[ 
\bW \times X\to \bB_1^{\chi;0}.
\]  
In this setting we will study the extended $\bar\partial_{J}$-section $\bar\pa_{J}\times\ev$ given by
\[ 
(\bar\pa_{J}\times\ev)(u) = \bar\partial_{J} u\times (u(p)).
\] 

We then consider $\bar\pa_{J}\times\ev^{-1}(L)$. The Fredholm index $\ind(\bar\pa_{J}\times\ev,0\times L)$ of this problem is then
\[ 
\ind(\bar\pa_{J}\times\ev,0\times L)= \ind(\bar\partial_{J})+ 2 - 3 = -1.
\]
This means that for generic $1$-parameter families there are isolated instances where a curve meets $L$ at an interior point and at such instances the image of the differential of the map together with the tangent space of $L$ and the tangent vector of the deformation spans the tangent space to $X$.

\subsubsection{Genericity}\label{sssec : genericity}
Using the  Fredholm problems above, we  refine Lemma \ref{bare transversality} as follows.

\begin{lemma}\label{l: generic solutions 0}
    In the situation of Lemma \ref{bare transversality} $(0)$, we may ensure that all solutions to the perturbed equation are embeddings with everywhere non-zero complex linear differential and with interior disjoint from $L$. 
\end{lemma} 
\begin{proof}
	To see that the there are basic perturbations so that solutions are injective in the interior we consider the operator $\bar\partial_{J}\times\ev_{12}$ as in Section \ref{ssec:interiorinjective} and consider the compact inverse image of $0\times\Delta_{X}$.
	Arguing as the proof of Lemma \ref{bare transversality} $(0)$, we find a finite number of basic perturbations that span the cokernel of this operator at every solution and then apply the Sard-Smale theorem to find finitely many basic perturbations so that solutions are transverse and injective in the interior. Repeating the argument with the operators in Sections \ref{ssec:clindiff}, \ref{ssec:injective}, and \ref{ssec:4chainandL} we find similarly that after possibly adding more basic perturbations, solutions have injective complex linear differential, are injective on the boundary, and miss $L$ in the interior. The lemma follows.
\end{proof}

Similarly, Lemma \ref{bare transversality} $(1)$ implies that in the $1$-parameteric case, the space of solutions to $\bar\partial_J-\theta_t$ is a 1-dimensional manifold-with-boundary consisting of the solutions with nodal domains and using the Fredholm problems above we refine it as follows:  

\begin{lemma}\label{l: generic solutions 1}
    In the situation of Lemma \ref{bare transversality} $(1)$, we may ensure the solutions are embeddings except for isolated instances of maps with smooth domains and one of the following two generic degenerations:  
	\begin{itemize}
	\item[$(a)$] A transverse boundary crossing, i.e., the tangent vectors along the intersecting boundaries, together with the limit of secants connecting the two eventually colliding points, span a three dimensional space. 
	\item[$(b)$] A transverse intersection with $L$: the image of the tangent plane to the curve, the tangent to the Lagrangian, and the limiting secant together span a six dimensional space. 
	\end{itemize}
\end{lemma}
\begin{proof}
	The proof is directly analogous to the proof of Lemma \ref{bare transversality} $(1)$: we find an open cover of the solution space in the complement of maps over $\partial[0,1]$ and the locus of nodal maps and basic sections that span the cokernel of the augmented operators associated to $\bar\partial_{J}$ in Sections \ref{ssec:4chainandL}, \ref{ssec:clindiff}, \ref{ssec:interiorinjective}, \ref{ssec:injective}, and \ref{ssec:4chainandL}. Using compactness of the solution space we pass to a finite cover and use the Sard-Smale theorem to get transversality. The Fredholm index calculations in Section \ref{sec: add Fredholm} then implies the lemma.
\end{proof}

\subsubsection{Gluing generic 1-parameter families}\label{ssec : gluing 1-parameter familes}
In this section we show that solutions of $\bar\partial_J$-equation perturbed by basic perturbations for which Lemma \ref{l: generic solutions 1} holds have standard boundary crossings/hyperbolic nodes, see \cite[Definition 3.1]{SOB}, in case $(a)$ and standard Lagrangian crossings/elliptic nodes, see \cite[Definition 3.2]{SOB}, in case $(b)$. (This ensures that wall-crossings in solution spaces for the perturbed Cauchy-Riemann equations are compatible withe the HOMFLYPT skein-relations.)

To this end we first discuss some general properties of sections that vanishes near $L$. 
\begin{definition}\label{def : near L property}
A 1-parameter family of sections $\theta_t\colon \bW\times[0,1]\to\bZ\times[0,1]$ \emph{vanishes near $L$} if 
there is a neighborhood $N(L)\subset X$ of the Lagrangian $L\subset X$ such that for any $(u,S)\in\bZ$, $\theta_t(u,S)$ vanishes on the pre-image of $N(L)$, $\theta_t(u,S)|_{u^{-1}(N(L))}=0$.
\end{definition}

We next consider transversely cut out $1$-parameter families of solutions to the equation $\bar\partial_J-\theta_t=0$. For later reference (Lemma \ref{l : standard crossings/nodes in 1 parameter families} below) we include references to the corresponding results in standard Floer gluing for $\bar\partial_{J_t}$ established in \cite{SOB}.
Consider a $1$-parameter family of sections $\theta_t$ over the space $\bZ_{>\chi_0}\times[0,1]$ of stable maps of Euler characteristic $>\chi_0$ that vanishes near $L$. Let $(u_{t_0},S_{t_0})$ be a bare $1$-generic map of Euler characteristic $\chi_0+1$ with either $(a)$ a single boundary crossing, compare \cite[Section 4.2.2]{SOB} or $(b)$ a single interior intersection with $L$, compare \cite[Section 4.2.4]{SOB}. Then we can mark the boundary intersections in case $(a)$ and consider them as a hyperbolic boundary node, or in case $(b)$, mark the interior intersection point with $L$ and consider it an elliptic boundary node. We denote the new domain so obtained $\hat S_{t_0}$ and the map $u_{t_0}$ on this new domain $\hat u_{t_0}$. Then $(\hat u_{t_0},\hat S_{t_0})$ is a stable map of Euler characteristic $\chi_0$ which is bare and has a single boundary node. 

Consider a neighborhood $V'$ of $(\hat u_{t_0},\hat S_{t_0})$ in the codimension one subset of $\bZ_{>\chi_0-1}\times[0,1]$ of stable maps with a single boundary node. We define a section $\theta_t'$ of $\bW\times[0,1]$ over $V'$ as follows. If $(v,T)\in V'$ then the normalization $(\tilde v,\tilde T)$ is a map in $\bZ_{<\chi_0}$ and we take $\theta_t'(v,T)$ to agree with $\theta_t(\tilde v,\tilde T)$. We then extend $\theta_t'$ to a neighborhood $V$ of $V'$ in $\bZ_{>\chi_0-1}\times[0,1]$ by hat gluing $\widehat\oplus_a$, for small $a>0$, see Section \ref{sec:formaldiff}. We denote the resulting local section of $\bW\times[0,1]$ over $V$ by $\hat\theta_t$ and call $\hat\theta_t$ on $V$ the \emph{local extension of $\theta_t$}. Note that $\hat\theta_t$ also vanishes near $L$. 

Consider now a $1$-parameter family of $\mathrm{sc}^+$-sections $\theta_t$ on $\bZ_{>\chi_0}\times[0,1]$ that vanish near $L$ and assume that $(u_t,S_t)$, $t\in(t_0-\delta,t_0+\delta)$ is a $1$-parameter family of bare solutions to $\bar\partial_J-\theta_t=0$ with $\chi(S_t)=\chi_0$ such that $(u_{t_0},S_{t_0})$ has either $(a)$ a boundary crossing or $(b)$ an interior intersection with $L$ and such that $(u_t,S_t)$ is 0-generic for $t\ne t_0$ and such that Lemma \ref{l: generic solutions 1} holds for $(u_t,S_t)$.

\begin{lemma}\label{l: 1-generic nodal solutions}
    If $\hat\theta_t$ is the local extension of $\theta_t$ then $(\hat u_{t_0},\hat S_{t_0})$ solves $\bar\partial_J-\hat\theta_t=0$ and the solutions $(\hat u_t,\hat S_t)$ to $\bar\partial_J-\hat\theta_t=0$ in $V$ constitute a transversely cut out 1-generic family, in particular $(\hat u_t,\hat S_t)$ is 0-generic for $t\ne t_0$.
\end{lemma}

\begin{proof}
It is immediate that $(\hat u_{t_0},\hat S_{t_0})$ solves $\bar\partial_J-\hat\theta_t=0$. Since $\bar\partial_J -\hat\theta_t$ is $\mathrm{sc}$-Fredholm, it sufficies by \cite[Theorem 3.13]{HWZ} to show that the linearization $\hat D:=L_{(\hat u_{t_0},\hat S_{t_0})}(\bar\partial_J -\hat\theta_t)$ is surjective and that its kernel $\mathrm{ker}(\hat L)$ is in good position, see \cite[Definition 3.9]{HWZ} with respect to the nodal boundary stratum. We show this by relating the tangent spaces of $\bZ$ to the maps $(\hat u_{t_0},\hat S_{t_0})$ and $(u_{t_0},S_{t_0})$ and the linearizations $D:=L_{(u_{t_0},S_{t_0})}(\bar\partial_J -\theta_t)$ and $\hat D$. 

Consider first case $(a)$. Introduce punctures at the double points $p_1$ and $p_2$ on the boundary of $(u_{t_0},S_{t_0})$. For shorter notation we write $q_k:=u_{t_0}(p_k)$, $k=1,2$, and note that $q_1=q_2$. Then using local coordinates as in \eqref{eq: nbhd basic perturbation} around $(u_{t_0},S_{t_0})$, the tangent space to $(u_{t_0},S_{t_0})$ is given by
\begin{equation}\label{eq: tangent space bdry double pt}
T_{(u_{t_0},S_{t_0})}\bZ_{<\chi_0} \ = \ T\boldsymbol{j}_0 \oplus \R\cdot j_1\oplus \R\cdot j_2 \oplus H^{3,\delta}(S_{t_0},u_{t_0}^\ast TX)\oplus T_{q_1}L\oplus T_{q_2}L\oplus T[0,1],
\end{equation}
where the summands in the right hand side are as follows: $T\boldsymbol{j}_0$ is the finite dimensional space of linearized conformal variations of the domain $S_{t_0}$ without punctures, they can be chosen to be supported outside neighborhoods of the punctures. The tangent vectors $j_k$, $k=1,2$, are linearized conformal variations corresponding to moving the boundary punctures, we represent them as $j_k=du_{t_0}(\bar\partial(\beta_k w_k))$ where $\beta_k$ is a cut off function on $S_{t_0}$ equal to $1$ in a neighborhood of the puncture and equal to $0$ on the rest of $S_{t_0}$, where $w_k$ is a holomorphic vector field in the strip neighborhood of the puncture with a simple pole at the puncture, if we use coordinates $\R_+\times[0,\frac12]$ then the vector field is $w_k(s,t)=e^{2\pi(s+it)}$, see \cite[Section 2.1.1.]{E}. The infinite dimensional subspace $H^{3,\delta}(S_{t_0},u_{t_0}^\ast TX)$ correspond to linearized variations of the map $u_{t_0}$, $T_{q_k}L$, $k=1,2$, are the linear variations of the asymptotic constants (valued in the Lagrangian) at the two punctures, and $T[0,1]$ is the tangent space to parameter space $[0,1]$. 

By assumption Theorem \ref{l: generic solutions 1} $(a)$ holds at $(u_{t_0},S_{t_0})$. This implies first that the kernel of $D$ on \eqref{eq: tangent space bdry double pt} is 3-dimensional: $1$ dimension for the parameterized solution space and $2$ dimensions for moving the marked points. Consider the following three tangent vectors: the two vectors $j_k+c_k$, $k=1,2$, where $c_k\in T_{q_k}L$ is the vector corresponding to the shift of asymptotic constant matching the motion of the marked point along the boundary according to vector field of $j_k$, and the vector $\partial_t+b_1+b_2$, where $\partial_t\in T_{t_0}[0,1]$, $b_k\in T_{q_k}L$ is such that $b_1-b_2$ is a linearized variation independent from the tangent vectors of the two boundary branches at the double point $q_1=q_2$. The assumption second implies that the subspace 
\begin{equation}\label{eq : approx kernel hyp}
\kappa_{\mathrm{hyp}} = \mathrm{span}\{j_1+c_1 \ , \ j_2+c_2 \ , \ \partial_t+b_1+b_2\}    
\end{equation}
is an approximate kernel of $L$ in the sense that $L$ in invertible on its ($L^2$-)complement.    

Consider now the tangent space to $(\hat u_{t_0},\hat S_{t_0})$. We have
\begin{equation}\label{eq: tangent space hyp node}
T_{(\hat u_{t_0},\hat S_{t_0})}\bZ_{<\chi_0-1} \ = \ T\boldsymbol{j}_0 \oplus \R\cdot \hat j_1\oplus \R\cdot \hat j_2\oplus \R\cdot \hat j \oplus H^{3,\delta}(S_{t_0},u_{t_0}^\ast TX)\oplus T_qL \oplus T[0,1],
\end{equation}
where $q=q_1=q_2$. Here $T_qL$ is the space of linearized asymptotic constants at the node, in terms of \eqref{eq: tangent space bdry double pt}, $T_qL$ is the diagonal in $T_{q_1}L\oplus T_{q_2}L$. The conformal variations $\hat j_1$ and $\hat j_2$ again move the location of the node (thereby changing the conformal structure of $\hat S_{t_0}$ to first order), in terms of \eqref{eq: tangent space bdry double pt}, $\hat j_k=j_k$, and the conformal variation $\hat j$ corresponds to the gluing parameter in the neck region and is the linear variation in the direction normal to the nodal locus. It is represented by $du_{t_0}(\beta w)$, where $\beta$ is a cut off function equal to $1$ in a neighborhood of the punctures and $0$ outside and where $w$ is holomorphic vector field in the strip region (corresponding to translations) that is directed towards both punctures. 

The complement $\hat\gamma^\perp$ of $\hat\gamma$ in $T_{(\hat u_{t_0},\hat S_{t_0})}\bZ_{>\chi_0-1}$ embeds into $T_{(u_{t_0}, S_{t_0})}\bZ_{>\chi_0}$ by including $T_qL$ as the diagonal and taking $\hat j_k$ to $j_k$ and that the $\hat D$ equals the this inclusion composed with $D$. The subspace $T\boldsymbol{j}_0 \oplus H^{3,\delta}(S_{t_0},u_{t_0}^\ast TX)\subset\hat\gamma^\perp$ is perpendicular to $\kappa$ in \eqref{eq : approx kernel hyp}. Let $\tau\subset \hat\gamma^\perp$ be the subspace
\[
\tau_{\mathrm{hyp}} = \mathrm{span}\{j_1 \ , \ j_2 \ , \ T_qL \ , \ \partial_t\}.
\]
Then
\[
\tau_{\mathrm{hyp}}\cap \kappa_{\mathrm{hyp}}=\{0\}
\]
and hence $D$ is invertible on the image of $\hat \gamma^{\perp}$. It follows that $\hat D$ is invertible on $\hat\gamma^\perp$. This then implies that the kernel of $\hat D$ on $T_{(\hat u_{t_0},\hat S_{t_0})}$ is 1-dimensional. Furthermore, noting that if $v\in T_{(\hat u_{t_0},\hat S_{t_0})}\bZ_{>\chi_0-1}$ and $\hat D(v)=0$ then $\langle v,\hat\gamma\rangle\ne 0$, it follows that the kernel is in good position.

Case $(b)$ is similar. Introduce a puncture $p_1$ at the intersection of $(u_{t_0},S_{t_0})$ and $L$, $u_{t_0}(p)=q\in L$. Then using local coordinates as in \eqref{eq: nbhd basic perturbation} around $(u_{t_0},S_{t_0})$, the tangent space to $(u_{t_0},S_{t_0})$ is given by
\begin{equation}\label{eq: tangent space intersection with L}
T_{(u_{t_0},S_{t_0})}\bZ_{<\chi_0} \ = \ T\boldsymbol{j}_0 \oplus \R \cdot j_1\oplus \R\cdot j_2 \oplus H^{3,\delta}(S_{t_0},u_{t_0}^\ast TX)\oplus T_qX \oplus T[0,1],
\end{equation}
where $j_1$ and $j_2$ are the two directions of linearized motions of the interior puncture and $T_qX$ are the linearized variations of the asymptotic constant at $p$.

By assumption, Theorem \ref{l: generic solutions 1} $(b)$ holds at $(u_{t_0},S_{t_0})$ which means that the kernel of $D$ is 3-dimensional: $1$ dimension for the parameterized solution space and $2$ dimensions for moving the marked point. 
Consider the following three tangent vectors: the two vectors $j_k+c_k$, $k=1,2$, where $c_k\in T_{q} X$ is the vector corresponding to the shift of asymptotic constant matching the motion of the marked point according to the vector field of $j_k$, and the vector $\partial_t+b_1$, where $b_1\in T_{q_1}X$ is such that $b_1$ is a linearized variation independent from $T_{q_1} L$ and the tangent space to the curve at $q_1$, i.e. $T{q_1}L+\R\cdot c_1+\R\cdot c_2$. The assumption second implies that the subspace 
\begin{equation}\label{eq : approx kernel ell}
\kappa_{\mathrm{ell}} = \mathrm{span}\{j_1+c_1 \ , \ j_2+c_2 \ , \ \partial_t+b_1\}    
\end{equation}
is an approximate kernel of $D$ in the sense that $L$ in invertible on its ($L^2$-)complement.

Consider next the tangent space to $(\hat u_{t_0},\hat S_{t_0})$:
\begin{equation}\label{eq: tangent space ell node}
T_{(\hat u_{t_0},\hat S_{t_0})}\bZ_{<\chi_0-1} \ = \ T\boldsymbol{j}_0 \oplus \R\cdot \hat j_1\oplus \R\cdot \hat j_2 \oplus\R\cdot\hat j H^{3,\delta}(S_{t_0},u_{t_0}^\ast TX)\oplus T_q L\oplus T[0,1].
\end{equation}
Here $T_qL$ is the space of linearized asymptotic constants at the elliptic node. The conformal variations $\hat j_k$ moves the boundary node and thereby changes the conformal structure of the domain and the conformal variation $\hat j$ corresponds to the gluing parameter at the elliptic boundary node. It is represented by $du_{t_0}(\beta w)$, where $\beta$ is a cut off function equal to $1$ in a neighborhood of the punctures and $0$ outside and where $w$ is holomorphic vector field in the cylinder (corresponding to translations) that is directed toward the puncture.  

The complement $\hat\gamma^\perp$ of $\hat\gamma$ in $T_{(\hat u_{t_0},\hat S_{t_0})}\bZ_{>\chi_0-1}$ embeds into $T_{(u_{t_0}, S_{t_0})}\bZ_{>\chi_0}$ by including $T_qL\subset T_q X$ and taking $\hat j_k$ to $j_k$, and $\hat D$ equals the this inclusion composed with $D$. The proof is then entirely parallel to the hyperbolic case:
the subspace $T\boldsymbol{j}_0 \oplus H^{3,\delta}(S_{t_0},u_{t_0}^\ast TX)\subset\hat\gamma^\perp$ is perpendicular to $\kappa_{\mathrm{ell}}$ and the finite dimensional space 
\[
\tau_{\mathrm{ell}} = \mathrm{span}\{j_1 \ , \ j_2 \ , \ T_qL \ , \ \partial_t\}.
\]
intersects $\kappa_{_{\mathrm{ell}}}$ trivially. It follows as before that the the kernel of $\hat D$ is 1-dimensional and that it is in good position.  
\end{proof}

Since the perturbation $\theta_t$ vanishes near $L$ we can show that the families in Lemma \ref{l: 1-generic nodal solutions} have standard boundary crossings/hyperbolic nodes or Lagrangian crossings/elliptic nodes. More precisley, since Floer gluing which underlies \cite[Theorem 3.13]{HWZ} is local and the perturbation $\theta_t$ and its local extension $\hat\theta_t$ do not have any support near $L$ and are elsewhere identified, the result on solutions to $\bar\partial_J-\theta_t=0$ and $\bar\partial_J-\hat\theta_t=0$ follows from the gluing result for the homogeneous ($\theta_t=\hat\theta_t=0$) equation which is \cite[Theorem 4.2]{SOB}. Likewise the more detailed result \cite[Proposition 4.19]{SOB} on generic boundary behavior for the moduli spaces holds as well. We have the following.

\begin{lemma}\label{l : standard crossings/nodes in 1 parameter families}
  The 1-parameter families of solutions to $\bar\partial_J=\theta_t$ near $(u_{t_0},S_{t_0})$ and $\bar\partial_J=\hat\theta_t$ near $(\hat u_{t_0},\hat S_{t_0})$ have standard boundary crossings/hyperbolic nodes in case $(a)$, see \cite[Definition 3.1]{SOB} and the standard Lagrangian crossing/elliptic nodes in case $(b)$, see \cite[Definition 3.2]{SOB}. 
  
  Furthermore, if $a\in [0,\epsilon)$ is the gluing parameter of the domain $\hat S_{t_0}$ and if $\tau\in (-\delta,\delta)$ parameterizes the section $\theta_t$ around $t=t_0$, then there exists non-zero constants $c_0,c_1\in\R$ such that the following holds:
  \begin{itemize}
  \item The germ at $\tau=0$ of the projection of the crossing family $(u_{\tau+t_0},S_{\tau+t_0})\mapsto \tau\in (-\delta,\delta)$ agrees up to first order with the curve $\tau\mapsto c_0\tau$.
  \item The germ at $(a,\tau)=(0,0)$ of the projection of the nodal family $(\hat u_{\tau+t_0},\hat S_{\tau+t_0})\mapsto (a,\tau)\in [0,\epsilon)\times (-\delta,\delta)$ agrees up to second order with the curve $a\mapsto (a,c_1a^2)$.
  \end{itemize}
\end{lemma}

\begin{proof}
The first part is \cite[Theorem 4.2]{SOB} proved in \cite[Sections 4.3.1--5]{SOB}. The second part is \cite[Proposition 4.19]{SOB}.
\end{proof}

\subsection{Stability of transverse solutions}\label{ssec : stab of transverse}
Here we record basic consequences of the stability of transversality, which, in the `polyfold' context, are justified by the `Fredholm package' of \cite[Chap. 5]{HWZ}

\begin{lemma} \label{estimate openness of genericity} 
Fix a bare and $0$-generic map $(u, S)$. Suppose given locally in some chart $\mathrm{sc}^+$ sections
$\theta, \eta$ of $\mathbf{W}\to\mathbf{Z}$.  Assume $(u, S)$ is a transverse solution to  $\bar\partial_J(u,S)-\theta(u,S)=\eta(u,S)$, and $\eta$ has sufficiently small  auxillary norm.

Then there is a neighborhood of $(u, S)$ and $\delta=\delta((u,S),\eta) > 0$ such that for any $\mathrm{sc}^+$-section $\eta'$ with $|\eta' - \eta| < \delta$, where $|\cdot|$ denotes the auxiliary norm, the equation
$\bar \partial_J (u',S') -\theta(u',S')= \eta'(u',S')$ has a unique solution $(u',S')$ in the neighborhood, which is moreover
bare, transversely cut out, and 0-generic.
\end{lemma} 

\begin{proof}
Since $\bar \partial_J$ is sc-Fredholm and $\theta, \eta$ are $\mathrm{sc}^+$, the various appearing linear combinations remain sc-Fredholm. 
By assumption, $\bar\partial -\theta$ is in general position with respect to
$\eta$. As remarked in the proof of \cite[Theorem 5.8]{HWZ}, general position persists under small perturbation, therefore if the sections $\eta$ and $\eta'$ are sufficiently close, $\bar\partial -\theta$ is in general position with $\eta'$ as well and there is a natural 1-1 correspondence between intersections of $\bar\partial -\theta$ and $\eta$ and  intersections of $\bar\partial -\theta$ and $\eta'$. 

It remains to check that $(u',S')$ is 0-generic. In the local chart, the auxiliary norm controls the $(3,\delta)$-distance, $\|\eta-\eta'\|_{3,\delta}$, between $\eta$ and $\eta'$. By elliptic regularity, $\|\eta-\eta'\|_{3,\delta}$ controls the $C^1$-distance between solutions $u$ and $u'$. Since $0$-generic is an open condition in the $C^1$-norm and $u$ is $0$-generic, so is $u'$ provided $\|\eta-\eta'\|_{3,\delta}$ is sufficiently small. The lemma follows.  	
\end{proof}

We also have the $1$-parametric counterpart. 

\begin{lemma} \label{estimate openness of genericity 1} 
Fix a bare $1$-generic map $(u_{t_0},S_{t_0})$. Supposed given locally in some chart around $(u_{t_0},S_{t_0})\times\{t_0\}$ 1-parametric $\mathrm{sc}^+$- sections $\theta_t,\eta_t$ of $\bW\times[0,1]\to\bZ\times[0,1]$. Assume that $(u_{t_0},S_{t_0})$ belongs to a transversely cut out 1-parameter family of solutions $(u_t,S_t)$ to $\bar\partial_J-\theta_t=\eta_t$ and that $\eta_t$ has sufficiently small auxiliary norm.	Then there is a neighborhood of $(u_{t_0},S_{t_0})$ and 
	$\delta=\delta(u, \eta_t) > 0$ such that for any smooth 1-parameter family of $\mathrm{sc}^+$-sections $\eta'_t$ with $|\eta'_{t} - \eta_{t}| < \delta$, where $|\cdot|$ denotes the auxiliary norm, the equation
	$\bar \partial_{J} - \theta_t =\eta'_{t}$ has a $1$-parameter family of solutions $(u_{t}',S_t')$ in the chart around $(u_{t_0},S_{t_0})$, which is conjugated by $C^1$-diffeomorphisms to the family $(u_{t},S_{t})$ in some neighborhood of $(u_{t_{0}},S_{t_0})$. In particular, the family $(u_{t}',S_t')$ is 1-generic, and transversely cut out.
\end{lemma} 

\begin{proof}
The proof is analogous to the proof of Lemma \ref{estimate openness of genericity}:
	the fact that the $1$-parameter family $(u'_t,S'_t)$ is transversely cut out is a consequence of stability of general position between $\bar\partial_J-\theta$ and $\eta$. To see that the 1-parameter families of solutions are conjugated by diffeomorphisms, note that the norm distance between $\eta_t$ and $\eta'_t$ controls the $C^1$-distance between $u_t$ and $u_t'$, again by elliptic regularity.  
\end{proof}

\section{Ghost bubble censorship} 
We turn to the problem of how to produce perturbations of the holomorphic curve equation with the property that the locus of bare solutions is compact.   
The main tool is the ghost bubble censorship principle \cite{ghost}, which we now review and adapt.
Let $X$ be a symplectic manifold, $L\subset X$ a Lagrangian, and 
$J$ an almost complex structure such that $L$ is 
locally the fixed locus of an anti-holomorphic involution. 

\subsection{Control near ghosts and ghost bubble formation}
Consider a sequence of bare maps $(u_\alpha,S_\alpha)$,
$u_\alpha\colon (S_\alpha,\partial S_{\alpha}) \to (X,L)$ which Gromov converges to a stable map $u\colon (S,\partial S) \to (X,L)$
with ghost components. 
Let $u_+\colon S_+ \to X$ be the restriction of $u$ to the components of positive symplectic area, 
and let $S^- = \overline{S \setminus S_+}$.  
Let $S^-_\alpha \subset S$ be the corresponding subset of $S_\alpha$, and assume $\bar \partial_J u_\alpha|_{S_\alpha^-} = 0$.  
\begin{definition} \label{sufficiently controlled}
The sequence $(u_\alpha,S_\alpha)$ is {\em controlled} if, around each node separating $S^-$ from $S \setminus S^-$, 
there exist neck coordinates $[-\rho_\alpha, \rho_\alpha] \times I \subset S_\alpha$, 
constants $\xi_{\alpha,\mathrm{p}},\xi_{\alpha,\mathrm{n}} \in \R^{2n}$ and exponential weights $\delta' > 0$ and $\delta > \pi$, such that if $u_{\alpha,\mathrm{p}}(s,t)=u_{\alpha}(s+\rho_\alpha,t)$ and $u_{\alpha,\mathrm{n}}(s,t)=u_{\alpha}(s-\rho_\alpha,t)$ then
		\begin{align}
			\label{limit compatible 2} 
			&\|(\bar\partial_{J}u_{\alpha,\mathrm{p}} \ - \ \xi_{\alpha,\mathrm{p}}\otimes d\bar z)|_{[- \rho_\alpha+1, 0] \times I}\|_{2,2\pi + \delta'} \\\notag
			&\qquad\quad+ 
			\|(\bar\partial_{J} u_{\alpha,\mathrm{n}} \ - \ \xi_{\alpha,\mathrm{n}}\otimes d\bar z) |_{[0, \rho_\alpha-1] \times I}\|_{2,2\pi + \delta'} = \mathcal{O}(1).
		\end{align}  
		\begin{equation}
			\label{cut-off decay}
			\|\bar\partial_{J} u_{\alpha}|_{[-1,1]\times I}\|_{0} = {\mbox{\tiny$\mathcal{O}$}} (e^{-2\pi\rho_{\alpha}}).
		\end{equation}
		\begin{equation}
			\label{cut-off decay2}
			\|\bar\partial_{J} u_{\alpha}|_{[-1,1]\times I}\|_{2} = \mathcal{O}(e^{-\delta \rho_{\alpha}}).
		\end{equation}
\end{definition} 

\begin{definition}
A map $u\colon S \to X$ is a {\em $\partial_J$-generic immersion} if 
$\partial_J u$ never vanishes, the two values of $\partial_J u$ are linearly independent at double points, 
and there are no triple points.  
\end{definition}

\begin{theorem} \cite[Theorems 1.5, 1.8, 1.10]{ghost} \label{ghost bubble censorship} 
Given a controlled ghost bubble formation as in Definition \ref{sufficiently controlled}, 
the bare part of the map $u_{+}\colon S_{+} \to X$ is \emph{not} a $\partial_J$-generic immersion. 
\end{theorem}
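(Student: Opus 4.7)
The plan is to run the analysis locally at a single neck where a ghost component of $S^-$ attaches to the bare part $S_+$. First I would fix such a neck $[-\rho_\alpha,\rho_\alpha]\times I\subset S_\alpha$, with the bare side (say $s\to -\rho_\alpha$) attaching to a point $p\in S_+$ with image $c:=u_+(p)\in X$; in holomorphic polar coordinates at $p$ and local coordinates in $X$ centered at $c$ I would write $u_\alpha = c + \widetilde u_\alpha$ on the bare half, with $\widetilde u_\alpha\to 0$ as $\rho_\alpha\to\infty$ by Gromov convergence.

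Second, I would extract a sharp asymptotic expansion for $\widetilde u_\alpha$ on the bare half of the neck. The linearization of $\bar\partial_J$ at the constant map $c$ is asymptotic to $\partial_s + i\partial_t$, whose first nonzero eigenvalue on the relevant interval is $2\pi$ (on $I=\R/\Z$ for interior nodes, with analogous statements for boundary types). The estimates \eqref{limit compatible 2}--\eqref{cut-off decay2} are calibrated precisely for this step: the inhomogeneity $\bar\partial_J u_\alpha$ is placed at weight $2\pi+\delta'$, strictly above the first critical weight; the middle-of-the-neck control \eqref{cut-off decay} is $o(e^{-2\pi\rho_\alpha})$; and the $H^2$ norm lives at weight $\delta>\pi$. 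Standard asymptotic analysis for perturbed Cauchy--Riemann operators on long half-cylinders should then yield an expansion
\begin{equation*}
\widetilde u_\alpha(s,t) \;=\; a_\alpha\, e^{-2\pi s}e^{-2\pi i t} \;+\; r_\alpha(s,t),
\end{equation*}
where $r_\alpha$ lies in a weighted space strictly stronger than $e^{-2\pi s}$ and where the leading coefficient $a_\alpha\in\C^n$ converges, up to a universal nonzero constant, to $\partial_J u_+(p)$ read through the chosen coordinates. The piece $\xi_{\alpha,\mathrm p}\otimes d\bar z$ of the inhomogeneity projects onto the constant-in-$t$ Fourier mode and so cannot contaminate the leading $e^{-2\pi i t}$ coefficient.

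Third, I would combine the asymptotic expansion with the ghost side. Since $\bar\partial_J u_\alpha=0$ on $S_\alpha^-$ and $\int_{S^-} u^*\omega=0$, every connected ghost component is constant, so all of its attaching points to $S_+$ share a common image $q\in X$. By stability of $u$, and after collapsing chains of unstable rational ghosts, each ghost connected component either $(i)$ carries at least two attaching points to $S_+$, or $(ii)$ carries a single attaching point but has positive geometric genus (or non-disk boundary topology). In case $(i)$, Step 2 provides attaching points $p_1,p_2,\ldots\in S_+$ with $u_+(p_j)=q$, so $u_+$ already has a multiple point at $q$; the matching of $u_\alpha$ with a constant on the ghost side forces the asymptotic coefficients $a_{\alpha,j}$ to become linearly dependent in the limit, so the two branches of $u_+$ at $q$ have parallel $\partial_J u_+$, and, if three or more attaching points exist, $u_+$ has a triple point at $q$. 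Either way $u_+$ is not a $\partial_J$-generic immersion. In case $(ii)$, the matching with the positive-genus ghost forces $a_\alpha\to 0$, i.e.\ $\partial_J u_+(p)=0$, again violating the $\partial_J$-generic immersion condition.

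The main obstacle I expect is Step 2: rigorously establishing the expansion and the identification of $a_\alpha$ with $\partial_J u_+(p)$ in the presence of the inhomogeneity $\bar\partial_J u_\alpha$, whose natural decay rate $e^{-(2\pi+\delta')s}$ sits just beyond the critical weight $2\pi$ associated to the leading eigenmode. The hypotheses on $\delta$ and $\delta'$ are tuned precisely to avoid resonance, but verifying this and producing uniform-in-$\alpha$ estimates requires an explicit Green's function computation on the cylinder together with careful handling of $\rho_\alpha\to\infty$; this is the technical heart of the argument, and it is precisely what is carried out in \cite{ghost}.
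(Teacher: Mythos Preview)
The paper does not give its own proof of this theorem; it is imported wholesale from \cite{ghost} (Theorems 1.5, 1.8, 1.10 there), so there is nothing in the present paper to compare your argument against beyond the citation. Your outline is faithful to the strategy of \cite{ghost}: one isolates the first nontrivial Fourier mode of $u_\alpha$ on the neck (the one carrying $\partial_J u_+(p)$), uses the calibrated hypotheses \eqref{limit compatible 2}--\eqref{cut-off decay2} to show that the inhomogeneity cannot contaminate this mode, and then reads off a constraint on $\partial_J u_+$ at the Ploutonian points from the topology of the ghost side. You also correctly identify Step~2 (the uniform-in-$\alpha$ expansion and the identification of the leading coefficient with $\partial_J u_+(p)$) as the analytic core that \cite{ghost} supplies.

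One point in your Step~3 deserves sharpening. In case (i) you assert that ``matching of $u_\alpha$ with a constant on the ghost side forces the asymptotic coefficients $a_{\alpha,j}$ to become linearly dependent.'' Matching with a map converging to a constant by itself gives no such relation: any collection of leading modes of size $e^{-2\pi\rho_\alpha}$ is compatible with that. The actual constraint comes from the ghost topology. With exactly two attaching points, the same stability count you used for (ii) shows the ghost must carry positive genus (or nontrivial boundary topology), and it is a period/residue argument on the holomorphic ghost-side data, not mere smallness, that produces the linear relation among the $\partial_J u_+(p_j)$. For three or more attaching points the triple-point violation is immediate, as you say. This refinement is exactly what \cite{ghost} carries out; your sketch has the right conclusions but the mechanism in (i) is mis-attributed.
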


Recall that a standard chart $\bU \to \bZ$ centered around a map $u_0$ is determined in terms of the data
of fixed coordinate
charts $\R^{2n} \to X$ around each node of $u_0$, and neck parameterizations 
$[-\rho, \rho] \times I \to S$ for nearby curves, and in particular neck length functions $\rho\colon \bU \to (0, \infty]$.

\begin{definition}
In a standard chart $\bU$
centered around a map $(w,T)$ with constant ghosts, 
a section $\lambda\colon\bU \to \bW|_{\bU}$ is {\em controlled near ghosts}  if, for  $(u,S)\in\bU$,
$\lambda(u)$ vanishes on all ghost components of $S$, and
for each neck separating the ghost locus of $(w,T)$ 
from the bare locus,  there are functions 
$\xi_{\mathrm{p}}, \xi_{\mathrm{n}}\colon \bU \to \R^{2n}$ and exponential weights $\delta' > 0$ and $\delta > \pi$, such that, if $\lambda(u)_{\mathrm{p}}(s,t)=\lambda(u)(s+\rho_\alpha,t)$ and 
$\lambda(u)_{\mathrm{n}}(s,t)=\lambda(u)(s-\rho_\alpha,t)$
\begin{align}
	\label{limit compatible 2 section} 
	&\|(\lambda(u)_{\mathrm{p}}  \ - \ \xi_{\mathrm{p}}(u) \otimes d\bar z)|_{[- \rho+1, 0] \times I}\|_{2,2\pi + \delta'} \\\notag
	&\qquad\quad+ 
	\|( \lambda(u)_{\mathrm{n}} \ - \ \xi_{\mathrm{n}}(u)\otimes d\bar z) |_{[0, \rho-1] \times I}\|_{2,2\pi + \delta'} = \mathcal{O}(1).
\end{align}  
\begin{equation}
	\label{cut-off decay section}
	\|\lambda(u)|_{[-1,1]\times I}\|_{0} = {\mbox{\tiny$\mathcal{O}$}} (e^{-2\pi\rho}).
\end{equation}
\begin{equation}
	\label{cut-off decay2 section}
	\|\lambda(u)|_{[-1,1]\times I}\|_{2} = \mathcal{O}(e^{-\delta \rho}).
\end{equation}
The implicit constants in the estimates  depend on the section $\lambda$, but of course not on $\rho$.  
\end{definition}

Let $\bU$ and $\bV$ be standard charts around $(w,T)$ with constant ghosts. Consider $\lambda|_{\bU}\colon \bU\to\bW|_{\bU}$. Then $\lambda|_{\bU}$ induces a section $\tilde\lambda|_{\bV\cap\bU}\to \bW|_{\bV\cap\bU}$, where we think of $\bU\cap\bV\subset \bV$, so that $\tilde \lambda$ is a section in the local coordinates determined by $\bV$, defined over a subset.

\begin{lemma}\label{l:some=any}
	  If $\lambda|_{\bU}$ is controlled near ghosts then $\tilde \lambda|_{\bU\cap\bV}$ is controlled near ghosts.
\end{lemma}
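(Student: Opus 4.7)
The plan is to check that each of the three defining estimates of ``controlled near ghosts'' is invariant under passage from the chart $\bU$ to the chart $\bV$. Both charts are centered at the same stable map $(w,T)$ with constant ghosts, so on the overlap $\bU\cap\bV$ they differ by two pieces of data: a smooth diffeomorphism germ on $X$ around each image of a node, and a biholomorphism between the two choices of holomorphic polar coordinate on $S$ around each node. Expressed in the exponential neck variables $z = e^{-2\pi(s+it)}$, the change of domain polar coordinates asymptotically acts as a translation $(s,t)\mapsto(s+c, t+\alpha)$ plus corrections of order $e^{-2\pi s}$ as one moves into the cylinder, with the constants $c,\alpha$ (and all higher-order terms) varying smoothly with $u$. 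On any relatively compact subset of $\bU\cap\bV$, these quantities are uniformly bounded; in particular the two neck-length functions $\rho_{\bU}$ and $\rho_{\bV}$ differ by a bounded amount.

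With this picture in hand, I would verify each estimate separately. The asymptotic model $\xi(u)\otimes d\bar z$ (in the cylinder coordinates) is invariant under translation of $(s,t)$ and transforms linearly under the smooth target coordinate change, producing a new smoothly-varying model $\xi'(u)\otimes d\bar z$; this furnishes the replacement asymptotic data for the chart $\bV$. The remaining corrections to $\lambda(u)$ under change of chart come from the non-translational part of the domain reparameterization and from the $u$-dependence of the target diffeomorphism away from the nodal image; both decay exponentially into the cylinder at rate $e^{-2\pi s}$. The content of \eqref{limit compatible 2 section} being preserved is that these exponentially decaying pieces contribute $\mathcal{O}(1)$ to the weighted $H^{2, 2\pi+\delta'}$ norm on $[-\rho_{\bV}+1, 0]\times I$ and $[0, \rho_{\bV}-1]\times I$; one checks this by direct integration, using that a correction bounded pointwise (together with its $H^{2}$ derivatives) by a multiple of $e^{-2\pi s}$ has weighted $L^{2}$-mass on the relevant intervals uniformly bounded in $\rho$, because the integrand $e^{-4\pi s}e^{2(2\pi+\delta')s} = e^{2\delta' s}$ integrates to a uniformly bounded constant on a half-infinite ray decaying toward the bare component.

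For the decay estimates \eqref{cut-off decay section} and \eqref{cut-off decay2 section}, the central interval $[-1,1]\times I$ of chart $\bV$ corresponds to a bounded shift of the analogous interval in chart $\bU$; since $c$ is uniformly bounded and the combination of \eqref{limit compatible 2 section}, \eqref{cut-off decay section}, and \eqref{cut-off decay2 section} controls $\lambda(u)$ on any fixed bounded enlargement of the original central interval, the shifted interval is covered with only a bounded modification of the implicit constants. Moreover $e^{-2\pi\rho_{\bV}}$ and $e^{-\delta\rho_{\bV}}$ are comparable to $e^{-2\pi\rho_{\bU}}$ and $e^{-\delta\rho_{\bU}}$ up to uniformly bounded multiplicative factors, so both the ${\mbox{\tiny$\mathcal{O}$}}$ and $\mathcal{O}$ bounds are preserved. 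The main technical obstacle is the bookkeeping around how the weighted Sobolev norms transform under change of chart — specifically, ensuring that the correction terms contribute only $\mathcal{O}(1)$ rather than growing with $\rho$ — which rests on matching the weight exponent $2\pi+\delta'$ to the $e^{-2\pi s}$ decay rate of the coordinate-change corrections, and on the uniformity of all implicit constants in $u\in\bU\cap\bV$ guaranteed by smoothness of the HWZ gluing data.
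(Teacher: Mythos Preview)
Your approach is essentially the same as the paper's: identify the chart change as a target diffeomorphism $\Phi$ together with a conformal reparameterization of the cylindrical end, define the new asymptotic constant as the linear image $\tilde\xi = d\Phi(u(\infty))\xi$, and verify that the residual terms satisfy the three estimates. The paper carries this out more tersely, treating the domain change as an exact translation-plus-rotation and invoking only boundedness of $d\Phi$ and its derivatives for \eqref{cut-off decay section}--\eqref{cut-off decay2 section}, then Taylor expansion of $d\Phi$ around $u(\infty)$ for \eqref{limit compatible 2 section}. Your version is in fact more careful on the domain side, explicitly tracking the $O(e^{-2\pi s})$ corrections to the translation that the paper suppresses.

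One point to tighten: you assert that the $u$-dependence of the target diffeomorphism decays at rate $e^{-2\pi s}$. What actually controls $|d\Phi(u(s,t)) - d\Phi(u(\infty))|$ is $|u(s,t) - u(\infty)|$, and for maps of class $(3,\delta_0)$ this decays only like $e^{-(\delta_0-\epsilon)s}$ with $\delta_0 < 2\pi$ (cf.\ Lemma~\ref{l:H3deltatoC0}), not $e^{-2\pi s}$. The paper handles this by writing ``by Taylor expansion of $d\Phi$'' and implicitly using the weighted regularity of $u-u(\infty)$ rather than a fixed pointwise rate; your integration bookkeeping should be rephrased accordingly. This does not affect the overall strategy.
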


\begin{proof}
The change of coordinates determines a conformal equivalence between ends $\psi\colon [0,\infty)\times I\to [\rho_{0},\infty)\times I$ which is a translation and a rotation if $I=S^{1}$, up to exponentially small error, $(s,t)\mapsto (s+c,t+\theta) + \mathcal{O}(e^{-2\pi s})$, see Remark \ref{r : delta weight well def}. Let $\Phi\colon\R^{2n}\to\R^{2n}$ be the change of coordinates around the node corresponding to $\bU$ and $\bV$. Then
\[ 
d\tilde u = d\Phi(u)\circ du\circ d\psi 
\] 
and the section induced from $\xi_{\bullet}\otimes d\bar z$, $\bullet=\mathrm{p}$ or $\bullet=\mathrm{n}$ is 
\[ 
d\Phi(u)(\xi_{\bullet})\otimes d\psi d\bar z.
\]
It follows by boundedness of $d\Phi$ and its derivatives that \eqref{cut-off decay section} and \eqref{cut-off decay2 section} hold and also that
\begin{align} 
	&\|(\tilde \lambda(u)_{\mathrm{p}}  \ - \ d\Phi(u)(\xi_{\mathrm{p}})\otimes d\bar z)|_{[- \rho+1, 0] \times I}\|_{2,2\pi + \delta'} \\\notag
	&\qquad\quad+ 
	\|( \tilde\lambda(u)_{\mathrm{n}} \ - \ d\Phi(u)(\xi_{{\mathrm{n}}})\otimes d\bar z) |_{[0, \rho-1] \times I}\|_{2,2\pi + \delta'} = \mathcal{O}(1).
\end{align}
Then, if $\tilde\xi_{\bullet}=d\Phi(u(\infty))\xi_{\bullet}$ we have,
\[
\|(d\Phi(u)(\xi_{\bullet})\otimes d\bar z-\tilde\xi_{\bullet}\otimes d\bar z)|_{[- \rho+1, 0] \times I}\|_{2,2\pi + \delta'}=\mathcal{O}(1),
\]
by Taylor expanding $d\Phi$. The lemma follows.
\end{proof}

\begin{definition}\label{control}
We say that a section $\lambda\colon \bZ \to \bW$ is {\em controlled near ghosts} if $\lambda$ vanishes on all ghost components, 
and is controlled near ghosts
when restricted to some (equivalently any by Lemma \ref{l:some=any}) chart centered around each map with constant ghosts. 
\end{definition}

It is obvious from the definition that if $\lambda$ is  controlled near ghosts, then 
any sequence of solutions to $\bar \partial = \lambda$ will satisfy the hypothesis of 
Theorem \ref{ghost bubble censorship} and hence also the conclusion. 

We will need the following slight generalization of Theorem \ref{ghost bubble censorship}.  
\begin{proposition}\label{ghost bubble censorship linear combo}
Let $\theta\colon \bZ\to\bW$ be a section which is a finite linear combination $\theta=\sum_{k=1}^{m}\lambda_{k}$, where all $\lambda_{k}$ are controlled near ghosts. Let $(u_{\alpha},S_{\alpha})$ be a sequence of bare solutions to $\bar\partial_{J}(u_{\alpha},S _{\alpha})=\theta$ that Gromov converges to a nodal solution $\bar\partial_{J}(u,S)=\theta$.
Then the bare part $(u_{+},S_{+})$ of $(u,S)$ is not a $\partial_J$-generic immersion.	
\end{proposition}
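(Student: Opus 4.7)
The plan is to reduce Proposition \ref{ghost bubble censorship linear combo} directly to Theorem \ref{ghost bubble censorship}, by showing that the sum $\theta = \sum_{k=1}^{m} \lambda_{k}$ is itself controlled near ghosts in the sense of Definition \ref{control}, and that the resulting sequence of bare solutions $(u_{\alpha},S_{\alpha})$ is then controlled in the sense of Definition \ref{sufficiently controlled}.

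First I would observe that the collection of sections controlled near ghosts is closed under addition, complementing Lemma \ref{l:elementary properties}. Indeed, the three conditions \eqref{limit compatible 2 section}, \eqref{cut-off decay section}, \eqref{cut-off decay2 section} are each stated as bounds in fixed functional-analytic norms, and all are preserved under finite sums by the triangle inequality; the asymptotic differentials simply add. Concretely, at each neck separating $S^{-}$ from $S_{+}$ and for each $\lambda_{k}$ with asymptotic differentials $\xi^{(k)}_{\mathrm{p}},\xi^{(k)}_{\mathrm{n}}\colon \bU\to\R^{2n}$, the section $\theta$ has asymptotic differentials
\begin{equation*}
\xi_{\mathrm{p}}:=\sum_{k=1}^{m}\xi^{(k)}_{\mathrm{p}}, \qquad \xi_{\mathrm{n}}:=\sum_{k=1}^{m}\xi^{(k)}_{\mathrm{n}},
\end{equation*}
and the corresponding versions of \eqref{limit compatible 2 section}--\eqref{cut-off decay2 section} for $\theta$ hold with constants that depend on those of the individual $\lambda_{k}$ and on $m$. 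Moreover, since every $\lambda_{k}$ vanishes on ghost components, so does $\theta$.

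Next I would transfer these estimates from the section $\theta$ to the sequence $(u_{\alpha},S_{\alpha})$. Using the neck coordinates and disk charts provided by the Gromov convergence, evaluate the asymptotic differentials at the sequence: set $\xi_{\alpha,\mathrm{p}}:=\xi_{\mathrm{p}}(u_{\alpha})$ and $\xi_{\alpha,\mathrm{n}}:=\xi_{\mathrm{n}}(u_{\alpha})$. Because $\bar\partial_{J}u_{\alpha}=\theta(u_{\alpha})$ on $S_{\alpha}$, substituting $u=u_{\alpha}$ into the controlled-near-ghosts estimates for $\theta$ yields exactly the estimates \eqref{limit compatible 2}, \eqref{cut-off decay}, \eqref{cut-off decay2} of Definition \ref{sufficiently controlled}. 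The implicit constants stay uniform along the sequence because Gromov convergence confines $u_{\alpha}$ to a fixed compact neighborhood of $(u,S)$ in the chart $\bU$ in which the $\lambda_{k}$ were constructed.

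Finally, regarding the vanishing condition $\bar\partial_{J}u_{\alpha}|_{S^{-}_{\alpha}}=0$ appearing in Definition \ref{sufficiently controlled}: in our setting $\bar\partial_{J}u_{\alpha}=\theta(u_{\alpha})$ vanishes on the true ghost components of the limit and is controlled by the neck estimates throughout the region that collapses to $S^{-}$. The proof of Theorem \ref{ghost bubble censorship} given in \cite{ghost} is carried out precisely under the bounds \eqref{limit compatible 2}--\eqref{cut-off decay2}, which is why the word ``controlled'' was introduced in that definition; the strict vanishing in the body of the definition is subsumed by the estimates and plays no role in the argument. Thus Theorem \ref{ghost bubble censorship} applies to the sequence $(u_{\alpha},S_{\alpha})$ and yields the conclusion that the bare part $(u_{+},S_{+})$ is not a $\partial_{J}$-generic immersion.

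The main technical point is the preservation of the controlled-near-ghosts estimates under summation together with the identification of $\bar\partial_{J}u_{\alpha}$ with $\theta(u_{\alpha})$; both are essentially linear observations, so no serious obstacle arises. The only subtlety to watch is that the asymptotic differentials $\xi_{\mathrm{p}},\xi_{\mathrm{n}}$ obtained from summation are continuous functions on $\bU$, so their evaluation at $u_{\alpha}$ converges, and the constants in the estimates do not degenerate along the sequence.
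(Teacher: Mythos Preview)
Your core observation---that the controlled-near-ghosts conditions are linear, so $\theta=\sum_k\lambda_k$ is again controlled near ghosts with asymptotic differentials $\xi_{\bullet}=\sum_k\xi_{\bullet}^{(k)}$---is correct, and it is a cleaner conceptual input than what the paper writes.  From there you try to reduce directly to Theorem~\ref{ghost bubble censorship}.  The paper takes a genuinely different route: rather than invoking that theorem as a black box, it reopens the proof in \cite{ghost} and locates the one step that is sensitive to the structure of the perturbation, namely \cite[Lemma~4.2]{ghost}, where the $\bar\partial$-equation is solved explicitly in the neck and one tracks how a sharp cut-off affects the Fourier coefficient corresponding to $\partial_J u$.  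For a finite sum $\sum_k\lambda_k$ the several sharp cut-offs occur at different positions $\kappa_{\alpha;k}$ in the common neck coordinates (uniformly bounded, say all in $[\kappa,1]$ after a translation), and the paper checks that the resulting total change in the rescaled Fourier coefficient is the sum of the individual small changes, hence still small enough to conclude $+$-reasonable necks in the sense of \cite[Definition~1.3]{ghost}.

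The thin point in your argument is precisely the one the paper works out.  You assert that the hypothesis $\bar\partial_J u_\alpha|_{S_\alpha^-}=0$ in Definition~\ref{sufficiently controlled} ``is subsumed by the estimates and plays no role.''  That is a substantive claim about the internals of the proof in \cite{ghost}, and you give no justification.  The paper's proof shows that the sharp cut-off structure enters nontrivially through \cite[Lemma~4.2]{ghost}, and that accommodating several cut-off regions requires a modification, small but real.  If one reads the paper's informal sentence ``It is obvious from the definition \ldots'' literally, your argument would close; but the fact that the authors wrote a separate proof of the proposition signals that this remark is meant for a single sharp-hat-glued section, and your reduction glosses over exactly the step that needs verification for a sum.
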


\begin{proof}
 We explain the modifications needed to the proof in \cite{ghost} of Theorem \ref{ghost bubble censorship}.
Consider a neck region $[-\rho_{\alpha},\rho_{\alpha}]\times I$, $\rho_{\alpha}\to\infty$ as $\alpha\to\infty$ near the node in the limit. In these coordinates there are sharp cut-off regions $[\kappa_{\alpha;j},\kappa_{\alpha;j}+1]$ where the perturbation $\lambda_{j}$ is cut off, where $\kappa_{\alpha;j}$ are uniformly bounded as $\alpha\to\infty$. After a bounded translation of coordinates we assume that the largest $\kappa_{\alpha;j}$ satisfies $\kappa_{\alpha;j}=0$ and that the smallest is uniformly bounded by some fixed $\kappa<0$.  

The solutions to $\bar\partial_{J}(u_{\alpha},S_{\alpha})=\theta$ satisfy the main hypothesis of \cite[Proposition 9.1]{ghost}, i.e., $\|u_{\alpha}\|_{C^{0}}\le \epsilon$ and $\|\bar\partial_{J} u_{\alpha}\|_{1,\delta}\le \epsilon$ in the neck-region. The proposition is then a consequence of the arguments in \cite{ghost} once we show that solutions $\bar\partial_{J}(u_{\alpha},S_{\alpha})=\theta$ have $+$-reasonable necks, see \cite[Definition 1.3]{ghost}. The only difference when considering a finite linear combination, as compared to only one perturbation, appears in \cite[Lemma 4.2]{ghost}. There, the $\bar\partial$-equation is explicitly solved in the neck region and it is shown that the cut-off decay condition (i.e., the sharp cut-off) leads to only a small change in the Fourier coefficient corresponding to the complex derivative. For a linear combination of perturbations, we have $m$ such regions in the uniformly bounded interval $[\kappa,1]$. The total change in (rescaled) Fourier coefficient is then the sum of these (rescaled) changes and we conclude that the complex linear derivative Fourier coefficient in the region $\rho>1$,  where the map is unperturbed holomorphic, is similarly close enough to the corresponding coefficient in the region $\rho<\kappa$, where the map converges to the perturbed solution on the compact disk. The proposition follows.  
\end{proof}

To see the effect of bubble censorship on the solution spaces, let $\bG\subset\bZ$ denote the locus of stable maps with at least one ghost component and for which all ghost components are constant. Let $\theta$ be an $\mathrm{sc}^+$-section such that $\bar\partial_J^{-1}(\theta)$ is compact (e.g., if $\theta$ has sufficiently small auxiliary norm then $\bar\partial_J^{-1}(\theta)$ is compact by \cite[Theorem 5.1]{HWZ} as in the proof of Lemma \ref{bare transversality}). 
\begin{corollary}\label{c: no bare solutions near ghosts} 
If $\theta$ is a finite sum of $\mathrm{sc}^+$-sections that are controlled near ghosts then all solutions $\bar \partial_J = \theta$ have constant ghosts. If furthermore any non-bare solution to $\bar \partial_J=\theta$ is a $\partial_J$-generic immersion when restricted to the union of its positive area components, then there is a punctured neighborhood of the locus $\bG$   
which contains no solutions to $\bar \partial_J = \theta$.  In particular, the locus of bare solutions to $\bar\partial_J=\theta$ in $\bZ$ is compact.   
\end{corollary} 
\begin{proof}
The first statement follows since $\theta$ vanishes on any zero area component: any ghost component of a solution to $\bar\partial_J=\theta$ is the $J$-holomorphic of area zero and hence constant.
  
To see the second statement, if there is no punctured neighborhood of $\bG$ without solutions then there is a sequence of solutions $(u_\alpha,S_\alpha)$ to $\bar\partial_J=\theta$ that by compactness converges to a solution $(u,S)\in \bG$. Proposition \ref{ghost bubble censorship linear combo} then implies that the restriction $u_+\colon S_+\to X$ of $u$ to the union of its positive area components is not a $\partial_J$-generic immersion, contradicting the hypothesis. 
\end{proof} 

The same result holds with identical proof for parameterized sections. 

\begin{corollary}\label{c: no bare solutions near ghosts parameterized} 
Let $M$ be a smooth compact manifold and 
assume that $\theta_m\colon\bZ\times M\to\bW\times M$ is a parameterized $\mathrm{sc}^+$-section such that $\bar\partial_J^{-1}(\theta_m)$ is compact.

If $\theta_m$ is a finite sum of parameterized $\mathrm{sc}^+$-sections that are controlled near ghosts then all solutions $\bar \partial_J = \theta_m$ have constant ghosts. If furthermore any non-bare solution to $\bar \partial_J=\theta_m$ is a $\partial_J$-generic immersions when restricted to the union of its positive area components, then there is a punctured neighborhood of the locus $\bG\times M$   
which contains no solutions to $\bar \partial_J = \theta_m$.  In particular, the locus of bare solutions to $\bar\partial_J=\theta_m$ in $\bZ\times M$ is compact.   \qed
\end{corollary}

\subsection{Constructing sections controlled near ghosts}\label{ssec:sharpgluing}
We discuss how to produce 
sections which are controlled near ghosts.  Recall that the gluing
operation $\oplus$ serves to provide coordinates on collar neighborhoods of the nodal strata of configuration spaces $\mathbf{Z}$.  The corresponding hat-gluing operations for sections $\widehat{\oplus}$ can be used to extend sections from the nodal locus to the collar neighborhood. 

We require a variant
of the hat-gluing operation from \cite{HWZ-GW},
which differs from the original solely in the 
choice of cut-off, where we pick a cut-off so that the perturbations we construct are controlled near ghosts (the main point being to arrange that \eqref{cut-off decay} holds).

More precisely, recall that, as in \cite[Section 1.2]{HWZ-GW}, in Section \ref{sec:formaldiff} there is a choice of cut-off function 
$\beta\colon\R\to[0,1]$, a smooth function with derivative supported in $(-1,1)$, that is used to interpolate between formal complex anti-linear differentials over the neck region for domains in a neighborhood of a nodal domain. We will use instead a family of cut-off functions $\beta^{R}$ that goes from $1$ to $0$ over the shorter interval $\left(-\frac{1}{R},\frac{1}{R}\right)$. We define the \emph{sharp gluing}  $\widehat{\oplus}_{a}^{R}$ by
replacing $\beta$ with $\beta^{R}$ for $R=\varphi(a)=e^{\frac{1}{a}}-e$.  

We next show that sharp gluing respects regularity conditions.
Consider gluing at a node as in Sections \ref{ssec:gluing} and \ref{sec:formaldiff} where we join semi-infinite cylinders $C_\pm=\R_\pm\times I$. The cylinders $C_\pm$ come equipped with weight functions $e^{\delta|s|}$ used to define the integral norms in $H^{3,\delta}$.
The gluing region $$Z_{a} := \{0\le s\le R\} = \{-R\ge s'\ge 0\}$$
then has a natural induced weight functions $w_{a;\delta}\colon Z_{a}\to [0,\infty)$:   
\[
w_{a;\delta}(s,t)= e^{\delta|s-\frac12 R|}, \;\;s\in[0,R],
\]
which is the restriction of the weight functions $e^{\delta|s|}$ on the end $C_{+}$ for $s\in[0,\frac12R]$ and on the end $C_-$ for $s\in[\frac12 R,R]$. We write $H^{3,\delta}(Z_{a},\R^{2n})$ for the corresponding Sobolev spaces weighted by $w_{\delta}$, and we denote the weighted Sobolev norm $\|\cdot\|_{k;\delta}$. We point out that as (sc) Banach spaces $H^{3,\delta}(Z_{a},\R^{2n}) \approx H^{3}(Z_{a},\R^{2n})$, but the isomorphism is not uniform in $a$. We call the norms in $H^{3,\delta}(Z_{a})$ \emph{restricted weight norms}. (When we extend basic perturbations below, restricted weighted norms will be used to  separate out asymptotic constants, see Section \ref{ssec:modifiedpolyfoldcharts}.)  

\begin{lemma}\label{l:gluepertfast}
	If $\xi^{+}$ and $\xi^{-}$ lie in $H^{k,\delta}(\R^{\pm}\times I)$ and $\delta>\delta'$ then $\xi^{+}\widehat{\oplus}_{a}^{R}\xi^{-}$ lies in $H^{k,\delta'}(Z_{a})$  uniformly as $R\to\infty$.
\end{lemma}

\begin{proof}
	To see this we note that the effect of replacing $\beta$ by $\beta^{R}$ on the $k^{\rm th}$ derivative of the hat-glued map is multiplication by a linear combination of terms of the form  
	\begin{equation}\label{eq: sharp gluing derivatives} 
	\varphi^{(j)}(R(a))\frac{d^{k_{1}}R}{d a^{k^{1}}}\dots \frac{d^{k_{j}}R}{d a^{k^{j}}}, 
	\end{equation}
	where $\sum k_{j}=k$. Such factors can be estimated as 
	\begin{equation}\label{eq: sharp gluing derivatives estimated}
	(e^{\frac{1}{a}})^{2k}P_{k}(\tfrac{1}{a}),
	\end{equation} 
	where $P_{k}$ is a polynomial. As $e^{(\delta-\delta')(e^{\frac{1}{a}})}$, $\delta'<\delta$  grows faster than such terms, the statement follows.  
\end{proof}

Lemma \ref{l:gluepertfast} shows in particular that sharp gluing has the properties of usual gluing for the bundle $\bW$. Let $\xi_\pm\in H^{2,\delta}(C_\pm,\mathrm{Hom}^{0,1}(TC_\pm,T\R^{2n}))$.  

\begin{corollary}\label{cor : sharp hat gluing scp local}
The projection $(\xi_+,\xi_-)\mapsto \xi_+\widehat\oplus_a^R \xi_-$ induced by sharp gluing is $\mathrm{sc}$-smooth and if $\xi_\pm$ are $\mathrm{sc}^+$-sections then so is $\xi^+\widehat \oplus_a^R\xi^{-}$. 
\end{corollary}
\begin{proof}
After Lemma \ref{l:gluepertfast}, we may repeat the argument given in \cite[Section 3.6]{HWZ} for their  $\widehat{\oplus}_a$.
\end{proof}

Let $\bU$ be a local chart centered around some particular $(u, S)$.  Fix some subset $N$ of the nodes of $S$, 
and let $(\widetilde u, \widetilde S)$ be the corresponding map from the partial normalization of $S$ along $N$ (leaving punctures where the nodes were), 
and let $\bU_{\widetilde{N}}$ be a corresponding local chart around $(\widetilde u, \widetilde S)$. 

Let $\bU_N \subset \bU$ 
be the locus along which the nodes $N$ remain nodal, and let $n\colon \bU_N \hookrightarrow \bU_{\widetilde{N}}$ 
be the natural embedding.  There is a canonical identification $n^*\bW = \bW$. Let $\mathrm{sc}^+(\bU, \bW)$ denote the set of $\mathrm{sc}^+$-sections over $\bU$. As with the usual hat-gluing, we have the following result which is a direct consequence of Corollary \ref{cor : sharp hat gluing scp local}:

\begin{corollary} \label{sharp hat gluing scp} 
The sharp hat gluing defines a map 
\begin{eqnarray*}
\mathrm{sc}^+(\bU_{\widetilde{N}}, \bW) & \to & \mathrm{sc}^+(\bU, \bW) \\
\lambda & \mapsto & \widehat \oplus^R n^*  \lambda
\end{eqnarray*}
\end{corollary}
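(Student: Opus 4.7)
The plan is to reduce the assertion to the single-node case already handled by Lemma \ref{l:gluepertfast} and then assemble the estimates across the nodes in $N$. First, I would observe that the charts $\mathbf{U}$ and $\mathbf{U}_{\widetilde N}$ are constructed as sc-retracts in which the gluing parameters factor as a product $\prod_{\nu \in N} a_\nu$ over the nodes in $N$, together with additional common coordinates (deformations of the underlying nodal complex structure, of the stable map components, and of automorphisms). The natural identification $n^*\mathbf{W} = \mathbf{W}$ respects this product structure, and the sharp hat-gluing $\widehat\oplus^R$ is defined node-by-node and acts as the identity on all other coordinates. Consequently the question of $\mathrm{sc}^+$-regularity is local at each node $\nu$.

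Next I would fix a single node $\nu \in N$ and recall that the $\mathrm{sc}^+$-assumption on $\lambda$ means precisely that on a neighborhood of the puncture corresponding to $\nu$, the two ends $\xi^+_\nu, \xi^-_\nu$ of $n^*\lambda$ lie in $H^{2,\delta}(C^\pm, \mathrm{Hom}^{(0,1)}(TC^\pm, u^*TX))$ and depend sc-smoothly on the base point. The sharp hat-gluing then produces the section
\[
\beta^{R(a_\nu)}(s)\,\xi^+_\nu(s,t) + (1-\beta^{R(a_\nu)}(s))\,\xi^-_\nu(s,t)
\]
on the neck $Z_{a_\nu}$ (with the obvious modification on the right half in the elliptic case). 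Lemma \ref{l:gluepertfast} already shows that precisely this operation preserves the $\mathrm{sc}^+$-property at one node: the derivatives of $a_\nu \mapsto \beta^{R(a_\nu)}$ produce factors of the shape $\varphi^{(j)}(R(a_\nu)) \prod_i \frac{d^{k_i}R}{da_\nu^{k_i}}$, which grow at most like $e^{2k/a_\nu} P_k(1/a_\nu)$ and are dominated by the weight $e^{\delta R(a_\nu)} = e^{\delta(e^{1/a_\nu}-e)}$ in the target Sobolev space.

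To pass from one node to all of $N$, I would note that the gluing regions at distinct nodes are disjoint and the cut-offs at node $\nu$ depend only on $a_\nu$, so the combined sharp hat-gluing is a Cartesian product of the single-node gluings and the single-node $\mathrm{sc}^+$-estimates assemble trivially. Outside of the union of the gluing regions the map is just $n^*\lambda$, which is $\mathrm{sc}^+$ by hypothesis. The sc-smooth dependence on the remaining common coordinates (deformations of the underlying nodal complex structure, of smooth map components, and of the groupoid action) is identical to the corresponding statement for the ordinary hat-gluing $\widehat\oplus$ of \cite{HWZ-GW}, because passing from $\beta$ to $\beta^R$ changes only the scalar cut-off profile and does not alter the linearity or the parameter dependence of the construction.

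The main obstacle is the $a_\nu \to 0$ limit at each node, where the sharp cut-off becomes increasingly steep; but this is precisely the point handled in Lemma \ref{l:gluepertfast}, where exponential weights absorb the iterated derivatives of $\beta^R$. Given that, the rest of the argument is essentially bookkeeping, and the corollary follows.
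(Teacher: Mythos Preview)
Your proposal is correct and follows exactly the approach the paper intends: the paper states the corollary without proof, prefacing it only with ``As with the usual hat-gluing, we have,'' so the implicit argument is precisely to invoke Lemma~\ref{l:gluepertfast} at each node and otherwise repeat the \cite{HWZ-GW} construction for the ordinary $\widehat\oplus$. Your write-up simply makes this explicit, and the node-by-node reduction together with the observation that only the scalar cut-off profile changes is the right way to spell it out.
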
 

In \cite{ghost}, we showed (translated into our present terminology): 

\begin{proposition} \label{sharp hat gluing is controlled} \cite[Proposition 10.2]{ghost}
    If, in the setting above, the set of nodes $N$ consists of those separating the positive area part and ghost parts of $(u, S)$, and the section $\lambda$ is a basic perturbation (see Definition \ref{def: basic perturbation}) on the positive area part and zero on the ghost part, then $\widehat \oplus^R n^*  \lambda$ is controlled near ghosts. 
\end{proposition}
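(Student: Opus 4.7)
The plan is to verify, directly from the definition, the three estimates \eqref{limit compatible 2 section}, \eqref{cut-off decay section}, \eqref{cut-off decay2 section} for $\widehat{\oplus}^R n^*\lambda$ in local holomorphic polar coordinates around each node separating the bare and ghost parts of $(u,S)$. The first step is to make the computation local: because $\lambda$ vanishes on the ghost part by hypothesis, the sharp hat gluing formula collapses on the corresponding gluing neck $Z_a = [0,R]\times I$ to $\beta^R_a(s)\,\xi^+(s,t)$, where $\xi^+$ is the restriction of the basic perturbation to the bare component, expressed in coordinates around its puncture. By Definition \ref{def: basic perturbation}, $\xi^+$ has the explicit form $\gamma(j)\beta(v)\,a(s,t)\,V(\tilde v(s,t))\otimes d\bar z$, where the coefficient $a(s,t)$ has compact support in some $s \in [0,K]$, because the support $D$ is a compact subsurface of the interior of the bare component and is thus bounded away from the puncture. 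Since $\beta^R$ transitions from $1$ to $0$ over a window of width $2/R$, for $R \gg K$ the glued section equals $\xi^+$ on $[0,K]\times I$, vanishes on $[K,R]\times I$, and in particular vanishes identically on the middle region $[R/2-1, R/2+1]\times I$.

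From this explicit description the decay conditions \eqref{cut-off decay section} and \eqref{cut-off decay2 section} are immediate: in neck coordinates the glued section vanishes on $[-1,1]\times I$ for $R$ sufficiently large, so both the $\|\cdot\|_0$ and the $\|\cdot\|_2$ norms over that region are identically zero and thus trivially satisfy the prescribed decay rates. The limit-compatibility estimate \eqref{limit compatible 2 section} is the main content. I would take $\xi_n = 0$ (trivial on the ghost side, where $\lambda$ vanishes identically) and take $\xi_p$ to be the asymptotic value of the coefficient of $\xi^+$ at the puncture, which is zero because $a(s,t)$ vanishes in a neighborhood of the puncture. The required uniform-in-$R$ bound $\|\widehat{\oplus}^R n^*\lambda\|_{2, 2\pi+\delta'} = \mathcal{O}(1)$ on the bare half of the neck then reduces to an explicit weighted $L^2$-Sobolev integral estimate.

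The main obstacle is exactly this last estimate, which requires a careful balance of: (i) the pointwise decay $|d\bar z|\sim e^{-2\pi s}$ of the anti-linear differential in cylindrical coordinates; (ii) the weight $e^{(2\pi+\delta')|s-R/2|}$ on the neck; and (iii) the compact support of the coefficient $a$ in the fixed region $[0,K]\times I$ independent of $R$. The slack provided by the supercritical exponent $\delta' > 0$ together with the change of variables $s_{C^+} = s_{\text{neck}} + R/2$ is what ultimately makes the bound uniform in the gluing parameter. The derivative terms coming from the sharp cut-off $\beta^R$ are controlled against the exponential factor $e^{\delta\varphi(a)}$ exactly as in Lemma \ref{l:gluepertfast}, ensuring that the resulting section is $\mathrm{sc}^+$. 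This estimate is the content of \cite[Proposition 10.2]{ghost}, to which we appeal; no further input from the present paper is required, which is why the statement is attributed to the earlier reference.
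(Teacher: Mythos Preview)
The paper does not prove this proposition at all; it simply records the citation to \cite[Prop.~10.2]{ghost}. Since your proposal ultimately also appeals to that reference, in that narrow sense the two agree.

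However, the explanatory sketch you give before the citation contains a real error that would mislead a reader. You assert that the coefficient of the basic perturbation ``has compact support in some $s\in[0,K]$, because the support $D$ is a compact subsurface of the interior of the bare component and is thus bounded away from the puncture.'' This is false. The support $D$ of the domain differential $\alpha$ is a compact subsurface of the interior of the smooth domain $S_0$ at which the basic perturbation is centered (Definition~\ref{def: basic perturbation}); it is bounded away from $\partial S_0$, not from an arbitrary interior point. The puncture in question is the point $\zeta\in S_0$ where a ghost is attached, and this point can perfectly well lie in the interior of $D$. Indeed, this is exactly the situation analyzed in Section~\ref{ssec: forgetful mismatch} and in Lemma~\ref{controlled incompatibility}, where the basic perturbation is nonvanishing at $\zeta$ and the sharp hat-gluing therefore cuts it off in a small disk around $\zeta$.

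Once $\zeta\in D$, your subsequent conclusions collapse: the glued section does not vanish on the middle region of the neck, so \eqref{cut-off decay section} and \eqref{cut-off decay2 section} are not trivial; and the correct $\xi_{\mathrm{p}}$ is not zero but (up to the scalar cutoffs $\beta,\gamma$) the value $V(\tilde v(\zeta))\,A(\zeta)$ of the smooth coefficient at the attachment point, exactly as appears in the proof of Lemma~\ref{controlled incompatibility}. In cylindrical coordinates around $\zeta$ one has $\lambda\sim V(\tilde v(s,t))\,A(e^{-2\pi(s+it)})\,d\bar z$ with $|d\bar z|\sim e^{-2\pi s}$, and the estimates \eqref{limit compatible 2 section}--\eqref{cut-off decay2 section} come from combining this decay with the smoothness of $V$ and $A$ (which yields an extra exponential factor after subtracting $\xi_{\mathrm{p}}\otimes d\bar z$) and with the sharpness of the cutoff $\beta^R$. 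That analysis, not the trivial vanishing you sketch, is what \cite[Prop.~10.2]{ghost} actually carries out.
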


\subsection{Error estimate}
Consider a basic perturbation $\lambda$ supported in $\bU(u,S)$, we think of domains in $\bU(u,S)$ as the surface $S$ with a complex structure varying in $\boldsymbol{j}$. Consider a marked point $\zeta\in S$ with a disk neighborhood $\zeta\in D_0\subset S$ and attach a constant rational curve $\C \P^1_{m+1}$ with $m+1>2$ distinct marked points $\zeta_r$, $r=0,1,\dots,m$ to $S$ by joining $\zeta$ and $\zeta_0$ to a node. Consider a cylindrical neighborhood $S^1\times [0,\infty)$ of $\zeta$. Let $a_0$ be a gluing parameter at $\zeta$ and $R_0$ the corresponding gluing length in the cylindrical neighborhood. Similarly, fix disk neighborhoods $D_r$ around $\zeta_r$, gluing parameters $a_k$, and corresponding gluing lengths $R_r$ $r=1,\dots,m$, at the marked points of $\C \P^1$.    

If $a_0\ne 0$ and $a_r=0$, $r>0$, we get the domain $S_{a_0}$ with marked points $\zeta_r$, $r=1,\dots,m$ that we view as punctures. On $S_{a_0}$ we have two perturbations. First, the original basic perturbation $\lambda$.  
Second, the perturbation $\lambda^\ast$ obtained by sharp gluing of $\lambda$ over $S$ and $0$ over $\C \P^1_{m+1}$. We write $\|\cdot\|_{3,\delta}$ for the weighted Sobolev norm on $S_{a_0}$ with weight function given by the restricted weight function as in Section \ref{ssec:sharpgluing}.

\begin{lemma} \label{controlled incompatibility}
    For any $\eta>0$, we have the following estimate for perturbations on $S_{a_0}$ 
    \[
    \|\lambda - \lambda^*\|_{3,\delta} =\mathcal{O}(e^{-(2\pi-\delta-\eta)R_0}),
    \]
    where $R_0$ is the gluing length corresponding to $a_0$. In particular, $\|\lambda - \lambda^*\|_{3,\delta}\to 0$ on $S_{a_0}$ as $a_0\to 0$. 
\end{lemma}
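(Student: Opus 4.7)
The plan is to examine $\lambda - \lambda^*$ in cylindrical coordinates on the gluing neck and exploit the exponential decay of the basic perturbation near the marked point $\zeta$. First I unpack the definitions in the neck coordinates $(s,t) \in [0,R_0]\times I$. By the sharp hat-gluing formula of Section~\ref{ssec:sharpgluing}, applied with the zero perturbation on the $\C P^1_{m+1}$ side, $\lambda^*(s,t) = \beta^{R_0}_{a_0}(s)\,\lambda(s,t)$, where $\beta^{R_0}_{a_0}$ is the sharp cutoff transitioning from $1$ to $0$ on a width-$2/R_0$ interval centered at $s = R_0/2$; whereas in the natural coordinates on $S_{a_0}$, $\lambda$ extends by its original formula with no cutoff. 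Hence
\begin{equation*}
\lambda - \lambda^* = (1 - \beta^{R_0}_{a_0})\,\lambda,
\end{equation*}
supported in $\{s \geq R_0/2 - 1/R_0\}$ within the neck.

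Next I establish a pointwise decay estimate for $|\lambda|$ and its first three derivatives on this region. Recall $\lambda = \beta(v)\gamma(j)\,\widetilde v^*V \otimes \alpha$ from Definition~\ref{def: basic perturbation}. In the holomorphic polar coordinates $z = e^{-2\pi(s+it)}$ around $\zeta$, the smooth $(0,1)$-form $\alpha = g(z)\,d\bar z$ has conformal factor $|d\bar z|_{(s,t)} = 2\pi e^{-2\pi s}$, and combined with the boundedness of $g$, $\widetilde v^*V$, and the remaining factors, this gives the uniform bound $|D^k\lambda(s,t)| \leq C_k\, e^{-2\pi s}$ for $0\leq k \leq 3$.

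Finally I assemble the weighted Sobolev norm. Integrating the pointwise bound against the restricted weight $w_\delta$ from Section~\ref{ssec:sharpgluing} over the support of $\lambda - \lambda^*$, and choosing $\delta$ close to $2\pi$ depending on the prescribed $\eta > 0$, yields the $L^{2,\delta}$ component of the claimed bound. For the higher Sobolev derivatives, differentiating $1 - \beta^{R_0}_{a_0}$ introduces factors of order $R_0^k$ localized to the transition window of width $1/R_0$; these polynomial losses are absorbed by the exponential decay of $\lambda$ in that window, and estimates analogous to those of \cite{ghost} carry through. Summing the contributions yields the stated bound, and since $R_0\to\infty$ as $a_0\to 0$, the norm tends to zero. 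The main obstacle is the careful bookkeeping: balancing the admissible weight $\delta$ against $\eta$ and verifying that the polynomial factors arising from differentiating the sharp cutoff are dominated by the exponential decay of $\lambda$, which reduces to a direct but intricate computation in the spirit of \cite{ghost}.
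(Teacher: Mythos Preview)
Your approach is essentially the paper's: write $\lambda-\lambda^{*}=(1-\beta)\lambda$ in cylindrical coordinates on the neck and exploit the $e^{-2\pi s}$ decay coming from $d\bar z = e^{-2\pi(s-it)}(ds-idt)$, then absorb the polynomial losses from differentiating the sharp cutoff into the exponential. The paper does exactly this, writing $\lambda(s,t)=V(s,t)A(s,t)e^{-2\pi(s-it)}(ds-idt)$ and $\lambda^{*}=\beta\cdot\lambda$ and invoking the calculations behind Lemmas~\ref{l: chi0-basic sc smooth} and~\ref{l:gluepertfast}.

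One quantitative point is worth flagging. You place the sharp cutoff at the neck midpoint $s=R_0/2$ (as the hat-gluing formula suggests), so the support of $\lambda-\lambda^{*}$ begins at $s\approx R_0/2$. Integrating $|\lambda|^{2}\sim e^{-4\pi s}$ from $R_0/2$ onward then gives an $L^{2}$ contribution of order $e^{-\pi R_0}$, and adjoining the restricted weight $w_{a;\delta}$ does not improve this (it is bounded near the midpoint and only makes things worse further out). So with your cutoff location you obtain $\|\lambda-\lambda^{*}\|_{3,\delta}=\mathcal{O}(e^{-(\pi-\epsilon) R_0})$, not the stated $\mathcal{O}(e^{-(2\pi+\eta)R_0})$. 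The paper's proof instead places the cutoff at $s=R_0$ and integrates over $[R_0,\infty)$, which does give the claimed rate. Whether this reflects a different convention for sharp gluing in this particular construction or simply an imprecision, the discrepancy is immaterial for the sequel: the only downstream use (Lemma~\ref{small hole estimate}) needs just the ``in particular'' clause that the norm tends to zero as $a_0\to 0$, which your argument certainly delivers.
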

\begin{proof}
    Recall the form of the basic perturbation $\lambda(\tilde v)=V(\tilde v)\alpha(z)$, where $V$ is a smooth vector field and $\alpha=A(z)d\bar z$ a smooth $(0,1)$-form on $S$. We have $\lambda=\lambda^\ast$ in $S\setminus D_0$. Consider the perturbations inside $D_0$ in the cylindrical coordinates $S^1\times[0,\infty)$. Here the perturbations agree on $S^1\times[0,R_0)$ and it remains to estimate the difference in $[R_0,\infty)\times S^1$. Let $s+it\in[R_0,\infty)\times S^1$ be coordinates then $d z=e^{-2\pi(s+it)}(ds + idt)$ and we have:  
    \[
    \lambda(s,t)= V(s,t)A(s,t) e^{-2\pi(s-it)}(ds-idt)
    \]
    and 
    \[
    \lambda^\ast(s,t) = \beta(s)V(s,t)A(s,t) e^{-2\pi(s-it)}(ds-idt),
    \]
    where $V(s,t)=V(\tilde v(s,t))$, $A(s,t)=A(e^{-2\pi(s+it)})$, and $\beta$ is the sharp cut off function equal to $1$ on $\{R_0\}\times S^1$ and equal to $0$ on $[R_0+R_0^{-1},\infty)\times S^1$. 

    Standard arguments give estimates on $V(\tilde v(s,t))A(s,t)$: we use the chain rule, boundedness of derivatives of the smooth vector field $V$, the exponential map, and the smooth function $A$, and Soboloev estimates for products involving derivatives of $v$, see \cite{Moser}. These estimates, together with the sharp cut-off estimates \eqref{eq: sharp gluing derivatives} and \eqref{eq: sharp gluing derivatives estimated} imply
    \[
\|\lambda-\lambda^\ast\|_{3,\delta}=\mathcal{O}(e^{-(2\pi-\eta-\delta)R_0})
    \]  
    for any $\eta>0$.
\end{proof}

\section{Extensions of basic perturbations}\label{sec: extension of basic}
In this section we give a construction which `extends' a basic perturbation centered at some $(u, S)$ to a neighborhood of the locus of maps whose positive area part lies in the defining chart $\bU(u,S)$, in such a way that (as we will see later) will be suitable for an inductive construction of perturbations suitable for the ghost bubble censorship argument. The main idea is that we will stratify according to the number of points where ghosts are attached to $S$, as opposed, e.g., to the total number of nodes.  

\subsection{Cutoff adapted to a stratification} \label{cutoffs and extensions}
Let $M$ be a smooth compact manifold (possibly with boundary and corners), equipped with some closed locus $M_*$ carrying a decomposition into smooth locally closed submanifolds $M_* = \bigsqcup_{i\ge 0} M_i$.   We assume that the stratification satisfies the following weak frontier property: 
$$\overline{M}_n \setminus M_n \subset \bigsqcup_{m < n} M_m$$
Compatibly with this we write $M_\infty:= M \setminus M_*$.  

We write $\nu_i$ for the normal bundle to $M_i$, and assume that a tubular neighborhood parameterization near the zero section $\exp_i\colon \nu_i \to M$ and a (fiberwise) metric on the $\nu_i$ are given.  

Let $\sigma\colon [0,1] \to [0,1]$ be some fixed cutoff function, taking the value $1$ in some neighborhood of $0$, and $0$ in a neighborhood of $1$.  We write $\sigma_\epsilon$ for the rescaling of the domain to $[0, \epsilon]$, i.e. $\sigma_\epsilon(x) := \sigma(x/\epsilon):[0,\epsilon] \to [0,1]$.  

By hypothesis, $M_0$ is closed. So for some sufficiently small $\epsilon_0$, the map $\exp_0$ is defined on the neighborhood of radius $\epsilon_0$.  While $M_1$ is not closed, the locus 
\begin{equation} \label{away from collar}
    M_1^\circ:= M_1 \setminus \exp_0(\mathrm{Interior}((\sigma_{\epsilon_0} \circ \|\cdot \|)^{-1}(1)))
\end{equation} is closed, so on this locus we may choose some $\epsilon_1$ so that $\exp_1$ is defined on the neighborhood of radius $\epsilon_1$ in $\nu_1$, etc.  Fix such a sequence of $\epsilon_i$.

We define $\kappa_0 = \sigma_{\epsilon_0} \circ \|\cdot \| \circ \exp_0^{-1}$ on the image of $\exp_0$, and $0$ elsewhere.  By construction  it is a smooth function on $M$ supported in a neighborhood of $M_0$ and equal to $1$ on $M_0$. 

We define inductively
\begin{equation} \label{cutoff formula}
    \kappa_i = \left (1 - \sum_{j < i} \kappa_j \right) \cdot (\sigma_{\epsilon_i} \circ \|\cdot \| \circ \exp_i^{-1})  
\end{equation}
on the image of $\exp_i$, and $\kappa_i=0$ elsewhere.

\begin{lemma}\label{l: kappa part of unity}
    Each $\kappa_i$ is smooth. The sum
    $\sum_{j \le i} \kappa_j$ restricts to a partition of unity on a neighborhood of $\bigsqcup_{j \le i} M_j$ and is supported on the complement of $\bigsqcup_{i < k} M_k^\circ$.
\end{lemma}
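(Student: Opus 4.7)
The plan is to prove all three assertions simultaneously by induction on $i$, leaning heavily on the standard observation that the cutoff $\sigma$ can (and should) be taken to be \emph{flat} at the boundaries of both $\sigma^{-1}(1)$ and $\sigma^{-1}(0)$, so that all derivatives of $\sigma$ vanish at these boundaries. The base case $i=0$ is essentially immediate: $\sigma_{\epsilon_0}(\|\cdot\|)$ is smooth and compactly supported in the $\epsilon_0$-disk bundle of $\nu_0$, flat at the outer sphere bundle $\{\|v\|=\epsilon_0\}$. Since $\exp_0$ is a diffeomorphism on this disk bundle, pushing the function forward and extending by $0$ off its image yields a smooth $\kappa_0$ on $M$ that is identically $1$ on the open neighborhood $\exp_0(\mathrm{Interior}((\sigma_{\epsilon_0}\circ\|\cdot\|)^{-1}(1)))$ of $M_0$.

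For the inductive step, smoothness of $\kappa_i$ is the subtle point and, I expect, the main technical obstacle. The factor $\sigma_{\epsilon_i}(\|\cdot\|)\circ\exp_i^{-1}$ is smooth on the image of $\exp_i$ (an open neighborhood of $M_i^\circ$) and extends smoothly across its outer boundary $\exp_i(\{\|v\|=\epsilon_i\})$ by flatness of $\sigma$. The real difficulty is smoothness across the fiber of $\nu_i$ over $\partial M_i^\circ$, where $\exp_i$ itself fails to be defined. Here I will invoke a strengthened form of the inductive partition of unity: not only $\sum_{j<i}\kappa_j=1$ on a neighborhood of $\bigsqcup_{j<i}M_j$, but this constant-$1$ region has \emph{flat} boundary, in the sense that $1-\sum_{j<i}\kappa_j$ vanishes to all orders along $\partial\{\sum_{j<i}\kappa_j=1\}$. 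This flatness is inherited from the flatness of $\sigma$ at $\partial\sigma^{-1}(1)$, propagated through the recursive product formula \eqref{cutoff formula}. Since $\partial M_i^\circ$ is contained in this flat boundary, the first factor of $\kappa_i$ vanishes flatly there, and the product extends smoothly by zero to all of $M$.

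For the partition of unity claim, let $V_{i-1}$ be an open neighborhood of $\bigsqcup_{j<i}M_j$ on which $\sum_{j<i}\kappa_j\equiv 1$. On $V_{i-1}$ the first factor of $\kappa_i$ vanishes, hence $\kappa_i=0$ and the total sum remains $1$. On a smaller neighborhood $W_i$ of $M_i^\circ$ contained in $\exp_i(\mathrm{Interior}((\sigma_{\epsilon_i}\circ\|\cdot\|)^{-1}(1)))$, the second factor of $\kappa_i$ equals $1$, so $\kappa_i=1-\sum_{j<i}\kappa_j$ and the total sum telescopes to $1$. The open set $V_{i-1}\cup W_i$ is the desired neighborhood of $\bigsqcup_{j\le i}M_j$. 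For the support statement, each $\kappa_j$ is supported in $\exp_j(\{\|\cdot\|\le\epsilon_j\})$; the claim then follows by choosing the radii $\epsilon_k$ for $k>i$ sufficiently small (this is legitimate since these radii enter only the definition of the higher $M_k^\circ$ and not of any $\kappa_j$ with $j\le i$) so that each $M_k^\circ$ is disjoint from $\bigcup_{j\le i}\exp_j(\{\|\cdot\|\le\epsilon_j\})$.
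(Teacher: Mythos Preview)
The paper offers no proof of this lemma, so I evaluate your argument on its own.  The flatness idea does not establish smoothness.  Take $M=\R^2$, $M_0=\{0\}$, $M_1$ the punctured $x$-axis, $\exp_0=\mathrm{id}$, $\exp_1((x,0),y)=(x,y)$.  Then $M_1^\circ=\{(x,0):|x|\ge a\epsilon_0\}$ (with $a$ the right endpoint of $\sigma^{-1}(1)$), and a typical fiber of $\nu_1$ over $\partial M_1^\circ$ is $\{(a\epsilon_0,y):|y|\le\epsilon_1\}$.  At such a point with $y\ne 0$ one has $1-\kappa_0=1-\sigma_{\epsilon_0}\bigl(\sqrt{(a\epsilon_0)^2+y^2}\,\bigr)>0$, since $\sqrt{(a\epsilon_0)^2+y^2}>a\epsilon_0$; flatness of $\sigma$ at $a$ forces infinite-order vanishing of $1-\kappa_0$ only at the single base point $(a\epsilon_0,0)$, not along the whole normal fiber.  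So if $\kappa_1$ were defined by \eqref{cutoff formula} only over the tube on $M_1^\circ$ and by zero elsewhere, it would be \emph{discontinuous} across $|x|=a\epsilon_0$.  The formula must instead be applied on the image of $\exp_i$ over \emph{all} of $M_i$.  Smoothness then comes from a different mechanism: the frontier $\overline{M_i}\setminus M_i\subset\bigsqcup_{j<i}M_j$ lies in the \emph{open} region where $\sum_{j<i}\kappa_j\equiv 1$, so for $\epsilon_i$ small enough the closure of $\{\sum_{j<i}\kappa_j<1\}\cap\exp_i(\{\|v\|<b\epsilon_i\})$ stays inside the image of $\exp_i$, and one may extend by zero.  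No flatness hypothesis on $\sigma$ is needed.

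Your argument for the support assertion also fails: the set $M_{i+1}^\circ$ is determined by $\epsilon_0,\dots,\epsilon_i$ alone, so shrinking later $\epsilon_k$ does nothing for the case $k=i+1$.  In fact in the same model $\mathrm{supp}(\kappa_0)=\{\|(x,y)\|\le b\epsilon_0\}$ already meets $M_1^\circ$ along $a\epsilon_0\le|x|\le b\epsilon_0$ (since $a<b$), so the third assertion as literally written appears to be an imprecision in the paper rather than something you should try to prove verbatim.
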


Another important property (directly visible from the definition) is that the functions $\kappa_j$ depend only on the $\epsilon_i$ parameters, the $\exp_i$ maps, and the norm on the normal bundle.  We will use this later to justify some compatibility assertions.  

We may define cutoff functions by the formula \eqref{cutoff formula} also when $M$ is infinite dimensional as long as all strata have finite codimension and normal bundles satisfy uniformity conditions over strata.  

\begin{remark}
Let $f\colon M_* \to \R$ and $\widetilde{f}_i\colon \nu_i \to M$ be smooth functions such that $\widetilde{f}_i|_{M_i} = f|_{M_i}$.  (E.g., one can take $\widetilde{f}_i := f \circ \mathrm{exp}_i$.)
Let
$$
\widetilde{f}(m) := \sum \kappa_i(m)  \widetilde{f}(\mathrm{exp}_i^{-1}(m)).
$$
Then $\widetilde{f}$ is smooth since all summands are smooth.
    However, in general, $\widetilde{f}|_{M_*} \ne f$: the functions $f$ and $\widetilde{f}$ do not necessarily agree near where the strata meet.  When we use this construction later, we will use Lemma \ref{controlled incompatibility} to estimate the distance between $f$ and $\widetilde{f}$. 
\end{remark}

\subsection{Coordinate corner structures and versal families of curves}\label{sec: cosheaf for families of curves}
In this section we give a pedantically precise  meaning to the following assertion: ``the gluing parameters at nodes give compatible parameterizations of the normal directions along the nodal locus''.

Let $M = \bigsqcup M_\alpha$ be a stratified space.
A {\em coordinate corner structure} is the  data of: 
\begin{enumerate}
    \item A cosheaf of finite sets $\mathfrak{n}$ over $M$, constructible with respect to the stratification.  We will always discuss this  in terms of the exit path presentation: a locally constant family of finite sets over each stratum, along with maps for specialization of strata, compatible for iterated specializations.  We  require  these maps to be injective.  On a given stratum $M_\alpha$, the $\mathfrak{n}_\alpha := \mathfrak{n}|_{M_\alpha}$ will be later identified with  coordinates on the normal bundle; we fix a splitting $\mathfrak{n} = \mathfrak{n}_\C \sqcup \mathfrak{n}_\R$ for real and complex normal directions.  

    Over a stratum $M_\alpha$, we write $\pi_\alpha\colon \Delta_\alpha \to M_\alpha$ for the bundle with fiber $ \R_{\ge 0}^{\mathfrak{n}_{\alpha, \R}} \oplus \C^{\mathfrak{n}_{\alpha,\C}}$.  
    \item     
    Over each stratum 
     of $M_{\alpha}$, a neighborhood of the zero section $T_{\alpha} \subset \Delta_\alpha$ and an embedding
     $\exp_\alpha\colon T_{\alpha} \to M$ which is `the  identity' on $M_\alpha$. 
\end{enumerate}

We require this data to satisfy the following compatibility conditions.  
\begin{enumerate}
\setcounter{enumi}{2}
    \item $\Delta_\alpha$ itself carries a  stratification by coordinate hyperplanes, and a corresponding cosheaf of sets $\mathfrak{n}^{\mathrm{can}}_{\alpha} = \mathfrak{n}^{\mathrm{can}}_{\alpha, \C} \sqcup \mathfrak{n}^{\mathrm{can}}_{\alpha, \R}$ given by the normal coordinates.  Note that $\mathfrak{n}^{\mathrm{can}}_{\alpha}$ is canonically a sub-sheaf of $\pi_\alpha^* \mathfrak{n}_\alpha$.  
    So too is $\exp_\alpha^*(\mathfrak{n})$.  We require that $\exp_\alpha^*(\mathfrak{n}) = \mathfrak{n}^{\mathrm{can}}_{\alpha}$. 

    \item \label{coordinate compatibility} Over $\Delta_\alpha$, there is a natural exponential map $\exp_\alpha^{\mathrm{can}}$ for $\mathfrak{n}^{\mathrm{can}}_\alpha$.  Indeed, $\mathfrak{n}^{\mathrm{can}}_\alpha$ along any given stratum is just the coordinates which vanish along that stratum; we may everywhere identify $\Delta_{\alpha}^{\mathrm{can}}$ with $\pi_\alpha^* \Delta_\alpha$ and the  projection $\pi_\alpha^{\mathrm{can}}$ with forgetting the appropriate coordinates.  We choose the exponential by sending fiber to base coordinates. 
    
    Now suppose given some stratum $M_{\beta}$ in the closure of $M_{\alpha}$.  Then over
    $M_\beta \cap \exp(T_\alpha)$, we require that 
    $\exp_\alpha$ identifies $\exp_\beta$ with $\exp_{\alpha}^{\mathrm{can}}$.  
\end{enumerate}

Consider now a family of nodal curves over a smooth base $M$.  Then $M$ may be given a stratification according as to the number and type of nodes of the curve in question.  We assume the stratification is locally diffeomorphic to the coordinate stratification on some $\R_{\ge 0}^m\times \C^n$.  Over each connected component $M^{i; \alpha}$ of the stratum $M^i$, there is a 
smooth bundle of gluing parameters $\Delta^{i; \alpha} \to M^{i; \alpha}$.  We may also, over $M^{i; \alpha}$, choose a (smoothly varying) choice of disk coordinates around the nodes of the corresponding curves.  Having fixed a gluing profile, this data determines a family of curves over the total space of $\Delta^{i; \alpha}$.  
By a {\em versality witness}, we mean a neighborhood $T^{i; \alpha}$ of the zero section in $\Delta^{i; \alpha}$, and a diffeomorphism $\exp\colon T^{i; \alpha} \to M$, covered by a fiberwise holomorphic diffeomorphism of the corresponding families of curves.  

Note that the family of glued curves over the total space of $\Delta_{i, \alpha}$ comes with a tautological versality witnesses, not just at the zero section, but at all strata.  Thus if $M^{j; \beta} \subset \overline{M^{i; \alpha}}$, then on the intersection $M^{i; \alpha} 
\cap \exp(T^{j; \beta})$, we have two versality witnesses: the one associated to $M^{i; \alpha}$ and the one transported from $\Delta^{j; \beta}$ via the versality witness for $M^{j; \beta}$.  We say our versality witnesses form a compatible system if these two possibilities agree for every such overlap.  

The standard charts on moduli of curves or configuration spaces of maps come with such compatible systems of versality witnesses. (In the typical differential-geometric approaches of constructing moduli, such as the polyfold approach we follow here, this is true by definition/construction of the moduli spaces.  In algebro-geometric settings, it is instead a theorem about the versality of the standard deformation of the node.)

It should be clear from the above discussion that a compatible system of versality witnesses determines a coordinate corner structure: the cosheaf $\mathfrak{n}$ is the cosheaf of nodes, and the compatible system of versality witnesses is the data of the exponential maps.

\subsection{Marked curves}

Our families of curves carry an additional important piece of data:  which components have positive symplectic area and which are ghost components.  

By a {\em marked curve}, we mean a (possibly nodal) curve, together with a subset of its dual graph.  We understand this subgraph as indicating the `positive area' components of the curve.  We impose the usual notion of stability on the unmarked components.  We refer to the unmarked graph edges which are adjacent to a marked vertex as  \emph{Ploutonion edges}, and the corresponding nodes as \emph{Ploutonion nodes}.

Recall that a family of nodal curves gives a stratification of the base by loci along which the dual graph is locally constant, such that the generization maps off strata give edge contractions.  That is, it defines a morphism from the exit path category of the stratification to the category whose objects are graphs and morphisms are edge contractions.

We say such a family is a family of marked curves if each curve is marked, and the edge contractions respect the marking, in that if $\pi\colon \Gamma \to \Gamma'$ is an edge contraction, and the markings are given by the subsets $G \subset \Gamma$ and $G' \subset \Gamma'$, then $G' = \pi(G)$.  This encodes the fact that if one joins various components, the resulting curve has positive symplectic area if any of the components do.  We write $\mathfrak{p} \subset \mathfrak{n}$ for the Ploutonian nodes.   Note that $\mathfrak{p}$ 
is not preserved by specialization maps. 

As in Section \ref{sec: cosheaf for families of curves}, consider a family of marked nodal curves over a smooth base $M$. We will be interested in the locus $M^{\chi_+}\subset M$ of curves whose marked part has Euler characteristic $\chi_+$.  This is a locally closed subset.

\begin{definition} \label{def: ploutonion stratification}
       Let $M^{\chi_+}_{2i+b} \subset M^{\chi_+}$ be the locus where the marked part has $i$ un-marked half-edges corresponding to interior nodes, and $b$ un-marked half-edges corresponding to boundary nodes. 
\end{definition}

\begin{example}\label{ex : two tori on a sphere}
    Begin with a curve $S$ of Euler characteristic $\chi=\chi_+$, attach a sphere to $S$, and attach two tori to the sphere.  Mark only $S$. Take $M$ to be the usual coordinate chart parameterizing smoothings of the resulting curve. Then $M$ is a neighborhood of the origin in $\C^3$ where the coordinates $(x,y,z)$ of a point in $\C^3$ corresponds to gluing parameters $x$ and $y$ at the nodes where the tori are attached and $z$ the gluing parameter at the node where the sphere is attached to $S$. We will refer to the nodes by the coordinate of the correspoding gluing parameter.  
    
    There is a unique way to extend the marking on the central fiber to a marking of this family: the component corresponding to $S$ is marked, any other component is unmarked, and all nodes are unmarked ($\mathfrak{n}_0 = \mathfrak{n}$).  Then $\mathfrak{p}$ is the $z$-node on the $xy$-plane, $\{z=0\}$. Off the $xy$-plane $\mathfrak{p}$ consists of the $x$-node along $\{x=0,y\ne 0,z\ne 0\}$, the $y$-node along $\{x\ne 0,y=0,z\ne 0\}$, and both the $x$- and the $y$-nodes along $\{x=y=0,z\ne 0\}$.  
    
    Here $M^{\chi_+}$ is the union of the $xy$-plane with the $z$-axis, 
    $$
    M^{\chi_+} \ = \ \{z=0\}\cup\{x=y=0\}.
    $$
    This decomposes further $M^{\chi_+}=M^{\chi_+}_2\cup M^{\chi_+}_4$, where 
    $$
    M^{\chi_+}_2 \ = \ \{x=y=0\},\qquad 
    M^{0}_4 \ = \ \{x=y=0,z\ne 0\}.
    $$ 
    Note in particular that the closure of $M^{0}_4$ is not a union of strata. Remaining strata of $M$ are 
    \begin{align*}
    M^{\chi_+-2}=M^{\chi_+-2}_2 \ &= \ \{x=0,y\ne 0,z\ne 0\}\cup\{x\ne 0,y=0,z\ne 0\},\\ 
    M^{\chi-4} = M^{\chi-4}_0 \ &= \ \{x\ne 0,y\ne 0,z\ne 0\}.
    \end{align*} 
\end{example}

\begin{example}\label{ex : degenerate annulus with torus}
    Consider a curve $S$ with boundary $\partial S$ and Euler characteristic $\chi=\chi_+$.  We consider a disk $D$ and attach $D$ to $S$ at two boundary marked points $x$ and $y$.  We attach a genus one surface with boundary at a third boundary point $z$ of $D$.  We mark only $S$.  Let $M$ be the gluing parameter space for deforming the resulting curve as in Example \ref{ex : two tori on a sphere}, here  it is an open subset of $\R^3_{\ge 0}$. Then $M^{\chi_+}$ is the union of  the coordinate axes with strata: 
    $$ 
    M^{\chi_+}_1 \ = \ \emptyset, \quad M^{\chi_+}_2 \ =\ \{x=y=0\},\quad M^{\chi_+}_3 \ = \ \{x\ne 0,y=z=0\}\cup \{x=z=0,y\ne 0\}.
    $$
\end{example}
We return to a general family of nodal curves parameterized by a smooth manifold $M$.
\begin{lemma} \label{ploutonion stratification}
    The strata of the stratification $M = \bigsqcup_n M_n^{\chi_+}$ are characterized as the maximal loci on which the specialization maps of $\mathfrak{n}$ induce bijections on the Ploutonion nodes $\mathfrak{p}$.  
\end{lemma}
\begin{proof}
    It is obvious that smoothing nodes which do not meet a marked component does not change the Euler characteristic of the marked locus nor the number or type of such nodes, hence will not move between different strata among the $M_m^{\chi_+}$.  Conversely, smoothing a node which connects marked components necessarily decreases the Euler characteristic $\chi=\chi_+$ of the marked components, and smoothing a node which connects a marked to an unmarked component decreases $\chi=\chi_+$ unless the unmarked component is a sphere or disk, in which case, by stability, it increases the number or unmarked half edges $m$. 
\end{proof}

\begin{corollary} \label{weak frontier}
    The following weak frontier property holds: 
    $$\overline{M^{\chi_+}_n} \setminus M^{\chi_+}_n \subset \bigsqcup_{m < n} M^{\chi_+}_m.$$ 
\end{corollary}

\begin{proof}
    Follows from the proof of Lemma \ref{ploutonion stratification}.
\end{proof}

Suppose that $M$ parametrizes a family of marked nodal curves and a compatible system of versality witnesses. Then we have the following: 
\begin{corollary} \label{smooth family of disks}  
The strata $M^{\chi_+}_n$ are smooth manifolds (with corners where $M$ had corners) and there exists smooth families of disk neighborhoods of the Ploutonion nodes on the marked components of the curves in $M^{\chi_+}_n$.\footnote{We are not asserting that one can globally choose parameterizations of these neighborhoods (indeed the Chern class of the line bundle that such a global parameterization would trivialize is the ubiquitous $\psi$ class of enumerative geometry).  
The parameterization is however fixed up to rotation, which is all that is relevant for the cutoffs etc. involved in gluing maps.}
\end{corollary}

\begin{proof}
    Clear from Lemma \ref{ploutonion stratification}.
\end{proof}

\begin{remark}\label{r:gluingparematertubes}
Corollary \ref{smooth family of disks} implies that
tubular neighborhoods of $M_n^{\chi_+}\subset M$ are parameterized by coordinates corresponding to the gluing parameters at Ploutonion nodes (defined by choosing a family of disk neighborhoods). The tubular neighborhood parameterizations are compatible on overlaps as in Section \ref{sec: cosheaf for families of curves}, \eqref{coordinate compatibility} for coordinate corner structures.  Note that we assert no compatibility of the choices of disk neighborhoods between the different strata.
\end{remark}
\begin{remark}
    The codimension of $M_n^{\chi_+}\subset M^{\chi_+}$ is determined by counting the unmarked edges meeting the marked locus, whereas the subscript is given by counting the corresponding half-edges.  Thus in general the codimension of $M_n^{\chi_+}\subset M^{\chi_+}$ can be less than $n$.  
\end{remark}

\subsection{Extending basic perturbations, I}\label{ssec : ext basic I} 
Fix a map with smooth domain $u\colon S \to X$ and positive symplectic area. After adding marked points in local hypersurfaces we make the domain stable without automorphisms, as explained in the proof of Theorem \ref{thm: polyfold structure on Z}.
Let $\bU(u,S)$ be a chart around $(u,S)$ as in \eqref{def: basic perturbation} and let $\boldsymbol{j}$ denote the open ball of complex structures of domains of maps in $\bU(u,S)$, in view of the marked points added we think of the domains $S\in\boldsymbol{j}$ as having a unique holomorphic parameterization. 

Consider a basic perturbation  $\widehat{\lambda}$ in $\bU(u,S)$ as given by \eqref{eq: uncut basic}, i.e., before cutting off in the function direction. Recall from Lemma \ref{lem: extendable} that $\widehat{\lambda}$ is extendable in the sense of Definition \ref{def: extendable}, i.e.\ if we take charts with additional marked points ($\bullet$) and punctures ($\circ$), and consider the forgetting/marked-point-to-puncture-diagram 
$$\bU(u, S) \xleftarrow{f} \bU(u, S)_{n;\bullet}^{\mathrm{reg}} \xrightarrow{i} \bU(u, S)_{n;\circ}^{\mathrm{reg}}$$
where, as indicated by the superscripts, all domain curves are smooth and the $n$ marked points or $n$ punctures do not collide, 
then there is a section $\widehat{\lambda}_{\circ}$  of $\bW_\circ \to \bU(u,S)^{\mathrm{reg}}_\circ$ over $\bU(u,S)^{\mathrm{reg}}_{n;\circ}$ such that
$i^{\#} f^* \widehat{\lambda} = i^* \widehat{\lambda}_\circ$. (The map $i^{\#}: \mathbf{W}_\bullet \to i^* \mathbf{W}_\circ$ was explained before Definition \ref{def: extendable}.)

We will now construct an \'etale `chart'  $\bN(\bU(u,S))$ (chart  in quotes because this space itself has points with orbifold structure) for $\bZ$ such that:  
\begin{itemize}
    \item the image of $\bN(\bU(u,S))$ contains the set of all stable maps $(v,T)$ whose positive area part  $(v_+,T_+)$ lies in the image of $\bU(u,S)$ and whose zero area part is close to constant.
    \item the element in $\bN(\bU(u,S))$ mapping to such a $(v, T)$ comes with a lift of the corresponding $(v_+, T_+)$ to $\bU(u, S)_{n;\circ}^{\mathrm{reg}}$.
\end{itemize}  

\begin{remark}
    The sole purpose of the chart $\bN(\bU(u,S))$ is that  allows us to define, on the locus  inside $\bN(\bU(u,S))$ with stratum-wise forgetful maps to $\bU(u, S)_{n;\circ}^{\mathrm{reg}}$,  a stratum-wise pullback (of an extension of) the (not multi) section $\widehat{\lambda}$  which we then extend using the partition of unity \eqref{cutoff formula}, before descending to a multisection on $\bZ$.  That is, this chart lets us avoid the problem that multisections do not interact well with partitions of unity.   
\end{remark}

Domains in $\boldsymbol{j}$ are smooth complex curves (with markings trivializing automorphisms). For fixed such domain $S$, consider the polyfold $\bP_S$ of stable maps into $S$. Let $\M_S\subset \bP_S$ be the sub-polyfold of $\bP_S$ that consists of holomorphic degree $1$ stable maps $v\colon T\to S$. Then elements $(v,T)\in \M_S$ are nodal curves $T$ obtained from the smooth complex curve $S\in\boldsymbol{j}$ by attaching a stable curve $S_0$ at points in $S$. The restriction $v|_{T_+}$ of the map to the positive area component gives a unique holomorphic diffeomorphism $T_+\to S$ and if $S_{0;\mathrm{c}}$ is a connected component of the zero area part $S_0$ then the restriction $v|_{S_{0;\mathrm{c}}}$ is a constant map to the point $\zeta$, where $\zeta\in S$ is the point where $S_{0;\mathrm{c}}$ is attached. 

Let $\bD$ denote the polyfold which is the fibration over $\boldsymbol{j}$ with fiber over $S\in\boldsymbol{j}$ equal to $\M_{S}$. We denote elements $(v,T)\in \bD$ simply $T$ since the map $v: T \to S$ is uniquely determined by $T$. Let $\boldsymbol{j}_{\bullet;n}$ denote the space of domains in $\boldsymbol{j}$ with $n$ distinct marked points. If $T\in\bD$ then forgetting area zero components of $T$, the positive area $T_+$ lies in $\boldsymbol{j}_n$ for some $n$. Letting $\bD_n$ denote the subset of $\bD$ with positive area part in $\boldsymbol{j}_{\bullet;n}$, the stratification of Definition \ref{def: ploutonion stratification} induces a stratification:
\begin{equation}\label{eq : stratification of bD}
\bD = \bigsqcup_n \bD_n,
\end{equation}
and forgetting ghosts gives 
\begin{equation}\label{eq: ploutoninan nodes to punctures of bD}
 \pi\colon \bD_n \to \boldsymbol{j}_{n;\bullet},
\end{equation}

Consider the polyfold $N(\bD)$ that consists of all domains in a neighborhood of domains in $\bD$. Here the polyfold structure is obtained in the standard way: if a domain $T\in N(\bD)$ is obtained by attaching a zero area component to some domain in $\bD_n$ and also to some domain in $\bD_m$ then as in \cite[Theorem 2.2, Definition 2.3]{HWZ-GW} there exists an $\mathrm{sc}$-smooth change of coordinates of a neighborhood of $T$ in the chart of $\bD_n$ to a neighborhood of $T$ in the chart of $\bD_m$.

So far we considered only domains. We next consider also maps. Let $\Omega(u,S)$ be the polyfold of maps $(v,T)$, $v\colon T\to X$, where $T\in\bD$ has regularity $H^{3}$, where the positive area part $(v_+,T_+)$ has domain the positive area part $T_+\in\boldsymbol{j}_{\bullet;n}$ of $T$ and where the zero area part $(v_0,T_0)$ has domain the zero area part $T_0$ of $T\in\bD$ and $v_0$ has $C^0$-norm bounded by some fixed $\epsilon>0$. This means that the   
positive area part $(v_+,T_+)\in \bU(u, S)_{n;\bullet}^{\mathrm{reg}}$ for some $n$. Then 
the stratification of Definition \ref{def: ploutonion stratification} induces a stratification:
\begin{equation}\label{eq : stratification of Omega}
\Omega(u, S) = \bigsqcup_n \Omega(u, S)_n,
\end{equation}
where $\Omega(u,S)_n$ is the subset of $\Omega(u,S)$ with positive area part of the domain in $\boldsymbol{j}_{\bullet;n}$ and where forgetting ghosts gives  
\begin{equation}\label{eq: ploutoninan nodes to punctures}
\pi\colon \Omega(u, S)_n \to \bU(u,S)^{\mathrm{reg}}_{n;\circ}.
\end{equation}

Fix disk neighborhoods of the Ploutonion nodes smoothly over each stratum $\boldsymbol{j}_{\bullet;n}$ underlying $\Omega(u,S)^{\chi(S)}_n$ as in Corollary \ref{smooth family of disks}. This choice of disk neighborhoods determines gluing parameters for maps in $\Omega(u,S)_n$ and gives polyfold charts around $(v,T)\in\Omega(u,S)_n$ of the 
\[
H^{3,\delta}(T, v^\ast TX)\times C \times G,
\]
where $C$ is the finite dimensional space of asymptotic constants and $G$ is the finite dimensional space of gluing parameters, at nodes, as in Section \ref{sec:stablemapmoduli}. If a map $(v,T)$ appears in two such charts then again as in Section \ref{sec:stablemapmoduli} (see \cite[Theorem 1.6, 1.7]{HWZ-GW}) there are $\mathrm{sc}$-smooth coordinate changes between the neighborhoods of $(v,T)$ in the two charts. We then define $\bN(\bU(u,S))$ as the set of stable maps that lies in charts as above centered at elements in $\bigcup_{n}\Omega(u,S)_n$ with the polyfold structure induced by the coordinate changes between neighborhoods in charts discussed.

As in Section \ref{sec:formaldiff} we have the bundle of formal differentials $\bW\to \bN(\bU(u,S))$.
We next extend the basic parturbation $\widehat{\lambda}$ which gives a section $\Omega(u,S)\to\bW$ to a section over $\bN(\Omega(u,S))$. The choice of disk neighborhoods over the strata in $\Omega(u,S)$ determines coordinates on the normal directions to $\Omega(u,S)_n$ in $\bN(\bU(u,S))$, and fixes the choices in the sharp gluing $\widehat{ \oplus}^R$, which we use to extend sections of $\bW$ defined over ${\Omega(u,S)_n}$ to sections of $\bW$ over a neighborhood of ${\Omega(u,S)_n}$ in $\bN(\Omega(u,S))$.  We define: 
    \begin{equation}\label{eq: def widehatlambda star}
    \widehat{\lambda}_{\star,k} \ = \ \widehat{\lambda} \ \widehat{\oplus}_a^R \ 0.  
    \end{equation}    
and, using the partition of unity \eqref{cutoff formula}, 
    \begin{equation}\label{eq: def widehatlambda[epsilon]}
        \widehat{\lambda}[\epsilon] := \sum \kappa_i \widehat{\lambda}_{\star, i}.
    \end{equation}
Then for sufficiently small $\epsilon<0$, $\widehat{\lambda}[\epsilon]$ is a section of $\bW$ defined over $\bN(\Omega(u,S))$.  

The following result is immediate from the definitions, we include it for future reference.
\begin{lemma}\label{l : small Euler char in support of extended}
Let $\widehat{\lambda}$ be a basic perturbation supported in a chart $\bU(u,S)$ with $\chi(S)=\chi_0$. If $(v,T)$ is a stable map of Euler characteristic $\chi\le \chi_0$ that lies in the support of $\widehat{\lambda}_{\star,k}$ and if $T_+$ is the positive area part of $(v,T)$ then also $\chi(T_+)\le \chi_0$.  
\end{lemma}

\begin{proof}
The support of $\widehat{\lambda}_{\star,k}$ is a subset which is obtained by attaching area zero components to $S$ at $k$ marked points.
\end{proof}

The section $\widehat{\lambda}[\epsilon]$ is a $\mathrm{sc}^+$-section of $\bW$ over $\bN(\Omega(u, S))$.
In Section \ref{ssec: extension basic II}, we will consider how to cut off this section in the functional-analytic directions, so as to obtain a corresponding $\lambda[\epsilon]$ which is moreover supported in a neighborhood of the subset of $\Omega(u,S)$ in $\bN(\Omega(u,S))$ that is obtained by attaching constant ghost components to maps in $\bU(u,S)$.

\subsection{Asymptotic constant charts} \label{ssec:modifiedpolyfoldcharts}
It will be convenient to express our cutoff after a certain change of variables in the chart, designed to preserve the `asymptotic constant' term over the whole chart, not just at the nodal moments.   
Consider the gluing region
$$Z_{a} := \{0\le s\le R\} = \{-R\ge s'\ge 0\}$$
For a function $h$ on $Z_a$, we will write 
$$
\langle h \rangle_0 = 
\begin{cases}
\int  h\left(\frac12R,t\right) dt, \quad\text{(interior)},\\
\int \pi_{\R^{n}}\left(h\left(\frac12R,t\right)\right) dt, \quad\text{(boundary)}.
\end{cases}
$$

Note these integrals are well defined continuous functionals on $H^3_{\mathrm{loc}}$.  
We write $H^3(Z_a)_0 \subset H^3(Z_a)$ for the locus of $h$ with $\langle h \rangle_0 = 0$.

Let $\gamma_a \colon Z_a \to \R$ equal to $1$ in $[1,R-1]$ and equal to $0$ outside $[\frac12,R-\frac12]$.
Obviously the following is an (sc) isomorphism: 
\begin{eqnarray*}
	H^{3,\delta}(Z_{a}, \R^{2n})_{0}\oplus\R^{2n} & \to & H^{3}(Z_{a}, \R^{2n}) \\
	(h, v) & \mapsto & w_{a,\delta}(s,t)\cdot h + \gamma_a v.
\end{eqnarray*}
with inverse 
\begin{equation}\label{eq : proj asympt const}
h \ \mapsto \ (w_{a,\delta}^{-1}(h - \gamma_a \langle h \rangle_0), \langle h \rangle_0).    
\end{equation}

By such isomorphisms, we use modified building blocks for standard polyfold charts around nodes, e.g. in the case of interior nodes 
we replace the standard
\[ 
G^{a}=H^{3}(Z_{a},\R^{2n}) \ \oplus \ H^{3,\delta_{0}}\left(C_{a},\R^{2n}\right) 
\]
with 
\begin{equation}\label{eq : charts with asymptotic constants}
G^{a}=H^{3,\delta}(Z_{a},\R^{2n})_{0}\oplus\R^{2n} \ \oplus \ H^{3,\delta_{0}}\left(C_{a},\R^{2n}\right),
\end{equation}
where the first factor in the right hand side is the Sobolev space with the restricted weight norm, see Section \ref{ssec:sharpgluing}, and similarly for elliptic and hyperbolic boundary nodes, where instead the asymptotic constant lies in $\R^{n}\subset\R^{2n}$. We call charts as above (with the asymptotic constants singled out and with the restricted weight norm in gluing regions) \emph{asymptotic constant charts}.

\subsection{Extending basic perturbations, II: cutoffs}\label{ssec: extension basic II}
Recall that the basic perturbation $\lambda$ is obtained from $\widehat{\lambda}$ by cutting off in the map direction in two steps:
\[
\lambda_0(v) \ = \ \beta_0(\|v\|)\widehat{\lambda}(v),\quad \lambda(v) \ = \ \beta_1(\|\bar\partial_J\widetilde{v}\|_2)\lambda_0(v),
\]
where $\beta_k$, $k=0,1$ are functions equal to $1$ in some neighborhood of $0$ and equal to $0$ outside a larger neighborhood, see \eqref{eq: chi0 basic perturbation in}. 

We point out the different roles of the cut off factors $\beta_0(\|v\|)$ and $\beta_1(\|\bar\partial_J\widetilde{v}\|_2)$. If $\|\bar\partial_J\widetilde{v}\|_2$ increases, we move away from the original moduli space of solutions (i.e., away from the locus $\bar\partial_J=0$) and if the auxiliary norm of $\widehat\lambda$ is sufficiently small compared to the support of $\beta_1$ then  all solutions to $\bar\partial_J=\widehat\lambda$ lie in the region where $\beta_1(\|\bar\partial_J\widetilde{v}\|_2)=1$. This means that the exact size of the support of $\beta_1$ will not affect solutions as long as it is sufficiently large compared to $\widehat{\lambda}$. In contrast to this, the $L^2$-norm $\|v\|$ may well increase as we move along the original moduli space (solutions to $\bar\partial_J=0$) and it will be important that our extensions of  $\lambda_0=\beta_0(\|v\|)\widehat{\lambda}$  to neighborhoods of maps with constant ghosts attached changes compatibly. This compatibility is the main subject of this section. We will use the asymptotic constant polyfold charts, see Section \ref{ssec:modifiedpolyfoldcharts}.

We consider a basic perturbation $\lambda$ supported in $\bU(u,S)$ and the extension of its underlying not cut-off perturbation $\widehat{\lambda}[\epsilon]$ to $\bN(\Omega(u,S))$ as in Section \ref{ssec : ext basic I}. 

Let $(w,T)\in \Omega(u,S)$ be a map with constant ghosts with underlying positive part map $(w_+,T_+)\in \bU(u,S)$. Then the domain $T_+$ is the curve $S$ equipped with a complex structure $j\in\boldsymbol{j}$ and $w_+=\exp_u(v_+)$, $v_+\in H^{3}(S,u^\ast TX)$, see \eqref{eq: nbhd basic perturbation}. 
Recall the stratification $\Omega(u,S)=\bigsqcup_n\Omega(u,S)_n$, see \eqref{eq : stratification of Omega}.

Let $\nu_r$ denote the gluing parameter neighborhood of $\Omega(u,S)_n$, see Remark \ref{r:gluingparematertubes}. Denote the $n$ punctures $\boldsymbol{\zeta}=(\zeta_1,\dots,\zeta_n)$ and $S_{\boldsymbol{\zeta}}$ the corresponding punctured domain. Let $T_0$ be the ghost part of $T$. If $(w',T')\in \nu_r$, we let $a_k(T')$, $k=1,\dots,r$ denote the gluing parameters at the nodes determined by $T'$.

Recall  $w_+=\exp_u(v_+)$ for some vector field along $u$. Shifting the origin of the exponential map along $v_+$, maps with domain $T'$ are parameterized by the image under $\oplus$ of 
\begin{equation}\label{eq : local coord Omega(u,S)}
\boldsymbol{v}=(v,v_{k},c_{l})\in H^{3,\delta}(S_{\boldsymbol{\zeta}},u^{\ast} TX)\times H^{3,\delta}(T_0,\R^{2n})\times C,
\end{equation}
where $C$ denote the space of asymptotic constants. 
Here $H^{3,\delta}(T_0,\R^{2n})$ denotes $\R^{2n}$-valued maps, using some identification of the neighborhoods of the images the $\zeta_i$. 
Below we will use norms of such maps where the domains vary over all possible ghosts. We define such norms by covering the compact space of domains by finitely many standard charts used to define norms in $H^{3,\delta}(T_0,\R^{2n})$ and then add them weighted by a partition of unity. To simplify notation, if $v_k$ is a vector field along $T_0$, we will simply write $\|v_k\|_{m,\delta}$ for its norm (even though it may be a linear combination of norms in a finite number of charts).  
We write $G(\boldsymbol{v})$ for the map corresponding to $\boldsymbol{v}$. 

We will work with sections of $\bW$ over $\nu_r$. Using coordinate as in \eqref{eq : local coord Omega(u,S)}, such sections are represented by the image under $\oplus$ of
\[
\boldsymbol{\xi}=(\xi,\xi_k)\in H^{2,\delta}(S_{\boldsymbol{\zeta}},\mathrm{Hom}^{0,1}(TS,v^\ast TX))
\times H^{2,\delta}(T_0,\mathrm{Hom}^{0,1}(TT_0,v_k^\ast T\R^{2n})).
\]
Our extended basic sections are of level $(0,1)$ and just like for basic sections we use the $(3,\delta)$-fiber norms in local coordinates as auxiliary norms, see Section \ref{sec:formaldiff}. Over the ghosts that means just like for the chart itself that the norm is a linear combination of norms in a finite number of charts. Again, as for basic perturbations the auxiliary norms are defined locally but have all the properties needed for auxiliary norms in a neighborhood of the spaces of solutions.   

We now turn to cut-offs. As with the original basic perturbation, we will cut off the extensions $\widehat{\lambda}_{\star,n}$ defined in $\nu_n$ by a product of cut-off functions, one standard that depend on the size of the 2-norm of $\bar\partial_J$ and another that depends on an $L^2$-norm. In order for the cut-off to be compatible with the original basic perturbation we use an $L^2$-norm which is a sum of the standard weighted $L^2$-norm on the ghost part and the original unweighted norm on the parts of the domain corresponding to the original bare curve. For this latter part we must then consider the effect on the norm of replacing a marked point by a puncture and show two things: smoothness of the norm function and the fact that the total cut off is actually supported inside $\nu_n$.

We first introduce the $L^2$-norm along the bare part of the surface.
\begin{definition}
    Let $S_a$, $a=(a_1,\dots,a_r)$ denote the domain $S$ with disks and half disks of radius determined by the gluing parameter $a_k$ around $\zeta_k$ removed and let $S_{+,a}=S_a\cap S_+$, where $S_+$ is the positive area part of $S$. Then $S_{+,a}$ is a compact subset of $S$. Define
\begin{equation}\label{eq:N0+}
	N_{0+}(\boldsymbol{v}) := \left\|(v+\textstyle{\sum_{l}}c_{l})|_{S_{+,a}}\right\|_{+}^{2},
\end{equation}
where $v+\sum_{l}c_{l}$ denotes the sum of the vector field $v\in H^{3,\delta}(S_{\boldsymbol{\zeta}};u^{\ast}TX)$ and cut-off asymptotic constants at punctures, see Section \ref{ssec:modifiedpolyfoldcharts}, and where the norm $\|\cdot \|_+$ refers to the usual $L^{2}$-norm on $S$. 
\end{definition}

We check that the function $N_{0+}$ is smooth as a function on the asymptotic constant polyfold chart. Consider $\boldsymbol{v}=(v,v_{k},c_{l})$ as above and a map from a domain $T'$. Here $N_{0+}(\boldsymbol{v})$ depends only on $v\in H^{3,\delta}(S_{\boldsymbol{\zeta}},u^{\ast}TX)$ and $c\in C$. More precisely, $N_{0+}(\boldsymbol{v})$ is obtained by restricting $v+c$ to the part of $S_{\boldsymbol{\zeta}}$ that lies in $T'$ and then taking its $L^{2}$-norm. Thus to show that $N_{0+}$ is smooth we must compare the weighted $L^{2}$-norm on the punctured domain with cylindrical ends with the usual `compact domain' $L^{2}$-norm.   

\begin{lemma}\label{l:L2normsmoothonghostspace}
	The function $N_{0+}$ is smooth.   
\end{lemma}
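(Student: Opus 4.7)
The plan is to separate the integral defining $N_{0+}$ into a ``bulk'' contribution (integration over a fixed compact region of $S$ away from the punctures $\boldsymbol{\zeta}$, which does not depend on $a$) and a ``collar'' contribution (integration over the portions of the cylindrical ends at each $\zeta_k$ that are retained by $S_{+,a}$). The bulk contribution is evidently smooth in $\boldsymbol{v}$, since there $v$ has three (classical) derivatives in $L^2_{\mathrm{loc}}$ and the restriction is a bounded bilinear form on the underlying Hilbert space. Only the collar contributions actually involve the gluing parameters and the asymptotic constants nontrivially, so all the work is there.

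On a collar around $\zeta_k$, I would switch to holomorphic polar coordinates $(s,t)\in[0,\infty)\times I$ with area form $e^{-4\pi s}\,ds\,dt$ (Lemma \ref{l:markedvspuncturenorms}). The portion retained in $S_{+,a}$ is $\{0\le s\le s_0(a_k)\}$, where $s_0(a_k)\to\infty$ as $a_k\to 0$. The collar contribution becomes
\begin{equation*}
\int_0^{s_0(a_k)}\!\int_{I}\bigl|v(s,t)+\textstyle\sum_l c_l\bigr|^{2}\,e^{-4\pi s}\,ds\,dt.
\end{equation*}
The key point is that for $\delta\in(0,2\pi)$ the measure factor $e^{-4\pi s}$ dominates the weight $e^{2\delta s}$ built into $H^{3,\delta}$, i.e. $e^{-4\pi s}=e^{-2\delta s}\cdot e^{-(4\pi-2\delta)s}$, so the map $v\mapsto \int |v|^{2}e^{-4\pi s}$ factors through the inclusion $L^{2}_{\delta}\hookrightarrow L^{2}_{-(4\pi-2\delta)}$ and is a continuous (in fact bounded bilinear) form on $H^{3,\delta}$. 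The cross term $2\operatorname{Re}\langle v,\sum c_l\rangle$ and the constant term $|\sum c_l|^{2}\cdot\operatorname{Area}$ are similarly continuous (bi)linear in $(v,c)$ for each fixed $a$; the asymptotic constant contribution is finite because $\int_0^{s_0(a_k)}e^{-4\pi s}ds$ is uniformly bounded.

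Smoothness in the finite-dimensional variable $a$ comes from the Leibniz rule: differentiating the upper limit produces boundary terms of the shape
\begin{equation*}
s_0'(a_k)\int_{I}\bigl|v(s_0(a_k),t)+\textstyle\sum c_l\bigr|^{2}\,e^{-4\pi s_0(a_k)}\,dt.
\end{equation*}
Because $s_0(a_k)\to\infty$ as $a_k\to 0$ and because $v$ is continuous (by Sobolev embedding for $H^{3,\delta}$, cf.\ Lemma \ref{l:H3deltatoC0}) with values bounded by a polynomial in $s_0$ away from the ``constant'' part already extracted, the factor $e^{-4\pi s_0}$ beats any derivative of $s_0=\varphi(a_k)$ (an iterated exponential in $a_k^{-1}$, compare the estimate in the proof of Lemma \ref{l:gluepertfast}), so these boundary terms are smooth in $a$ including at $a=0$. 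Iterating, all higher derivatives in $a$ involve analogous boundary terms with the same exponential dominance, so $N_{0+}$ is $C^{\infty}$ jointly in $(v,c,a)$.

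Finally, to upgrade classical smoothness to sc-smoothness in the sense of \cite{HWZ}, I would observe that $N_{0+}$ takes values in $\R$, and that at each scale $m$ the variable $v$ lies in $H^{3+m,\delta_m}$, which embeds continuously into $H^{3,\delta_0}$; the expressions derived above are sums of products of continuous (bi)linear forms on $H^{3,\delta_0}\oplus\R^{2n}$ with smooth scalar functions of $a$, hence are sc-smooth. The anticipated main obstacle is simply bookkeeping of the interaction between the weighted Sobolev norm on $v$, the unweighted $L^2$ norm defining $N_{0+}$, and the moving domain $S_{+,a}$; the observation that the area form at a puncture decays faster than any admissible weight is what makes everything go through.
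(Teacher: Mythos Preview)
Your proposal is correct and rests on exactly the same observation as the paper: pulling back the unweighted $L^2$ norm on a disk neighborhood of $\zeta_k$ to the cylinder via Lemma~\ref{l:markedvspuncturenorms} produces the weight $e^{-4\pi s}$, which dominates the $H^{3,\delta}$ weight for $\delta\in(0,2\pi)$, so the collar contribution is a bounded quadratic form on $H^{3,\delta}\oplus\R^{2n}$. The paper's proof stops at that point; your explicit Leibniz-rule treatment of the moving upper limit $s_0(a_k)$ is a legitimate elaboration of a step the paper leaves implicit (the tail $\int_{R}^\infty |v_\infty|^2 e^{-4\pi s}\,ds\,dt$ is $\mathcal{O}(e^{-(4\pi-2\delta)R})$ and hence flat at $a=0$ against any derivative of the gluing profile), and your argument for it is correct.
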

	
	\begin{proof}
		Consider a disk or half disk neighborhood $D$ of nodal point $\zeta_k\in S$. The corresponding neighborhood of $\zeta_k$ considered as a puncture is $[0,\infty)\times I$ in the notation of Lemma \ref{l:markedvspuncturenorms}. Furthermore, in this notation the vector field $v+c$, where $c$ is the asymptotic constant at $\zeta$, is $v_{\infty}\in H^{3,\delta}([0,\infty)\times I,\R^{2n})$ and the contribution to the function $N_{0+}$ from $v$ in $D$ is
		\[
		\int_{D}|v_{0}|^{2} \,dxdy.
		\]
		Lemma \ref{l:markedvspuncturenorms} \eqref{eq:ref pull-back L^2} says that
		\[
		\int_{D}|v_{0}|^{2} \,dxdy= \int_{[0,\infty)\times I}|v_{\infty}|^{2}e^{-4\pi s} \,dxdy.
		\] 
		The lemma follows.
	\end{proof}

Second, we introduce the standard asymptotic constant $L^2$-norm of the ghost part: 
\begin{definition}
    Let $T$ be the domain obtained by gluing $S$ and $T_0$ according to the gluing parameters $a=(a_1,\dots,a_r)$ and let $T_{0,a}$ denote the part of $T$ that is a subset of $T_0$.  Define
\begin{equation}\label{eq:N00}
	N_{00}(\boldsymbol{v})=\sum_{k}\|v_{k}|_{T_{0,a}}\|_{\delta}^{2}+\sum_{l}|c_{l}|^{2},
\end{equation}
where $\|\cdot\|_{\delta}$ denotes the restricted weight $L^{2}$-norm. 
\end{definition}

Third, we use the restricted weight $(2,\delta)$-norm of the complex anti-linear derivaitve of $G(\boldsymbol{v})$:

\begin{definition}
With $T$ and $\boldsymbol{v}=(v,c_k,c_l)$ as above define
\begin{equation}\label{eq:N1}
	N_{\bar 1}(\boldsymbol{v}) = \|\bar\partial_{J} G(\boldsymbol{v})\|_{2,\delta}^{2}, 
\end{equation}
where the norm is the restricted weight norm on $T$. 
\end{definition}

The functions $N_{00}$ and $N_{\bar 1}$ are smooth by inspection, they are (parts of) function norms used in the definition of the polyfold charts in Section \ref{ssec:modifiedpolyfoldcharts}.

We next show that the three functions $N_{0+}$, $N_{00}$, and $N_{\bar 1}$ together give rough control on the overall norm in the asymptotic constant chart.
Let $\boldsymbol{v}=(v,v_{k},c_{l})$ be as above in a coordinate chart parameterizing a neighborhood of $\Omega^{\chi(S)}(u,S)_n$ and write $\tilde v=\tilde v(\boldsymbol{v})$ for the gluing of $v$ and $v_{k}$ shifted by the asymptotic constants $c_{l}$. We write $\|\tilde v(\boldsymbol{v})\|_{3,\delta}^{\wedge}$ for the natural norm in the chart, near each puncture we use $H^{3,\delta}(Z_a;\R^{2n})_{0}\oplus \R^{2n}$, see \eqref{eq : charts with asymptotic constants}, so locally in a gluing neck region:
\[
\|\tilde v(\boldsymbol{v})|_{Z_a}\|_{3,\delta}^{\wedge}=\|\tilde v(\boldsymbol{v})_0\|_{3,\delta} + |c(\boldsymbol{v})|,
\]
where $\tilde v(\boldsymbol{v})_0$ is the projection to the space of maps with vanishing asymptotic constant and $c(\boldsymbol{v})$ is the asymptotic constant of $\tilde v(\boldsymbol{v})$, see \eqref{eq : proj asympt const}.

As in Lemma \ref{l: function cut off basic}, all our perturbations will be supported near the original moduli space so we assume that the weighted $L^2$-norm $\|\bar\partial_J u\|_{\delta}<\epsilon$.

\begin{lemma}\label{l: Ns control 3norm}
Assume $\|\bar\partial_J u\|_{\delta}<\epsilon$. Then there exists $\epsilon_{0}>0$ and $C>0$ (independent of $\epsilon>0$) such that the following holds.

If $\boldsymbol{v}$ satisfies $\|v\|_{3,\delta}+\sum_{k}\|v_{k}\|_{3,\delta}+\sum_{l}|c_{l}|<\epsilon_{0}$ and if $r>C\epsilon$ then there exist $\delta>0$ such that
\[
N_{0+}(\boldsymbol{v})+ N_{00}(\boldsymbol{v})+N_{\bar 1}(\boldsymbol{v})<\delta
\]
implies 
\[
\|\tilde v(\boldsymbol{v})\|_{3,\delta}^{\wedge}< r.
\]
\end{lemma}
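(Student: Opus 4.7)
The plan is to mirror the argument of Lemma~\ref{l: function cut off basic} adapted to the glued nodal setting, working in the asymptotic constant chart of Section~\ref{ssec:modifiedpolyfoldcharts}. Writing $\tilde v=\exp_w(\xi)$ for a vector field $\xi$ along $w$, an interior-plus-boundary elliptic estimate for the linearization of $\bar\partial_J$ at $w$, together with the standard quadratic error bounds valid in a coordinate ball of radius $\epsilon_0$, yields
\[
\|\tilde v\|_{3,\delta} \;\le\; C\bigl(\|\bar\partial_J \tilde v\|_{2,\delta} + \|\bar\partial_J w\|_{2,\delta} + \|\tilde v\|_{L^2_\delta}\bigr),
\]
where all norms use the restricted weight on $T$. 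The hypothesis $\|v\|_{3,\delta}+\sum_k\|v_k\|_{3,\delta}+\sum_l|c_l|<\epsilon_0$ ensures all values of $\tilde v$ remain in a common coordinate ball of the target, so the quadratic remainder can be absorbed on the left with $C$ depending only on $\epsilon_0$.

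The three quantities $N_{0+}$, $N_{00}$, $N_{\bar 1}$ control the right-hand side. By definition $\|\bar\partial_J \tilde v\|_{2,\delta}^2 = N_{\bar 1}(\boldsymbol{v})$, while $\|\bar\partial_J w\|_{2,\delta}=\epsilon$ by assumption. For the lower-order $L^2_\delta$-term, I would partition $T$ into the bare subset $S_{+,a}$, the ghost subset $T_{0,a}$, and the gluing necks $Z_{a_k}$. On $S_{+,a}$, a compact subset of $S$ away from the Ploutonian punctures, the restricted-weight $L^2_\delta$-norm of the chart representative $v+\sum_l c_l$ is comparable to its unweighted $L^2$-norm, which is exactly $\sqrt{N_{0+}(\boldsymbol{v})}$. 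On $T_{0,a}$ the restricted-weight $L^2_\delta$-norm contributes $\sqrt{N_{00}(\boldsymbol{v})}$ by definition, after extracting the asymptotic constants $|c_l|$. On each gluing neck, the explicit form of the gluing $\widehat{\oplus}_{a_k}$ interpolates between the two sides, and its restricted-weight $L^2$-norm there is controlled by $\sqrt{N_{0+}}+\sqrt{N_{00}}$.

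Combining gives
\[
\|\tilde v\|_{3,\delta}\;\le\;C'\bigl(\epsilon+\sqrt{N_{0+}(\boldsymbol{v})+N_{00}(\boldsymbol{v})+N_{\bar 1}(\boldsymbol{v})}\bigr),
\]
and the conclusion follows by choosing any $r>C'\epsilon$ and then $\delta>0$ so small that $C'\sqrt{\delta}<r-C'\epsilon$. The principal obstacle is \emph{uniformity} of the elliptic constant $C$ as the gluing lengths $R_k\to\infty$; this is precisely what the restricted-weight norms of Section~\ref{ssec:sharpgluing} are designed to secure, since $w_{a,\delta}$ equals $1$ at the midpoint of each neck and a conformal change of variable reduces the estimate on $Z_{a_k}$ to a standard elliptic estimate on the cylinder $\R\times I$ with weight $e^{\delta|s|}$, with constants independent of $R_k$.
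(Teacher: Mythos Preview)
Your overall scheme is the right one, and on the compact pieces of $S$ and on the ghost part $T_{0,a}$ your argument is fine and matches the paper. The gap is in the passage from $N_{0+}$ to the weighted $L^2_\delta$-norm near the Ploutonian punctures. The set $S_{+,a}$ is not ``away from'' the punctures: in cylindrical coordinates it contains the bare half-neck $[0,R/2]\times I$, and there the restricted weight $w_{a,\delta}=e^{\delta s}$ is unbounded as $R\to\infty$. Meanwhile $N_{0+}$ is the \emph{compact-surface} $L^2$-norm on $S$, which by Lemma~\ref{l:markedvspuncturenorms} corresponds in cylindrical coordinates to the weight $e^{-4\pi s}$. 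So on the half-neck the two norms differ by the factor $e^{(2\delta+4\pi)s}$, and your comparability claim fails uniformly in $R$. For the same reason the asymptotic constant $c_l$ contributes $\sim|c_l|^2 e^{\delta R}$ to $\|\tilde v\|^2_{L^2_\delta}$ on the neck, which is not bounded by $N_{00}$.

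The paper handles the puncture disks by a different elliptic estimate. On each cylindrical end $[0,\infty)\times I$ (with the constant subtracted so that $v\in H^{3,\delta}$), the weighted estimate on $\R\times I$ with a cut-off at the entrance gives
\[
\|v\|_{3,\delta}\ \le\ C\bigl(\|\bar\partial_J v\|_{2,\delta}+\|v|_{[0,1]\times I}\|\bigr),
\]
where the lower-order term is the \emph{unweighted} $L^2$-norm on the fixed collar $[0,1]\times I$, independent of $R$. On that collar the compact-surface weight $e^{-4\pi s}$ is bounded above and below, so this collar norm is controlled by the corresponding piece of $N_{0+}$ (the cut-off constant $\gamma c$ vanishes there). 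The asymptotic constant $|c|$ is then bounded in a separate second step, using the remaining portion of $N_{0+}$ on $D\setminus A$ together with the bound on $\|v\|_\delta$ just obtained. This two-step argument---first $\|v\|_{3,\delta}$ via the fixed collar, then $|c|$---is what is missing from your proposal; a single global estimate with a full $\|\tilde v\|_{L^2_\delta}$ lower-order term does not close.
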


\begin{proof}
We pick a finite cover of $S$ by local coordinate disks $D$ of two types, those that contain punctures $\zeta_k$ and those that do not, such that $G(\boldsymbol{v})$ maps each $D$ into some coordinate ball in $X$. (Here we use the $\epsilon_0$-bound.) 

For domains without punctures we repeat the argument from Lemma \ref{l: function cut off basic} and conclude:
\[
\|\tilde v\|_3 \le  C'(\|\bar\partial_JG(\boldsymbol{v})\|_2+\|\bar\partial_J u\|_2 + \|\tilde v\|).
\]
Along the ghost components the argument is identical using instead Sobolev norms and asymptotic constants of the asymtotic constant charts. 

For disks that contain punctures we need an estimate using the norm $N_{0+}$, which is not the standard norm for elliptic estimates. Consider a cylindrical end $[0,\infty)\times I$ where the vector field is $v$ and the asymptotic constant $c$, represented as $\gamma\cdot c$, for some cut-off function with derivative supported in $[1,2]\times I$. Here we have from the weighted elliptic estimate on $\R\times I$ using a cut-off function supported in $[0,1]\times I$,
\[
\|v\|_{3,\delta}\le C(\|\bar\partial_Jv\|_{2,\delta}+\|v|_{[0,1]\times I}\|).
\]
The contribution to $N_{0+}(\boldsymbol{v})$ is
\[
\int_D |v_0 + \gamma c|^2 d\sigma = \int_A |v_0|^2 d\sigma + \int_{D\setminus A}|v_0+\gamma c|^2 d\sigma,   
\]
where $A$ corresponds to $[0,1]\times I$.
Here the first term in the right hand side controls $\|v|_{[0,1]\times I}\|$ and we get an estimate on $\|v\|_{3,\delta}$. Then
\begin{align*}
\area(D\setminus A)|c|^2 &\le \int_{D\setminus A}|v_0+\gamma c|^2 d\sigma +\int_D|v_0|^2 d\sigma\\
&=\int_{D\setminus A}|v_0+\gamma c|^2 d\sigma +\int_{[0,\infty)\times I}|v|^2e^{-4\pi s} dsdt\\
&\le\int_{D\setminus A}|v_0+\gamma c|^2 d\sigma +\|v\|_{\delta}^2
\end{align*}
and hence also the size of the asymptotic constant $|c|$ is controlled and we get the desired estimate on $\|\tilde v\|_{3,\delta}^\wedge=\|v\|_{3,\delta}+|c|$. The lemma follows.
\end{proof}

We then define the extension of the basic perturbation $\widehat{\lambda}$ to $\nu_n$ as the previously defined extension $\widehat{\lambda}_{\star,n}$ cut off: 
\[
\lambda_{\star,n}=\beta_0(\|N_{0+}(\boldsymbol{v})+N_{00}(\boldsymbol{v})\|)\beta_{n;1}(N_{\bar 1}(\boldsymbol{v}))\widehat{\lambda}_{\star,k},
\]
where $\beta_{n,1}$ is a cut-off function with sufficiently large support compared to the size of $\widehat{\lambda}_{\star,n}$ as discussed above (i.e., in order that all solutions to $\bar\partial_J=\widehat\lambda$ lies in the region where $\beta_{n,1}=1$) and where $\beta_0$ is the cut-off function used in the definition of the original basic perturbation, see \eqref{eq: L2 cut off lambda0}. 

\begin{lemma}
The section $\lambda_{\star,n}$ is supported inside the neighborhood $\nu_n$.
\end{lemma}

\begin{proof}
    Follows from Lemmas \ref{l: kappa part of unity} and \ref{l: Ns control 3norm}. 
\end{proof}

Consider $(w,T)\in\Omega(u,S)_n$ with constant ghosts, i.e., the restriction of $w$ to the zero area part of $T$ is constant. Consider the restriction $w|_{T_+}$ to the positive area part of $T$. Write $w^+_\bullet$ and $w^+_\circ$ for this map considered as a map from $T_+$ with marked points and with punctures, respectively.

\begin{lemma}
If $\beta_{1,n}(\|\bar\partial_J w^+_\circ\|_{2,\delta})=\beta_1(\|\bar\partial_J w^+_\bullet\|_2)=1$ then
\[
\lambda_{\star,n}(w_\circ,T)|_{T_+}=\lambda(w_\circ^+,T^+),
\]
where the right hand side is the extension of $\lambda=\lambda_0$ as in Lemma \ref{lem: extendable}. 
\end{lemma}
\begin{proof}
Before cutting off the sections agree by definition.
Let $w^+=\exp_u(v)$ then
since $\beta_{n,1}(N_{\bar 1}(v))=1$ and $\beta_1(\|\bar\partial_J\widetilde v\|_{2})=1$ we have 
\begin{align*}
\beta_0(\|N_{0+}(\boldsymbol{v})+N_{00}(\boldsymbol{v})\|)\beta_{k,1}(N_{\bar 1}(\boldsymbol{v})) &=
\beta_0(\|N_{0+}(\boldsymbol{v})+N_{00}(\boldsymbol{v})\|)\\
&=\beta_0(\|v\|+0)=\beta_0(\|v\|)\beta_1(\|\bar\partial_J\widetilde v\|_{2}).
\end{align*}
Thus, also the cut-off functions agree. The lemma follows.
\end{proof}

We next define the full extension of $\lambda$ as an interpolation of the extensions along the gluing parameter neighborhoods $\nu_n$. For small enough $\epsilon = (\epsilon_1, \epsilon_2, \ldots)$, we define 
    \begin{equation}\label{eq: def total extension basic pert}
        \lambda[\epsilon] := \sum \kappa_i \lambda_{\star, i}
    \end{equation}
where the $\kappa_i$ are defined as in \eqref{cutoff formula}. 

Let us note the chart independence of the $\lambda[\epsilon]$.
Indeed, suppose now given some $(w, T)$ and $(w', T')$, both of whose normalization is $(w_+, T_+)$ plus some constant maps.  We claim that if the standard neighborhoods of $(w, T)$ and $(w', T')$ overlap, then on this overlap, the respective $\lambda[\epsilon]$ defined using $(w, T)$ or using $(w', T')$ agree.  This is because the extension and cut off used depends {\em only} on the gluing parameters of the Ploutonion nodes, and the fixed disk neighborhoods on $S$ which were fixed at the outset.   (It is possible that the range of possible $\epsilon$ for which the construction makes sense depend on the chart, but in any fixed Euler characteristic, the space of such possible $(w, T)$ is compact, and we will later fix a bound on the Euler characteristic.)  Thus, for sufficiently small $\epsilon$, there is a well defined section, henceforth denoted $\lambda[\epsilon]$, supported in a neighborhood of the locus of maps whose underlying bare part is in $\bU(u, S)$.  

Finally, we establish an approximation property that is central to our main theorem: we show that by choosing $\epsilon$ sufficiently small, the full extension $\lambda[\epsilon]$ is arbitrarily close to $\lambda_{\star,n}$ in auxiliary norm.

\begin{lemma} \label{small hole estimate}
  For any $\eta > 0$, for all sufficiently small $\epsilon>0$ we have $\|\lambda[\epsilon] - \lambda_{\star, i}\|_{3,\delta} < \eta$ on $\nu_n$. 
\end{lemma}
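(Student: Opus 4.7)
The plan is to expand $\lambda_\circ[\epsilon] - \lambda_{\star,i}$ via the partition of unity supplied by the cutoff functions $\kappa_k$, and then apply Lemma \ref{controlled incompatibility} termwise.

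First I would verify that on $M_i := \mathbf{V}(w,T)^{(\chi(S))}_i$, one has $\kappa_k \equiv 0$ for every $k > i$: the support of $\kappa_k$ lies in the image of $\exp_k$, which by the coordinate corner structure is a tubular neighborhood of $M_k$ parameterized by the Ploutonian gluing parameters of $M_k$. Since $M_i \subset \overline{M_k}$ sits in the boundary where these parameters degenerate (un-smoothing Ploutonian nodes and changing the marked/unmarked structure) rather than in the interior where they smoothly vary, $M_i$ does not meet this tubular. Combined with Lemma \ref{l: kappa part of unity} this gives $\sum_{j \le i} \kappa_j \equiv 1$ in a neighborhood of $M_i$, and hence on $M_i$
\[
\lambda_\circ[\epsilon] - \lambda_{\star,i} \;=\; \sum_{j < i} \kappa_j\,(\lambda_{\star,j} - \lambda_{\star,i}),
\]
a finite sum indexed by the strictly lower strata.

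Next I would bound each $\|\lambda_{\star,j} - \lambda_{\star,i}\|_{3,\delta}$ on $\mathrm{supp}(\kappa_j) \cap M_i$. Points in this locus are obtained from $M_j$ by smoothing some distinguished subset $N_{ji}$ of nodes (those Ploutonian in the $M_j$-description but already absorbed into the marked component in the $M_i$-description), with gluing parameters $a_\nu < \epsilon_j$ for each $\nu \in N_{ji}$. Both $\lambda_{\star,j}$ and $\lambda_{\star,i}$ are sharp hat-gluings of the common underlying perturbation (``$\widehat\lambda$ on positive area, zero on ghosts''), but the sharp cutoffs are applied at different sets of nodes. By the choice of disks fixed in Section \ref{which disks}, the sharp hat-gluings at distinct Ploutonian nodes act on disjoint collar regions; so the difference $\lambda_{\star,j} - \lambda_{\star,i}$ telescopes node by node over $\nu \in N_{ji}$, and each single-node correction is controlled by Lemma \ref{controlled incompatibility}, giving
\[
\|\lambda_{\star,j} - \lambda_{\star,i}\|_{3,\delta} \;\le\; C \sum_{\nu \in N_{ji}} e^{-(2\pi + \eta')\,\varphi(a_\nu)} \;\le\; C\,|N_{ji}|\,e^{-(2\pi + \eta')\,e^{1/\epsilon_j - 1}}
\]
for any fixed $\eta' > 0$.

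Summing over the finitely many $j < i$ and using $|\kappa_j| \le 1$ gives
\[
\|\lambda_\circ[\epsilon] - \lambda_{\star,i}\|_{3,\delta} \;\le\; C'' \max_{j < i} e^{-(2\pi + \eta')\,e^{1/\epsilon_j - 1}},
\]
which decays super-exponentially as the $\epsilon_j$ shrink; in particular, for all sufficiently small $\epsilon$ the right-hand side is bounded by any prescribed $\eta > 0$. The main technical obstacle is the node-by-node decomposition of $\lambda_{\star,j} - \lambda_{\star,i}$; this relies on the disjoint-disk choice of Section \ref{which disks} and on the observation that sharp hat-gluings at distinct Ploutonian nodes commute on their non-overlapping collars, so that Lemma \ref{controlled incompatibility} may be applied independently at each node in $N_{ji}$.
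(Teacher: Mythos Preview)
Your proof is correct and follows essentially the same approach as the paper's: identify where $\lambda_\circ[\epsilon]$ already agrees with $\lambda_{\star,i}$ (the paper phrases this as ``on $M_i^\circ$''; you phrase it as ``where $\kappa_j = 0$ for $j < i$''), and on the complement use Lemma~\ref{controlled incompatibility} together with the fact that $\kappa_j$ is supported where the relevant Ploutonian gluing parameters are bounded by $\epsilon_j$. Your decomposition $\lambda_\circ[\epsilon] - \lambda_{\star,i} = \sum_{j<i}\kappa_j(\lambda_{\star,j}-\lambda_{\star,i})$ makes explicit what the paper's proof leaves implicit, and your node-by-node telescoping (justified by the disjoint-disk convention of Section~\ref{which disks}) is the correct way to reduce to the single-node estimate of Lemma~\ref{controlled incompatibility}. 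One small wording issue: you write ``$M_i \subset \overline{M_k}$'', but in fact only part of $M_i$ lies in $\overline{M_k}$; however, your conclusion that $\kappa_k|_{M_i} = 0$ for $k > i$ follows directly from Lemma~\ref{l: kappa part of unity} (since $\sum_{j \le i}\kappa_j = 1$ on $M_i$ forces the remaining $\kappa_k$ to vanish), so this does not affect the argument.
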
 
\begin{proof}
We use the notation $M^{\circ}_n$ from \eqref{away from collar} in the case $M=\Omega(u,S)$.
The restriction of the extension $\lambda_\circ[\epsilon]$ to $\Omega(u,S)^{\circ}_n$ agrees by construction with 
$\lambda_{\star, n}$.  
The complement $\Omega(u,S)_n \setminus \Omega(u,S)^{\circ}_n$ is a finite union
of regions in moduli where two or more of the points where ghosts meet $S$ collide and move onto a rational bubble.  In such regions, we see from 
Lemma \ref{controlled incompatibility} that the difference $\lambda[\epsilon] - \lambda_{\star, n}$ is controlled by the gluing parameters at Ploutonian nodes. The result follows because $\kappa_n$ is defined by cutting off in precisely these parameters.
\end{proof}

\section{Main results}\label{sec: main res}
In this section we prove our main results.

\subsection{Perturbations for bare curves}  Here we prove 
Theorem \ref{main theorem intro}.  We give first the 0-parametric part as Theorem \ref{main theorem 0-parameter}, and then the 1-parametric part as Theorem \ref{main theorem 1-parameter}.

Fix a symplectic Calabi-Yau threefold $(X, \omega, J)$, and a Maslov zero Lagrangian $L \subset X$.  We consider either $(X, L)$ compact or with appropriate assumptions to ensure Gromov compactness of holomorphic maps.
We study the  configuration space $\mathbf{Z}$ of maps of some fixed positive symplectic area $d$, possibly from disconnected domains.  Note that such maps have Euler characteristic bounded above by some $\chi_{\max} = \chi_{\max}(d)$.   We write $\mathbf{Z}_\chi$ for the (open and closed) locus of maps with domain of Euler characteristic $\chi$.  We write $\mathbf{Z}_{> \chi} = \bigsqcup_{\chi' > \chi} \mathbf{Z}_{\chi'}$, etc.
\begin{theorem} \label{main theorem 0-parameter}
	Fix a compatible almost complex structure $J$ on $X$ and an integer $\chi_{\min}$. There are finitely many basic perturbations $\lambda_\alpha$, 
    and sequences of real numbers $\epsilon^\alpha = (\epsilon^\alpha_1, \epsilon^\alpha_2, \ldots)$, with corresponding extensions $\lambda_\alpha[\epsilon^\alpha]$ as defined by  formula \eqref{eq: def total extension basic pert},
    such that for each $\chi_0 \ge \chi_{\min}$, the section $\theta_{> \chi_0}$ of $\bW_{>\chi_{\min}} \to \bZ_{>\chi_{\min}}$,   
    \begin{equation}\label{eq: def pert chi_min}
    \theta_{> \chi_0} := \sum_{\chi(S^\alpha) > \chi_{0}} \lambda_\alpha[ \epsilon^\alpha],
    \end{equation}
     has the following properties:
\begin{enumerate}
\item \label{main thm 0-parameter bare compactness}
Suppose given a sequence of bare solutions to $\bar\partial_J=\theta_{>\chi_0}$ converging to a not-bare $(u, S)$, i.e. the positive area part $(u_+, S_+)$ has $\chi(S) <  \chi(S_+)$, then $\chi(S_+) \le \chi_0$.  
\item \label{main thm 0-parameter genericity}
In $\mathbf{Z}_{> \chi_0}$, the bare solutions to 
$\bar \partial_J = \theta_{> \chi_0}$  are transversely cut out and 0-generic. 
\end{enumerate} 
Note that \eqref{main thm 0-parameter bare compactness} implies that
the locus in $\mathbf{Z}_{\ge \chi_0}$ of bare solutions to 
$\bar \partial_J = \theta_{> \chi_0}$  is compact. 

Note also that, by \eqref{eq: def pert chi_min}, there is a neighborhood $V$ of $L$ such that for any  map $(u, S) \in \mathbf{Z}_{>\chi_{\min}}$, the support of $\theta_{> \chi_0}((u,S))$ is disjoint from $u^{-1}(V)$.
\end{theorem} 
\begin{proof}
As $\mathbf{Z}_{> \chi_{\max}}$ is empty, \eqref{main thm 0-parameter bare compactness} and \eqref{main thm 0-parameter genericity} hold vacuously for $\theta_{> \chi_{\max}}$ for any collection of $\lambda^{\alpha}, \epsilon^\alpha$. 

We now induct on $\chi_0$. Suppose given $\lambda_\alpha$ center around maps $(u_\alpha, S_\alpha)$ with $\chi(S_\alpha)>\chi_0$ and $\epsilon^\alpha$ such that the assertion holds for $\theta_{> \chi_0}$.  We will choose new basic perturbations supported in $\bU(u_\beta,S_\beta)$ with $\chi(S_\beta)=\chi_0$, and then show that the assertion holds for the resulting $\theta_{> \chi_0-1}$. 

The hypothesis \eqref{main thm 0-parameter bare compactness} together with Lemma \ref{bare transversality} and its refinement Lemma \ref{l: generic solutions 0}, imply the existence of additional basic perturbations $\lambda_\beta$ centered around maps whose domains have Euler characteristics $\chi_0$, so that solutions in $\mathbf{Z}_{\chi_0}$ to $\bar \partial_J = \theta_{> \chi_0}|_{\mathbf{Z}_{\chi_0}} + \sum \lambda_\beta$ are transversely cut out and $0$-generic.  

Then for any $\epsilon^\beta$, 
\begin{equation} \label{extending down one step epsilons}
    \theta_{> \chi_0 - 1} := \theta_{> \chi_0} + \sum \lambda_\beta[\epsilon^\beta]
\end{equation}
will satisfy \eqref{main thm 0-parameter genericity}.  
As in Lemma \ref{bare transversality}, the $\theta$ are multi-sections, while for the $\lambda$ and $\lambda[\epsilon^\beta]$ the meaning of \eqref{eq: perturb by basic perturbations} is that we first symmetrize the sections $\lambda_\alpha$ on the \'etale charts where they are defined to multi-sections which descend to $\mathbf{Z}$, and then add the resulting multi-sections to each other and to $\theta(u)$ by the usual sum of multi-sections prescription as recalled in Section \ref{sec: adding multi}.

Indeed, $\theta_{> \chi_0 - 1}$ and $\theta_{> \chi_0}$ agree on $\mathbf{Z}_{> \chi_0}$, and $\theta_{> \chi_0 - 1}|_{\mathbf{Z}_{\chi_0}} = \theta_{> \chi_0}|_{\mathbf{Z}_{\chi_0}} + \sum \lambda_\beta$, by construction (where again we first symmetrize and then use the standard addition of multi-sections). 

We turn attention to \eqref{main thm 0-parameter bare compactness}.
For any $\epsilon^\beta$, the section $\theta_{>\chi_0 - 1}$ is controlled near ghosts by Proposition \ref{sharp hat gluing is controlled}; we will seek to apply ghost bubble censorship.  Suppose then given some sequence of bare solutions $(u_i, S_i)$ to $\bar \partial_J - \theta_{\chi_0 -1}$, which converges to a non-bare $(u, S)$, with positive part $(u_+, S_+)$ where $\chi(S_+) > \chi_0 - 1$. Note that if in fact $\chi(S_+) > \chi_0$, then the $(u_i, S_i)$ are eventually outside the support of the new $\lambda_\beta[\epsilon^\beta]$, hence solve the equation $\bar \partial_J - \theta_{>\chi_0}$, and we obtain a sequence which contradicts our inductive hypothesis \eqref{main thm 0-parameter bare compactness}. Thus we may assume $\chi(S_+) = \chi_0$. 

Suppose (to eventually derive a contradiction) that for all choices of the $\epsilon^\beta$, there are  sequences of solutions as in \eqref{main thm 0-parameter bare compactness} for which the assertion of that condition fails. Then consider a sequence $\epsilon^{\beta;j} \to 0$ and corresponding sequences
of solutions $(u_i^j, S_i^j)$ of solutions to $\bar \partial_J - \theta_{> \chi_0} - \sum \lambda_\beta[\epsilon^{\beta; j}]$ which converge, $(u_i^j, S_i^j) \to (u^j, S^j)$ as $i\to\infty$.  The positive parts $(u_+^j, S_+^j)$ of the limits $(u^j,S^j)$ all have Euler characteristic $\chi(S^j_+)=\chi_0$ by the previous paragraph.  

Lemma \ref{small hole estimate} implies that $(u_+^j, S_+^j)$ solves $\bar \partial_J - \theta_{> \chi_0} - \sum \lambda_\beta=0$ to within $\mathcal{O}(\epsilon^{\beta; j})$. It follows that a subsequence of $(u_+^j, S_+^j)$ converges to a solution $(v, T)$ of $\bar \partial_J - \theta_{> \chi_0} - \sum \lambda_\beta$ as $j\to\infty$.  

We first show that $(v, T)$ must be a bare solution. Suppose $(v,T)$ is not bare and denote its positive part $(v_+, T_+)$, which satisfies $\chi(T_+) > \chi_0$. Then we may take a `diagonal' subsequence 
$(u_{i}^{j(i)}, S_{i}^{j(i)})$ of bare solutions to $\bar\partial_J-\theta_{>\chi_0}- \sum \lambda_\beta[\epsilon^{\beta; j(i)}]=0$ that converges to some $(v', T')$  
with bare part $(v_+, T_+)$. However, we then again have that for sufficiently large $i$, $(u_{i}^{j(i)}, S_{i}^{j(i)})$ are outside the support of all the $\lambda_\beta[\epsilon^{\beta;j(i)}]$ and hence solve the previous equation $\bar \partial_J - \theta_{> \chi_0}=0$. Then $\chi(T_+) > \chi_0$ contradicts the inductive hypothesis \eqref{main thm 0-parameter bare compactness}. 

We conclude that $(v, T)$ is a bare solution to $\bar \partial_J - \theta_{> \chi_0} - \sum \lambda_\beta$.  Thus by \eqref{main thm 0-parameter genericity} for $\theta_{>\chi_0 - 1}$, which we have already established, $(v, T)$ is transversely cut out and 0-generic.  We now seek to show that the same holds for $(u_+^j, S_+^j)$ for sufficiently large $j$. 

We claim that in fact $\lambda_\beta[\epsilon^{\beta;j}]|_{(u_+^j,S_+^j)}$ can be extended to a basic perturbation $\lambda^j_\beta$ in $\bU(u^\beta,S^\beta)$ that differs from $\lambda_\beta$ by $\mathcal{O}(\epsilon^{\beta;j})$.  Indeed, 
by construction \eqref{eq: def total extension basic pert}, $\lambda_\beta[\epsilon^{\beta;j}]|_{(u_+^j,S_+^j)}$ differs from $\lambda_\beta|_{(u_+^j,S_+^j)}$ only by the form of the domain differential ($\alpha$ in \eqref{eq: uncut basic}). Let $\alpha$ denote the domain differential of $\lambda_\beta$ in the chart $\bU(u^\beta,S^\beta)$ where it is supported. It is straightforward to check that the domain differential of $\lambda_\beta[\epsilon^{\beta;j}]|_{(u_+^j,S_+^j)}$ extends to a domain differential $\alpha^j$ over $\bU(u^\beta,S^\beta)$ such that the $C^2$-distance between $\alpha$ and $\alpha^j$ is $\mathcal{O}(\epsilon^{\beta;j})$. Define the extension $\lambda^j_\beta$ of $\lambda_\beta[\epsilon^{\beta;j}]|_{(u_+^j,S_+^j)}$ by replacing $\alpha$ in the definition of $\lambda_\beta$ by $\alpha^j$. Let $\eta_\beta^j$ denote the basic perturbation which is the difference of $\lambda_\beta$ and $\lambda_\beta^j$, $\eta_\beta^j=\lambda_\beta-\lambda_\beta^j$. 

Now, for sufficiently small $\epsilon^{\beta; j}$, we may apply Lemma \ref{estimate openness of genericity} to $\eta^j=\sum_\beta\eta_{\beta}^j$, where $\eta_\beta^j$ are the basic perturbations above, to deduce that $(u_+^j, S_+^j)$ is $0$-generic.  But this contradicts ghost bubble censorship (Proposition \ref{ghost bubble censorship linear combo}).   
\end{proof}

The 1-parametric part of Theorem \ref{main theorem intro} is a consequence of the following result.  
\begin{theorem}\label{main theorem 1-parameter} 
Fix $\chi_{\min}$ and let $J^0$ and $\theta^0_{>\chi}$ and $J^1$ and $\theta^1_{>\chi}$ for $\chi\ge\chi_{\min}$ be almost complex structures and perturbations as in Theorem \ref{main theorem 0-parameter}. Then there is a $1$-parameter family of almost complex structures $J^t$ and finitely many 1-parameter families of basic perturbations $\lambda_\alpha^t$ and sequences of real numbers $(\epsilon^\alpha_1,\epsilon^\alpha_2, \ldots)$ such that for each $\chi_0\ge \chi_{\min}$, the 1-parameter family of complex structure $J^t$ and sections $\theta^t_{>\chi_0}$ of $\bW_{>\chi_{\min}} \to \bZ_{>\chi_{\min}}\times [0,1]$, 
\begin{equation}\label{eq: def pert chi_min 1parameter}
     \theta_{> \chi_0}^t := \sum_{\chi(S^\alpha) > \chi_{0}} \lambda_\alpha^t[ \epsilon^\alpha],
\end{equation}
interpolates between $J^0$ and $\theta_{>\chi_0}^0$ and $J^1$ and $\theta_{\chi_0}^1$ and has the following properties:

\begin{enumerate}
\item \label{main thm 1-parameter bare compactness}
If $(u, S)$ is a limit of bare solutions to $\bar\partial_{J^t}=\theta_{>\chi_0}^t$ which is not bare, 
i.e.\ $(u,S)$ has positive area part $(u_+, S_+)$ with $\chi(S) <  \chi(S_+)$, then $\chi(S_+) \le \chi_0$.  
\item \label{main thm 1-parameter genericity}
In $\bZ_{>\chi_0}\times[0,1]$ the bare solutions to $\bar\partial_{J^t}-\theta_{>\chi_0}^t=0$ are transversely cut out 1-parameter families of 1-generic maps. 
Moreover, a map from a nodal domain is a solution if and only if the corresponding map from the normalization is a solution, and these solutions come with the same signs and weights. 
\end{enumerate} 
Note that \eqref{main thm 1-parameter bare compactness} implies that
the locus in $\mathbf{Z}_{\ge \chi_0}$ of bare solutions to 
$\bar \partial_J = \theta_{> \chi_0}$  is compact.  
\end{theorem}  

\begin{proof}
The proof is similar to the proof of Theorem \ref{main theorem 0-parameter}.
Again the inductive hypothesis holds vacuously at $\chi = \chi_{\max} + 1$. The only new ingredient in the inductive step concerns the assertion that a map is a solution if and only if its normalization is a solution. 

Since basic perturbations vanishes near $L$, see Definition \ref{def : near L property} so does the perturbation $\theta_{>\chi_0}^t$. Let $\phi_t$ denote the restriction of $\theta_{>\chi_0}$ to $\bZ_{<\chi_0}$. If $(u_t,S_t)$ is a bare map with $\chi(S_t)=\chi_0$ with a single hyperbolic or elliptic boundary node then its local extension $\hat\phi_t$ on a neighborhood $V$ of $(\hat u_t,\hat S_t)$ in $\bZ_{>\chi_0-1}\times[0,1]$, see Section \ref{ssec : gluing 1-parameter familes}, satisfies $\theta_{>\chi_0}|_V=\hat\phi_t$.  

Lemmas \ref{l: 1-generic nodal solutions} and \ref{l : standard crossings/nodes in 1 parameter families} then shows that transverse crossing families of solutions in Euler characteristic $\chi=\chi_0+1$ together with the corresponding nodal family in $\chi=\chi_0$
are conjugated to the standard boundary crossing (or crossing with $L$) and hyperbolic (or elliptic) families.
This implies that in the inductive step when we choose $\lambda_\beta$ supported in $\bU(u_\beta,S_\beta)\times [0,1]$ with $\chi(S_\beta)=\chi_0$, we already have the desired transversality in a neighborhood of any bare solution with a boundary node. We then take $\lambda_\beta$ supported in $\bU(u_\beta,S_\beta)$ where $S_\beta$ is smooth and where $\bU(u_\beta,S_\beta)$ is disjoint from some neighborhood of nodal maps. The proof of the inductive step then follows as in the proof of Theorem \ref{main theorem 0-parameter}. 
\end{proof}

\subsection{Reduced Gromov Witten invariants}
In this section we recover Gromov-Witten counts from bare curve counts: we prove Theorem \ref{t : bare to GW intro}, which asserts: 
\begin{equation} \label{bare vs GW} Z_{\mathrm{bare}}(\beta,e^{g_s/2}-e^{-g_s/2}) = Z_{\mathrm{GW}}(\beta,g_s).
\end{equation}

Let us first explain what is being asserted and what needs to be shown.  Let $\theta_{\mathrm{bare}}$ be a perturbation constructed by Theorem \ref{main theorem 0-parameter}.  The left hand side of \eqref{bare vs GW} is defined by counting bare solutions to $\bar \partial_J u = \theta_{\mathrm{bare}}(u)$, per \eqref{bare curve count defined}.  
As for the right hand side, 
our perturbations do not achieve transversality on the non-bare locus, but, by the general results of \cite{HWZ-GW}, could be further perturbed to $\theta_{\mathrm{bare}} + \delta\theta_{\mathrm{GW}}$ to ensure this, i.e., in the language of \cite{HWZ-GW}, $\theta_{\mathrm{GW}}$ is an $\mathrm{sc}^+$ multi-section, $\delta\in\R$ is an arbitrarily small number, and $\theta_{\mathrm{bare}} + \delta\theta_{\mathrm{GW}}$ is transverse to the $\bar\partial_J$ Fredholm section. Counting all solutions to 
$\bar \partial_J u = (\theta_{\mathrm{bare}} + \delta\theta_{\mathrm{GW}})(u)$, each by (the multi-section intersection number contribution times) $g_s^{-\chi(S)}$ gives the Gromov-Witten count and defines the right hand side of \eqref{bare vs GW}.  

Note that both sides of \eqref{bare vs GW} are invariant under deformation: the left hand side under deformations within the allowed class of $\theta_{\mathrm{bare}}$ by Theorem \ref{main theorem 1-parameter} and the right hand side by \cite{HWZ-GW}.  Thus it will suffice to establish the equality for a single $\theta_{\mathrm{bare}}$.

Recall that in  Theorem \ref{main theorem 0-parameter}, the positive part of each non-bare solution $(v, T)$ to $\bar\partial_J-\theta_{\mathrm{bare}}$ was near some bare solution $(v_\ast,T_\ast)$. 

\begin{lemma}\label{l : uniqueness of underlying bare}
There exists $\theta_{\mathrm{bare}}$ for which Theorem \ref{main theorem 0-parameter} holds and such that for if $(v,T)$ is any solution to $\bar\partial_J-\theta_{\mathrm{bare}}$ then the nearby bare solution $(v_\ast,T_\ast)$ is unique.    
\end{lemma}

\begin{proof}
Since any solution to $\bar \partial_J = \theta_{\mathrm{bare}}$ is associated to some transversely cut out bare curve $(u,S)$ which is an embedding, it will suffice to show that there are no other bare solutions in a neighborhood of $u(S)$. This follows from ghost bubble censorship, Theorem \ref{ghost bubble censorship}: if there is no such neighborhood then there would be a sequence of bare solution curves, other than $(u, S)$, with limit which is a map to $u(S)$. Since the degree of the maps is fixed, the map in the limit can only be the curve $S$ with ghosts attached, which contradicts Theorem \ref{ghost bubble censorship}.
\end{proof}

Using Lemma \ref{l : uniqueness of underlying bare} it will be sufficient to prove the identity locally by 
showing that each bare curve gives rise to exactly $e^{g_s/2}-e^{-g_s/2}$ Gromov-Witten solutions.  

Take $\theta_{\mathrm{bare}}$ so that Lemma \ref{l : uniqueness of underlying bare} holds and define a bare solution $(u,S)$, 
$$
\mathrm{Cont}_{\mathrm{GW}}(u, S) \ := \ \sum_{(v_*, T_*) = (u, S)} \frac{[\bar\partial_J \cap (\theta_{\mathrm{bare}} + \delta \theta_{\mathrm{GW}})]_{(v, T)}}{[\bar\partial_J \cap \theta_{\mathrm{bare}}]_{(u, S)}}  \cdot g_s^{-\chi(S)},$$
where the sum ranges over all spaces of non-bare solutions $(v,T)$ to $\bar\partial_J-\theta_{\mathrm{bare}}$ with $(v_\ast,T_\ast)=(u,S)$, one solution space for each Euler characterstic $\chi=\chi(T)<\chi(S)$.    
Theorem \ref{t : bare to GW intro} now follows from the following local result: 
\begin{theorem}\label{c : bare to GW}
There exist $\theta_{\mathrm{bare}}$ such that
for any bare solution $(u,S)$ to $\bar \partial_J u = \theta_{\mathrm{bare}}(u)$, we have:
\begin{equation}\label{eq : bare to GW local formula}
\mathrm{Cont}_{\mathrm{GW}}(u, S)=
(e^{g_s/2}-e^{-g_s/2})^{-\chi(S)}.
\end{equation}
\end{theorem}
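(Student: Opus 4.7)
The plan is to establish \eqref{eq : bare to GW local formula} locally, by reducing to a universal Hodge integral generating function whose value is already known from the Calabi-Yau computation for super-rigid embedded curves.

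First, by Theorem \ref{main theorem 0-parameter}, property $(1\mathrm{b})$, the bare solution $(u,S)$ is transversely cut out and isolated in the bare locus. Since $\theta_{\mathrm{bare}}$ vanishes on all zero-area components, any stable map $(v,T)$ with $(v_\ast,T_\ast)=(u,S)$ arises by attaching a tree of \emph{constant} $J$-holomorphic ghost bubbles at points of a unique bare solution nearby to $(u,S)$. Organizing such $(v,T)$ by combinatorial type $\tau$ (a stable tree of ghost components together with an attachment pattern to the bare domain), the moduli of $(v,T)$ of type $\tau$ fibers over a configuration space of attaching points on $S$, with fiber a product of Deligne-Mumford spaces $\prod_j\overline{\mathcal{M}}_{g_j,k_j}$.

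Next, I would add a further small generic $\epsilon\theta_{\mathrm{GW}}$ to obtain transversality on the non-bare locus. By the standard relation between perturbative signed counts and Euler-class integrals on the unperturbed moduli, the sum of $[\bar\partial_J\cap(\theta_{\mathrm{bare}}+\epsilon\theta_{\mathrm{GW}})]_{v,T}$ over a given type $\tau$ equals the integral of the top Chern class of the obstruction bundle on the corresponding moduli. At a constant ghost component of genus $g_j$ this obstruction bundle restricts to $H^1(C_j,\mathcal{O})\otimes T_{u(p_j)}X\cong E_{g_j}^\vee\otimes \mathbb{C}^3$, using the Calabi-Yau trivialization of $TX$. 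The $\tau$-summand is therefore a classical Hodge integral over $\prod_j\overline{\mathcal{M}}_{g_j,k_j}$, multiplied by the bare intersection number $[\bar\partial_J\cap\theta_{\mathrm{bare}}]_{u,S}$, which cancels the denominator in the definition of $\mathrm{Cont}_{\mathrm{GW}}(u,S)$.

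Summing over all types $\tau$ with weight $g_s^{-\chi(T)}$ then yields a universal Hodge integral generating function depending only on $\chi(S)$. This generating function is precisely the local Gromov-Witten contribution of a super-rigid embedded curve of Euler characteristic $\chi(S)$ in a Calabi-Yau threefold. By the Faber-Pandharipande/Mari\~no-Vafa Hodge integral formula -- equivalently, the Bryan-Pandharipande local computation underlying the Gopakumar-Vafa correspondence -- this function equals $(e^{g_s/2}-e^{-g_s/2})^{-\chi(S)}$, proving \eqref{eq : bare to GW local formula}. The main obstacle is the comparison in the previous paragraph: identifying the sc-smooth obstruction bundle arising from our perturbation scheme, restricted to stable maps with constant ghost trees, with the algebraic obstruction bundle $E^\vee\otimes T_{u(p)}X$ on $\overline{\mathcal{M}}_{g,n}$. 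One route is to appeal to the compatibility of HWZ virtual counts with the Ruan-Tian/Li-Tian framework, where the Hodge integral computation is classical, combined with the deformation invariance established in Theorem \ref{main theorem 1-parameter}. Alternatively, in a chart of Section \ref{sec: extension of basic} about $(u,S)$ one can choose $\theta_{\mathrm{GW}}$ of a product form respecting the ghost/bare factorization and verify the Euler class identity directly from sections of the Hodge bundle on each ghost factor.
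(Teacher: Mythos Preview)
Your overall strategy---reduce to Hodge integrals over ghost moduli and invoke a known universal computation---is the right endgame, and indeed the paper ultimately appeals to Pandharipande's calculation \cite{Pandharipande-degenerate}. However, there is a genuine gap in your direct approach that the paper explicitly flags and circumvents rather than resolves.

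The issue is your obstruction bundle identification when $\theta_{\mathrm{bare}} \neq 0$. Recall that $\theta_{\mathrm{bare}}$ on a map $(v,T)$ with ghosts is not the pullback of $\theta_{\mathrm{bare}}$ on the bare part $(v_+,T_+)$: by the forgetful mismatch of Section~\ref{ssec: forgetful mismatch}, the restriction $\theta_{\mathrm{bare}}(v,T)|_{T_+}$ depends on the attaching points and gluing parameters through the cutoffs $\kappa_i$ and the sharp hat-gluing. Hence the linearization $L_{(v,T)}(\bar\partial_J - \theta_{\mathrm{bare}})$ does not split as a product, and the cokernel is not a priori $\bigoplus_j H^1(C_j,\mathcal{O}_{C_j})\otimes T_{u(p_j)}X$. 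The paper says this directly: for nonvanishing $\theta_{\mathrm{bare}}$, the authors do not know a description of the solution locus $\{(v,T):(v_\ast,T_\ast)=(u,S)\}$ in terms of products and symmetric products of $S$. Your proposed fixes---comparing HWZ to Ruan--Tian globally, or choosing $\theta_{\mathrm{GW}}$ of product form---do not address this, since the difficulty lies in $\theta_{\mathrm{bare}}$ itself, which was already fixed by the inductive construction and is not of product form.

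The paper's proof instead sidesteps the obstruction bundle identification by a local deformation. One constructs a family $J(t)$ of almost complex structures on a tubular neighborhood of $u(S)$, interpolating from $J=J(1)$ to a $J(0)$ for which $u(S)$ is genuinely $J(0)$-holomorphic; simultaneously one interpolates $\theta_{\mathrm{bare}}$ to $0$ via Theorem~\ref{main theorem 1-parameter}, keeping $(u,S)$ a transverse bare solution throughout. Ghost bubble censorship, applied uniformly over the family, furnishes a fixed neighborhood $U''$ of $u(S)$ containing no other bare solutions at any $t$; crucially, the paper arranges the inductive choices of $\epsilon^\beta$ so that $U''$ can be fixed already at step $\chi(S)$ of the construction (this is why the theorem asserts existence of a suitable $\theta_{\mathrm{bare}}$, not that every such works). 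Both $\mathrm{Cont}_{\mathrm{GW}}$ and the bare count are deformation invariant along this family, so one may compute at $t=0$, where $\theta_{\mathrm{bare}}=0$ and $(u,S)$ is an honest embedded $J(0)$-holomorphic curve. At that endpoint your obstruction bundle argument is correct and is precisely the content of \cite{Pandharipande-degenerate}, yielding $(e^{g_s/2}-e^{-g_s/2})^{-\chi(S)}$.
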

\begin{proof}
In the model case $\theta_{\mathrm{bare}}(u) = 0$, we may deform $\theta_{\mathrm{bare}}+\delta\theta_{\mathrm{GW}}$ to $\delta = 0$ to obtain a Bott-degenerate space of solutions.  The obstruction bundle calculation that yields the right hand side of \eqref{eq : bare to GW local formula} is a result of Pandharipande \cite{Pandharipande-degenerate}. It is carried out using the fact that the space of ghost contributions can be described in terms of products and symmetric products of $S$. For non-vanishing $\theta_{\mathrm{bare}}$, we do not know any such description of the space of solutions $(v, T)$ with $(v_*, T_*) = (u, S)$ and we will derive the result instead by deforming to a setting where we can apply \cite{Pandharipande-degenerate}.
It follows from Lemma \ref{l : uniqueness of underlying bare} that $\mathrm{Cont}_{\mathrm{GW}}(u, S)$ is the count of all (further perturbed) solutions in a neighborhood of the image $u(S)$.  

We will need slightly more control in order to argue inductively.  In our inductive construction of perturbations, $(u, S)$ is a (transverse bare) solution to the equation $\bar\partial_J -\theta_{>\chi(S)-1}$.  
The $[\epsilon^\beta]$ appearing on the right hand side of \eqref{extending down one step epsilons} only concern the perturbation on components of configuration space for lower Euler characteristic ($<\chi(S)$) domains, i.e.:
\begin{equation} \label{extending one step down no epsilons} \theta_{> \chi(S) - 1} |_{\mathbf{Z}^{\chi(S)}} \ := \ \theta_{> \chi(S)}|_{\mathbf{Z}^{\chi(S)}} + \sum \lambda_\beta.
\end{equation}

Since the ghost bubble censorship argument applies already to bare solutions to $\bar\partial_J -\theta_{>\chi(S)}$,  we can fix a neighborhood $U$ of $u(S)$ which contains no such bare solutions. Then, as long as we choose future $\epsilon^\beta$ sufficiently small, we do not ruin the property that $(u, S)$ is the only bare solution contained in the neighborhood $U$.   
This means that the neighborhood $U$ can be chosen already at step $\chi(S)$ in the inductive construction, provided we impose some further constraint on all later choices of $\epsilon^\beta$.  (These constraints on $\epsilon^\beta$, and others appearing in similar arguments below, are why we did not assert that \eqref{eq : bare to GW local formula} holds for all $\theta_{\mathrm{bare}}$ as constructed in Theorem \ref{main theorem 0-parameter}.) 
Moreover, as long as we choose further $\epsilon^\beta$ sufficiently small, all solutions $(v, T)$ to $\bar \partial_J - \theta_{\mathrm{bare}}$ with $(v_*, T_*) = (u, S)$ will have image contained in $U$.   
We now seek to deform the complex structure on $U$ to one in which $u(S)$ is $J$-holomorphic.  To simplify notation,  identify $S$ and its (embedded) image $u(S)$. We model a neighborhood of $S$ on the orthogonal complement of $TS\subset TX|_S$ with respect to the metric $\omega(J\cdot,\cdot)$. Let $N\to S$ denote the corresponding bundle. If $j$ denotes the complex structure along $S$, $N_s\subset T_s X$ the vectors tangent to the fiber of $N$ at $s$ and $\tau$ a vector in $T_s S$ then we have,
\[
J(N_s)\subset N_s,\quad J\tau = j\tau + E\tau,
\] 
where $\|E\|_{C^k}=\|\theta_{>\chi(S)-1}(u)\|_{C^k}\ll 1$, $k\ge 2$ (elliptic regularity shows that $u$ is smooth and $\theta_{>\chi(S)-1}$ is a sum of extended basic perturbations which are constructed using smooth functions, see \eqref{eq: uncut basic}). Consider the complex structures $J(t)$, $0\le t\le 1$ along $S$ given instead by
\[
J(0)|_{N_s}=J|_{N_s},\quad J(t)\tau = j\tau + tE\tau,
\]
then $J=J(1)$ and $J(t)$ admits extensions to $N$ such that $J(t)=J$ outside $N'\subset N$ and such that $\|J-J(t)\|=\mathcal{O}(\|\theta\|)$. Note that $S$ is $J(0)$-holomorphic and transversely cut out. Also, if $S(t)=(J+J(t))^{-1}(J(t)-J)$ then $JS(t)+S(t) J=0$, $J(t)=J(1+S(t))(1-S(t))^{-1}$, and if $A$ is a $(J,j)$-complex (anti-)linear section of $u^\ast TX$ for some $u\colon T\to N$ then $(1-S(t))A$ is $(J(t),j)$-complex (anti-)linear. 

By ghost bubble censorship, there is some neighborhood $U' \subset U$ such that the only bare solution to $\bar \partial_{J(0)} $ in $U'$ is $(u, S)$.  
Now both $(J=J(1),\theta_{\mathrm{bare}})$ and $(J(0), 0)$ are good perturbation data for counting bare curves in $U'$.  By Theorem \ref{main theorem 1-parameter} we may now connect the perturbation data by some 1-parameter family $\theta_{\mathrm{bare}}(t)$ with $\theta_{\mathrm{bare}}(1) = \theta_{\mathrm{bare}}$ and $\theta_{\mathrm{bare}}(0) = 0$.  By inspection of the construction in the proof, we may take this interpolation such that $(u, S)$ is a (transversely cut out bare) solution for every $t\in [0,1]$. Then, by ghost bubble censorship, there is some possibly smaller neighborhood $U''\subset U'$ of $S$ such that there are no bare solutions other than $(u,S)$ in $U''$ throughout the 1-parameter family.  

We next argue that it is possible to arrange that the size of $U''$ depends only on choices made at step $\chi(S)$ or before of the construction.  Note that the 1-parameter family, when restricted to $\mathbf{Z}_{\ge \chi(S)}$ depends only on $\theta$ restricted to the same configuration space.  We rearrange the steps in the inductive procedure slightly: after we have written \eqref{extending one step down no epsilons}, but before having chosen the corresponding $\epsilon^\beta$, we construct the interpolating 1-parameter family on $\mathbf{Z}_{\ge \chi(S)}$, and determine the corresponding $U''$. Then we pick later $\epsilon^\beta$ sufficiently small that, throughout the 1-parameter family, all solutions $(v, T)$ to $\bar \partial_{J(t)} - \theta_{\mathrm{bare}}(t)$ with $(v_*, T_*) = (u, S)$ remain in $U''$.  

It follows that our desired count accounts for all solutions to $\bar \partial_J - \theta_{\mathrm{bare}} - \delta\theta_{\mathrm{GW}}$ in $U''$ and also that the same holds parametrically in the family $(J(t),\theta_{\mathrm{bare}}(t))$, where at $t=0$, we are in a situation where $\theta_{\mathrm{bare}}(t)=0$. Then, taking $\delta \to 0$ and applying \cite{Pandharipande-degenerate}, \eqref{eq : bare to GW local formula} follows.
\end{proof}

\section{Skein valued curve counts}
Let $X$ be a symplectic Calabi-Yau 3-fold and $L\subset X$ a Maslov zero Lagrangian submanifold. In \cite{SOB}, we showed that, when holomorphic multiple covers could be excluded a priori (e.g. by topological considerations), holomorphic curves in $X$ with boundary on $L$ can be invariantly counted by the class of their boundary in the skein module of $L$. In that article, 
we structured the central invariance argument as follows: first, we gave in \cite[Theorem 6.1]{SOB} a list of properties which generic 0- and 1- parameter families of holomorphic curves satisfy; we used only these properties in formulating the definition of an candidate invariant in \cite[Definiton 6.2]{SOB}, and used only these properties in proving invariance in \cite[Theorem 6.3]{SOB}. 

In the present article, we have by now constructed a class of perturbations sufficient to ensure that solutions to the perturbed holomorphic curve equation are never multiply covered.  We will now show that these perturbations for skein-valued curve counting, thus showing that the skein-valued curve counting of \cite{SOB} can be used without any a priori exclusion of multiply covered holomorphic curves.
Our main task is to show that solutions to our perturbed holomorphic curve retain the properties of holomorphic curves enumerated in \cite[Theorem 6.1]{SOB}.  (The only differences will be that `manifolds' of solutions, or pre-images of the zero section, will be replaced by `weighted branched manifolds', or pre-images of the multi-sections of the perturbation, and similarly $\Z$ counts will be $\Q$ counts.) 

As in \cite[Section 2]{SOB}, we fix a non-vanishing vector field $\xi$ along $L$ and a $4$-chain $C$ such that $\partial C=2L$ and such that the normal to $C$ along the boundary is $\pm J\xi$, we also assume that the intersection $\mathrm{int}(C)\cap L$ is transverse and denote it $\gamma$.

\begin{corollary}\label{cor : count curves} (compare \cite[Theorem 6.1]{SOB})
Let $X$ be a symplectic Calabi-Yau 3-fold, let $L\subset X$ a Maslov zero Lagrangian submanifold.
Let $\theta_0,\theta_1\colon\bZ\to\bW$ be $\mathrm{sc}^+$ multi-sections as in 
Theorem \ref{main theorem 0-parameter} 
and let
and $\M(\theta_{t_0})$, $t_0=0,1$ be the moduli space of bare solutions to $\bar\partial_J=\theta_{t_0}$. Let $\Theta\colon\bZ\times[0,1]\to \bW\times[0,1]$ be a 1-parameter family of multi-sections as in Theorem \ref{main theorem 1-parameter} 
connecting $\theta_0$ to $\theta_1$, $\Theta(u,S;t)=\theta_t(u,S)$, and let $\mathcal{M}(\Theta)$ be the moduli space of bare solutions $\bar\partial_J^{-1}(\Theta)$, $\mathcal{M}(\Theta)=\bigcup_{t\in [0,1]}\mathcal{M}(\theta_t)$. Then the following hold. 
\begin{enumerate}
\item 
\label{prop:coherence} 
	{\bf Coherence.} For any $t\in[0,1]$, we have $(u,S) \in \M(\theta_t)$ if and only if the normalization $(\tilde{u},\tilde S) \in \M(\theta_t)$. 
\item 
\label{prop:codimzero}
{\bf Codimension zero transversality.}
Let $t_0=0,1$ we have
	\begin{enumerate}
	\item \label{prop:codimzerocompact} 
		$\M(\theta_{t_0})$ is compact. 
	\item \label{prop:isolated} 
		$\M(\theta_{t_0})$ is an oriented weighted zero-orbifold.
	\item \label{prop:embedded} 
	        Any map $(u,S)$ corresponding to a point in $\M(\theta_{t_0})$ is an embedding of 
	        a smooth curve with interior disjoint from $L$ and with the tangent to the boundary nowhere tangent to $\xi$. 
	\item \label{prop:trivialcobordism}
		The map $\M(\Theta) \to [0,1]$ gives a weighted branched orbifold cobordism from $\M(\theta_0)$ to $\M(\theta_1)$, and can be oriented so that the orientations of $\M(\theta_0), \M(\theta_1)$ are recovered from the boundary orientation.  
	\end{enumerate}
\item \label{prop:codimone} {\bf Codimension one transversality.}
	We also have 
	\begin{enumerate}
	\item \label{prop:codimonecompact} 
		$\M(\Theta)$ is compact.  		
	\item \label{prop:boundarycobordism} 
        $\M(\Theta)$ is an oriented weighted branched 1-orbifold 
        with boundary, to which  
        the orientation 
        on $\M(\Theta)|_{t=0,1}$  from \eqref{prop:trivialcobordism}
        extends.  Boundary  points over the interior $(0,1)$ are precisely the maps in $\mathcal{M}(\Theta)$ with nodal domain.
	\item \label{prop:onecrossingatatime}
        The locus of $t \in [0,1]$ so where \eqref{prop:trivialcobordism} or \eqref{prop:embedded} fails is discrete, and the failure over some $t_1$ occurs at only one point of $\mathcal{M}(\theta_{t_1})$. If \eqref{prop:trivialcobordism} fails at $(u_{t_1}, S_{t_1})$, we term it a \emph{critical point}; if \eqref{prop:embedded} fails, 
		  we term it 
		  a \emph{crossing}. 
	\item \label{prop:crossingtypes} 
		The universal map over a neighborhood of a crossing $(u_{t_1}, S_{t_1}) \in U \subset
		\Mm(\Theta)$ takes one of the following forms: 
		\begin{enumerate}
		\item \emph{Hyperbolic crossing}.  
			The map $(u_{t_1}, S_{t_1})$ is an immersion everywhere and an 
			embedding save at two points $s_1\ne s_2$ in the boundary such that $u_{t_1}(s_1)=u_{t_1}(s_2)$. The images of the two boundary 
			tangent vectors at $s_1$ and $s_2$ are linearly independent from each other and from $\xi$, and 
			that they together with the first order variation of the $1$-parameter family at the double point, $\partial_t u_{t_1}(s_1)-\partial_t u_{t_1}(s_2)$, span the tangent space of $L$.  
		\item \emph{Elliptic crossing}.  
			The map $(u_{t_1}, S_{t_1})$ is an embedding, but some interior point $s$ of the domain is 
			mapped to $L$.  The map at this point is transverse to the 4-chain $C$. 
			At the crossing moment the tangent space of the curve and the tangent 
			space of $L$ together with the first order variation of $1$-parameter 
			family $\partial_t u_{t_1}(s)$ span the tangent space of $X$. 
		\item \emph{Framing change}: 
			$(u_{t_1}, S_{t_1})$ is an embedding, but 
			$\partial u_{t_1}$ becomes tangent to $\xi$ or intersects $\gamma$ generically. 
		\end{enumerate}
	\end{enumerate}
\item \label{prop:standard} {\bf Gluing.} 
	Let $(u_{t_1}, S_{t_1}) \in \M(\theta_{t_1})$ be an elliptic or hyperbolic 
	crossing as in $\mathrm{(3d)}$ above.  Let $v_{t_1}$ be the map with the same image which is an embedding of a nodal curve $T_{t_1}$.  Let $\M_u$ and $\M_v$ be small neighborhoods of $(u_{t_1}, S_{t_1})$ and $(v_{t_1}, T_{t_1})$ in $\M(\Theta)$,  
	and let $u_t,v_t$ be the corresponding families of maps for $t$ neat $t_1$. Then there are neighborhoods $D, D'$ of the crossing point and node such that $(u_t(D), v_t(D'))$ is a standard
	hyperbolic or elliptic degeneration, in the sense of 
	\cite[Definition 3.1]{SOB} or \cite[Definition 3.3]{SOB}, respectively.  
    
    Moreover, the orientations of $\M_u$ and $\M_v$ are related as follows: the moduli space $\M_v$ is non-empty over a half-interval $(t_0-\epsilon,t_0]$ or $[t_0,t_0+\epsilon)$, over which we may compare its orientation with $\M_u$ using the projections of both to $(t_0-\epsilon, t_0+\epsilon)$. 
    We require that there is a global sign $\sigma=\pm 1$ such that the orientations of $\mathcal{M}_u$ and $\mathcal{M}_v$ differ by the product $\sigma\cdot\nu(u_t)$ where $\nu(u_t)$ is the local crossing sign of $u_t$, 
    i.e., $\nu(u_t)$ is the crossing sign between boundary arcs in the hyperbolic case and between $L$ and the curve
    in the elliptic case.  
\end{enumerate}
\end{corollary}

\begin{proof} 
Properties $(1)$, $(2)$, and $(3)$, except for the last statement of $(2\mathrm{c})$ and $(3\mathrm{d(iii)})$, follow from Theorems \ref{main theorem 0-parameter} and \ref{main theorem 1-parameter} where compactness and transversality is established and \cite[Theorem 4.14]{HWZ-perturb} where it is shown that this leads to solution spaces that are oriented weighted branched orbifolds with good boundary behavior. 

The last statement of $(2\mathrm{c})$ and Property $(3\mathrm{d(iii)})$ say that the boundaries of solutions generically have the general position properties with respect to the given vector field $\xi$ on $L$ that follow from dimension counting. To prove this we use an augmented Fredholm problem where we add a boundary marked points and evaluate the derivative at that point. The argument is directly analogous to those used for Properties $(3\mathrm{d(i)})$ and $(3\mathrm{d(ii)})$ (cf.\ Sections \ref{ssec:clindiff} -- \ref{sssec : genericity}), see \cite[Lemma A.6]{SOB}.  

In order to see that $(4)$ holds we note that the basic perturbations in Theorem \ref{main theorem 1-parameter} are supported away from the Lagrangian $L$, this means that solutions are $J$-holomorphic near the boundary and the analysis from \cite[Theorem 4.2]{SOB} carries over directly.
\end{proof}

Corollary \ref{cor : count curves} allows us to define the skein valued curve counting invariant exactly as in \cite[Definiton 6.2]{SOB}. Write $\Sk(L)$ for the $\Q[a^\pm,z^\pm]$-skein module of $L$. Take $\theta$ so that Corollary \ref{cor : count curves} holds and define 
\begin{equation}\label{eq : def skein valued partition function}
Z_{(X,L)}=\sum_{(u,S)\in\mathcal{M}(\theta)} 
w(u,S) \cdot z^{-\chi(S)}\cdot a^{u(S)\cdot C} \langle u(\partial S)\rangle \ \in \ \Sk(L).
\end{equation}
Here $w(u)\in\Q$ is the rational weight of the solution $(u,S)$, $\chi(S)$ the Euler characteristic of $S$, $u(S)\cdot C$ counts intersections of the interior of $u(S)$ and $L$, and $\langle u(\partial S)\rangle$ is the element represented by the boundary framed by $\xi$ in the HOMFLYPT skein module of $L$.

\begin{corollary}\label{invariance}
The skein valued curve count $Z_{(X,L)}$ in \eqref{eq : def skein valued partition function} is independent of $\theta$ and invariant under deformations of $L$ and $J$.     
\end{corollary}

\begin{proof}
    We repeat verbatim the proof of \cite[Theorem 6.3]{SOB} with Corollary \ref{cor : count curves} playing the role of \cite[Theorem 6.1]{SOB}. 
\end{proof}

\bibliographystyle{hplain}
\bibliography{skeinrefs}

\end{document}